\pdfoutput=1
\documentclass{amsart}

\usepackage[left=1.25in, right=1.25in, top=1.5in, bottom=1.5in]{geometry}
\usepackage{amsmath, amssymb, amsthm, tikz-cd}
\usepackage{amscd}
\usepackage{mathtools}
\usepackage{enumitem}
\usetikzlibrary{fit}
\usepackage[colorlinks=true, linkcolor=black, citecolor=black, urlcolor=black]{hyperref}
\usepackage{cite}

\newtheorem{theorem}{Theorem}[section]
\newtheorem{proposition}[theorem]{Proposition}
\newtheorem{lemma}[theorem]{Lemma}
\newtheorem{corollary}[theorem]{Corollary}

\newtheorem{thmletter}{Theorem}

\makeatletter
\let\c@equation\c@theorem

\makeatother

\theoremstyle{definition}
\newtheorem{definition}[theorem]{Definition}
\newtheorem{construction}[theorem]{Construction}
\newtheorem{convention}[theorem]{Convention}
\newtheorem{example}[theorem]{Example}

\theoremstyle{remark}
\newtheorem{remark}[theorem]{Remark}

\DeclareMathOperator{\Spf}{Spf}
\DeclareMathOperator{\Spec}{Spec}

\DeclareMathOperator{\Perf}{Perf}

\DeclareMathOperator{\Map}{Map}
\DeclareMathOperator{\CAlg}{CAlg}
\DeclareMathOperator{\colim}{colim}
\DeclareMathOperator{\fib}{fib}

\DeclareMathOperator{\RHom}{RHom}
\DeclareMathOperator{\cofib}{cofib}
\DeclareMathOperator{\Rees}{Rees}
\DeclareMathOperator{\Ksz}{Kos}
\DeclareMathOperator{\Fil}{Fil}

\DeclareMathOperator{\Hom}{Hom}

\DeclareMathOperator{\Bock}{Bock}
\DeclareMathOperator{\gr}{gr}

\newcommand{\Gm}{\mathbb G_m}

\newcommand{\Zp}{\mathbb Z_p}

\newcommand{\cO}{\mathcal O}
\newcommand{\Dual}{\mathbb D}

\newcommand{\cH}{\mathcal H}

\newcommand{\cN}{\mathcal N}

\newcommand{\NN}{\mathcal{NN}}

\newcommand{\syn}{\mathrm{Syn}}
\newcommand{\Nyg}{\mathcal N}

\newcommand{\HW}{\mathrm{HW}}
\newcommand{\dR}{\mathrm{dR}}
\newcommand{\dRW}{\mathrm{dRW}}
\newcommand{\HT}{\mathrm{HT}}
\newcommand{\crys}{\mathrm{crys}}
\newcommand{\et}{\mathrm{\acute et}}

\newcommand{\comp}{\mathrm{comp}}

\newcommand{\RHomgr}{\operatorname{RHom}^{\mathrm{gr}}}
\newcommand{\Dqc}{D_{\mathrm{qc}}}

\newcommand{\WOm}{W\Omega}

\begin{document}

\title{The stacky approach to de Rham--Witt cohomology}
\author{Yuanning Zhang}
\address{Department of Mathematics, Northwestern University}
\email{yuanningzhang2026@u.northwestern.edu}
\date{}
\hypersetup{
  pdftitle={The stacky approach to de Rham--Witt cohomology},
  pdfauthor={Yuanning Zhang}
}

\begin{abstract}
Let $k$ be a perfect field of characteristic $p>0$.  Ekedahl proved an
equivalence of triangulated categories between bounded coherent
complexes over the Raynaud ring and complexes of $F$-gauges.  We give a
geometric proof, independent of his proof, that the de Rham--Witt
realization is a symmetric monoidal equivalence of stable
$\infty$-categories between perfect $F$-gauges and bounded coherent
complexes over the Raynaud ring with Ekedahl's completed star product,
as expected by Illusie.  The geometry comes from topological
restriction homology: $\mathrm{TR}^r(k;\mathbf Z_p)^{hS^1}$ is an
iterated fibre product of copies of
$\mathrm{TC}^-(k;\mathbf Z_p)$, and taking even stacks before and after
the fibre product gives the genuine and the naive $r$-Nygaard stacks
$k^{\NN_r}$ and $k^{\Nyg_r}$.  Pushforward from $k^{\NN_r}$ to
$k^{\Nyg_r}$ realizes Andriopoulos' $r$-Nygaard
filtrations, which interpolate between the Nygaard filtration at $r=1$
and the Hodge--Witt filtration at $r=\infty$.  On $k^{\NN_r}$, which
is a chain of Nygaard charts, restriction and Frobenius come from the
inclusions of sub-chains,
Verschiebung from duality, and the differential as a Bockstein
operator, and the relations of the star product become projection
formulas.  As applications, we deduce Ekedahl's K\"unneth formula and
Poincar\'e duality for de Rham--Witt cohomology from their counterparts
for $F$-gauge cohomology, and show that the special-value formulas of
Illusie and of Mondal over finite fields are equivalent.
\end{abstract}

\maketitle

\setcounter{tocdepth}{2}
\tableofcontents

\section{Introduction}
\label{sec:intro}

Let $k$ be a perfect field of characteristic $p>0$, and let
$W=W(k)$, with Frobenius $\sigma$.  For a smooth proper $k$-scheme
$X$, the operators $F$, $V$ and $d$ on the de Rham--Witt complex make
$R\Gamma(X,W\Omega^\bullet_{X/k})$ a bounded complex of modules over
the Raynaud ring $R$ with coherent cohomology
\cite{Illusie79,IllusieRaynaud83} (Definition~\ref{def:raynaud-ring}).
In the 1980s Ekedahl introduced a tensor product of $R$-modules, the
star product $M\star N$, for which de Rham--Witt cohomology satisfies a K\"unneth formula
\cite{Ekedahl85}, and a dualizing functor, for which it satisfies
Poincar\'e duality \cite{Ekedahl84}.  He also proved an equivalence
\[
 D_c^b(R)\simeq D_c^b(F\text{-gauges})
\]
between bounded coherent complexes of $R$-modules and of $F$-gauges
\cite[Theorem~II.5.3]{Ekedahl86Diagonal}.
Ekedahl notes that Fontaine had introduced $F$-gauges independently
\cite[Introduction]{Ekedahl86Diagonal}, and Fontaine and Jannsen
developed them in a preprint of 2013 \cite{FontaineJannsen}.

In recent years $F$-gauges have acquired a geometric meaning.  Drinfeld
and Bhatt--Lurie introduced the prismatization
\cite{Drinfeld,BhattLurie}.  Drinfeld and Bhatt studied $F$-gauges as
quasi-coherent complexes on the syntomic point $k^{\syn}$, on whose
loci crystalline, de Rham and Hodge cohomology arise as pullbacks
\cite{Drinfeld,BhattFGauges}.  The left-hand side of Ekedahl's
equivalence, the de Rham--Witt side, has so far been missing from this
picture: neither the operators $F$, $V$ and $d$ of the de Rham--Witt
complex nor Ekedahl's multiplicative structures, which are defined by
explicit formulas, have been interpreted on $k^{\syn}$.

Ekedahl's equivalence is proved in triangulated categories, by
constructing an adjoint and checking the adjunction maps.  In recent
notes \cite[\S6]{IllusieZeta}, Illusie sketches how it compares the
de Rham--Witt and the $F$-gauge approaches to special values of zeta
functions over finite fields.  For this he needs an enhancement to
stable $\infty$-categories and compatibility with the symmetric
monoidal structures and with Breuil--Kisin twists on both sides, which
he describes as expected, and he remarks that ``it would be nice to
find a simpler, more conceptual proof''.  We construct a
\emph{de Rham--Witt realization}
\[
 T_{\dRW}:\Dqc(k^{\syn})\longrightarrow\widehat D(R),
\]
which sends the $F$-gauge cohomology $\pi_{X,*}\cO_{X^{\syn}}$ of a
smooth proper $k$-scheme $X$ to $R\Gamma(X,W\Omega^\bullet_{X/k})$, with
its operators $F$, $V$ and $d$
(Corollary~\ref{cor:geometric-raynaud-realization}), and prove that it
is a symmetric monoidal equivalence, as Illusie expects.

\begin{thmletter}[Ekedahl's derived equivalence;
Theorems~\ref{thm:presentable-equivalence}
and~\ref{thm:symmetric-monoidal-equivalence},
Corollary~\ref{thm:gauge-raynaud-equivalence}]
\label{thm:intro-equivalence}
The de Rham--Witt realization is a symmetric monoidal equivalence of
stable $\infty$-categories
\[
 T_{\dRW}:\bigl(\Dqc(k^{\syn}),\otimes\bigr)
 \xrightarrow{\ \sim\ }
 \bigl(\widehat D(R),\widehat\star^{L}\bigr),
\]
which restricts to an equivalence
$\Perf(k^{\syn})\simeq D_c^b(R)$.  On $D_c^b(R)$ its inverse is given
by Ekedahl's functor $S$.
\end{thmletter}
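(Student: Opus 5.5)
The proof will go through the tower of $r$-Nygaard stacks described in the abstract. For each $r\ge 1$ I take the genuine $r$-Nygaard stack $k^{\NN_r}$, which is a chain of $r$ Nygaard charts $\Spf$ of the even filtration on $\mathrm{TC}^-(k;\Zp)$, glued along the loci where consecutive charts meet. Any $F$-gauge $E\in\Dqc(k^{\syn})$ can be pulled back along the evident map $k^{\NN_r}\to k^{\syn}$. Pushing forward to $\Spf W$, or rather to the graded point, gives a graded $W$-complex $T_r(E)$. In degree $i$ this complex will be the $r$-th truncated de Rham--Witt piece.

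Next I define the operators. Restriction $R$ and Frobenius $F$ on $T_r(E)$ come from the two inclusions of the sub-chain of length $r-1$ into the chain of length $r$. Verschiebung $V$ is defined as the Grothendieck dual of $F$, using the fact that these inclusions are finite and flat with trivialized relative dualizing sheaf. The differential $d$ is the Bockstein for the grading coming from the $\Gm$-action. The Raynaud relations $FV=p$, $FdV=d$, $d^2=0$ and $Fd=pdF$ are then identities between pull-push correspondences on $k^{\NN_r}$, which I check chart by chart. Taking $\lim_r$ gives $T_{\dRW}:\Dqc(k^{\syn})\to\widehat D(R)$, landing in derived-complete $R$-modules. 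Compatibility with $X^{\syn}$ reduces, after quasisyntomic descent, to the comparison of $\mathrm{TR}^r$ with $W_r\Omega$ due to Hesselholt and Madsen.

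To prove that $T_{\dRW}$ is an equivalence, I write down a left adjoint $S$ by left Kan extension and check the unit and counit on generators. On the syntomic side, $\Dqc(k^{\syn})$ is generated under colimits by Breuil--Kisin twists $\cO\{n\}$ and by pushforwards from the de Rham and Hodge--Tate loci. I compute $T_{\dRW}$ of each generator: $T_{\dRW}(\cO\{n\})$ is the shifted twisted unit, and the locus pushforwards go to $R/VR$-type and $R/FR$-type modules. These images are exactly the generators used by Ekedahl, namely $R$, $R/(F)$, $R/(V)$ and their twists. Full faithfulness then reduces to comparing $\RHom$ between generators, that is, cohomology of $k^{\syn}$ with coefficients against $\RHom_R$ of these small modules. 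Both sides are computed by two-term complexes, which I match explicitly. Once full faithfulness on generators is known, essential surjectivity onto $\widehat D(R)$ follows because the images generate under colimits and completion. Restricting to compact objects gives $\Perf(k^{\syn})\simeq D_c^b(R)$, and on that subcategory $S$ agrees with Ekedahl's functor by construction.

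For the symmetric monoidal structure, lax monoidality of $T_r$ comes from the diagonal of $k^{\NN_r}$. I show that the resulting structure map $T(E)\,\widehat\star^L\,T(E')\to T(E\otimes E')$ is an equivalence by checking it on generators; it suffices to check on $\cO\{n\}$ together with the locus pushforwards, where projection formulas give the star-product relations. I expect the main obstacle to be this monoidal step. The hard part is matching Ekedahl's explicit star-product relations with projection formulas coherently, so that the result is an $\infty$-categorical symmetric monoidal functor rather than just an isomorphism of objects. The cleanest route is to realize $\widehat\star^L$ intrinsically as the tensor product of the sheaves $\lim_r$ on the tower, and then transport the structure.
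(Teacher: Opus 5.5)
Your proposal has genuine gaps, starting with the definition of the finite realizations.

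\textbf{Construction of $T_r$ and the operators.} Pushing $f_r^*E$ forward from $k^{\NN_r}$ to the graded point only forgets $u,t$ on $g_{r,*}f_r^*E$. For $E=\cH_{\syn}(X)$ this returns the $r$-Nygaard filtered pieces $\Fil^{\geq n}_{\Nyg_r}R\Gamma_{\crys}(X/W)$ (Proposition~\ref{prop:r-nygaard-comparison}). These are $W$-modules, not $R\Gamma(X,W_r\Omega^n_{X/k})$. You must first take the derived fibre over the origin $u=t=0$ of the naive stack $k^{\Nyg_r}$, that is, pull back to the $r$-Hodge stack $k^{\mathrm{Hodge}_r}$. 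On the $i$-th chart this is cut out by $p^iu_i$ and $p^{r-1-i}t_i$, not by $u_i,t_i$. Only then does the associated cochain complex of the $t$-filtration on $u=0$ give $d$.

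Your Verschiebung cannot be built as stated either. The sub-chain inclusions $\mathsf R_r,\mathsf F_r\colon k^{\NN_r}\to k^{\NN_{r+1}}$ are open immersions, flat but not finite, so there is no trace along them. Finiteness appears only on the $r$-Hodge stacks. There $e_r\colon k^{\mathrm{Hodge}_r}\to k^{\syn}$ is finite with $e_r^!\simeq e_r^*[-2]$ (Lemma~\ref{lem:e-finite-oriented}). This makes $T_{\dRW_r}$ corepresentable by perfect $F$-gauges $G_{r,n}$ with $G_{r,n}^\vee\simeq G_{r,-n}[2]$. The induced $\mathsf F_r$ on $r$-Hodge stacks is finite with $\mathsf F_r^!\cO\simeq\cO$, and $V$ is the resulting transpose.

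Checking relations chart by chart only gives them up to unspecified homotopies. So $\lim_rT_r(E)$ is not yet an object of $D(R)$, let alone a complete one. The paper gets a coherent action by identifying $G_n=\colim_rG_{r,n}$ with Ekedahl's $S(R(-n))$, on which $R$ acts strictly by right multiplication.

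\textbf{The equivalence and monoidal steps.} Twists do not generate $\Dqc(k^{\syn})$, since $\RHom(\cO\{n\},E)=R\Gamma(k^{\syn},E\{n\})$. Take $k=\mathbf F_q$ with $p$ odd, and the rank-one unit-root $F$-gauge attached to the character of $\mathrm{Gal}(\bar k/k)$ sending Frobenius to $-1$. Its $R\Gamma(k^{\syn},E\{n\})$ vanishes for every $n$: for $n=0$ it is the Galois cohomology of that character, and for $n\neq0$ the defining map $W\to W$ is bijective modulo $p$. You give no argument that adding ``locus pushforwards'' repairs this. Also, nothing in your list maps to the free module $R(-n)$, since $T_{\dRW}(\cO\{n\})=W\{n\}$.

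The object corresponding to $R(-n)$ is the non-compact $G_n$. The key input is the Koszul computation $T_{\dRW_r}(G_n)\simeq R_r(-n)$ (Lemma~\ref{lem:path-output}). It gives $T_{\dRW}S_{\mathrm{FG}}\simeq\widehat{(-)}$, with the completion map as unit. The paper combines this with conservativity of $T_{\dRW}$, proved by probing with the Hodge-point skyscrapers $G_{1,n}\simeq S_{\mathrm{FG}}(U_0(1-n)[-2])$ and with $j_\Delta^*G_n$. The triangle identities then finish.

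``Restrict to compact objects'' is likewise not automatic. Identifying $\Perf(k^{\syn})$ with $D_c^b(R)$ uses Nakayama on $k^{\syn}$ and uniform weight bounds. The comparison $S_{\mathrm{FG}}\simeq S$ is proved by a two-term resolution, using that $R$ is a tensor algebra over $R^0$ after $p$-completion; it does not follow from left Kan extension by construction.

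Finally, projection formulas only show that cup products are $(F,V,d)$-bilinear. That yields the comparison map $P\star Q\to T_{\dRW}(S_{\mathrm{FG}}P\otimes S_{\mathrm{FG}}Q)$ on free modules, not that it is an equivalence. The equivalence comes from reducing to level one, where $T_{\dRW_1}$ is the Hodge pullback and hence symmetric monoidal. This is combined with \eqref{eq:level-one-star}, Ekedahl's Nakayama lemma and conservativity. Coherence comes from the discreteness of mapping spaces between the $S_{\mathrm{FG}}(P)$.
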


Here $D_c^b(R)$ consists of the bounded complexes of graded
$R$-modules with coherent cohomology
(Definition~\ref{def:coherent-complexes}), and
$\widehat D(R)$ of the complexes $M$ that are complete,
that is, $M\simeq R\varprojlim_r(R_r\otimes_R^{L}M)$ for the finite
reductions $R_r=R/(V^rR+dV^rR)$.  The completed star product
$\widehat\star^{L}$ is Ekedahl's (Definition~\ref{def:derived-star}).
The equivalence also identifies Breuil--Kisin twists with the twists
$M\{a\}=M(a)[-a]$ of $M\in D(R)$
(Corollary~\ref{lem:Koszul-BK-shift}).  The proof is geometric and
does not use Ekedahl's proof of his equivalence.  From his work we use
his functor $S$ from complexes of $R$-modules to $F$-gauges
(Construction~\ref{con:ladder}) and his results on $R$-modules, notably
on the star product and on his dualizing functor
\cite{Ekedahl84,Ekedahl85}.  We use the derived star product in the
form constructed by Li and Yang \cite[\S3.2]{LiYang}.

\medskip
\noindent\textbf{Geometry of $r$-Nygaard filtrations.}
A gauge on the Nygaard point $k^{\Nyg}$ is a filtered object whose
transition maps $u$ and $t$ satisfy $ut=p$.  To obtain $F$, $V$ and
$d$ at Witt length $r$, we were guided
by Andriopoulos' description of the $r$-Nygaard filtration through
$\mathrm{TR}$ \cite{AndriopoulosTR}, by the even filtration of Hahn,
Raksit and Wilson \cite{HRW}, and by calculations of $\mathrm{TR}^r$
shown to us by Noah Riggenbach.  These sources suggest the
\emph{naive $r$-Nygaard stack}
\[
 k^{\Nyg_r}=\Spf\bigl((W[u,t]/(ut-p^r))^\wedge_p\bigr)/\Gm,
 \qquad |u|=1,\quad |t|=-1,
\]
whose coordinate ring has an $A_{r-1}$-singularity.  We resolve it by
the \emph{genuine $r$-Nygaard stack} $k^{\NN_r}$, a chain of $r$
Nygaard charts glued along Frobenius.  The contraction
$g_r:k^{\NN_r}\to k^{\Nyg_r}$ comes from the minimal resolution of the
singularity and is crepant (Proposition~\ref{prop:contraction-crepant}).
Both stacks also have a conceptual description through the even
stacks $\mathrm{Spev}$ of \cite{DevalapurkarThesis}, in their
$p$-complete variant.  The spectrum
$\mathrm{TR}^r(k;\mathbf Z_p)^{hS^1}$ is an iterated fibre product of $r$
copies of $\mathrm{TC}^-(k;\mathbf Z_p)$ over $\mathrm{TP}(k;\mathbf Z_p)$
\cite{AndriopoulosTR}, and taking even stacks before and after the
fibre product gives (Remark~\ref{rem:TR-motivation})
\[
 \begin{aligned}
 k^{\NN_r}&\simeq\operatorname{coeq}\Bigl(\coprod_{i=0}^{r-2}\mathrm{Spev}\bigl(\mathrm{TP}(k;\mathbf Z_p)\bigr)
 \rightrightarrows\coprod_{i=0}^{r-1}\mathrm{Spev}\bigl(\mathrm{TC}^-(k;\mathbf Z_p)\bigr)\Bigr)\\
 &\xrightarrow{\ g_r\ }\mathrm{Spev}\bigl(\mathrm{TR}^r(k;\mathbf Z_p)^{hS^1}\bigr)\simeq k^{\Nyg_r}.
 \end{aligned}
\]
Together with the map $f_r$ to the syntomic point, $g_r$ gives a
correspondence
\[
\begin{tikzcd}[column sep=small]
 & k^{\NN_r} \arrow[dl, "f_r"'] \arrow[dr, "g_r"] & \\
 k^{\syn} & & k^{\Nyg_r}
\end{tikzcd}
\]
For a smooth $k$-scheme $X$ with $F$-gauge cohomology
$\cH_{\syn}(X)=\pi_{X,*}\cO_{X^{\syn}}$, we call
$\cH_{\Nyg_r}(X):=g_{r,*}f_r^*\cH_{\syn}(X)$ its \emph{$r$-Nygaard
realization}.  Its restrictions to the loci of $k^{\Nyg_r}$ recover the
classical cohomology theories of $X$, now at Witt length $r$.

\begin{thmletter}[Specializations of the $r$-Nygaard realization;
Theorem~\ref{thm:r-nygaard-specializations}]
\label{thm:intro-realization}
Let $X$ be a smooth quasi-compact $k$-scheme and $r\geq1$.  Up to
Frobenius twists of the coefficients:
\begin{enumerate}
\item $\cH_{\Nyg_r}(X)$ is the Rees object of the $r$-Nygaard filtration
 on $R\Gamma_{\crys}(X/W)$;
\item on the de Rham locus $t\neq0$ it is $R\Gamma_{\crys}(X/W)$, and on
 the Hodge--Tate locus $u\neq0$ it is $R\Gamma_\Delta(X/W)$;
\item on the divisor $u=0$ it is the Hodge filtration on
 $R\Gamma(X,W_r\Omega^\bullet_{X/k})$;
\item on the divisor $t=0$ it is the conjugate filtration on
 $R\Gamma_{\crys}(X/W_r)$, with graded pieces
 $R\Gamma(X,W_r\Omega^i_{X/k})[-i]$;
\item at the origin $u=t=0$, its weight-$i$ piece is
 $R\Gamma(X,W_r\Omega^i_{X/k})[-i]$.
\end{enumerate}
\end{thmletter}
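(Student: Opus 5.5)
The plan is to reduce everything to a chart-by-chart computation on the genuine stack $k^{\NN_r}$. Write $k^{\NN_r}$ as the chain of $r$ Nygaard charts $U_0,\dots,U_{r-1}$, each isomorphic to $k^{\Nyg}=\Spf(W[u,t]/(ut-p))/\Gm$. The chart $U_i$ is glued to $U_{i+1}$ along its Hodge--Tate locus, identified with the de Rham locus of $U_{i+1}$ via Frobenius. The restriction $f_r|_{U_i}$ is the standard map $k^{\Nyg}\to k^{\syn}$, possibly precomposed with a Frobenius power, which is the source of the ``Frobenius twists of the coefficients''.

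Hence $f_r^*\cH_{\syn}(X)$ restricted to $U_i$ is the Rees object of the Nygaard filtration on $R\Gamma_\Delta(X/W)\simeq\varphi^{i*}R\Gamma_{\crys}(X/W)$. Here I use the known fact that pulling $\cH_{\syn}(X)$ back to $k^{\Nyg}$ gives the Nygaard-filtered prismatic cohomology.

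Using the coequalizer description (Remark~\ref{rem:TR-motivation}), $g_{r,*}$ of such an object is computed as an iterated fibre product. Concretely, take the equalizer of the $r$ Nygaard Rees objects over the $r-1$ copies of the $\Gm$-torsor loci (Laurent objects over $W[t^{\pm1}]$), pushed forward to $W[u,t]/(ut-p^r)$. The map $g_r$ is crepant and an isomorphism away from the origin, so on the loci $t\neq0$ and $u\neq0$ only the first, respectively last, chart contributes. This gives (2) directly: the de Rham locus of $U_0$ yields $R\Gamma_{\crys}$, and the Hodge--Tate locus of $U_{r-1}$ yields $R\Gamma_\Delta$, up to Frobenius twist.

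For (1), I will identify the weight-$n$ part of the fibre product. It is the iterated pullback
\[
\Nyg^{\geq n}\times_{R\Gamma}\varphi^{-1}\Nyg^{\geq \cdot}\times\cdots,
\]
inside $R\Gamma_{\crys}(X/W)[1/p]$, with the shifts by $u$ and $t$ dictated by the gluing. I then compare this with Andriopoulos' definition of the $r$-Nygaard filtration as the $\mathrm{TR}^r$-version of the same iterated fibre product of Nygaard filtrations \cite{AndriopoulosTR}. The comparison should be a formal matching of formulas once both are written as equalizers, and the relation $ut=p^r$ comes from composing the $r$ relations $ut=p$.

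Items (3)--(5) follow from (1) by base change. The divisor $u=0$ gives the associated Rees object modulo $u$, i.e.\ $\Fil^n/u$. The divisor $t=0$ gives the gradeds of the $r$-Nygaard filtration, and the origin gives the intersection. For each of these, the derived pullback along $\{u=0\}\to k^{\Nyg_r}$ commutes with $g_{r,*}$ by base change for the proper-like map $g_r$, or more simply because $u$ and $t$ act on the explicit Rees module.

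Then I invoke the known descriptions of the $r$-Nygaard filtration: by Andriopoulos' results its gradeds are $R\Gamma(X,W_r\Omega^i)[-i]$, identified via the Illusie--Raynaud/Nygaard decomposition, and $R\Gamma_{\crys}/p^r=R\Gamma_{\crys}(X/W_r)$ carries the conjugate filtration. Reading off the two divisors yields the Hodge filtration on $R\Gamma(X,W_r\Omega^\bullet)$ (the stupid filtration, using $\Fil^n_r\otimes^L_{W[u]}W=$ truncations of $W_r\Omega^{\geq n}$) and the conjugate filtration. For quasi-compact non-affine $X$ I reduce everything by Zariski descent, since all functors commute with finite limits.

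The main obstacle is (1). It requires the precise matching of the gluing Frobenius twists on $k^{\NN_r}$ with Andriopoulos' formula, and verifying that $g_{r,*}$ computed via the coequalizer really is the stated fibre product. That means showing the Čech-type limit has no higher terms, which I would check using that $\Gm$-torsor loci are affine and the charts' overlaps are divisors complements. If Andriopoulos' comparison is unavailable in this form, I would instead check (5) directly using the explicit structure of $W_r\Omega$ from the Nygaard gradeds, and deduce (1) by completeness in $u,t$.
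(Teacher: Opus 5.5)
Your treatment of (1) and (2) matches the paper. The \v{C}ech formula for $g_{r,*}$ (Proposition~\ref{prop:cech-formula-for-Rgr}) gives, in weight $m$, the derived fibre product of $r$ copies of $\Fil^{\geq m}_{\Nyg}R\Gamma_{\crys}$ along $\iota_m$ and $\varphi_m$. All factors sit in the same weight $m$, because the gluings preserve weights. The product is formed as a derived fibre product, not inside $R\Gamma_{\crys}[1/p]$. This is Andriopoulos' definition on the nose. Over the open loci only the two end charts survive.

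Your gluing runs the opposite way from the paper's (de Rham locus of chart $i$ to Hodge--Tate locus of chart $i+1$), but that is only a relabelling. This part is formal, and there are no ``higher terms'' to rule out, so (1) is not where the difficulty lies.

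The gap is in (3). You assert that the fibre at $u=0$ is the stupid filtration of $W_r\Omega^\bullet_{X/k}$, ``using $\Fil^n_r\otimes^L_{W[u]}W=$ truncations of $W_r\Omega^{\geq n}$''. That is the statement itself, and it cannot be read off from graded pieces. A complete filtration with graded pieces $W_r\Omega^i[-i]$ is a cochain complex with terms $W_r\Omega^i$; what must be shown is that its differential, the boundary map of the filtration, is the de Rham--Witt $d$. The fact you cite is also misstated: the graded pieces of the $r$-Nygaard filtration are the conjugate-filtration terms $\tau^{\leq i}$, and only their graded pieces are $W_r\Omega^i[-i]$.

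The paper uses three inputs that your proposal lacks.
\begin{enumerate}
\item Andriopoulos' characterization (Theorem~\ref{thm:andriopoulos}) identifies the $r$-Nygaard filtration, \'etale locally, with the Beilinson-connective cover of the $p^r$-adic filtration. This must be sheafified as $\cN^\bullet_{p^r}(\mathcal C_{X/W})$ before taking global sections (Corollary~\ref{cor:bridge}), because $\tau^{\leq i}$ does not commute with $R\Gamma(X,-)$. Your Zariski descent has to be run in this form, with conjugate filtration $R\Gamma(X,\tau^{\leq i}(\mathcal C_{X/W}/p^r))$.
\item Since $u=p^r$ identifies the $p^r$-adic filtration with its shift, $\cofib(u)$ lies in the Beilinson heart. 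It is the $E_1$-page, namely the Bockstein complex on $\mathcal H^\bullet(\mathcal C_{X/W}/p^r)$ with its stupid filtration. Likewise $\cofib(t)$ is $\tau^{\leq\bullet}(\mathcal C_{X/W}/p^r)$ with $u$ acting by the truncation maps (Proposition~\ref{prop:sheaf-fibres}). This also supplies the filtered structure you need in (4).
\item The higher Cartier isomorphism $C_r^{-1}(\omega)=[F^r\widetilde\omega]$ identifies $\mathcal H^i(\mathcal C_{X/W}/p^r)$ with $(\sigma^r)^*W_r\Omega^i_{X/k}$ and the Bockstein with $d$ (Proposition~\ref{prop:crystalline-Bockstein-dRW}). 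It rests on saturation in the form $F^r(M)=\{x:dx\in p^rM\}$.
\end{enumerate}

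A more hands-on repair is also available. Compute the fibre products degreewise in the de Rham--Witt model, as the paper does at $r=\infty$ in Theorem~\ref{thm:hodge-witt}. This gives $(\Fil^{\geq m}_{\Nyg_r})^n=p^{r(m-n-1)}V^rW\Omega^n$ for $n<m$, with $u=p^r$. Then $\cofib(u)$ is $[W\Omega^{m-1}/F^r\xrightarrow{dV^r}W\Omega^m/p^r\xrightarrow{d}\cdots]$, and its projection to $W_r\Omega^{\geq m}$ is a filtered map. On graded pieces it is an equivalence: $dV^ry\in V^rW\Omega^m$ forces $dy\in p^rW\Omega^m$, hence $y\in F^rW\Omega^{m-1}$.
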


For $r=1$ these are Bhatt's comparisons for the Nygaard point
\cite[Theorem~3.3.5]{BhattFGauges}.  For general $r$, the $r$-Nygaard
filtration in (1) and its graded pieces in (4) are due to Andriopoulos
\cite[Theorem~1.8 and Corollaries~5.14--5.15]{AndriopoulosTR}, and his
characterization of the filtration enters our proof.  The point of
Theorem~\ref{thm:intro-realization} is that all of these theories are
restrictions of a single object, built geometrically from the
correspondence.

The chain of length $r$ sits in the chain of length $r+1$ as the right
and as the left sub-chain.  These two inclusions, with the coefficient
Frobenius, are the restriction and Frobenius maps $\mathsf R_r$ and
$\mathsf F_r$, and together with their analogues on the naive $r$-Nygaard stacks
$k^{\Nyg_r}$ they are compatible with the correspondences
(Proposition~\ref{prop:level-maps-compatible}):
\[
\begin{tikzcd}[column sep=large, row sep=large]
 k^{\syn} \arrow[d, "\Phi_{\syn}"']
 & k^{\NN_r} \arrow[l, "f_r"'] \arrow[r, "g_r"]
   \arrow[d, shift right=0.6ex, "\mathsf R_r"']
   \arrow[d, shift left=0.6ex, "\mathsf F_r"]
 & k^{\Nyg_r}
   \arrow[d, shift right=0.6ex, "\mathsf R_r"']
   \arrow[d, shift left=0.6ex, "\mathsf F_r"]\\
 k^{\syn}
 & k^{\NN_{r+1}} \arrow[l, "f_{r+1}"] \arrow[r, "g_{r+1}"']
 & k^{\Nyg_{r+1}}.
\end{tikzcd}
\]
They induce the restriction and Frobenius maps between the $r$-Nygaard
filtrations at adjacent levels.

At $r=\infty$, the limit of the
$r$-Nygaard filtrations along restriction is the $\infty$-Nygaard
filtration of Andriopoulos \cite[\S5.1]{AndriopoulosTR}, and it is the
Hodge--Witt filtration $R\Gamma(X,W\Omega^{\geq\bullet}_{X/k})$.  So the
$r$-Nygaard filtrations interpolate between the Nygaard filtration at
$r=1$ and the Hodge--Witt filtration at $r=\infty$.  Geometrically, along the restriction maps $\mathsf R_r$ the naive
$r$-Nygaard stacks $k^{\Nyg_r}$ have colimit $k^{\Nyg_\infty}\simeq\mathbb A^1_{W,t}/\Gm$,
the genuine ones $k^{\NN_r}$ have as colimit a chain $k^{\NN_\infty}$ that is
infinite in one direction, and pushforward along the colimit
correspondence
\[
\begin{tikzcd}[column sep=small]
 & k^{\NN_\infty} \arrow[dl, "f_\infty"'] \arrow[dr, "g_\infty"] & \\
 k^{\syn} & & \mathbb A^1_{W,t}/\Gm
\end{tikzcd}
\]
recovers the Hodge--Witt filtration (Theorem~\ref{thm:hodge-witt}).

Item (3) exhibits the de Rham--Witt complex of level $r$ as a
filtered object, whose differential is recovered from the boundary maps
of the filtration.  To do this for an arbitrary $F$-gauge $E$, we use the
\emph{$r$-Hodge stack}
\[
 k^{\mathrm{Hodge}_r}:=k^{\NN_r}\times^{R}_{k^{\Nyg_r}}B\mathbb G_{m,W_r},
\]
the derived preimage under $g_r$ of the origin $u=t=0$, with its maps
$e_r$ to $k^{\syn}$ and $g'_r$ to $B\mathbb G_{m,W_r}$.  The \emph{finite
de Rham--Witt realization} is
\[
 T_{\dRW_r}(E)\simeq(\sigma^{1-r})^*\bigl((g'_r)_*e_r^*E\bigr)[\bullet]
 \in D(\mathrm{GrMod}_{W_r[d]/d^2})
\]
(Definition~\ref{def:finite-extraction} and Lemma~\ref{lem:T-r-Hodge}),
where $[\bullet]$ is the regrading and $d$ comes from the boundary maps of
the Hodge filtration on the divisor $u=0$.  The map $e_r$ is finite and
oriented, $e_r^!\simeq e_r^*[-2]$ (Lemma~\ref{lem:e-finite-oriented}), so
$T_{\dRW_r}$ is corepresented by self-dual perfect $F$-gauges, twists of
$e_{r,*}\cO$ (Proposition~\ref{thm:oriented-finite-extraction}).
Restriction and Frobenius are pullbacks along $\mathsf R_r$ and
$\mathsf F_r$, and Verschiebung, which goes from level $r$ to level
$r+1$, is the transpose of Frobenius for this duality
(Construction~\ref{def:intrinsic-geometric-V}).  On a chart, Frobenius
multiplies the Koszul generator of the equation $t$ by $p$, Verschiebung
multiplies the unit by $p$, and $d$ sends the generator to the unit.
All relations of the Raynaud ring, such as $FV=VF=p$ and $FdV=d$, follow
from these three formulas (Remark~\ref{rem:local-raynaud-relations}).
Taking the limit over $r$ gives the de Rham--Witt realization
$T_{\dRW}$ of Theorem~\ref{thm:intro-equivalence}, with its $R$-action
(\S\ref{sec:representing-objects}).

\medskip
\noindent\textbf{Where the star product comes from.}
For Dieudonn\'e modules, the box product
\cite{Goerss,HopkinsLurie,AntieauNikolaus} is the tensor product
subject to the relations
\[
 F(x\boxtimes y)=Fx\boxtimes Fy,\qquad
 Vx\boxtimes y=V(x\boxtimes Fy),\qquad
 x\boxtimes Vy=V(Fx\boxtimes y),
\]
and Ekedahl's star product (Definition~\ref{def:star-product}) adds
the differential $d$, subject to the Leibniz rule.  In Ekedahl's work
these relations are part of the definition.  In our approach each of
them is a basic property of a geometric operation.  Frobenius is a
pullback, hence multiplicative.  Verschiebung is the trace along the Frobenius map
$\mathsf F_r:k^{\mathrm{Hodge}_r}\to k^{\mathrm{Hodge}_{r+1}}$ of $r$-Hodge stacks, which is finite, and the two
relations for $V$ are its projection formula.  The differential is the boundary map of a Koszul complex,
hence a derivation (Lemma~\ref{lem:V-cup-projection}).  So the cup
products on the finite realizations $T_{\dRW_r}$ satisfy the defining
relations of the star product, and its universal property gives the
comparison map of Theorem~\ref{thm:intro-equivalence}.

This also resolves an apparent tension between the two
sides.\footnote{We thank Vadim Vologodsky for discussions of this
question.}  A Dieudonn\'e module, viewed as an $R$-module in internal
degree zero, corresponds to an $F$-gauge with Hodge--Tate weights in
$[0,1]$.  The tensor product of two such $F$-gauges has Hodge--Tate
weights in $[0,2]$, whereas the box product of two Dieudonn\'e modules
is again a Dieudonn\'e module.  The box product therefore cannot be the tensor product of
$F$-gauges, while the star product is, by
Theorem~\ref{thm:intro-equivalence}.  For two Dieudonn\'e modules the
star product agrees with the box product in internal degree zero
(Proposition~\ref{prop:star-box-degree-zero}), but it can also be
nonzero in internal degree one (Example~\ref{ex:star-box-dV}), and this
is where Hodge--Tate weight $2$ appears.  So there are not two competing
tensor products, and there is no need for a separate stack whose
quasi-coherent sheaves are complexes of $R$-modules.  The syntomic point
$k^{\syn}$ is already such a stack, and the $r$-Nygaard stacks
$k^{\NN_r}$ and $k^{\Nyg_r}$ are correspondences that read off $F$, $V$ and $d$.

\medskip
\noindent\textbf{Why there is no K\"unneth formula at finite level.}
For full Witt vectors, tensor products become box
products: the $V$-completed box product satisfies
$W(A)\widehat\boxtimes W(B)\cong W(A\otimes B)$ for rings $A$
and $B$ \cite[Theorem~4.16]{AntieauNikolaus}, and Ekedahl's star
product extends this to de Rham--Witt complexes.  At finite level
$r\geq2$ there is no such statement.  Already
$W_2(k[x])\otimes_{W_2(k)}W_2(k[y])\to W_2(k[x,y])$ is not surjective,
and Ekedahl observed that reduction to level $r$ is monoidal only for
$r=1$ \cite[Chapter~I]{Ekedahl85}.  Correspondingly, for $r\geq2$ the
finite realization $T_{\dRW_r}$, as a functor to graded
$W_r[d]/d^2$-complexes with the graded tensor product over $W_r$, is
lax but not strong monoidal (Remark~\ref{rem:finite-level-not-monoidal}).
Our construction explains this.  The functor $T_{\dRW_r}$ is a pullback
followed by a pushforward through the $r$-Hodge stack:
\[
 \begin{tikzcd}[column sep=small, ampersand replacement=\&]
  \& k^{\mathrm{Hodge}_r} \arrow[dl, "e_r"'] \arrow[dr, "g'_r"] \& \\
  k^{\syn} \& \& B\mathbb G_{m,W_r}.
 \end{tikzcd}
\]
The pullback $e_r^*$ is symmetric monoidal, but the pushforward
$(g'_r)_*$ is only lax symmetric monoidal, so for $r\geq2$ the cup
products $T_{\dRW_r}(E)\otimes T_{\dRW_r}(E')\to T_{\dRW_r}(E\otimes E')$
are in general not equivalences.  For $r=1$ the map $g'_1$ is the
identity of $B\mathbb G_{m,k}$, so $T_{\dRW_1}$ is just the Hodge
pullback and is symmetric monoidal.  This is the K\"unneth formula for
Hodge cohomology.  At $r=\infty$, the de Rham--Witt realization $T_{\dRW}$
is symmetric monoidal again, now for
the completed star product $\widehat\star^{L}$
(Theorem~\ref{thm:intro-equivalence}).  So the
realizations are monoidal exactly at the two ends, $r=1$ and
$r=\infty$, and the proof of Theorem~\ref{thm:intro-equivalence} uses
both: the comparison
map is an equivalence because its reduction to level one is, by
derived Nakayama.

\medskip
\noindent\textbf{K\"unneth formula and Poincar\'e duality.}
On the side of $F$-gauges, the K\"unneth formula for $F$-gauge
cohomology can be checked at the Hodge point, and Poincar\'e duality
for $F$-gauge cohomology holds by \cite[Theorem~4.5.3]{BhattFGauges}.
Theorem~\ref{thm:intro-equivalence} carries them to de Rham--Witt
cohomology.

\begin{thmletter}[K\"unneth formula and Poincar\'e duality;
Corollary~\ref{cor:geometric-kunneth}, Theorem~\ref{cor:duality-comparison}
and Corollary~\ref{thm:geometric-Ekedahl-duality}]
\label{thm:intro-duality}
Let $X$ and $Y$ be smooth proper over $k$.  The K\"unneth equivalence
for $F$-gauge cohomology induces an equivalence
\[
 R\Gamma(X,W\Omega^\bullet_{X/k})\,\widehat\star^{L}\,
 R\Gamma(Y,W\Omega^\bullet_{Y/k})
 \simeq R\Gamma(X\times_kY,W\Omega^\bullet_{X\times_kY/k}).
\]
The equivalence of Theorem~\ref{thm:intro-equivalence} is compatible
with duality: $T_{\dRW}(E^\vee)\simeq\Dual_R(T_{\dRW}(E))$ for perfect
$E$.  In particular, if $X$ has pure dimension $N$, Poincar\'e duality
for $F$-gauge cohomology induces an equivalence
\[
 \Dual_R\bigl(R\Gamma(X,W\Omega^\bullet_{X/k})\bigr)(-N)[-N]
 \simeq R\Gamma(X,W\Omega^\bullet_{X/k}),
\]
compatible with the finite dualities for
$R\Gamma(X,W_r\Omega^\bullet_{X/k})$, which exchanges Frobenius and
Verschiebung under transpose.
\end{thmletter}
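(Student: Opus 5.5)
The plan is to deduce Theorem~\ref{thm:intro-duality} formally from Theorem~\ref{thm:intro-equivalence}, together with the corresponding statements for $F$-gauge cohomology, and Corollary~\ref{cor:geometric-raynaud-realization}, which identifies $T_{\dRW}(\cH_{\syn}(X))$ with $R\Gamma(X,W\Omega^\bullet_{X/k})$ and its operators $F$, $V$, $d$.

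\emph{Künneth formula.} For $X$, $Y$ smooth proper, first prove $\cH_{\syn}(X)\otimes\cH_{\syn}(Y)\simeq\cH_{\syn}(X\times_kY)$ in $\Perf(k^{\syn})$. Both sides are perfect, and the map is the external cup product. By derived Nakayama along the Hodge point (conservativity of pullback to $B\mathbb G_{m,k}$ on perfect $F$-gauges, used for $T_{\dRW_1}$), it suffices to check the map there. At the Hodge point it becomes the Künneth formula for Hodge cohomology $\bigoplus_i R\Gamma(X,\Omega^i)[-i]$, which holds by flat base change and $\Omega^n_{X\times Y}=\bigoplus_{i+j=n}\Omega^i_X\boxtimes\Omega^j_Y$. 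Applying the symmetric monoidal functor $T_{\dRW}$ of Theorem~\ref{thm:intro-equivalence} then gives the displayed equivalence. One must still check that the resulting map is the classical cup product on de Rham--Witt complexes. For this I would use that the lax structure maps of each $T_{\dRW_r}$ are cup products (pullback $e_r^*$ followed by the lax structure of $(g'_r)_*$), and compare with the realization of Corollary~\ref{cor:geometric-raynaud-realization} at each finite level.

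\emph{Duality.} For perfect $E$, construct a map $T_{\dRW}(E^\vee)\to\Dual_R(T_{\dRW}(E))$ adjoint to the pairing $T_{\dRW}(E^\vee)\,\widehat\star^{L}\,T_{\dRW}(E)\simeq T_{\dRW}(E^\vee\otimes E)\to T_{\dRW}(\cO)$. Compose with the trace $T_{\dRW}(\cO)=W\to$ Ekedahl's dualizing object. Here I would use Ekedahl's description of $\Dual_R$ as an internal Hom for the star product into the unit, suitably twisted. If that description is not available, I would instead argue level by level. Proposition~\ref{thm:oriented-finite-extraction} shows that $T_{\dRW_r}$ is corepresented by self-dual twists of $e_{r,*}\cO$, using $e_r^!\simeq e_r^*[-2]$. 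This gives $T_{\dRW_r}(E^\vee)\simeq\RHom_{W_r}(T_{\dRW_r}(E),W_r)$ up to twist and shift. By Construction~\ref{def:intrinsic-geometric-V}, $V$ is the transpose of $F$ under this duality, so the finite dualities assemble, via the limit over $r$, into Ekedahl's $\Dual_R$, which is built from these finite duals with $F$ and $V$ swapped.

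Either way the map is an equivalence. Both sides are exact in $E$ and commute with twists. Perfect $F$-gauges are generated under finite colimits, retracts and Breuil--Kisin twists by $\cO$ (or the $e_{r,*}\cO$), and for these the map can be checked directly: $\Dual_R(W)$ is computed by Ekedahl. Alternatively, reduce to level one by Nakayama, where it is Serre-type duality for graded $k$-vector spaces.

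\emph{Poincaré duality} then follows. Apply $T_{\dRW}$ to the self-duality $\cH_{\syn}(X)^\vee\simeq\cH_{\syn}(X)\{N\}[2N]$ of \cite[Theorem~4.5.3]{BhattFGauges}. Use the compatibility with duality just proved, and the identification of $\{N\}$ with $(N)[-N]$ from Corollary~\ref{lem:Koszul-BK-shift}. The shifts combine to $(-N)[-N]$ after bookkeeping. Compatibility with the finite dualities, with $F$ and $V$ exchanged, comes from the level-$r$ argument above.

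The main obstacle is the duality step: producing a natural comparison map that matches Ekedahl's explicit $\Dual_R$ (including its completion and twist conventions), rather than merely an abstract equivalence. I would try the level-wise corepresentability route first, since it makes the $F$/$V$ transposition automatic.
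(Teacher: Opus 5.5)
The gap is in the duality compatibility $T_{\dRW}(E^\vee)\simeq\Dual_R(T_{\dRW}(E))$. Your K\"unneth argument (perfectness, derived Nakayama at the Hodge point, then the symmetric monoidal $T_{\dRW}$) agrees with the paper. So does your Poincar\'e bookkeeping $\Dual_R(M)\simeq M\{N\}[2N]=M(N)[N]$. Checking that the induced map is the classical cup product is not needed for the statement.

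\textbf{The finite duality step fails as stated.} Your first route presupposes that $\Dual_R$ is the internal dual for $\widehat\star^{L}$ on $D_c^b(R)$. You do not prove this, and the paper obtains it only as a consequence of this comparison, so everything rests on your level-wise route. The self-duality $G_{r,n}^\vee\simeq G_{r,-n}[2]$ coming from $e_r^!\simeq e_r^*[-2]$ gives only
$T_{\dRW_r}(E^\vee)^n\simeq\RHom_{k^{\syn}}(E,G_{r,-n})[2]$, a mapping complex \emph{into} $G_{r,-n}$. Identifying it with $\RHom_{W_r}(\RHom_{k^{\syn}}(G_{r,-n},E),W_r)$ is a $W_r$-linear Serre duality, and this cannot come from the orientation of $e_r$. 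The stack $k^{\syn}$ is not a $W$-stack, and the $W_r$-structure enters only through $g'_r$. What is needed is Grothendieck duality along the other leg, $g_r$ or its base change $g'_r\colon k^{\mathrm{Hodge}_r}\to B\mathbb G_{m,W_r}$, with trivial dualizing complex. The paper proves that $g_r$ is proper and crepant, $g_r^!\cO\simeq\cO$, via the toric resolution of $ut=z^r$ (Proposition~\ref{prop:contraction-crepant}). It combines this with the orientation $(i_{\mathrm{Hodge}_r,*}\cO)^\vee\simeq i_{\mathrm{Hodge}_r,*}\cO[-2]$ of the origin of $k^{\Nyg_r}$, and normalizes the trace by $g_{r,*}\cO\simeq\cO$ (Proposition~\ref{prop:finite-extraction-preserves-duals}). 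At $r=1$ the problem is invisible because $g'_1$ is the identity, which is why your level-one check does not detect it.

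\textbf{Assembling the levels needs more than $V$ being the transpose of $F$.} In Construction~\ref{def:intrinsic-geometric-V} that transpose is taken for the trace pairing on $e_{r,*}\cO$, not for the cup-product pairing $b_r$. The bridge between the two is the projection formula $V_r(x)\cup y=V_r(x\cup F_ry)$ of Lemma~\ref{lem:V-cup-projection}. You apply it to evaluation $E^\vee\otimes E\to\cO$, together with the values $F_r=\sigma\circ\pi_r$ and $V_r=\rho_r\circ\sigma^{-1}$ on $T_{\dRW_r}(\cO)=W_r$.

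More importantly, $\check R$ is a limit along the duals of $\rho_r\colon R_r\to R_{r+1}$, $[x]\mapsto[px]$. So you must show that restriction on $\{T_{\dRW_r}(E^\vee)\}_r$ is dual to the oriented transpose of restriction on $\{T_{\dRW_r}(E)\}_r$. You must also identify that transpose with $\rho_r$ (Lemma~\ref{lem:scalar-transfer-projection}). Only then do two further inputs give the equivalence: Ekedahl's description of the finite reductions of $\Dual_R$ with their dual system (Proposition~\ref{prop:raynaud-duality}), and completeness of both sides.

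\textbf{Your fallbacks do not close the gap.} The Nakayama fallback presupposes that such a comparison map already exists. The generator fallback fails outright. The objects $\cO\{n\}$ and the $p$-power torsion $e_{r,*}\cO(n)$ generate a thick subcategory whose $F$-isocrystals have only integral slopes. It therefore misses, for example, the $F$-gauge cohomology of a supersingular elliptic curve.
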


Here $\Dual_R$ is Ekedahl's dualizing functor \cite[\S III.3]{Ekedahl84}.
The K\"unneth formula and Poincar\'e duality are as in Ekedahl's theorems
\cite[Theorem~III.2.9]{Ekedahl84}, \cite[Theorem~II.1.1]{Ekedahl85},
and here they follow from their counterparts for $F$-gauges.  Li and
Yang extend the K\"unneth formula to smooth quasi-compact varieties in
the derived setting \cite[Theorem~3.36]{LiYang}.  The two
shifts in the duality come from the Breuil--Kisin twist and from the
regrading in $T_{\dRW_r}$.

\medskip
\noindent\textbf{Special values of zeta functions.}
Over a finite field $k=\mathbf F_q$, special values of zeta functions
have been studied $p$-adically in two settings.  Milne and
Ramachandran showed that complexes of $R$-modules are a natural home
for them.  An object $M$ of $D_c^b(R)$ has a zeta function, Hodge
numbers and an absolute $p$-adic cohomology $\RHom_R(W,M\{n\})$, and
they proved a special-value formula for all such objects, assuming that
$q^n$ is at most a simple root of the minimal polynomial of Frobenius
\cite[Theorem~0.1]{MilneRamachandran}.  Mondal
works with $F$-gauges and proves an unconditional formula, using a
stable Bockstein characteristic \cite{MondalZeta}.  In his survey of
Mondal's work, Illusie proves the formula for every $M\in D_c^b(R)$
without this assumption \cite[Theorem~5]{IllusieZeta}, and
observes that a symmetric monoidal $\infty$-categorical version of
Ekedahl's equivalence would identify the invariants on the two sides
\cite[\S6.4]{IllusieZeta}.  Theorem~\ref{thm:intro-equivalence}
provides it.

\begin{thmletter}[Comparison of zeta-value invariants;
Theorem~\ref{thm:zeta-comparison}]
\label{thm:intro-zeta}
Let $M\in D_c^b(R)$, and let $E\in\Perf(k^{\syn})$
correspond to $M$ under Theorem~\ref{thm:intro-equivalence}.  Then $M$
and $E$ have the same zeta function, the same syntomic cohomology
$R\Gamma(-,\mathbf Z_p(n))$ over $k$ and over $\bar k$, the same
stable Bockstein characteristics and the same Hodge numbers.  Hence
the special-value formula of \cite[Theorem~5]{IllusieZeta} for $M$ is
equivalent to that of \cite[Theorem~6.5]{MondalZeta} for $E$.
\end{thmletter}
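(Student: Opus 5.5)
The plan is to use Theorem~\ref{thm:intro-equivalence} as the only bridge between the two sides, and to check that each invariant in the statement is defined by data the equivalence preserves. Throughout, the equivalence is taken to be symmetric monoidal, exact, compatible with Breuil--Kisin twists ($T_{\dRW}(E\{n\})\simeq M\{n\}$ by Corollary~\ref{lem:Koszul-BK-shift}), and compatible with base change from $k=\mathbf F_q$ to $\bar k$. For the last property I would first check that $T_{\dRW}$ commutes with pullback along $\Spec\bar k\to\Spec k$: on the $R$ side this is $W(\bar k)\widehat\otimes_{W(k)}-$, and it is visible on the finite realizations $T_{\dRW_r}$, since all stacks involved base change along $W(k)\to W(\bar k)$.

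\textbf{Syntomic cohomology.} On the $F$-gauge side, $R\Gamma(k,E\{n\})=\RHom_{k^{\syn}}(\cO,E\{n\})$. Full faithfulness of $T_{\dRW}$ on $\Perf(k^{\syn})$ turns this into $\RHom_R(T_{\dRW}(\cO),M\{n\})$. So the key input is $T_{\dRW}(\cO)\simeq W$ with its standard $R$-module structure: $F=\sigma$, $V=p\sigma^{-1}$, $d=0$, concentrated in degree $0$. This should follow from the computation of $T_{\dRW}$ on $\pi_{X,*}\cO$ for $X=\Spec k$ (Corollary~\ref{cor:geometric-raynaud-realization}). The same argument over $\bar k$ gives the geometric version.

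\textbf{Hodge numbers and zeta functions.} Hodge numbers of $M$ are read off from $R_1\otimes^L_R M$, the reduction to level one. On the $F$-gauge side they come from the Hodge (or Hodge--Tate) specialization of $E$. I would identify the two using $T_{\dRW_1}$: for $r=1$ it is the Hodge pullback, by the discussion preceding Theorem~\ref{thm:intro-duality}. The graded ranks of the weight-$i$ pieces then match the Hodge numbers of $M$, after the regrading $[\bullet]$. For zeta functions, both sides use the characteristic polynomial of the linearized Frobenius $F^{f}$ (with $q=p^f$) on rational crystalline-type cohomology. On the $R$ side this is $W[1/p]\otimes_W(W\otimes^L_{W[F,V]}M)$, or equivalently the rationalization of $M$ viewed through $R^0=W_\sigma[F,V]$. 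On the $F$-gauge side it is the de Rham/crystalline restriction of $E$ with its Frobenius. Theorem~\ref{thm:intro-realization}(2) identifies the $t\neq0$ locus with crystalline cohomology, and the restriction maps $\mathsf R_r$ make this compatible in the limit. Hence the two isocrystals agree with their Frobenius. Because the identification is functorial, it suffices to build it for all objects and check it on generators, or through the limit over $r$.

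\textbf{Bockstein characteristics, and the main obstacle.} The stable Bockstein characteristic is defined from the $p$-Bockstein spectral sequence of the syntomic cohomology $R\Gamma(\bar k,E\{n\})$ together with the Frobenius action. The previous step matches these complexes over $\bar k$ compatibly with $\mathrm{Gal}$ and Frobenius, so the characteristics coincide. Illusie's version, on the $M$ side, is defined from $\RHom_R(W,M\{n\})$ in the same way. The main obstacle I expect is precisely that Illusie's and Mondal's invariants are phrased in different models: Illusie works with the $R$-module $\RHom$ and Frobenius-fixed points, while Mondal works with syntomic cohomology as a fibre of $\varphi-1$ on the Nygaard filtration. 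One must check that the identification $\RHom_{k^{\syn}}(\cO,E\{n\})\simeq\RHom_R(W,M\{n\})$ is an identification of the specific objects each author uses, not merely abstract complexes. I would handle this by computing both via the fibre sequence $\RHom_R(W,-)\to(-)^{V\text{-}d}\xrightarrow{1-F}(\cdots)$, which is Ekedahl's resolution of $W$ over $R$. I would match it with the $\varphi-1$ fibre sequence for $k^{\syn}$ by evaluating on $E=\cO\{n\}$ and extending by exactness. With all the inputs matched, the two special-value formulas become the same equality.
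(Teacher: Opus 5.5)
Your treatment of syntomic cohomology (full faithfulness, $T_{\dRW}(\cO)\simeq W$, Corollary~\ref{lem:Koszul-BK-shift}, base change to $\bar k$) follows the paper, and so does your treatment of Hodge numbers, which goes through the Hodge point (not the Hodge--Tate locus) and \eqref{eq:raynaud-reduction} at $r=1$. The Bockstein characteristics then follow directly. The zeta-function step, however, has a genuine gap.

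\textbf{The zeta functions.} The $R$-side invariant (Definition~\ref{def:zeta-raynaud}(1)) is the isocrystal $\mathbf s(M)\otimes_WK$. This is the simple complex that totalizes $d$ with the complex differential, and its Frobenius is $\varphi_M=p^iF$ on $M^i\otimes_WK$. Your $W[1/p]\otimes_W(W\otimes^L_{W[F,V]}M)$ is a different object: it discards $d$ and the totalization, and it has no Frobenius with the normalization $p^iF$.

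On the $F$-gauge side, Theorem~\ref{thm:intro-realization}(2) concerns only $\cH_{\syn}(X)$ for a scheme $X$. It says nothing about $E^{\infty}[1/p]$ or $\varphi_E$ for the $E$ attached to an arbitrary $M\in D_c^b(R)$. Passing to the limit along $\mathsf R_r$ does not supply $\varphi_E$ either, since the finite realizations are read off on the divisor $u=0$. ``Checking on generators'' does not work: you have not produced a natural comparison map, and free $R$-modules are not even coherent.

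The missing idea is to use the explicit inverse $E\simeq S(M)$ (Corollary~\ref{thm:gauge-raynaud-equivalence}).
\begin{itemize}
\item Ekedahl's ladder (Construction~\ref{con:ladder}) has ends $E^{-\infty}\simeq\mathbf s(M)$ and $E^{\infty}\simeq\sigma_*\mathbf s(M)$, glued by the identity.
\item $\varphi_E$ is this gluing composed with the identifications of $E^{\pm\infty}[1/p]$ with $E^0[1/p]$ by powers of $t$ and $u$.
\item The table of $u$ and $t$ shows that on the entry of internal degree $i$ this composite is $p^iF$. For $i\geq0$, the $u$-composite from weight $0$ is $p^iF$ and the $t$-composite is $1$. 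For $i<0$, one gets the inverse of $p^{|i|-1}V$, which is $p^iF$ since $FV=p$.
\end{itemize}
Hence $(E^{\infty}[1/p],\varphi_E)\simeq(\sigma^{-1})^*(\mathbf s(M)\otimes_WK,\varphi_M)$. The twist is harmless because the characteristic polynomials of $\varphi_M^a$ have coefficients in $\mathbf Q_p$. This is Lemma~\ref{lem:zeta-rational-frobenius}.

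\textbf{The Bockstein characteristics.} The ``main obstacle'' you name is not one. By Definitions~\ref{def:zeta-raynaud}(2)--(3) and~\ref{def:zeta-gauge}(2)--(3), the two syntomic cohomologies are literally $\RHom_R(W,M\{n\})$ and $\RHom(\cO,E\{n\})$, over $k$ and over $\bar k$. The invariant $\chi_s$ depends only on the pair $(C,1-\gamma)$, through the complexes $\operatorname{Bock}(C,(1-\gamma)^r)$, not through a $p$-Bockstein spectral sequence. So the $\gamma$-equivariant equivalence from your second step already gives the equality of stable Bockstein characteristics.

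Your proposed detour, evaluating on $\cO\{n\}$ and extending by exactness, would fail anyway. The thick subcategory generated by the $\cO\{n\}$ contains only $F$-gauges whose isocrystals have integral slopes, so it misses, for example, $\cH_{\syn}$ of a supersingular elliptic curve.
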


Each formula therefore implies the other, so
Theorem~\ref{thm:intro-zeta} gives a new proof of each.  For instance,
Mondal's formula also follows from Illusie's, whose proof uses
Ekedahl's d\'evissage of $D_c^b(R)$.

\medskip
\noindent\textbf{Related work.}
This project began as an attempt to understand Ekedahl's papers on
de Rham--Witt cohomology \cite{Ekedahl84,Ekedahl85,Ekedahl86Diagonal}
from a modern point of view, and much of it is a geometric reading of
his constructions.  The star product and the
dualizing functor are his, the functor $S$ of
Construction~\ref{con:ladder} is his complex of $F$-gauges, and
Theorem~\ref{thm:intro-equivalence} recovers his equivalence.
The framework of this paper is that of Bhatt's notes on prismatic
$F$-gauges \cite{BhattFGauges}, which develop $F$-gauges on the syntomic
point, recover crystalline, de Rham and Hodge cohomology from the loci of
the Nygaard point, and formulate Poincar\'e duality for $F$-gauge
cohomology.  Theorem~\ref{thm:intro-realization} extends his comparisons for the
Nygaard point to Witt length $r$, and the de Rham--Witt realization
$T_{\dRW}$ is named after his realizations $T_{\mathrm{dR}}$ and
$T_{\mathrm{crys}}$.  The other main input is Andriopoulos' $r$-Nygaard
filtration on prismatic cohomology \cite{AndriopoulosTR}, obtained by
gluing $r$ copies of the Nygaard filtration along Frobenius.  He
characterized it algebraically and related its filtered and graded
pieces to the motivic filtrations on $(\mathrm{TR}^r)^{hS^1}$ and
$\mathrm{TR}^r$.  The
genuine $r$-Nygaard stack $k^{\NN_r}$ makes this gluing geometric, and
pushforward to $k^{\Nyg_r}$ recovers his filtration
(Theorem~\ref{thm:intro-realization}).  Lahoti and Manam study the moduli stack
$\mathbf Z_p^{\NN}$ of polyfiltered Cartier--Witt divisors, an
$\mathbf N$-indexed chain of copies of the Nygaard stack of
$\mathbf Z_p$ glued along the Hodge--Tate and de Rham maps, and they
mention the present work as a variant in characteristic $p$
\cite[\S1.3]{LahotiManam}.  Yang follows Ekedahl's approach and makes the
equivalence explicit, using bounded projective resolutions at each
finite level \cite{YangCanonical}.  Our earlier papers used Ekedahl's equivalence
at the level of triangulated categories, for explicit computations in
de Rham--Witt cohomology with applications to $p$-primary Brauer groups
\cite{ZhangDominoes} and for the study of
torsion $F$-gauges \cite{ZhangHN}.  The present paper studies the
equivalence itself.

\medskip
\noindent\textbf{Acknowledgements.} This project began during a visit to
Ben Antieau at the Institute for Advanced Study in the spring of 2024.
The author thanks his advisor, Ben Antieau, for his guidance, and for
discussing this project with him over several years.  The author
learned d\'ecalage, Koszul duality and the box product from
conversations with him, and these ideas run through the paper.  The author is
especially
grateful to Noah Riggenbach for showing him calculations around
$\mathrm{TR}^r$, which inspired the constructions of $k^{\Nyg_r}$ and
$k^{\NN_r}$.  A first version of the construction, the stack now called
the $\infty$-Hodge stack $k^{\mathrm{Hodge}_\infty}$
(Construction~\ref{con:infinite-nygaard-stacks}), was presented under
the name $k^{\mathrm{TR}}$ at Antieau's group meeting in January 2025.  The notation
$k^{\NN_r}$ was inspired by the notation $\mathbf Z_p^{\NN}$ of Lahoti
and Manam \cite{LahotiManam}.  The author thanks Chris Dodd, Elden Elmanto, Daigo Ito, Shizhang Li, Kirill Magidson, Deven Manam, Josh
Mundinger, Sasha Petrov, Arpon Raksit, Vladimir Shein, Longke Tang,
Vadim Vologodsky and Yuan Yang for helpful conversations.  This material is based
upon work supported by the National Science Foundation under Grant
No.~DMS-1928930 while the author was in residence at the Simons Laufer
Mathematical Sciences Institute in Berkeley, California, during the Fall
2026 semester, for the program \emph{Motivic Homotopy Theory: Connections
and Applications}.  The author is also supported by the Simons
Collaboration on Perfection in Algebra, Geometry, and Topology.

\medskip
\noindent\textbf{AI disclosure.} The author used AI tools, namely
GPT-6 Astra and Claude Opus 5.5, to explore the compatibility with
duality in \S\ref{sec:geometric-ekedahl-duality}
and the comparison of special values in \S\ref{sec:zeta}.  Apart from
these, the mathematical ideas of this paper are due to the author.  AI
assistants were also used to review arguments, to help write up proofs
and expository passages from the author's ideas, and to proofread and
edit the manuscript.  All content they produced has been reviewed,
edited, and checked for accuracy by the author, who is responsible for
the paper as it stands.

\medskip
\phantomsection\label{sec:conventions}
\noindent\textbf{Notation and conventions.}
Throughout, $k$ is a perfect field of characteristic $p>0$,
$W=W(k)$, $W_r=W/p^r$, and $\sigma$ is the Frobenius of $W$.  We write
$R\Gamma_{\crys}(X/W_r):=R\Gamma_{\crys}(X/W)/^{L}p^r$, and
$R\Gamma_{\crys}$ for $R\Gamma_{\crys}(X/W)$ inside formulas.
Stacks are classical $p$-adic formal fpqc stacks, except for the
derived $r$-Hodge stacks $k^{\mathrm{Hodge}_r}$ of \S\ref{sec:hodge-stack}, which are locally
$\Gm$-quotients of spectra of $1$-truncated animated rings.
Categories are stable $\infty$-categories with cohomological indexing,
all pullbacks and pushforwards are derived, and $\RHom$ is the derived
mapping complex.  An object is \emph{static} if its cohomology sits in
degree zero, in each internal degree if it is graded.  For a morphism
$f$ of affine formal schemes or of their $\Gm$-quotients, $f^*$ also
denotes the map of graded coordinate rings, and we describe $f$ by
$f^*$ on coordinates.

For a graded ring $\Lambda$, the objects of $D(\mathrm{GrMod}_\Lambda)$
are \emph{graded complexes}, and $W_r[d]/d^2$ is graded with $d$ in
degree one.  For the Raynaud ring $R$ (Definition~\ref{def:raynaud-ring}) we
write $D(R):=D(\mathrm{GrMod}_R)$, and $\widehat D(R)\subset D(R)$ for
the complete objects (Definition~\ref{def:raynaud-completion}).  We write $E$ for gauges and $F$-gauges and $M$ for graded
complexes, such as complexes of $R$-modules.  Both gradings are written
as superscripts: $E^n$ is the weight-$n$ component of a gauge $E$, and
$M^n$ is the internal-degree-$n$ component of $M$.  Each such
component is itself a complex, and for a gauge or graded complex $N$ we
write $N^{n,q}$ for the term of $N^n$ in cohomological degree $q$.  So
the first index is always the weight or internal degree and the second
the cohomological degree, as in $h^{p,q}=\dim H^q(\Omega^p)$.  We write $|x|$ for the weight of a
homogeneous element $x$.  Internal shifts, cohomological shifts and
Breuil--Kisin twists are
\[
 M(a)^{n,q}=M^{n+a,q},\qquad M[a]^{n,q}=M^{n,q+a},\qquad M\{a\}:=M(a)[-a],
\]
so $R(-n)$ is free on a generator of internal degree $n$.  On
complexes of $R$-modules the natural shifts are $(a)$ and $[b]$, and
the Breuil--Kisin twist $\{a\}=(a)[-a]$ is diagonal.  On $F$-gauges the
natural shifts are $\{a\}$ and $[b]$, and the internal shift $(a)$ of
$R$-modules corresponds to the diagonal $\{a\}[a]$, since regrading
turns the weight shift of a gauge into $(a)[-a]$
(Convention~\ref{conv:shearing}).  On
$R\Gamma(X,W\Omega^\bullet)$ the internal degree is the form degree, and
the simple functor $\mathbf s$ of Definition~\ref{def:simple-functor}
combines it with the complex degree.  For $f\in\Lambda$
homogeneous of weight $w$ we put $M/^{L}f:=\cofib(f:M(-w)\to M)$, and
iterate for sequences.  The Koszul model
$\Ksz_\Lambda(f_1,\ldots,f_m)$ of $\Lambda/^{L}(f_1,\ldots,f_m)$ sits in
degrees $[-m,0]$, with a generator $\varepsilon$ of degree $-1$ and
weight $|f_j|$ for each $f_j$.

For a $W$-complex $P$, the Frobenius twist
$(\sigma^a)^*P:=W\otimes_{W,\sigma^a}P$ is $P$ with $b\in W$ acting by
$\sigma^{-a}(b)$.  A $\sigma$-semilinear $F$ and a
$\sigma^{-1}$-semilinear $V$ become $W$-linear maps
$F:(\sigma^a)^*P\to(\sigma^{a-1})^*P$ and
$V:(\sigma^{a-1})^*P\to(\sigma^a)^*P$.  Restriction and extension of
scalars along the Frobenius are $\sigma_*:=(\sigma^{-1})^*$ and
$\sigma^*$, mutually inverse since $k$ is perfect, and a
$\sigma$-semilinear map $M\to N$ is a $W$-linear map $M\to\sigma_*N$.  We
use $\sigma_*$ only for the sources and targets of such maps.
Dieudonn\'e modules are always $V$-complete, which for finite length
means that $V$ is nilpotent.

\section{The geometry of \texorpdfstring{$r$}{r}-Nygaard filtrations}
\label{sec:expanded-construction}

This section constructs the geometry behind the $r$-Nygaard
filtration.  In \S\ref{sec:syntomic-point} we review Bhatt's filtered
prismatization $X^{\Nyg}$ and syntomification $X^{\syn}$ of a smooth
$k$-scheme $X$.  In \S\ref{sec:genuine-naive} we recall Andriopoulos'
$r$-Nygaard filtration and construct two stacks, the genuine
$r$-Nygaard stack $k^{\NN_r}$ and the naive $r$-Nygaard stack
$k^{\Nyg_r}$.  The main construction is a correspondence
\[
\begin{tikzcd}[column sep=small]
 & k^{\NN_r} \arrow[dl, "f_r"'] \arrow[dr, "g_r"] & \\
 k^{\syn} & & k^{\Nyg_r}
\end{tikzcd}
\]
for each level $r\geq1$, together with restriction and Frobenius maps
from level $r$ to level $r+1$ that are compatible with it.  In
\S\ref{sec:pullback-pushforward} we study the geometry of this
correspondence and compare the $r$-Nygaard realization
$\cH_{\Nyg_r}(X):=g_{r,*}f_r^*\cH_{\syn}(X)$ with cohomology theories
of $X$.  It is the Rees object of the $r$-Nygaard filtration
(Proposition~\ref{prop:r-nygaard-comparison}), and its restrictions
to the open and closed loci of $k^{\Nyg_r}$ recover cohomology
theories of $X$ such as crystalline cohomology and the de Rham--Witt
cohomology of level $r$ with its Hodge and conjugate filtrations
(Theorem~\ref{thm:r-nygaard-specializations}).  The associated cochain
complex of the Hodge filtration then recovers the de Rham--Witt complex
of level $r$ with its differential, and applied to an arbitrary $F$-gauge it defines the
finite de Rham--Witt realization $T_{\dRW_r}$
(Definition~\ref{def:finite-extraction}).

\subsection{Nygaard stack and syntomic stack}
\label{sec:syntomic-point}

This subsection recalls Bhatt's construction of the filtered
prismatization $X^{\Nyg}$ and the syntomification $X^{\syn}$ of a
smooth $k$-scheme $X$ \cite[\S\S3.3--3.4 and 4.1]{BhattFGauges}.

Filtrations enter through the Rees construction.  A decreasing
filtration $\Fil^\bullet M$ has Rees object
$\Rees(\Fil^\bullet M):=\bigoplus_n\Fil^nM\,t^{-n}$ with $|t|=-1$, on
which $t$ acts by the transition maps.  This identifies filtered
complexes with $\Dqc(\mathbb A^1_t/\Gm)$, and complete ones with its
derived $t$-complete objects \cite[Proposition~2.2.6]{BhattFGauges}.
Increasing filtrations correspond likewise to $\Dqc(\mathbb A^1_u/\Gm)$
with $|u|=1$.  We use these identifications without comment.

Write $\Fil_p^nW=p^{\max(n,0)}W$ for the $p$-adic filtration on $W$.
Its Rees algebra is the graded algebra
\[
 \Rees(\Fil_p^\bullet W)
 =\bigoplus_{n\in\mathbb Z}p^{\max(n,0)}W\,t^{-n}
 \simeq W[u,t]/(ut-p),\qquad u=pt^{-1}.
\]
Here $|u|=1$ and $|t|=-1$.

\begin{definition}[The Nygaard point and gauges]
\label{def:nygaard-point}
\label{def:gauge-ends}
The \emph{Nygaard point} is the completed Rees stack
\[
 k^{\Nyg}:=
 \Spf\bigl((W[u,t]/(ut-p))^{\wedge}_p\bigr)/\Gm .
\]
A \emph{gauge} is an object of $\Dqc(k^{\Nyg})$, that is, a derived
$p$-complete graded module $E$ over the coordinate ring of $k^{\Nyg}$.
In terms of its weight components $E^n$, multiplication by $u$ and
$t$ gives a diagram
\[
\begin{tikzcd}[column sep=small, ampersand replacement=\&]
 \cdots \arrow[r, shift left=0.6ex, "t"] \& E^{n+1} \arrow[l, shift left=0.6ex, "u"] \arrow[r, shift left=0.6ex, "t"] \& E^{n} \arrow[l, shift left=0.6ex, "u"] \arrow[r, shift left=0.6ex, "t"] \& E^{n-1} \arrow[l, shift left=0.6ex, "u"] \arrow[r, shift left=0.6ex, "t"] \& \cdots \arrow[l, shift left=0.6ex, "u"]
\end{tikzcd}
\]
in which $u$ raises the weight, $t$ lowers it, and both composites
$ut$ and $tu$ are multiplication by $p$.  Put
\[
 E^{\infty}:=\bigl(\colim(E^n\xrightarrow{u}E^{n+1}\xrightarrow{u}\cdots)\bigr)^\wedge_p,
 \qquad
 E^{-\infty}:=\bigl(\colim(E^n\xrightarrow{t}E^{n-1}\xrightarrow{t}\cdots)\bigr)^\wedge_p,
\]
the derived $p$-completed colimits along $u$ and along $t$.  In
pictures we place $E^{\infty}$ at the left of this row and
$E^{-\infty}$ at the right.
\end{definition}

To pass from the point $k^{\Nyg}$ to a smooth $k$-scheme, we use
Bhatt's ring stack $\mathbb G_a^{\Nyg}$.

\begin{construction}[The ring stack $\mathbb G_a^{\Nyg}$]
\label{con:nygaard-ring-stack}
Let $L$ be the tautological line bundle on $k^{\Nyg}$, with sections
$u:\cO\to L$ and $t:L\to\cO$ whose composite $t\circ u$ is
multiplication by $p$.
Write $\mathbb W$ for the ring scheme of $p$-typical Witt vectors
and $F_{\mathbb W}$ for its Frobenius, both base changed to
$k^{\Nyg}$.  The divided-power envelope $\mathbb G_a^\sharp$ of the
origin in $\mathbb G_a$ identifies with $\ker(F_{\mathbb W})$, so
\[
 0\longrightarrow\mathbb G_a^\sharp\longrightarrow\mathbb W
 \xrightarrow{F_{\mathbb W}}F_{\mathbb W,*}\mathbb W
 \longrightarrow0
\]
is exact for the fpqc topology
\cite[Lemma~2.6.1 and Remark~2.6.2]{BhattFGauges}.
Here $F_{\mathbb W,*}\mathbb W$ is $\mathbb W$ with its module
structure restricted along $F_{\mathbb W}$.

Let $\mathbf V(L)$ be the vector bundle scheme of $L$, the affine
$k^{\Nyg}$-scheme with $\mathbf V(L)(S)=L(S)$ for a test scheme
$S\to k^{\Nyg}$, and let $\mathbf V(L)^\sharp$ be the divided-power
envelope of its zero section.  Thus $\mathbf V(\cO)=\mathbb G_a$, and the section
$u:\cO\to L$ induces a map $u^\sharp:\mathbb G_a^\sharp\to\mathbf V(L)^\sharp$.
Pushing out the preceding extension along $u^\sharp$ gives a group
scheme $M_u$.  Since $t^\sharp u^\sharp=p$, multiplication by $p$ on
$\mathbb W$ and the map $t^\sharp$ induce a map
$d_{u,t}:M_u\to\mathbb W$, and these fit into the diagram
\[
\begin{tikzcd}[column sep=huge, row sep=large]
0 \arrow[r] & \mathbb G_a^\sharp \arrow[r] \arrow[d,"u^\sharp"]
    \arrow[dd, bend right=40, dashed, "p=t^\sharp u^\sharp"', pos=0.2]
  & \mathbb W \arrow[r,"F_{\mathbb W}"] \arrow[d]
    \arrow[dd, bend right=40, dashed, "p"', pos=0.2]
  & F_{\mathbb W,*}\mathbb W \arrow[r] \arrow[d,equal]
    \arrow[dd, bend right=40, dashed, "p"', pos=0.2] & 0 \\
0 \arrow[r] & \mathbf V(L)^\sharp \arrow[r] \arrow[d,"t^\sharp"]
  & M_u \arrow[r] \arrow[d,"d_{u,t}"]
  & F_{\mathbb W,*}\mathbb W \arrow[r] \arrow[d,"p"] & 0 \\
0 \arrow[r] & \mathbb G_a^\sharp \arrow[r]
  & \mathbb W \arrow[r,"F_{\mathbb W}"]
  & F_{\mathbb W,*}\mathbb W \arrow[r] & 0.
\end{tikzcd}
\]
The map $d_{u,t}$ is a quasi-ideal: it is $\mathbb W$-linear and
satisfies $d_{u,t}(x)y=d_{u,t}(y)x$.  The ring stack
\[
 \mathbb G_a^{\Nyg}
 :=\operatorname{Cone}(M_u\xrightarrow{d_{u,t}}\mathbb W)
\]
is the stack quotient of $\mathbb W$ by the additive action of
$M_u$ through $d_{u,t}$, with the ring structure induced by the
quasi-ideal.  The middle column of the diagram gives a map
$\mathbb W/p\to\mathbb G_a^{\Nyg}$.  Together
with the canonical map $k\simeq W(k)/p\to\mathbb W/p$, it
makes $\mathbb G_a^{\Nyg}$ an animated $k$-algebra stack.
\end{construction}

\begin{definition}[Transmutation and filtered prismatization]
\label{def:filtered-prismatization}
Let $S$ be a stack and $\mathcal A$ an animated $k$-algebra stack on
$S$-schemes.  The \emph{transmutation} of a $k$-scheme $X$ along
$\mathcal A$ is the $S$-stack $X^{\mathcal A}$ with
\[
 X^{\mathcal A}(\Spec(R)\to S)
 :=X(\mathcal A(R))
 =\Map_k\bigl(\Spec(\mathcal A(R)),X\bigr).
\]
Put $\mathbb G_a^{\Delta}:=\mathbb W/p$, a $k$-algebra stack on
$p$-nilpotent $W$-algebras.  For a smooth $k$-scheme $X$, the
\emph{prismatization} $X^\Delta\to k^\Delta:=\Spf(W)$ and the
\emph{filtered prismatization} $X^{\Nyg}\to k^{\Nyg}$ are
\[
 X^\Delta(R)=\Map_k\bigl(\Spec(\mathbb W(R)/p),X\bigr),
 \qquad
 X^{\Nyg}(\Spec(R)\to k^{\Nyg})
 =\Map_k\bigl(\Spec(\mathbb G_a^{\Nyg}(R)),X\bigr).
\]
\end{definition}

Write $\varphi:k^\Delta\xrightarrow{\sim}k^\Delta$ for the
Witt-vector Frobenius.  The two opens $(k^{\Nyg})_{u\neq0}$ and
$(k^{\Nyg})_{t\neq0}$ are each isomorphic to $k^\Delta$, and they are
disjoint, since their intersection is the locus $p\neq0$, which is
empty in the $p$-adic formal spectrum.

\begin{construction}[The Hodge--Tate and de Rham pullbacks]
\label{conv:frobenius-gluing}
\label{con:HT-dR-pullbacks}
We call $(k^{\Nyg})_{u\neq0}$ the \emph{Hodge--Tate locus} and
$(k^{\Nyg})_{t\neq0}$ the \emph{de Rham locus} of $k^{\Nyg}$, with
inclusions
\[
 j_{\HT}:k^\Delta\simeq(k^{\Nyg})_{u\neq0}\hookrightarrow k^{\Nyg},
 \qquad
 \widetilde j_{\dR}:k^\Delta\simeq(k^{\Nyg})_{t\neq0}\hookrightarrow k^{\Nyg},
 \qquad
 j_{\dR}:=\widetilde j_{\dR}\circ\varphi^{-1}.
\]
We glue along the ordered pair $(j_{\HT},j_{\dR})$.  Thus
$j_{\HT}$ is $W$-linear and the Frobenius twist lies on the de Rham
locus.

Explicitly, $(k^{\Nyg})_{u\neq0}=\Spf(W[u^{\pm1}]^\wedge_p)/\Gm$, since
$t=pu^{-1}$ there, and
$\Dqc((k^{\Nyg})_{u\neq0})\simeq D_{p\text{-}\comp}(W)$
by taking weight zero, and similarly for $(k^{\Nyg})_{t\neq0}$.  Inverting $u$
or $t$ identifies every weight of $E$ with the colimit along $u$
or along $t$, so for a gauge $E$,
\[
 j_{\HT}^*E\simeq E^{\infty},
 \qquad
 \widetilde j_{\dR}^{\,*}E\simeq E^{-\infty}.
\]
We call these the \emph{Hodge--Tate pullback} and the \emph{de Rham
pullback} of $E$.

The ring stack has canonical identifications on the two loci,
\[
 j_{\HT}^*\mathbb G_a^{\Nyg}
 \simeq\mathbb W/p
 \simeq j_{\dR}^*\mathbb G_a^{\Nyg}
\]
of animated $k$-algebra stacks, the second using the Frobenius
twist in $j_{\dR}$.  Transmutation gives open immersions
$j_{\HT},j_{\dR}:X^\Delta\hookrightarrow X^{\Nyg}$ over the two
maps just defined.
\end{construction}

\begin{construction}[The Hodge-filtered and conjugate-filtered de Rham pullbacks]
\label{con:closed-loci}
The zero loci of $u$ and $t$ are the Cartier divisors
\[
 i_{\dR,+}:\mathbb A^1_{k,t}/\Gm=(k^{\Nyg})_{u=0}\hookrightarrow k^{\Nyg},
 \qquad
 i_{\dR,c}:\mathbb A^1_{k,u}/\Gm=(k^{\Nyg})_{t=0}\hookrightarrow k^{\Nyg},
\]
which meet transversally along the origin
$i_{\mathrm{Hodge}}:B\mathbb G_{m,k}=V(u,t)\hookrightarrow k^{\Nyg}$.  For a gauge
$E$ we call $i_{\dR,+}^*E$ its \emph{Hodge-filtered de Rham pullback},
$i_{\dR,c}^*E$ its \emph{conjugate-filtered de Rham pullback}, and
$i_{\mathrm{Hodge}}^*E$ its \emph{Hodge pullback}.  For a smooth
$k$-scheme $X$, base change of $X^{\Nyg}$ along these immersions
gives the Hodge-filtered de Rham stack, the Frobenius-twisted
conjugate-filtered de Rham stack, and the Hodge stack
\[
 X^{\dR,+}:=(X^{\Nyg})_{u=0},
 \qquad
 X^{\dR,c}:=(X^{\Nyg})_{t=0},
 \qquad
 X^{\mathrm{Hodge}}:=(X^{\Nyg})_{u=t=0}
\]
\cite[Remark~3.4.2]{BhattFGauges}.  Together with the two open loci
$X^\Delta$, these stratify $X^{\Nyg}$.
\end{construction}

The filtered prismatization recovers the classical cohomology
theories of $X$ on these loci.

\begin{theorem}[{Specializations of $X^{\Nyg}$;
\cite[Theorem~3.3.5]{BhattFGauges}}]
\label{thm:ordinary-input}
Let $X$ be a smooth quasi-compact $k$-scheme, and put
$\cH_{\Nyg}(X):=\pi_{X,*}\cO_{X^{\Nyg}}\in\Dqc(k^{\Nyg})$.
\begin{enumerate}
\item $\cH_{\Nyg}(X)$ is the Nygaard filtration $\Fil_{\Nyg}^{\geq\bullet}$ on
 $R\Gamma_{\crys}(X/W)=\varphi^*R\Gamma_\Delta(X/W)$.
\item On $(k^{\Nyg})_{t\neq0}$, the de Rham pullback is
 $\cH_{\Nyg}(X)^{-\infty}\simeq R\Gamma_{\crys}(X/W)$.
\item On $(k^{\Nyg})_{u\neq0}$, the Hodge--Tate pullback is
 $\cH_{\Nyg}(X)^{\infty}\simeq R\Gamma_\Delta(X/W)=(\sigma^{-1})^*R\Gamma_{\crys}(X/W)$.
\item On $(k^{\Nyg})_{u=0}\simeq\mathbb A^1_{k,t}/\Gm$, the
 Hodge-filtered de Rham pullback $i_{\dR,+}^*\cH_{\Nyg}(X)$ is the
 Hodge filtration
 $\Fil_H^\bullet R\Gamma(X,\Omega^\bullet_{X/k})$.
\item On $(k^{\Nyg})_{t=0}\simeq\mathbb A^1_{k,u}/\Gm$, the
 conjugate-filtered de Rham pullback $i_{\dR,c}^*\cH_{\Nyg}(X)$ is
 $(\sigma^{-1})^*$ of the conjugate filtration
 $\Fil^{\mathrm{conj}}_\bullet R\Gamma(X,\Omega^\bullet_{X/k})$.
\item On $(k^{\Nyg})_{u=t=0}=B\mathbb G_{m,k}$, the Hodge pullback
 has weight-$i$ piece
 $(i_{\mathrm{Hodge}}^*\cH_{\Nyg}(X))^i\simeq R\Gamma(X,\Omega^i_{X/k})[-i]$.
\end{enumerate}
\end{theorem}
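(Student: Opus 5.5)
The plan is to reduce everything to item (1) and then deduce (2)--(6) by base change along the immersions of Constructions~\ref{con:HT-dR-pullbacks} and~\ref{con:closed-loci}.

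For (1), the map $\pi_X:X^{\Nyg}\to k^{\Nyg}$ is qcqs, so $\pi_{X,*}$ commutes with flat base change and is Zariski local on $X$. It therefore suffices to treat affine $X=\Spec A$ with a chosen framing (an étale map to $\mathbb A^n_k$), together with naturality in $A$. I would compute $\pi_{X,*}\cO$ after pulling back along the fpqc cover of $k^{\Nyg}$ by the Rees algebra $\Spf(W[u,t]/(ut-p))$. On an $R$-point the ring stack $\mathbb G_a^{\Nyg}(R)$ is $\operatorname{Cone}(M_u(R)\to W(R))$, so maps $\Spec\mathbb G_a^{\Nyg}(R)\to X$ are computed by a crystalline-type divided-power envelope. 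The pushforward is then the Rees object of the Nygaard filtration on the lifted de Rham complex. Concretely, with $\tilde A$ a $p$-completely smooth lift with Frobenius, this Rees object is $\bigoplus_n \Fil^n_{\Nyg}\,t^{-n}$ with $\Fil^n_{\Nyg}=p^{\max(n-i,0)}\Omega^i_{\tilde A}$ in degree $i$.

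The key identification is that functions on the stack given by $\mathbb W$ modulo $M_u$ form the $p$-adic filtration, twisted by the quasi-ideal. This gives, in weight $n$, the subcomplex $p^{\max(n-\bullet,0)}\Omega^\bullet$, which is the Nygaard filtration by the de Rham--Witt description of Nygaard. I expect this local computation, together with showing that the answer is independent of the framing and glues, to be the main obstacle. I would first handle $X=\mathbb A^1_k$ by hand, using $\mathbb G_a^{\Nyg}$-points and the exact sequence $0\to\mathbb G_a^\sharp\to\mathbb W\to F_*\mathbb W\to0$. I would then pass to products of copies of $\mathbb A^1_k$ by a Künneth argument, to étale maps by étale base change, and to general $X$ by Zariski descent.

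Given (1), the remaining items are formal.

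For (2) and (3), inverting $t$ in the Rees object gives $E^{-\infty}=\colim_t\Fil^n$, the full crystalline complex $R\Gamma_{\crys}(X/W)$, after $p$-completion. Inverting $u$ gives $E^\infty$, the colimit along the multiplication maps $u$, which is $p^{-n}$ times the filtration. This is the prismatic cohomology, and the $\sigma^{-1}$-twist arises from the convention $j_{\dR}=\widetilde j_{\dR}\circ\varphi^{-1}$.

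For (4), restricting to $u=0$ means taking the derived cofibre of $u$ on the Rees object, that is $\Fil^{n}/u\Fil^{n-1}$. Reducing $p^{\max(n-i,0)}\Omega^i$ modulo the image gives $\Omega^{i}_{A}$ for $i\ge n$ and $0$ otherwise, which is the Hodge filtration $\Fil_H^\bullet$ over $k$.

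For (5), restricting to $t=0$ gives the graded pieces $\gr^n_{\Nyg}$. Via Cartier/Deligne--Illusie these are $\tau^{\le n}$ of the Frobenius-twisted de Rham complex, that is, the conjugate filtration, with the $(\sigma^{-1})^*$ twist.

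For (6), restricting further to $u=t=0$ intersects (4) and (5). This gives $\gr^i_H=\Omega^i[-i]$, consistently with the Cartier isomorphism $\gr^{\mathrm{conj}}_i\simeq\Omega^i[-i]$ obtained from (5).
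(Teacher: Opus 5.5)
The paper does not prove this theorem; it quotes it from \cite[Theorem~3.3.5]{BhattFGauges}. Your sketch therefore has to stand on its own. Your deductions of (2)--(6) from (1) are essentially fine; for instance, in your model $\Fil^n/u\Fil^{n-1}$ is indeed $\Omega^{\geq n}_A$. But (1) is the whole content of the theorem, and for it you give a plan rather than an argument. There are three gaps.
\begin{enumerate}
\item \emph{The computation for $\mathbb A^1_k$.} The sentence that functions on $\mathbb W$ modulo $M_u$ ``form the $p$-adic filtration'' describes $\cO(k^{\Nyg})=W[u,t]/(ut-p)$, which is the case $X=\Spec k$. For $X=\mathbb A^1_k$, transmutation gives $X^{\Nyg}=\mathbb G_a^{\Nyg}=[\mathbb W/M_u]$. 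So you must compute the cohomology of the flat group scheme $M_u$, acting through $d_{u,t}$, with coefficients in $\cO(\mathbb W)$. The degree-one term $p^{\max(n-1,0)}W[x]\,dx$ of your answer has to come from $H^1$ of $M_u$, for example by d\'evissage along $0\to\mathbf V(L)^\sharp\to M_u\to F_{\mathbb W,*}\mathbb W\to0$. None of this is carried out.
\item \emph{\'Etale maps.} The passage to an \'etale $U\to\mathbb A^n$ is not a base change in any naive sense. Already on the answer, $\Omega^\bullet_{\tilde U}$ is not obtained from $\Omega^\bullet_{W[x_1,\dots,x_n]}$ by tensoring, because the differential is not linear. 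You need an \'etale-localization property of $X\mapsto X^{\Nyg}$, an analogue of $U\simeq X\times_{X^{\dR}}U^{\dR}$, together with a way to compute $\pi_{U,*}\cO$ from it.
\item \emph{Gluing.} Your model $p^{\max(n-\bullet,0)}\Omega^\bullet_{\tilde A}$ depends on the lift, and its identification with the Nygaard filtration (via $\phi/p^n$ on graded pieces) depends on the Frobenius lift. Zariski gluing therefore needs canonical comparisons between framings. You flag this but do not supply them.
\end{enumerate}

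A cleaner way to remove the framing problem is to verify Bhatt's intrinsic characterization of the Nygaard filtration \cite[Theorem~3.2.1]{BhattFGauges} for the weights of $\pi_{X,*}\cO_{X^{\Nyg}}$: completeness, $\varphi(\Fil^n)\subseteq p^n$, and $\gr^n\simeq\tau^{\leq n}$ \'etale locally. This is how the paper itself identifies concrete models with the Nygaard filtration in Lemma~\ref{lem:saturated-ladder} and Theorem~\ref{thm:hodge-witt}. Uniqueness then gives canonicity and functoriality. In that organization, comparisons on the loci (2), (3), (5) become inputs to (1) rather than consequences of it.

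A smaller point concerns (3). The twist $(\sigma^{-1})^*$ does not come from the convention $j_{\dR}=\widetilde j_{\dR}\circ\varphi^{-1}$, which plays no role in $E^{\infty}=j_{\HT}^*E$. In your model, $\colim_u\Fil^n$ is $p^{-\bullet}\Omega^\bullet_{\tilde A}\cong(\Omega^\bullet_{\tilde A},p\,d)$. It is identified with $R\Gamma_{\crys}$ only through the $\sigma$-semilinear quasi-isomorphism $\phi/p^\bullet$, which follows from Deligne--Illusie modulo $p$ and derived Nakayama. That semilinearity is the twist, and your claim ``this is the prismatic cohomology'' needs exactly this argument.
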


\begin{definition}[Syntomification and $F$-gauges]
\label{def:syntomification}
\label{def:syntomic-point}
\label{def:prismatic-F-gauge}
For a smooth $k$-scheme $X$, the \emph{syntomification} $X^{\syn}$
is the pushout
\[
\begin{CD}
 X^\Delta\amalg X^\Delta @>{(j_{\HT},\,j_{\dR})}>>
 X^{\Nyg}\\
 @VVV @VV{j_{\Nyg}}V\\
 X^\Delta @>{j_\Delta}>>
 X^{\syn}.
\end{CD}
\]
The construction is functorial in $X$.  For $X=\Spec(k)$ it gives
the \emph{syntomic point} $k^{\syn}$, and functoriality gives the
structure map $\pi_X:X^{\syn}\to k^{\syn}$.  An \emph{$F$-gauge} is
an object of $\Dqc(k^{\syn})$, and the \emph{$F$-gauge cohomology}
of $X$ is
\[
 \cH_{\syn}(X):=\pi_{X,*}\cO_{X^{\syn}}\in\CAlg(\Dqc(k^{\syn})).
\]
\end{definition}

All maps in the square are \'etale, and $j_{\Nyg}$ is a surjective
local isomorphism, hence an \'etale cover of $k^{\syn}$.  Functoriality
gives
$X^{\Nyg}\simeq X^{\syn}\times_{k^{\syn}}k^{\Nyg}$
\cite[Remark~4.1.4]{BhattFGauges}, so base change along $j_{\Nyg}$
gives $j_{\Nyg}^*\cH_{\syn}(X)\simeq\cH_{\Nyg}(X)$. The identifications on the two loci descend $\mathbb G_a^{\Nyg}$
to an animated $k$-algebra stack $\mathbb G_a^{\syn}$ on $k^{\syn}$,
and $X^{\syn}$ is alternatively the transmutation of $X$ along
$\mathbb G_a^{\syn}$ \cite[Remark~4.1.6]{BhattFGauges}.

By descent along this gluing, an $F$-gauge $E$ is a gauge
$j_{\Nyg}^*E$ together with an isomorphism
$j_{\HT}^*E\xrightarrow{\sim}j_{\dR}^*E$ on $k^\Delta$, its
\emph{descent datum}.  By Construction~\ref{con:HT-dR-pullbacks},
the descent datum is equivalently an isomorphism of derived
$p$-complete $W$-modules $\varphi^*E^{\infty}\xrightarrow{\sim}E^{-\infty}$
between the Hodge--Tate and de Rham pullbacks.  We abuse notation
and write $E$ also for the underlying gauge $j_{\Nyg}^*E$.

\begin{definition}[{Breuil--Kisin twists; \cite[Definition~4.3.4]{BhattFGauges}}]
\label{def:BK-twist}
The \emph{Breuil--Kisin twist} $\cO\{-1\}\in\operatorname{Pic}(k^{\syn})$
is the line bundle whose underlying $F$-crystal is $W$ with Frobenius
structure $\varphi^*W\xrightarrow{\ p\ }W$.  Its restriction to
$k^{\Nyg}$ is the dual $L^{\vee}$ of the tautological line bundle
$L$ of Construction~\ref{con:nygaard-ring-stack}, so that
$\cO\{1\}|_{k^{\Nyg}}=L$.  Its restriction to $k^\Delta=\Spf(W)$ is a
free $W$-module $We$ of rank one, with Frobenius structure
$\varphi^*(We)\to We$, $e\mapsto pe$.
For $E\in\Dqc(k^{\syn})$ and $m\in\mathbb Z$ put
$E\{m\}:=E\otimes\cO\{-1\}^{\otimes(-m)}$, and likewise on
$k^{\Nyg}$ and $k^\Delta$.  On $k^\Delta$ we trivialize $\cO\{m\}$ by
the basis $e^{\otimes(-m)}$, so that $\cO\{m\}|_{k^\Delta}=W$ with
Frobenius structure $p^{-m}\sigma$.
\end{definition}

\begin{remark}[Geometry of the syntomic point]
\label{rem:two-points-of-syn}
The two open loci of $k^{\Nyg}$ have the same image in $k^{\syn}$,
namely the open substack $\Spf(W)$, and the origin $V(u,t)$ maps to
the closed substack $B\mathbb G_{m,k}$.  The two divisors $u=0$ and
$t=0$ give two specializations from the open substack to the closed
one, so $k^{\syn}$ is not separated.  Because of the Frobenius in
the gluing, $k^{\syn}$ is a $\Zp$-stack and not a $W$-stack
\cite[Remark~4.1.2 and Warning~4.1.3]{BhattFGauges}.
\end{remark}

\begin{remark}[Coefficient Frobenius]
\label{rem:coefficient-Frobenius-syn}
Since $k$ is perfect, its absolute Frobenius
$F_k:\Spec(k)\xrightarrow{\sim}\Spec(k)$ is an automorphism.
Functoriality of syntomification \cite[\S6.1]{BhattFGauges} gives
an automorphism of $\Zp$-stacks $\Phi_{\syn}:=(F_k)^{\syn}:k^{\syn}\xrightarrow{\sim}k^{\syn}$. On the Nygaard point $k^{\Nyg}$ the induced automorphism
$\Phi_{\Nyg}:=(F_k)^{\Nyg}$ applies $\sigma$ to the coefficients
and fixes $u$ and $t$.  So pullback along $\Phi_{\syn}$ is the Frobenius
twist on underlying gauges, and for an $F$-gauge $E$ we write
$\sigma^*E:=\Phi_{\syn}^*E$.
\end{remark}
\subsection{Genuine and naive \texorpdfstring{$r$}{r}-Nygaard stacks}
\label{sec:genuine-naive}

Fix $r\geq1$ and let $X$ be a smooth $k$-scheme.  By
Theorem~\ref{thm:ordinary-input}, the Nygaard filtration on
$R\Gamma_{\crys}(X/W)$ is the filtration whose Hodge-filtered de Rham
pullback is $R\Gamma(X,\Omega^\bullet_{X/k})$ with its Hodge filtration.
Andriopoulos defined a filtration on the same object that plays this
role for $W_r\Omega^\bullet_{X/k}$, obtained by iterating the Nygaard
filtration $r$ times along the divided Frobenius.  We recall his
definition and results in the notation of Theorem~\ref{thm:ordinary-input}.

\begin{definition}[{The $r$-Nygaard filtration;
\cite[Theorem~1.1 and Definition~5.1]{AndriopoulosTR}}]
\label{def:r-nygaard-filtration}
Let $X$ be a smooth $k$-scheme, and let
$\iota_m,\varphi_m:\Fil^{\geq m}_{\Nyg}R\Gamma_{\crys}(X/W)\to R\Gamma_{\crys}(X/W)$
be the inclusion and the divided Frobenius $\varphi_m=\varphi/p^m$ of
the Nygaard filtration, the latter $\sigma$-semilinear.  The
\emph{$r$-Nygaard filtration} on $R\Gamma_{\crys}(X/W)$ has $m$-th
filtered piece
\[
 \Fil^{\geq m}_{\Nyg_r}R\Gamma_{\crys}
 :=\Fil^{\geq m}_{\Nyg}R\Gamma_{\crys}
 \times_{R\Gamma_{\crys}}\cdots
 \times_{R\Gamma_{\crys}}
 \Fil^{\geq m}_{\Nyg}R\Gamma_{\crys},
\]
with $r$ factors, where each fibre product is formed with $\iota_m$
on its left factor and $\varphi_m$ on its right factor.
\end{definition}

At $r=1$ this is the Nygaard filtration.  Andriopoulos writes the
$m$-th filtered piece with a Breuil--Kisin twist $\{m\}$, which is
where the divided Frobenius lands canonically.  Over the crystalline
prism we have trivialized the twist by the basis $e^{\otimes(-m)}$
(Definition~\ref{def:BK-twist}), on which Frobenius acts by
$p^{-m}\sigma$, so $\varphi_m$ is literally $\varphi/p^m$.  Since $\varphi_m$ is only semilinear, the fibre products are formed in $\Zp$-modules, and
making them $W$-linear requires a Frobenius twist of the
coefficients at each step.

\begin{construction}[{Operators on the $r$-Nygaard filtration;
\cite[Theorem~1.1 and Definition~5.1]{AndriopoulosTR}}]
\label{con:r-nygaard-operators}
For $r\geq1$, the $r$-Nygaard filtrations at levels $r$ and $r+1$
are related by two maps
\[
 \pi_r,F_r:\Fil^{\geq m}_{\Nyg_{r+1}}R\Gamma_{\crys}
 \longrightarrow\Fil^{\geq m}_{\Nyg_r}R\Gamma_{\crys}.
\]
Here $\pi_r$ forgets the leftmost factor of the iterated fibre product
and is called \emph{restriction}, and $F_r$ forgets the rightmost factor
and is called \emph{Frobenius}.  There are also two maps to the
underlying object,
\[
 \iota:\Fil^{\geq m}_{\Nyg_r}R\Gamma_{\crys}
 \lhook\joinrel\longrightarrow R\Gamma_{\crys},
 \qquad
 \varphi_{r,m}:\Fil^{\geq m}_{\Nyg_r}R\Gamma_{\crys}
 \longrightarrow R\Gamma_{\crys},
\]
where the \emph{inclusion} $\iota$ is the projection to the rightmost
factor followed by $\iota_m$ and the \emph{$r$-divided Frobenius}
$\varphi_{r,m}$ is the projection to the leftmost factor followed by
$\varphi_m$.  The map $\varphi_{r,m}$ is
$\sigma^r$-semilinear, and $\varphi^r=p^{rm}\varphi_{r,m}$ on
$\Fil^{\geq m}_{\Nyg_r}R\Gamma_{\crys}$.  These maps satisfy
$\iota\circ\pi_r=\iota$ and $\varphi_{r,m}\circ F_r=\varphi_{r+1,m}$.
Verschiebung is constructed later, on the finite de Rham--Witt
realizations, by duality
(Construction~\ref{def:intrinsic-geometric-V}).
\end{construction}

The $r$-Nygaard filtration admits a characterization of the same
form as Bhatt's characterization of the Nygaard filtration
\cite[Theorem~3.2.1]{BhattFGauges}, with $p^r$ in place of $p$ and
$W_r\Omega^\bullet_{X/k}$ in place of $\Omega^\bullet_{X/k}$.

\begin{theorem}[{Characterization of the $r$-Nygaard filtration;
\cite[Theorem~1.8 and Corollary~5.14]{AndriopoulosTR}}]
\label{thm:andriopoulos}
Let $X$ be a smooth affine $k$-scheme.  The $r$-fold crystalline Frobenius
refines to a $\sigma^r$-semilinear filtered map
\[
 \varphi^r:\Fil^{\geq\bullet}_{\Nyg_r}R\Gamma_{\crys}
 \longrightarrow p^{r\bullet}R\Gamma_{\crys},
\]
and the $r$-Nygaard filtration is uniquely characterized by the
following two properties.
\begin{enumerate}
\item $\Fil^{\geq\bullet}_{\Nyg_r}R\Gamma_{\crys}(X/W)$ is complete as a
 filtered object.
\item The map $\varphi^r$ induces isomorphisms
 \[
  \gr^i_{\Nyg_r}R\Gamma_{\crys}
  \simeq\tau^{\leq i}\gr^i_{p^{r\bullet}}R\Gamma_{\crys}
  \simeq\tau^{\leq i}R\Gamma(X,W_r\Omega^\bullet_{X/k}).
 \]
\end{enumerate}
Equivalently, $\varphi^r$ identifies the $r$-Nygaard filtration with
the Beilinson-connective cover of the $p^r$-adic filtration on
$R\Gamma_{\crys}(X/W)$.
\end{theorem}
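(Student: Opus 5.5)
This theorem is quoted from Andriopoulos. The plan is to reduce it to Bhatt's characterization of the Nygaard filtration \cite[Theorem~3.2.1]{BhattFGauges} by induction on $r$, using the fibre-product description of Definition~\ref{def:r-nygaard-filtration}.

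\emph{Step 1: the filtered Frobenius.} First construct the filtered refinement of $\varphi^r$. On $\Fil^{\geq m}_{\Nyg_r}$ take $p^{rm}\varphi_{r,m}$. Construction~\ref{con:r-nygaard-operators} gives $\varphi_{r,m}\circ F_r=\varphi_{r+1,m}$. Together with Bhatt's case $r=1$, this shows that $\varphi_{r,m}$ lands in $R\Gamma_{\crys}$, so the composite lands in $p^{rm}R\Gamma_{\crys}$. One also checks compatibility with the transition maps in $m$, which holds because $p\varphi_{m+1}=\varphi_m$ on $\Fil^{\geq m+1}$.

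\emph{Step 2: completeness.} The $r$-Nygaard filtration is a finite limit of complete filtrations (each factor $\Fil_{\Nyg}^{\geq\bullet}$ is complete, and $R\Gamma_{\crys}$ with the constant filtration is handled by taking fibres). Finite limits commute with $R\varprojlim_m$, so $R\varprojlim_m\Fil^{\geq m}_{\Nyg_r}$ is a finite limit of the corresponding limits. These vanish termwise, except for the constant terms, which cancel in the fibre products.

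\emph{Step 3: graded pieces.} Induct on $r$, writing
\[
\Fil^{\geq m}_{\Nyg_{r+1}}=\Fil^{\geq m}_{\Nyg_r}\times_{R\Gamma_{\crys}}\Fil^{\geq m}_{\Nyg},
\]
and take $\gr^i$. The key input is a short exact sequence
\[
0\to W_r\Omega^\bullet\xrightarrow{V\text{ or }p}W_{r+1}\Omega^\bullet\to\Omega^\bullet\to0
\]
in the Beilinson-truncated range. In filtered terms, the $p^{r+1}$-adic graded piece of $R\Gamma_{\crys}$ is an extension of a $p^r$-adic piece by a $p$-adic one, matched via $\varphi^r$ and $\varphi$. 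The recognition that $\tau^{\leq i}$ of the level-$(r+1)$ piece is the corresponding fibre product of the truncated level-$r$ piece and the truncated Nygaard piece is the main obstacle. I would prove it locally on a smooth affine with a Frobenius lift, using the explicit Cartier isomorphism and the de Rham--Witt description $W_r\Omega^\bullet\simeq R\Gamma_{\crys}/^{L}p^r$ decalaged (Illusie--Raynaud, Bhatt--Lurie--Mathew's $L\eta_p$ description). The point is that $\eta_p$ commutes with the relevant fibre products in the truncated range.

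\emph{Step 4: uniqueness.} Uniqueness follows formally. A complete filtration whose graded pieces map isomorphically to the Beilinson-connective cover of the $p^r$-adic filtration agrees with that cover, since a filtered map inducing isomorphisms on $\gr$ between complete filtrations is an isomorphism. This also gives the final ``equivalently'' statement.
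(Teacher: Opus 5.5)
The paper does not prove this theorem; it quotes it from Andriopoulos. On its own terms, your proposal has a genuine gap at Step~3, which carries all of the content. Step~2 also rests on a misreading of the filtered structure, and that misreading undermines Step~3 as well.

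The bases $R\Gamma_{\crys}$ of the iterated fibre product are not constant filtered objects. The rightmost factor, through which $\iota$ goes, must carry the inclusions as transition maps. Given this, the identity $\varphi_m|_{\Fil^{\geq m+1}}=p\,\varphi_{m+1}$ that you use in Step~1 forces the transition maps on the $i$-th factor, and on the base joining it to the $(i+1)$-st, to carry a factor $p^{r-1-i}$. This is $t|_i=p^{r-1-i}t_i$ in Proposition~\ref{prop:r-nygaard-comparison}. It is visible already for $W$ (Example~\ref{ex:r-nygaard-on-W}): for $r=2$ the transition map from $\Fil^{\geq m+1}$ to $\Fil^{\geq m}$ fixes $x_1$ and multiplies the left coordinate $x_0=\sigma(x_1)/p^m$ by $p$.

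With constant bases nothing would cancel in Step~2, since $0\times_C0\simeq C[-1]$; you would get $R\varprojlim_m\Fil^{\geq m}_{\Nyg_r}\simeq R\Gamma_{\crys}[-1]^{\oplus(r-1)}$. Completeness does hold, but for a different reason: the bases have transition maps $p^c$ with $c\geq1$, and $R\Gamma_{\crys}$ is derived $p$-complete.

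The same twist breaks your induction. In $\Fil^{\geq m}_{\Nyg_{r+1}}\simeq\Fil^{\geq m}_{\Nyg_r}\times_{R\Gamma_{\crys}}\Fil^{\geq m}_{\Nyg}$, the level-$r$ block enters with all its transition maps multiplied by $p$ (it is the image of $\mathsf F_r$, which sends $t$ to $pt$), and the base enters with transition map $p$. So $\gr^i_{\Nyg_{r+1}}$ is the fibre product over $R\Gamma_{\crys}/p$ of $\cofib(pt)$ on the level-$r$ block with $\gr^i_{\Nyg}$. It is not a fibre product of $\gr^i_{\Nyg_r}$ with $\gr^i_{\Nyg}$.

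Finally, the identification you call the main obstacle is only asserted, and the reason you offer is false in general. Neither $L\eta_p$ nor $\tau^{\leq i}$ commutes with fibre products. For instance, $\tau^{\leq i}A\times_{\tau^{\leq i}C}\tau^{\leq i}B$ acquires an extra $H^{i+1}$, namely the cokernel of $H^iA\oplus H^iB\to H^iC$.

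A route that works, without induction, is the explicit computation in the proof of Theorem~\ref{thm:hodge-witt}(1); read backwards, it is Example~\ref{ex:r-nygaard-dRW}.
\begin{enumerate}
\item For $X=\Spec S$, let $C=\WOm^\bullet_{S/k}$ with the Nygaard filtration of \cite[Construction~8.1.1]{BLM21}. Then $\iota_m$ is the inclusion and $\varphi_m=p^{i-m}F$ in degree $i$.
\item In each degree one of the two legs is bijective: $\varphi_m$ is inverse to $p^{m-i-1}V$ for $i<m$, and $\iota_m=\mathrm{id}$ for $i\geq m$. So the fibre products are strict, and $\Fil^{\geq m}_{\Nyg_r}C^i$ is $p^{r(m-i-1)}V^rC^i$ for $i<m$ and $C^i$ for $i\geq m$.
\item The injective map $\varphi^r=p^{ri}F^r$ carries this onto $\{x\in p^{r\max(m,i)}C^i:dx\in p^{r\max(m,i+1)}C^{i+1}\}$. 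This uses $F^rV^r=p^r$ and $F^r(C)=\{x:dx\in p^rC\}$ (proof of Proposition~\ref{prop:crystalline-Bockstein-dRW}).
\item A direct check, as in \cite[Proposition~8.2.1]{BLM21} for $r=1$, shows that this filtration by subcomplexes of $p^{r\bullet}C$ has $m$-th graded piece the subcomplex $\tau^{\leq m}(C/p^r)$ of $C/p^r$. So it is the Beilinson-connective cover of $p^{r\bullet}C$.
\item It is complete, because its $m$-th piece lies in $p^{rm}C$ in degrees below $m$.
\end{enumerate}
This gives the filtered Frobenius, (1), (2) and the final reformulation at once, and your Step~4 then gives uniqueness.
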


The affineness hypothesis is needed because the truncations
$\tau^{\leq i}$ do not commute with $R\Gamma(X,-)$.  For $X$
smooth and proper, $r=1$ and $i=0$, the Cartier isomorphism gives
$\gr^0_{\Nyg}R\Gamma_{\crys}(X/W)\simeq R\Gamma(X,\cO_X)$ up to
a Frobenius twist, which has $H^1\neq0$ in general, whereas
$\tau^{\leq0}R\Gamma_{\dR}(X/k)=H^0_{\dR}(X/k)$.  For general
$X$ the truncations must be taken sheafwise, and
Corollary~\ref{cor:bridge} gives the resulting global statement.

\begin{example}[The $r$-Nygaard filtration on $W$ itself]
\label{ex:r-nygaard-on-W}
For $X=\Spec k$ the Nygaard filtration on $W$ is the $p$-adic
filtration \cite[Example~3.2.4]{BhattFGauges}.  For $m\geq0$ the map
$\varphi_m=\sigma/p^m$ carries $p^mW$ onto $W$, and for $m\leq0$ the
inclusion $\iota_m$ is the identity.  So the fibre products are
discrete, and
$\Fil^{\geq m}_{\Nyg_r}W=p^{rm}W$ for $m\geq0$: the $r$-Nygaard
filtration on $W$ is the $p^r$-adic filtration, with
$\varphi_{r,m}=\sigma^r/p^{rm}$.
\end{example}

\begin{example}[The de Rham--Witt model]
\label{ex:r-nygaard-dRW}
Let $X=\Spec S$ be a smooth affine $k$-scheme, and let
$\WOm^\bullet_{S/k}$ be its de Rham--Witt complex, a $p$-torsion-free
model of $R\Gamma_{\crys}(X/W)$ with $\varphi=p^nF$ in degree $n$
\cite{Illusie79}.  Bhatt--Lurie--Mathew
\cite[Construction~8.1.1 and Proposition~8.2.1]{BLM21} represent the
Nygaard filtration by the subcomplexes with $n$-th term
$p^{m-n-1}V\WOm^n_{S/k}$ for $n<m$ and $\WOm^n_{S/k}$ for $n\geq m$.
By Theorem~\ref{thm:andriopoulos}, the $m$-th piece of the
$r$-Nygaard filtration is the preimage under $\varphi^r$ of the $m$-th
piece of the connective cover.  In degree $n\geq m$ there is no
condition, and in degree $n<m$ the condition is
$F^rx\in p^{r(m-n)}\WOm^n_{S/k}$.  Since $F$ is injective and
$p^cy=F(p^{c-1}Vy)$, applying this $r$ times gives
\[
 \bigl(\Fil^{\geq m}_{\Nyg_r}\WOm^\bullet_{S/k}\bigr)^n=
 \begin{cases}
  p^{r(m-n-1)}V^r\WOm^n_{S/k}, & n<m,\\[2pt]
  \WOm^n_{S/k}, & n\geq m.
 \end{cases}
\]
\end{example}

The Nygaard point $k^{\Nyg}$ is the Rees stack of the $p$-adic
filtration on $W$, and the Nygaard filtration is the gauge
$\cH_{\Nyg}(X)$ on it.  By Example~\ref{ex:r-nygaard-on-W} the
$r$-Nygaard filtration on $W$ is the $p^r$-adic filtration, so the
first candidate is the Rees stack of the $p^r$-adic filtration.

\begin{definition}[Naive \(r\)-Nygaard stack]
\label{def:naive-nygaard-stack}
The \emph{naive $r$-Nygaard stack} is the Rees stack of the
$p^r$-adic filtration on $W$,
\[
 k^{\Nyg_r}:=\Spf\bigl((W[u,t]/(ut-p^r))^\wedge_p\bigr)/\Gm,
\]
where $|u|=1$ and $|t|=-1$.
\end{definition}

Replacing $p$ by $p^r$ in the Nygaard point thus gives the naive
$r$-Nygaard stack.  At $r=1$ it is $k^{\Nyg}$.

\begin{remark}[The naive $r$-Nygaard stack and its $A_{r-1}$-singularity]
\label{rem:naive-defects}
Let $r\geq2$.  The naive $r$-Nygaard stack $k^{\Nyg_r}$ has two
drawbacks, both avoided by the genuine $r$-Nygaard stack $k^{\NN_r}$
introduced below.  First, $k^{\Nyg_r}$ is not regular: at the origin of
$W[u,t]/(ut-p^r)$ the element $ut=p^r$ lies in the square of the
maximal ideal $(p,u,t)$, so $k^{\Nyg_r}$ has an
$A_{r-1}$-singularity there.  Second, it is unclear how to construct
a $k$-algebra stack $\mathbb G_a^{\Nyg_r}$ over $k^{\Nyg_r}$ whose
transmutation would attach a stack $X^{\Nyg_r}$ to every smooth
$k$-scheme $X$, as $\mathbb G_a^{\Nyg}$ does in
Definition~\ref{def:filtered-prismatization}.  The first guess,
$\mathbb W/p^r$ in place of $\mathbb W/p$, is a
$\mathbf Z/p^r$-algebra stack and not a $k$-algebra stack, and
evaluating a $k$-scheme on it gives the empty
stack.\footnote{This was pointed out to the author by Vadim
Vologodsky.  Drinfeld \cite[\S1.6.4]{DrinfeldBT} records the same
fact: the $r$-prismatization defined by
$\operatorname{Cone}(p^r:\mathbb W\to\mathbb W)$ is empty on every
scheme over $\mathbf Z/p^{r-1}$.}
\end{remark}

The toric resolution of $ut=z^r$ is a chain of $r$ copies of $ut=z$,
and base change along $z\mapsto p$ turns it into a chain of $r$ copies of the Nygaard point.  We call this chain the \emph{genuine $r$-Nygaard
stack}.  As in equivariant homotopy theory, where $\mathrm{TR}^r$ is
built from genuine fixed points (Remark~\ref{rem:TR-motivation}),
``genuine'' is opposed to ``naive''.  The
genuine $r$-Nygaard stack $k^{\NN_r}$ is regular,
it maps properly onto $k^{\Nyg_r}$, and the Frobenius
gluing of adjacent Nygaard points, with the de Rham locus of each
glued to the Hodge--Tate locus of the next, is exactly what encodes
the iterated fibre product of Definition~\ref{def:r-nygaard-filtration},
with the coefficient twists that make its steps $W$-linear.

\begin{definition}[Genuine \(r\)-Nygaard stack]
\label{def:adjacent-transition}
For $i\geq0$, let $W^{(i)}:=(\sigma^i)^*W=W\otimes_{W,\sigma^i}W$,
and write $a^{(i)}:=1\otimes a$ for the copy of $a\in W$ in the
second factor.  Thus
$W^{(i)}$ is a copy of $W$ whose structural map is
$a\mapsto\sigma^{-i}(a)^{(i)}$.  The \emph{$i$-th Nygaard
chart} $k^{\Nyg}_i$ and its chart ring $B_i$ are
\[
 B_i:=\bigl(W^{(i)}[u_i,t_i]/(u_it_i-p)\bigr)^{\wedge}_p,
 \qquad k^{\Nyg}_i:=\Spf(B_i)/\Gm,
\]
where $|u_i|=1$ and $|t_i|=-1$, with structural map
$\lambda_i:W\to B_i$, $\lambda_i(a)=\sigma^{-i}(a)^{(i)}$.
The transition map between adjacent charts,
$\alpha_i:(k^{\Nyg}_i)_{t_i\neq0}\xrightarrow{\sim}(k^{\Nyg}_{i+1})_{u_{i+1}\neq0}$,
is the isomorphism given on coordinates by
\begin{equation}\label{eq:overlap-gluing}
 a^{(i+1)}\longmapsto\sigma(a)^{(i)},
 \qquad
 u_{i+1}\longmapsto t_i^{-1},
 \qquad
 t_{i+1}\longmapsto p\,t_i.
\end{equation}
The \emph{genuine $r$-Nygaard stack} is the chain
\[
 k^{\NN_r}:=k^{\Nyg}_0\cup_{\alpha_0}k^{\Nyg}_1\cup_{\alpha_1}\cdots
 \cup_{\alpha_{r-2}}k^{\Nyg}_{r-1}.
\]
\end{definition}

The formulas \eqref{eq:overlap-gluing} preserve weights and the
relation $u_it_i=p$, and they are invertible, with inverse
$a^{(i)}\mapsto\sigma^{-1}(a)^{(i+1)}$,
$t_i\mapsto u_{i+1}^{-1}$, $u_i\mapsto p\,u_{i+1}$.
Moreover $\alpha_i^*\lambda_{i+1}=\lambda_i$, so each transition
map is $W$-linear.  Since the two open loci of a chart are disjoint
(\S\ref{sec:syntomic-point}), only adjacent charts meet, along
$k^{\Nyg}_i\cap k^{\Nyg}_{i+1}=(k^{\Nyg}_i)_{t_i\neq0}\simeq(k^{\Nyg}_{i+1})_{u_{i+1}\neq0}$,
and there is no cocycle condition to check.

Next we construct the contraction map $k^{\NN_r}\to k^{\Nyg_r}$,
the analogue of the resolution of an $A_{r-1}$-singularity.

\begin{proposition}[The contraction map \(g_r\)]
\label{prop:contraction-map}
There is a $W$-linear morphism $g_r:k^{\NN_r}\longrightarrow k^{\Nyg_r}$ whose restriction $g_{r,i}:k^{\Nyg}_i\to k^{\Nyg_r}$ is given on coordinates by
\[
 a\longmapsto\lambda_i(a)=\sigma^{-i}(a)^{(i)},
 \qquad
 u\longmapsto p^iu_i,
 \qquad
 t\longmapsto p^{r-1-i}t_i
\]
for $a\in W$ and $0\leq i<r$.
\end{proposition}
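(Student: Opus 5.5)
The plan is to define $g_r$ chart by chart and then check that the chart maps agree on overlaps. Since $k^{\NN_r}$ is the colimit of the charts $k^{\Nyg}_i$ glued along the isomorphisms $\alpha_i$, and only adjacent charts meet, a morphism out of $k^{\NN_r}$ is the same as a family of morphisms $g_{r,i}:k^{\Nyg}_i\to k^{\Nyg_r}$ with $g_{r,i+1}\circ\alpha_i=g_{r,i}$ on $(k^{\Nyg}_i)_{t_i\neq0}$ for $0\leq i\leq r-2$. No cocycle condition arises.

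First, I check that each $g_{r,i}$ is a well-defined morphism of $\Gm$-quotients of formal spectra. The formula defines a map of graded rings $W[u,t]\to B_i$: $u\mapsto p^iu_i$ has weight $1$, $t\mapsto p^{r-1-i}t_i$ has weight $-1$, and $a\mapsto\lambda_i(a)$ is a ring map of weight zero. The relation is respected, because $ut\mapsto p^{i}p^{r-1-i}u_it_i=p^{r-1}\cdot p=p^r$. The map is continuous for the $p$-adic topologies, so it extends to the completions. This gives a graded map $(W[u,t]/(ut-p^r))^\wedge_p\to B_i$, hence a $\Gm$-equivariant map of formal spectra and a morphism of quotient stacks. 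It is $W$-linear by construction, since the coefficient map is exactly the structural map $\lambda_i$.

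Next comes compatibility on the overlaps, which is the only real content. I compute $\alpha_i^*\circ g_{r,i+1}^*$ on generators using \eqref{eq:overlap-gluing}. For coefficients, $a\mapsto\sigma^{-i-1}(a)^{(i+1)}\mapsto\sigma(\sigma^{-i-1}(a))^{(i)}=\sigma^{-i}(a)^{(i)}$; this is the identity $\alpha_i^*\lambda_{i+1}=\lambda_i$ already noted. For $t$, we get $t\mapsto p^{r-2-i}t_{i+1}\mapsto p^{r-2-i}\cdot p\,t_i=p^{r-1-i}t_i$, which agrees with $g_{r,i}^*(t)$. For $u$, we get $u\mapsto p^{i+1}u_{i+1}\mapsto p^{i+1}t_i^{-1}$. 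On the locus $t_i\neq0$ we have $u_i=pt_i^{-1}$, so $p^{i+1}t_i^{-1}=p^iu_i=g_{r,i}^*(u)$ in $B_i[t_i^{-1}]^\wedge_p$. Thus the two pullbacks agree as graded ring maps into the coordinate ring of the overlap, and the chart maps glue.

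The main obstacle, though a mild one, is the gluing formalism: checking that a morphism out of the chain is determined by compatible chart maps, with agreement of ring maps on the localized-completed overlap. Because the charts cover $k^{\NN_r}$ Zariski-locally, and each overlap is the single open $(k^{\Nyg}_i)_{t_i\neq0}$, this follows from $k^{\NN_r}$ being defined as the pushout of the charts along these opens. The equality of ring maps on generators then suffices, since the overlap's coordinate ring is a completed localization of $B_i$.
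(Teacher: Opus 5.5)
Your proposal is correct and follows the paper's proof: you check that the chart formulas preserve weights and send $ut$ to $p^r$, and then verify $\alpha_i^*\circ g_{r,i+1}^*=g_{r,i}^*$ on coefficients, on $t$, and on $u$. These checks use $\alpha_i^*\lambda_{i+1}=\lambda_i$, $\alpha_i^*(p^{r-2-i}t_{i+1})=p^{r-1-i}t_i$, and $\alpha_i^*(p^{i+1}u_{i+1})=p^{i+1}t_i^{-1}=p^iu_i$ on the overlap.
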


\begin{proof}
The formulas preserve weights and send $ut$ to
$p^{r-1}u_it_i=p^r$, so they define maps on the charts.
Their coefficient maps agree on overlaps because
$\alpha_i^*\lambda_{i+1}=\lambda_i$.
For the coordinates,
\[
 \begin{aligned}
 \alpha_i^*(p^{i+1}u_{i+1})
   &=p^{i+1}t_i^{-1}=p^iu_i,\\
 \alpha_i^*(p^{r-2-i}t_{i+1})
   &=p^{r-1-i}t_i.
 \end{aligned}
\]
Hence the chart maps glue to $g_r$.
\end{proof}

Resolving the singularity is not the only purpose of this
construction.  What makes the genuine $r$-Nygaard stack $k^{\NN_r}$ useful is
that its charts are glued by the same Frobenius that glues the two
loci of $k^{\Nyg}$ into $k^{\syn}$.  So the chart inclusions
$k^{\Nyg}_i\to k^{\Nyg}\to k^{\syn}$ agree on the overlaps and glue
to a map from $k^{\NN_r}$ to the syntomic point.

\begin{proposition}[The syntomic projection \(f_r\)]
\label{prop:syntomic-projection}
For $0\leq i<r$, let $f_{r,i}:k^{\Nyg}_i\to k^{\Nyg}$ be the map
given on coordinates by $a\mapsto a^{(i)}$, $u\mapsto u_i$,
$t\mapsto t_i$.  It is $\sigma^i$-semilinear over $W$, since
$f_{r,i}^*a=\lambda_i(\sigma^i(a))$.  The maps $j_{\Nyg}\circ f_{r,i}$
glue to a canonical morphism $f_r:k^{\NN_r}\to k^{\syn}$.
\end{proposition}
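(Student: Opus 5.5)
Since $k^{\NN_r}$ is the colimit of the charts $k^{\Nyg}_i$ glued along the open immersions $\alpha_i$, and only adjacent charts meet, a morphism out of $k^{\NN_r}$ is given by maps on each chart that agree on each overlap. No cocycle condition is needed, since there are no triple overlaps. So the plan is to check, for each $0\le i\le r-2$, that
\[
 j_{\Nyg}\circ f_{r,i}|_{(k^{\Nyg}_i)_{t_i\neq0}}
 \simeq
 j_{\Nyg}\circ f_{r,i+1}|_{(k^{\Nyg}_{i+1})_{u_{i+1}\neq0}}\circ\alpha_i
\]
as maps $(k^{\Nyg}_i)_{t_i\neq0}\to k^{\syn}$, with a chosen homotopy. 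First one notes that $f_{r,i}$ is well defined: it preserves weights and $u_it_i=p$, and the semilinearity claim is the identity $\sigma^{-i}(\sigma^i a)^{(i)}=a^{(i)}$.

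On the left, $f_{r,i}$ sends the de Rham locus $t_i\neq0$ of the $i$-th chart isomorphically onto the de Rham locus $(k^{\Nyg})_{t\neq0}$, via $u\mapsto pt_i^{-1}$, $t\mapsto t_i$, $a\mapsto a^{(i)}$. On the right, $f_{r,i+1}$ sends the Hodge--Tate locus of the $(i+1)$-st chart onto $(k^{\Nyg})_{u\neq0}$. Composing with $\alpha_i$ and using \eqref{eq:overlap-gluing}, the right-hand map is $u\mapsto t_i^{-1}$, $t\mapsto pt_i$, $a\mapsto\sigma(a)^{(i)}$.

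The main step is to match these with the two inclusions $j_{\HT}$ and $j_{\dR}=\widetilde j_{\dR}\circ\varphi^{-1}$ of $k^\Delta=\Spf(W)$. Both loci are identified with $\Spf(W)$ by passing to weight zero: $u^{-1}$ trivializes the Hodge--Tate locus and $t$ trivializes the de Rham locus. Under these identifications, the left-hand map becomes $\widetilde j_{\dR}\circ\psi$ with $\psi^*(a)=a^{(i)}$. The right-hand map becomes $j_{\HT}\circ\psi'$ with $\psi'^*(a)=\sigma(a)^{(i)}$, that is, $\psi'=\varphi^{-1}\circ\psi$ or its inverse, depending on conventions. The computation to carry out is that $\widetilde j_{\dR}\circ\psi=j_{\dR}\circ\varphi\circ\psi$, while $j_{\HT}\circ\psi'$ differs from it by exactly one Frobenius. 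The pushout square of Definition~\ref{def:syntomification} gives $j_{\Nyg}\circ j_{\HT}\simeq j_{\Nyg}\circ j_{\dR}$, and precomposing this identification with the appropriate map $\Spf(W^{(i)})\to k^\Delta$ produces the required homotopy on the overlap.

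I expect the main obstacle to be bookkeeping of the Frobenius twist rather than any conceptual point. One has to check that the direction of $\varphi$ in $j_{\dR}=\widetilde j_{\dR}\circ\varphi^{-1}$ agrees with the twist $a^{(i+1)}\mapsto\sigma(a)^{(i)}$ in $\alpha_i$, which is contravariant on coordinates. One also has to check that the weight-zero trivializations ($u^{-1}$ versus $t$) agree, since on the overlap $u_{i+1}^{-1}=t_i$ exactly. If the twist comes out in the wrong direction, the fix is to compare via the descent datum $\varphi^*E^\infty\simeq E^{-\infty}$ rather than via the maps of stacks. As a sanity check, pulling back an $F$-gauge $E$ should give, on the overlap, the identification of the de Rham pullback of chart $i$ with the Frobenius-twisted Hodge--Tate pullback of chart $i+1$. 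Canonicity holds because the only choice made is the pushout homotopy of $k^{\syn}$.
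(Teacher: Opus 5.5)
Your outline is the paper's proof. Both restrictions to the overlap $(k^{\Nyg}_i)_{t_i\neq0}$ are factored through maps $\Spf(W^{(i)})\to k^\Delta$, followed by $\widetilde j_{\dR}$ and $j_{\HT}$. The gluing of $k^{\syn}$ is then pulled back, and there are no triple overlaps to check. Your coefficient maps $\psi^*(a)=a^{(i)}$ and $\psi'^*(a)=\sigma(a)^{(i)}$ are also correct. However, the one step that is not formal is left undecided, and your first guess for it is wrong. You yourself call this step the main obstacle and write ``$\varphi^{-1}\circ\psi$ or its inverse, depending on conventions''.

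There is no freedom of convention. Since $\varphi$ is the Witt-vector Frobenius, $\varphi^*=\sigma$, and pullback reverses composition, so $(\varphi\circ\psi)^*(a)=\psi^*(\sigma(a))=\sigma(a)^{(i)}=\psi'^*(a)$. Thus $\psi'=\varphi\circ\psi$, not $\varphi^{-1}\circ\psi$. On the overlap this gives $f_{r,i}=\widetilde j_{\dR}\circ\psi=j_{\dR}\circ\varphi\circ\psi=j_{\dR}\circ\psi'$. On the other side, $f_{r,i+1}\circ\alpha_i=j_{\HT}\circ\psi'$ because $j_{\HT}$ is $W$-linear. So both maps factor through the same $\psi'$, which is the paper's $y$. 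Precomposing the gluing $j_{\Nyg}\circ j_{\HT}\simeq j_{\Nyg}\circ j_{\dR}$ with $\psi'$ is then the required identification.

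With the other direction you would be comparing $j_{\dR}\circ\varphi^2\circ\psi'$ with $j_{\HT}\circ\psi'$. Since $j_\Delta$ is an open immersion, these are different maps to $k^{\syn}$ as soon as $\sigma^2\neq\mathrm{id}$ on $W$. So this check is the actual content of the proposition, not bookkeeping to be settled afterwards.

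Your fallback would not help either. The descent datum $\varphi^*E^{\infty}\simeq E^{-\infty}$ encodes exactly the same gluing $j_{\HT}\simeq j_{\dR}$. If the twist in $\alpha_i$ were in the wrong direction, the chart maps would genuinely fail to glue, and one would have to change $\alpha_i$ rather than the method of comparison. Once the one-line computation above is inserted, your argument is complete and coincides with the paper's.
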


\begin{proof}
Taking weight zero identifies the overlap
$(k^{\Nyg}_i)_{t_i\neq0}=\Spf\bigl(W^{(i)}[t_i^{\pm1}]\bigr)^{\wedge}_p/\Gm$
with $\Spf(W^{(i)})$.  Let $y:\Spf(W^{(i)})\to k^\Delta$ be the map
$y^*(a)=\sigma(a)^{(i)}$.  The map $f_{r,i}$ sends $t$ to $t_i$, so it
lands in the de Rham locus, and after trivializing $t$ and $t_i$ its
coefficient map is $a\mapsto a^{(i)}$.  Hence
$f_{r,i}=\widetilde j_{\dR}\circ y'$ with $y'^*(a)=a^{(i)}$.  Since
$\widetilde j_{\dR}=j_{\dR}\circ\varphi$ and $\varphi\circ y'=y$, we
get $f_{r,i}=j_{\dR}\circ y$.  The map
$f_{r,i+1}\circ\alpha_i$ sends $u$ to $t_i^{-1}$, so it lands in the
Hodge--Tate locus, and after trivializing $u$ and $t_i$ its coefficient
map is $a\mapsto a^{(i+1)}\mapsto\sigma(a)^{(i)}$.  As $j_{\HT}$ is
$W$-linear, $f_{r,i+1}\circ\alpha_i=j_{\HT}\circ y$.  The gluing
$j_{\HT}\simeq j_{\dR}$ in $k^{\syn}$
(Construction~\ref{conv:frobenius-gluing}) therefore identifies
$j_{\Nyg}\circ f_{r,i}$ with $j_{\Nyg}\circ f_{r,i+1}\circ\alpha_i$ on
the overlap.  Since chart $i+1$ meets chart $i$ and chart $i+2$ in its
disjoint loci $u_{i+1}\neq0$ and $t_{i+1}\neq0$, there are no triple
overlaps, and these identifications glue the maps
$j_{\Nyg}\circ f_{r,i}$ to $f_r$.
\end{proof}

The next lemma describes the syntomic projection $f_r$ over the
\'etale cover $j_{\Nyg}:k^{\Nyg}\to k^{\syn}$.

\begin{lemma}[The syntomic projection over the Nygaard point]
\label{lem:syntomic-projection-fibre}
There is an equivalence over $k^{\Nyg}$
\[
 k^{\NN_r}\times_{k^{\syn}}k^{\Nyg}\simeq
 \coprod_{i=0}^{r-1}k^{\Nyg}_i\amalg(k^{\Nyg}_0)_{u_0\neq0}
 \amalg(k^{\Nyg}_{r-1})_{t_{r-1}\neq0},
\]
where $k^{\Nyg}_i$ maps to $k^{\Nyg}$ by $f_{r,i}$, and the two end
summands map isomorphically onto the de Rham and the Hodge--Tate locus
of $k^{\Nyg}$, respectively.  In particular, $f_r$ is finite on every
closed substack of $k^{\NN_r}$ that is disjoint from the two ends
$(k^{\Nyg}_0)_{u_0\neq0}$ and $(k^{\Nyg}_{r-1})_{t_{r-1}\neq0}$.
\end{lemma}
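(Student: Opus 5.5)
We compute the fibre product chart by chart, using that $k^{\syn}$ is the pushout of Definition~\ref{def:syntomification}. Since $k^{\NN_r}$ is glued from the charts $k^{\Nyg}_i$ along the overlaps $(k^{\Nyg}_i)_{t_i\neq0}\simeq(k^{\Nyg}_{i+1})_{u_{i+1}\neq0}$, and fibre products commute with these colimits, $k^{\NN_r}\times_{k^{\syn}}k^{\Nyg}$ is glued from the pieces $k^{\Nyg}_i\times_{k^{\syn}}k^{\Nyg}$, taken with respect to $j_{\Nyg}\circ f_{r,i}$. So the first step is to compute $k^{\Nyg}\times_{k^{\syn}}k^{\Nyg}$. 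From the pushout square, $j_{\Nyg}$ is an \'etale local isomorphism that identifies the Hodge--Tate locus with the de Rham locus via $j_{\HT}\simeq j_{\dR}$ and nothing else. The self-fibre product is therefore the diagonal copy $k^{\Nyg}$ together with two off-diagonal copies of $k^\Delta$: one is the pair $(j_{\HT}(x),j_{\dR}(x))$ and the other is $(j_{\dR}(x),j_{\HT}(x))$. I would justify this by descent: $k^{\syn}$ is the colimit of the \v{C}ech groupoid of $j_{\Nyg}$, whose space of arrows is exactly this disjoint union. This is the main point to get right, specifically that the pushout gives an effective groupoid with these arrows and no further identifications. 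I would argue it from the fact that the two opens are disjoint, so the equivalence relation generated by the gluing has no composites.

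Pulling back along $f_{r,i}$, which is an isomorphism $k^{\Nyg}_i\simeq k^{\Nyg}$ up to a coefficient twist, I get
\[
 k^{\Nyg}_i\times_{k^{\syn}}k^{\Nyg}\simeq k^{\Nyg}_i\amalg(k^{\Nyg}_i)_{u_i\neq0}\amalg(k^{\Nyg}_i)_{t_i\neq0}.
\]
Here the second projection is $f_{r,i}$ on the first summand. On $(k^{\Nyg}_i)_{u_i\neq0}$ it is the HT-to-dR transfer, landing in the de Rham locus, and on $(k^{\Nyg}_i)_{t_i\neq0}$ it is the dR-to-HT transfer, landing in the Hodge--Tate locus.

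Next I glue. By the proof of Proposition~\ref{prop:syntomic-projection}, on the overlap of charts $i$ and $i+1$ we have $f_{r,i}=j_{\dR}\circ y$ and $f_{r,i+1}\circ\alpha_i=j_{\HT}\circ y$. Hence the summand $(k^{\Nyg}_i)_{t_i\neq0}$ of chart $i$, with its dR-to-HT transfer map, is identified with the open part $(k^{\Nyg}_{i+1})_{u_{i+1}\neq0}$ of the diagonal summand of chart $i+1$. Symmetrically, the summand $(k^{\Nyg}_{i+1})_{u_{i+1}\neq0}$ of chart $i+1$ is identified with the open part $(k^{\Nyg}_i)_{t_i\neq0}$ of the diagonal summand of chart $i$. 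So every off-diagonal summand is absorbed into a neighbouring diagonal copy, except two. The first is $(k^{\Nyg}_0)_{u_0\neq0}$, which has no chart $-1$ to glue to and maps isomorphically onto the de Rham locus. The second is $(k^{\Nyg}_{r-1})_{t_{r-1}\neq0}$, which has no chart $r$ and maps onto the Hodge--Tate locus. The diagonal copies are no longer glued to each other, because the identifications just described match them with off-diagonal pieces, not with each other. This gives the stated disjoint union. I would check the bookkeeping carefully to confirm that no diagonal-to-diagonal gluing survives.

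For finiteness, finiteness can be checked after the \'etale base change $j_{\Nyg}$. Let $Z$ be a closed substack disjoint from the two ends. Its base change is a closed substack of $\coprod_i k^{\Nyg}_i$, and each $f_{r,i}$ is an isomorphism up to a Frobenius twist of the coefficients, hence finite. A finite disjoint union of closed immersions into finite maps is finite.
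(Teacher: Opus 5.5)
Your proposal is correct and follows the paper's own argument: compute $k^{\Nyg}\times_{k^{\syn}}k^{\Nyg}\simeq k^{\Nyg}\amalg k^\Delta\amalg k^\Delta$ from the pushout along disjoint images, pull it back chart by chart, absorb each off-diagonal overlap piece into the adjacent chart copy using $f_{r,i}=j_{\dR}\circ y$ and $f_{r,i+1}\circ\alpha_i=j_{\HT}\circ y$ from the proof of Proposition~\ref{prop:syntomic-projection}, and check finiteness \'etale locally on the target. Your additional points fill in details the paper leaves implicit: that the gluing creates no composites or loops because the two loci are disjoint, and that the two lifts of each overlap point are matched diagonal-to-off-diagonal.
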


\begin{proof}
As $k^{\syn}$ is the pushout of $j_{\HT},j_{\dR}:k^\Delta\to k^{\Nyg}$
along disjoint images, $k^{\Nyg}\times_{k^{\syn}}k^{\Nyg}\simeq
k^{\Nyg}\amalg k^\Delta\amalg k^\Delta$.  Over the chart $k^{\Nyg}_i$
this adds copies of $(k^{\Nyg}_i)_{u_i\neq0}$ and
$(k^{\Nyg}_i)_{t_i\neq0}$, which the gluing of
Proposition~\ref{prop:syntomic-projection} identifies with open parts
of the adjacent charts, except at the two ends.  This gives the
decomposition.  A closed substack $Z$ disjoint from the ends therefore
has $Z\times_{k^{\syn}}k^{\Nyg}\simeq\coprod_i(Z\cap k^{\Nyg}_i)$, a
disjoint union of closed substacks of $k^{\Nyg}$, so $Z\to k^{\syn}$ is
finite, as finiteness is \'etale local on the target.
\end{proof}

We have now constructed, for each $r\geq1$, the correspondence
\[
\begin{tikzcd}[row sep=large, column sep=large]
 & k^{\NN_r} \arrow[dl, "f_r"'] \arrow[dr, "g_r"] & \\
 k^{\syn} & & k^{\Nyg_r}.
\end{tikzcd}
\]
Quasi-coherent complexes on $k^{\NN_r}$ and on $k^{\Nyg_r}$ have
concrete descriptions, which we name after the stacks.

\begin{definition}[Genuine and naive $r$-gauges]
\label{def:r-gauges}
A \emph{naive $r$-gauge} is an object $E$ of $\Dqc(k^{\Nyg_r})$.
As in Definition~\ref{def:nygaard-point}, it has maps
$u:E^n\to E^{n+1}$ and $t:E^{n+1}\to E^n$, and now their composites
are multiplication by $p^r$.  A \emph{genuine $r$-gauge} is an
object $\mathcal E$ of $\Dqc(k^{\NN_r})$.  Since only adjacent charts
meet, it is a chain
$(E_0,\ldots,E_{r-1})$ of gauges $E_i\in\Dqc(k^{\Nyg}_i)$ together
with identifications
\[
 \delta_i:E_i|_{t_i\neq0}\xrightarrow{\ \sim\ }
 \alpha_i^*\bigl(E_{i+1}|_{u_{i+1}\neq0}\bigr),
 \qquad 0\leq i\leq r-2,
\]
that is, of the de Rham pullback $E_i^{-\infty}$ with the
Hodge--Tate pullback $E_{i+1}^{\infty}$ along the transition map.
At $r=1$ both notions are gauges.  When no confusion with genuine
$r$-gauges can arise, we refer to naive $r$-gauges as just
$r$-gauges.
\end{definition}

The genuine $r$-Nygaard stack $k^{\NN_r}$ also has a version for a smooth
$k$-scheme $X$, obtained by base change from the syntomification.

\begin{definition}[Genuine $r$-Nygaard prismatization]
\label{def:genuine-r-nygaard-prismatization}
For a smooth $k$-scheme $X$, the \emph{genuine $r$-Nygaard
prismatization} of $X$ is
\[
 X^{\NN_r}:=X^{\syn}\times_{k^{\syn}}k^{\NN_r},
\]
with structure map $\pi_X:X^{\NN_r}\to k^{\NN_r}$.
\end{definition}

Since $X^{\Nyg}\simeq X^{\syn}\times_{k^{\syn}}k^{\Nyg}$ and
$f_{r,i}$ is an isomorphism onto $k^{\Nyg}$, the restriction of
$X^{\NN_r}$ to the chart $k^{\Nyg}_i$ is a copy of $X^{\Nyg}$ with
coefficients twisted to $W^{(i)}$, and $X^{\NN_r}$ is a chain of
$r$ copies of the filtered prismatization $X^{\Nyg}$ glued along
Frobenius.  At $r=1$ it is $X^{\Nyg}$.  We do not know a
corresponding construction over the naive $r$-Nygaard stack
$k^{\Nyg_r}$, for the reason
given in Remark~\ref{rem:naive-defects}.

The two legs play different roles.  The syntomic projection $f_r$
is built from the Frobenius gluing, and pulling back along it turns
an $F$-gauge $E$ into a genuine $r$-gauge.  On the $i$-th chart the
pullback is $f_{r,i}^*E$, which is the underlying gauge of $E$ with
its coefficients twisted to $W^{(i)}$.  On the overlap
$(k^{\Nyg}_i)_{t_i\neq0}$ the two chart maps $f_{r,i}$ and
$f_{r,i+1}\circ\alpha_i$ land in the de Rham and the Hodge--Tate
locus of $k^{\Nyg}$ and have the same composite to $k^{\syn}$, so
the two restrictions of the pullback are identified by the descent
datum of $E$.  This identification is $\delta_i$.  The contraction $g_r$ is built from the
$p^r$-adic filtration, and pushing forward along it turns a genuine
$r$-gauge into an $r$-gauge, a graded module over
$\Rees(p^{r\bullet}W)$, which is the shape of an $r$-Nygaard
filtered object.  At $r=1$ the correspondence collapses:
$g_1=\mathrm{id}_{k^{\Nyg}}$ and $f_1=j_{\Nyg}$.

Finally, we construct two families of maps on the genuine and on
the naive $r$-Nygaard stacks $k^{\NN_r}$ and $k^{\Nyg_r}$ between adjacent levels $r$ and $r+1$.
A chain of length $r$ sits in one of length $r+1$ as either the
right or the left sub-chain, and the two inclusions, with the
coefficient Frobenius, are called the restriction map $\mathsf R_r$
and the Frobenius map $\mathsf F_r$.  We use
the same names for the corresponding maps on the naive stacks
$k^{\Nyg_r}$, and
the source distinguishes the two.

\begin{definition}[Restriction and Frobenius]
\label{def:level-maps}
The maps
$\mathsf R_r,\mathsf F_r:k^{\NN_r}\to k^{\NN_{r+1}}$
are defined as follows.
Restriction sends $k^{\Nyg}_i$ to $k^{\Nyg}_{i+1}$, and is given
on coordinates by
\[
 a^{(i+1)}\longmapsto\sigma(a)^{(i)},\qquad
 u_{i+1}\longmapsto u_i,\qquad t_{i+1}\longmapsto t_i.
\]
Frobenius sends $k^{\Nyg}_i$ to $k^{\Nyg}_i$, and is given on
coordinates by
\[
 a^{(i)}\longmapsto\sigma(a)^{(i)},\qquad
 u_i\longmapsto u_i,\qquad t_i\longmapsto t_i.
\]
These maps respect the transition maps between adjacent charts and hence glue.
Informally, with $k^{\NN_r}$ in the top row, $k^{\NN_{r+1}}$ in the
bottom row and the gluings $\alpha_i$ as horizontal lines, they can be
pictured as follows
\[
\begin{tikzcd}[column sep=small, row sep=large]
 k^{\Nyg}_0 \arrow[r,dash] \arrow[d,"\mathsf F_r"'] \arrow[dr,"\mathsf R_r"]
 & k^{\Nyg}_1 \arrow[r,dash] \arrow[d] \arrow[dr]
 & \cdots \arrow[r,dash]
 & k^{\Nyg}_{r-1} \arrow[d] \arrow[dr] & \\
 k^{\Nyg}_0 \arrow[r,dash]
 & k^{\Nyg}_1 \arrow[r,dash]
 & \cdots \arrow[r,dash]
 & k^{\Nyg}_{r-1} \arrow[r,dash]
 & k^{\Nyg}_r .
\end{tikzcd}
\]
On the naive stacks, define
$\mathsf R_r,\mathsf F_r:k^{\Nyg_r}\to k^{\Nyg_{r+1}}$
on coordinates by
\[
 \mathsf R_r:\quad a\longmapsto a,
 \qquad u\longmapsto pu,
 \qquad t\longmapsto t,
\]
\[
 \mathsf F_r:\quad a\longmapsto\sigma(a),
 \qquad u\longmapsto u,
 \qquad t\longmapsto pt.
\]
Both preserve the defining relation.
On each tower, restriction is $W$-linear and Frobenius is
$\sigma$-semilinear.
\end{definition}

\begin{proposition}[Compatibility of $\mathsf R_r$ and $\mathsf F_r$ with the correspondence]
\label{prop:level-maps-compatible}
The following two squares commute, for restriction and for
Frobenius:
\[
\begin{tikzcd}[column sep=large, row sep=large]
 k^{\NN_r} \arrow[r, shift left, "\mathsf R_r"] \arrow[r, shift right, "\mathsf F_r"'] \arrow[d, "g_r"']
 & k^{\NN_{r+1}} \arrow[d, "g_{r+1}"]\\
 k^{\Nyg_r} \arrow[r, shift left, "\mathsf R_r"] \arrow[r, shift right, "\mathsf F_r"']
 & k^{\Nyg_{r+1}},
\end{tikzcd}
\qquad
\begin{tikzcd}[column sep=large, row sep=large]
 k^{\NN_r} \arrow[r, shift left, "\mathsf R_r"] \arrow[r, shift right, "\mathsf F_r"'] \arrow[d, "f_r"']
 & k^{\NN_{r+1}} \arrow[d, "f_{r+1}"]\\
 k^{\syn} \arrow[r, "\Phi_{\syn}"]
 & k^{\syn}.
\end{tikzcd}
\]
In each square the upper horizontal arrows form one commutative
square and the lower horizontal arrows another.
\end{proposition}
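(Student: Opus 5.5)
Both squares are checked chart by chart, since $k^{\NN_r}$ is glued from the charts $k^{\Nyg}_i$ along the open immersions $\alpha_i$ (Definition~\ref{def:adjacent-transition}). A morphism out of $k^{\NN_r}$ is determined by its restrictions to the charts together with the gluing data on overlaps. So for each square and each choice of horizontal arrow, I will compare the two composites on each chart $k^{\Nyg}_i$, $0\leq i<r$, using the coordinate formulas for $\mathsf R_r$ and $\mathsf F_r$ (Definition~\ref{def:level-maps}), for $g_r$ (Proposition~\ref{prop:contraction-map}) and for $f_{r,i}$ (Proposition~\ref{prop:syntomic-projection}).

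For the left square, everything is a map of affine formal schemes modulo $\Gm$, so equality of pullbacks on coordinates suffices. For restriction, $k^{\Nyg}_i\to k^{\Nyg}_{i+1}\to k^{\Nyg_{r+1}}$ pulls back as follows:
\begin{itemize}
\item $u\mapsto p^{i+1}u_{i+1}\mapsto p^{i+1}u_i$;
\item $t\mapsto p^{r-1-i}t_{i+1}\mapsto p^{r-1-i}t_i$;
\item $a\mapsto\sigma^{-i-1}(a)^{(i+1)}\mapsto\sigma^{-i}(a)^{(i)}$.
\end{itemize}
The other composite $k^{\Nyg}_i\to k^{\Nyg_r}\to k^{\Nyg_{r+1}}$ gives $u\mapsto pu\mapsto p^{i+1}u_i$, $t\mapsto t\mapsto p^{r-1-i}t_i$ and $a\mapsto a\mapsto\sigma^{-i}(a)^{(i)}$, so the two agree. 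For Frobenius, the chart goes to $k^{\Nyg}_i\subset k^{\NN_{r+1}}$. One composite gives $u\mapsto p^iu_i$, $t\mapsto p^{r-i}t_i$ and $a\mapsto\sigma(\sigma^{-i}(a))^{(i)}=\sigma^{1-i}(a)^{(i)}$. The other, via $\mathsf F_r$ on $k^{\Nyg_r}$, gives $u\mapsto p^iu_i$, $t\mapsto p\cdot p^{r-1-i}t_i$ and $a\mapsto\lambda_i(\sigma(a))=\sigma^{1-i}(a)^{(i)}$, so these agree as well. Because the chart maps glue, agreement on every chart gives the commutativity of the left square.

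For the right square, the targets are $k^{\syn}$, so I need equivalences of morphisms that are compatible with the gluings, not merely equalities on coordinates. On charts, $f_{r+1,i+1}\circ\mathsf R_r$ pulls back as $a\mapsto a^{(i+1)}\mapsto\sigma(a)^{(i)}$, $u\mapsto u_i$, $t\mapsto t_i$, which is exactly $\Phi_{\Nyg}\circ f_{r,i}$ by Remark~\ref{rem:coefficient-Frobenius-syn}. Similarly, $f_{r+1,i}\circ\mathsf F_r$ pulls back as $a\mapsto\sigma(a)^{(i)}$ with $u,t$ fixed, which is again $\Phi_{\Nyg}\circ f_{r,i}$. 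Since $\Phi_{\syn}\circ j_{\Nyg}=j_{\Nyg}\circ\Phi_{\Nyg}$ by functoriality of $(F_k)^{\syn}$, the chartwise composites to $k^{\syn}$ agree.

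The main obstacle is the remaining point: the identifications on overlaps used to glue $f_r$ and $f_{r+1}$ must match. In Proposition~\ref{prop:syntomic-projection}, the gluing on $(k^{\Nyg}_i)_{t_i\neq0}$ came from the canonical equivalence $j_{\Nyg}\circ j_{\HT}\simeq j_{\Nyg}\circ j_{\dR}$, precomposed with $y$. I will check two things:
\begin{itemize}
\item $\mathsf R_r$ and $\mathsf F_r$ carry the overlap map $y$ for $k^{\NN_r}$ to the corresponding $y$ for $k^{\NN_{r+1}}$, twisted by $\varphi$ on $k^\Delta$; this is a coordinate check like the ones above.
\item $\Phi_{\syn}$, being $(F_k)^{\syn}$, is compatible with the pushout square of Definition~\ref{def:syntomification} and therefore preserves the canonical 2-cell $j_{\HT}\simeq j_{\dR}$. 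This is naturality of the syntomification pushout in $X$, applied to $F_k$.
\end{itemize}
There are no triple overlaps, so no higher coherences arise, and the two morphisms to $k^{\syn}$ are canonically equivalent.
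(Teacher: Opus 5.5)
Your proposal is correct and follows the paper's proof. The paper also compares the two composites on each chart $k^{\Nyg}_i$: it gets $(p^{i+1}u_i,p^{r-1-i}t_i)$ with coefficient map $\lambda_i$ for restriction, and $(p^iu_i,p^{r-i}t_i)$ with $\lambda_i\circ\sigma$ for Frobenius. It identifies both chartwise composites to $k^{\syn}$ with $\Phi_{\Nyg}\circ f_{r,i}$. Your explicit check that the overlap maps satisfy $y\circ\mathsf R_r=\varphi\circ y$ (and likewise for $\mathsf F_r$), and that $\Phi_{\syn}$ preserves the gluing $j_{\HT}\simeq j_{\dR}$, spells out what the paper states in one line as ``these equalities respect the adjacent gluings.''
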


\begin{proof}
On $k^{\Nyg}_i$, the two composites in the restriction square send
$(u,t)$ to $(p^{i+1}u_i,p^{r-1-i}t_i)$, and those in the
Frobenius square send it to $(p^iu_i,p^{r-i}t_i)$.
Their coefficient maps are $\lambda_i$ and
$\lambda_i\circ\sigma$, respectively.
This proves that the two squares over the naive stacks $k^{\Nyg_r}$ commute.
For the squares over $k^{\syn}$, both $\mathsf R_r$ and $\mathsf F_r$ apply $\sigma$
to coefficients and preserve the local Nygaard coordinates,
so the chartwise composites are $\Phi_{\Nyg}\circ f_{r,i}$.
These equalities respect the adjacent gluings and glue to the
asserted ones.
\end{proof}

\begin{remark}[{Variants of even stacks of $\mathrm{TR}^r$}]
\label{rem:TR-motivation}
The stacks $k^{\Nyg_r}$ and $k^{\NN_r}$ and the contraction $g_r$ are
modelled on topological restriction homology.  Write $\mathrm{THH}$,
$\mathrm{TC}^-$, $\mathrm{TP}$ and $\mathrm{TR}^r$ for these invariants
of $k$ with $\mathbf Z_p$-coefficients.  The genuine fixed points
$\mathrm{TR}^r=\mathrm{THH}^{C_{p^{r-1}}}$ and their $S^1$-homotopy
fixed points are iterated fibre products
\[
 \mathrm{TR}^r\simeq
 \mathrm{THH}^{hC_{p^{r-1}}}\times_{\mathrm{THH}^{tC_{p^{r-1}}}}\cdots
 \times_{\mathrm{THH}^{tC_p}}\mathrm{THH},
 \qquad
 (\mathrm{TR}^r)^{hS^1}\simeq
 \mathrm{TC}^-\times_{\mathrm{TP}}\cdots\times_{\mathrm{TP}}\mathrm{TC}^-
\]
along the canonical maps and Frobenius, with factors numbered from $0$
to $r-1$ \cite[\S II.4]{NikolausScholze}, \cite[\S2]{AndriopoulosTR}.
Restriction forgets the first factor.  Frobenius forgets the last, and
on $\mathrm{TR}^r$ it also applies the maps
$\mathrm{THH}^{hC_{p^k}}\to\mathrm{THH}^{hC_{p^{k-1}}}$ to the others.
These maps become identities on $S^1$-homotopy fixed points, so there
restriction and Frobenius forget the first and the last factor, as
$\mathsf R_r$ and $\mathsf F_r$ do on the genuine stacks $k^{\NN_r}$
(Definition~\ref{def:level-maps}).

All rings here are even.  For an even $p$-complete
$\mathbf E_\infty$-ring $A$, we use the $p$-complete variant of the even
stack $\mathrm{Spev}(A)$ \cite[\S2.1]{DevalapurkarThesis}, which
geometrizes the even filtration of Hahn, Raksit and Wilson \cite{HRW}.
Directly from the definition, it is $\Spf(\pi_{2*}A)/\Gm$ with
$\pi_{2n}A$ in weight $n$.  Recall that
$\pi_*\mathrm{TC}^-=W[u,t]/(ut-p)$, with $u$ and $t$ in degrees $2$ and
$-2$, and that the canonical map and Frobenius to $\mathrm{TP}$ invert
$t$ and $u$ \cite[\S IV.4]{NikolausScholze}.  The fibre product is computed degreewise \cite[\S3]{AndriopoulosTR}.
In degree $2n\geq0$ a compatible family $(a_iu^n)_i$ satisfies
$p^na_i=\sigma(a_{i+1})$, and in degree $-2n$ a family $(b_it^n)_i$
satisfies $b_i=p^n\sigma(b_{i+1})$.  So
$\pi_*(\mathrm{TR}^r)^{hS^1}\simeq W[u,t]/(ut-p^r)$, where the $i$-th
projection sends $u$ and $t$ to $p^iu$ and $p^{r-1-i}t$ and acts on $W$
by $\sigma^{-i}$, as $g_r$ does on the $i$-th chart
(Proposition~\ref{prop:contraction-map}).  So taking even stacks
before and after the fibre product gives the genuine and the naive
$r$-Nygaard stacks $k^{\NN_r}$ and $k^{\Nyg_r}$, and $g_r$ is the comparison map:
\[
 k^{\NN_r}\simeq\operatorname{coeq}\Bigl(\coprod_{i=0}^{r-2}\mathrm{Spev}(\mathrm{TP})
 \rightrightarrows\coprod_{i=0}^{r-1}\mathrm{Spev}(\mathrm{TC}^-)\Bigr)
 \xrightarrow{\ g_r\ }\mathrm{Spev}\bigl((\mathrm{TR}^r)^{hS^1}\bigr)\simeq k^{\Nyg_r}.
\]
Here the $i$-th copy of $\mathrm{Spev}(\mathrm{TP})$ maps to the $i$-th
and the $(i+1)$-st chart by the canonical map and by Frobenius.

Modulo $t$ one gets $\mathrm{TR}^r=(\mathrm{TR}^r)^{hS^1}/t$, with
$\pi_*\mathrm{TR}^r\simeq W_r[u]$ \cite[\S3]{AndriopoulosTR}, and its
terms $\mathrm{THH}^{hC_{p^j}}\simeq\mathrm{TC}^-/(p^jt)$ and
$\mathrm{THH}^{tC_{p^j}}\simeq\mathrm{TP}/p^j$ have homotopy rings
$W[u,t]/(ut-p,\,p^jt)$ and $W_j[t^{\pm1}]$
\cite[Lemma~3.1]{RiggenbachTrunc}.  Since $g_r$ sends $t$ to
$p^{r-1-i}t_i$ on the $i$-th chart, for $j=r-1-i$ these are the even
stacks of the preimage of $t=0$ in the $i$-th chart and of its overlap
with the next.  So the comparison map for $\mathrm{TR}^r$ is the
restriction of $g_r$ over the locus $t=0$.

Restriction sends $u$ to $pu$ and fixes $t$
\cite[\S3]{AndriopoulosTR}, so at $r=\infty$ the limit
$\mathrm{TR}=\varprojlim_r\mathrm{TR}^r$ has $\pi_*\mathrm{TR}^{hS^1}\simeq W[t]$ and $\pi_*\mathrm{TR}\simeq W$.
The fibre products become infinite, and their even stacks give the
$\infty$-Nygaard stacks of Construction~\ref{con:infinite-nygaard-stacks},
\[
 \begin{aligned}
 k^{\NN_\infty}&\simeq\operatorname{coeq}\Bigl(\coprod_{j\geq1}\mathrm{Spev}(\mathrm{TP})
 \rightrightarrows\coprod_{j\geq0}\mathrm{Spev}(\mathrm{TC}^-)\Bigr)
 \xrightarrow{\ g_\infty\ }\mathrm{Spev}(\mathrm{TR}^{hS^1})\simeq\mathbb A^1_{W,t}/\Gm,\\
 k^{\mathrm{Hodge}_\infty}&\simeq\operatorname{coeq}\Bigl(\coprod_{j\geq1}\mathrm{Spev}\bigl(\mathrm{THH}^{tC_{p^j}}\bigr)
 \rightrightarrows\coprod_{j\geq0}\mathrm{Spev}\bigl(\mathrm{THH}^{hC_{p^j}}\bigr)\Bigr)
 \xrightarrow{\ g'_\infty\ }\mathrm{Spev}(\mathrm{TR})\simeq B\mathbb G_{m,W},
 \end{aligned}
\]
in which the $j$-th overlap maps to the $j$-th and the $(j-1)$-st chart
(Lemma~\ref{lem:naive-infinite-nygaard} and
Remark~\ref{rem:genuine-infinite-nygaard}).

For a smooth $k$-algebra $A$, Hesselholt identified
$\pi_*\mathrm{TR}^r(A;\mathbf Z_p)$ with $W_r\Omega^\bullet_{A/k}$
tensored with a polynomial algebra on a class of degree $2$
\cite[Theorem~B]{Hesselholt96}.  For a smooth $k$-scheme and
$1\leq r\leq\infty$, Andriopoulos identifies the motivic filtrations of
$(\mathrm{TR}^r)^{hS^1}$ and $\mathrm{TR}^r$ with the $r$-Nygaard
filtration and its graded pieces \cite[Theorem~1.5]{AndriopoulosTR}.
Section~\ref{sec:pullback-pushforward} realizes these on $k^{\Nyg_r}$,
and at $r=\infty$ identifies them with the Hodge--Witt filtration
(Theorem~\ref{thm:hodge-witt}).
\end{remark}

\begin{remark}[The syntomic point as a coequalizer]
\label{rem:syntomic-coequalizer}
For every $r\geq1$, the map $f_{r+1}$ exhibits the syntomic point as
the coequalizer of restriction and Frobenius:
\[
 \operatorname{coeq}\Bigl(k^{\NN_r}
 \underset{\mathsf F_r}{\overset{\mathsf R_r}{\rightrightarrows}}
 k^{\NN_{r+1}}\Bigr)\xrightarrow{\ \sim\ }k^{\syn}.
\]
For $r=1$ this reads
$k^{\syn}\simeq\operatorname{coeq}(k^{\Nyg}\rightrightarrows k^{\NN_2})$.
The two charts of $k^{\NN_2}$ are identified, and the gluing between
them becomes the self-gluing of $k^{\Nyg}$ that defines $k^{\syn}$.
This is a stacky analogue of the classical construction of
$\mathrm{TC}$ from the $\mathrm{TR}^r$ by equalizing restriction and
Frobenius \cite[\S II.4]{NikolausScholze}.  On the fibre products of
Remark~\ref{rem:TR-motivation}, restriction and Frobenius forget the
first and the last factor.  So their equalizer identifies all factors
of $(\mathrm{TR}^{r+1})^{hS^1}$, and it is the equalizer of the
canonical map and Frobenius $\mathrm{TC}^-\rightrightarrows\mathrm{TP}$,
that is, $\mathrm{TC}$.  Taking even stacks of the terms turns this
equalizer into the coequalizer above.

One can even extend this to $r=0$, where it becomes the definition of
$k^{\syn}$.  Put
$k^{\NN_0}=k^{\Nyg_0}:=\Spf\bigl(W[u,t]/(ut-1)\bigr)/\Gm\simeq k^\Delta$
and define $\mathsf R_0,\mathsf F_0:k^{\Nyg_0}\to k^{\Nyg}$ by the
formulas of Definition~\ref{def:level-maps}.  Then
$\mathsf R_0=\widetilde j_{\dR}$ and
$\mathsf F_0=j_{\HT}\circ\varphi$, so their coequalizer is
$k^{\syn}$, with quotient map $f_1=j_{\Nyg}$.  Correspondingly,
$\pi_*\mathrm{TP}=W[t^{\pm1}]$ gives $\mathrm{Spev}(\mathrm{TP})=k^\Delta$,
and the equalizer of $\mathrm{TC}^-\rightrightarrows\mathrm{TP}$ is the
case $r=0$ of the equalizer above.  So the coequalizer of restriction
and Frobenius is the same at every level.

To prove the claim, recall that $k^{\NN_{r+1}}$ is the colimit of its
charts glued along their overlaps.  On each chart and overlap of
$k^{\NN_r}$, the maps $\mathsf R_r$ and $\mathsf F_r$ are isomorphisms
onto the $(i+1)$-st and the $i$-th piece of $k^{\NN_{r+1}}$, and
$\mathsf F_r\circ\mathsf R_r^{-1}$ is the identity in coordinates.  As
colimits commute, the coequalizer therefore glues all charts of
$k^{\NN_{r+1}}$ into one copy of $k^{\Nyg}$, and all transition maps
$\alpha_i$ into one self-gluing
$a\mapsto\sigma(a)$, $u\mapsto t^{-1}$, $t\mapsto pt$ from the de Rham
to the Hodge--Tate locus.  By the proof of
Proposition~\ref{prop:syntomic-projection}, this is the gluing that
defines $k^{\syn}$, and the resulting equivalence is induced by
$f_{r+1}$, which coequalizes $\mathsf R_r$ and $\mathsf F_r$ by
Proposition~\ref{prop:level-maps-compatible}.
\end{remark}

\subsection{The \texorpdfstring{$r$}{r}-Nygaard comparison}
\label{sec:pullback-pushforward}
\label{sec:r-nygaard-specializations}
\label{sec:r-nygaard-comparison}
\label{sec:iterated-r-nygaard}

In this subsection we use the correspondence of
\S\ref{sec:genuine-naive} to realize the $r$-Nygaard filtration
from a given $F$-gauge, via $E\mapsto g_{r,*}f_r^*E$.

We begin with the pullback along $f_r$, which is straightforward.

\begin{construction}[Formula for $f_r^*$]\label{con:f-pullback}
Let $E$ be an $F$-gauge.  Its pullback $f_r^*E$ is the genuine
$r$-gauge whose $i$-th chart component is
\[
 E_i:=f_{r,i}^*E,
\]
the underlying gauge of $E$ with its coefficients twisted to
$W^{(i)}$, and whose gluing maps
$\delta_i:E_i|_{t_i\neq0}\xrightarrow{\sim}\alpha_i^*(E_{i+1}|_{u_{i+1}\neq0})$
are the descent datum of $E$ pulled back along the common map
$(k^{\Nyg}_i)_{t_i\neq0}\to k^{\syn}$ of
Proposition~\ref{prop:syntomic-projection}.
\end{construction}

So $f_r^*E$ consists of $r$ copies of the gauge $E$, each with its
own Frobenius twist, in which the de Rham pullback
$E_i^{-\infty}$ of each copy is identified with the Hodge--Tate
pullback $E_{i+1}^{\infty}$ of the next by the descent datum of
$E$.  For $r=3$ the picture is
\[
\begin{tikzcd}[column sep=small, row sep=2.4em, ampersand replacement=\&]
 E_0^{\infty} \& \cdots \& E_0^{n+1} \arrow[r, shift left=0.6ex, "t"] \& E_0^{n} \arrow[l, shift left=0.6ex, "u"] \arrow[r, shift left=0.6ex, "t"] \& E_0^{n-1} \arrow[l, shift left=0.6ex, "u"] \arrow[r, shift left=0.6ex, "t"] \& E_0^{n-2} \arrow[l, shift left=0.6ex, "u"] \& \cdots \& E_0^{-\infty} \arrow[dlllllll, "\delta_0"', "\sim"] \\
 E_1^{\infty} \& \cdots \& E_1^{n+1} \arrow[r, shift left=0.6ex, "t"] \& E_1^{n} \arrow[l, shift left=0.6ex, "u"] \arrow[r, shift left=0.6ex, "t"] \& E_1^{n-1} \arrow[l, shift left=0.6ex, "u"] \arrow[r, shift left=0.6ex, "t"] \& E_1^{n-2} \arrow[l, shift left=0.6ex, "u"] \& \cdots \& E_1^{-\infty} \arrow[dlllllll, "\delta_1"', "\sim"] \\
 E_2^{\infty} \& \cdots \& E_2^{n+1} \arrow[r, shift left=0.6ex, "t"] \& E_2^{n} \arrow[l, shift left=0.6ex, "u"] \arrow[r, shift left=0.6ex, "t"] \& E_2^{n-1} \arrow[l, shift left=0.6ex, "u"] \arrow[r, shift left=0.6ex, "t"] \& E_2^{n-2} \arrow[l, shift left=0.6ex, "u"] \& \cdots \& E_2^{-\infty}
\end{tikzcd}
\]
Here each row is the gauge $E_i=f_{r,i}^*E$, which is $E$ read
through $W^{(i)}$.  The unglued terms $E_0^{\infty}$ and $E_{r-1}^{-\infty}$ are the restrictions
of $f_r^*E$ to the open loci $(k^{\Nyg}_0)_{u_0\neq0}$ and
$(k^{\Nyg}_{r-1})_{t_{r-1}\neq0}$ of the chain.

\begin{proposition}[\v{C}ech formula for $g_{r,*}$]
\label{prop:cech-formula-for-Rgr}
Let $\mathcal E=(E_i,\delta_i)$ be a genuine $r$-gauge.  Then
\begin{equation}\label{eq:g-pushforward}
 g_{r,*}\mathcal E\simeq
 \fib\Bigl(
 \bigoplus_{i=0}^{r-1}E_i
 \xrightarrow{\ \partial\ }
 \bigoplus_{i=0}^{r-2}E_i|_{t_i\neq0}
 \Bigr),
 \qquad
 \partial((m_i)_i)
 =\bigl(m_i|_{t_i\neq0}-\delta_i^{-1}\alpha_i^*
 (m_{i+1}|_{u_{i+1}\neq0})\bigr)_{i=0}^{r-2},
\end{equation}
where each summand is a module over the coordinate ring of
$k^{\Nyg_r}$ by restriction of scalars along $g_r$
(Proposition~\ref{prop:contraction-map}).
\end{proposition}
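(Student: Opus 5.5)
The plan is to compute $g_{r,*}$ by Zariski (in fact open-cover) descent on the chain $k^{\NN_r}$, and then to push the resulting \v{C}ech complex forward chart by chart.

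\textbf{Step 1: the cover and its \v{C}ech resolution.} The charts $U_i:=k^{\Nyg}_i$ form an open cover of $k^{\NN_r}$. By Definition~\ref{def:adjacent-transition}, the only nonempty pairwise intersections are $U_i\cap U_{i+1}\simeq(k^{\Nyg}_i)_{t_i\neq0}$. There are no triple overlaps, since the two open loci of a chart are disjoint. The \v{C}ech nerve therefore has only two nonzero levels. For any $\mathcal E\in\Dqc(k^{\NN_r})$, descent gives
\[
 \mathcal E\simeq\fib\Bigl(\textstyle\bigoplus_i j_{i,*}j_i^*\mathcal E\to\bigoplus_i j_{i,i+1,*}j_{i,i+1}^*\mathcal E\Bigr),
\]
where $j_i$ and $j_{i,i+1}$ are the inclusions of the charts and the overlaps. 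The differential is the difference of the two restriction maps. Identifying $j_i^*\mathcal E=E_i$, and expressing the restriction from $U_{i+1}$ to the overlap through $\alpha_i$ and $\delta_i$, produces exactly the map $\partial$ of \eqref{eq:g-pushforward}. One point to check here is a sign and convention issue: the overlap is identified with $(k^{\Nyg}_i)_{t_i\neq0}$, so the second term must be transported there by $\delta_i^{-1}\alpha_i^*$.

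\textbf{Step 2: pushforward along $g_r$.} Since $g_{r,*}$ is exact, it preserves fibres, so it suffices to compute $g_{r,*}j_{i,*}$ and $g_{r,*}j_{i,i+1,*}$. Each composite $g_r\circ j_i=g_{r,i}$ is a map of $\Gm$-quotients of affine formal schemes, $\Spf(B_i)/\Gm\to\Spf(A_r)/\Gm$, where $A_r$ is the coordinate ring of $k^{\Nyg_r}$. The same holds on the overlaps, which are $\Spf(B_i[t_i^{-1}]^\wedge_p)/\Gm$. Pushforward along such an affine map is restriction of scalars of graded modules along $g_{r,i}^*$ from Proposition~\ref{prop:contraction-map}. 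No derived $p$-completion issue arises, because the restriction of a derived $p$-complete module is still derived $p$-complete.

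\textbf{Main obstacle.} The step needing the most care is justifying that $g_{r,*}$ on $\Dqc$ of these formal stacks is computed by restriction of scalars on affine pieces and commutes with the finite \v{C}ech limit. The limit commutes because it is finite. For affineness, I would argue that the map $\Spf(B_i)\to\Spf(A_r)$ is adic, since $p$ maps to $p$, and affine. It follows that $\Dqc$ pushforward is the forgetful functor on derived $p$-complete graded modules, compatibly with the $\Gm$-quotients. Taking weight components is exact, so the $\Gm$-quotient also causes no problem.
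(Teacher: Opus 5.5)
Your proposal is correct and follows the paper's proof essentially step for step. Zariski descent on the chain, using that only adjacent charts meet and that there are no triple overlaps, gives the two-term \v{C}ech fibre with $\partial$ transported through $\delta_i^{-1}\alpha_i^*$; exactness of $g_{r,*}$, together with affineness of $g_r\circ j_i=g_{r,i}$ and of its restriction to the overlaps, then reduces each term to restriction of scalars, and your remarks on derived $p$-completeness and the $\mathbb G_m$-quotient spell out what the paper leaves implicit.
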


\begin{proof}
Let $j_i:k^{\Nyg}_i\hookrightarrow k^{\NN_r}$ and
$j_{i,i+1}:k^{\Nyg}_i\cap k^{\Nyg}_{i+1}\hookrightarrow k^{\NN_r}$ be
the open immersions.  Since only adjacent charts meet and there are
no triple intersections, Zariski descent gives
\[
 \mathcal E\simeq
 \fib\Bigl(
 \bigoplus_{i=0}^{r-1}j_{i,*}E_i
 \xrightarrow{\ \partial\ }
 \bigoplus_{i=0}^{r-2}j_{i,i+1,*}\bigl(E_i|_{t_i\neq0}\bigr)
 \Bigr),
\]
where the $i$-th component of $\partial$ is the restriction from the
left chart minus the restriction from the right chart identified
through $\delta_i^{-1}\alpha_i^*$, as in \eqref{eq:g-pushforward}.
The functor $g_{r,*}$ is exact, so it preserves this fibre.  The
composite $g_r\circ j_i$ is $g_{r,i}$, and $g_r\circ j_{i,i+1}$ is its
restriction to the overlap.  Both are induced by homogeneous maps of
$p$-complete graded rings (Proposition~\ref{prop:contraction-map}),
so they are affine and their pushforwards are restriction of
scalars.  Hence $g_{r,*}j_{i,*}E_i$ is $E_i$ regarded as a module
over the coordinate ring of $k^{\Nyg_r}$, and likewise for the
overlap terms, which gives \eqref{eq:g-pushforward}.
\end{proof}

In each weight the formula is an $r$-fold fibre product.  Since
$t_i$ is invertible on the overlap, the weight-$n$ component of
$E_i|_{t_i\neq0}$ is the de Rham pullback $E_i^{-\infty}$, and
through $\alpha_i$ and $\delta_i$ it is also the Hodge--Tate pullback
$E_{i+1}^{\infty}$.  Hence
\[
 (g_{r,*}\mathcal E)^n\simeq
 E_0^n\times_{E_0^{-\infty}\simeq E_1^{\infty}}E_1^n
 \times_{E_1^{-\infty}\simeq E_2^{\infty}}\cdots
 \times_{E_{r-2}^{-\infty}\simeq E_{r-1}^{\infty}}E_{r-1}^n,
\]
where $E_i^n\to E_i^{-\infty}$ is the structure map along $t$ and
$E_{i+1}^n\to E_{i+1}^{\infty}$ is the structure map along $u$.  In
the picture after Construction~\ref{con:f-pullback}, this is the
fibre product of the weight-$n$ column, glued along the diagonal
isomorphisms $\delta_i$.

We now apply this calculation to the $F$-gauge cohomology
$\cH_{\syn}(X)$ of a smooth quasi-compact $k$-scheme $X$
(Definition~\ref{def:syntomification}).

\begin{definition}[The $r$-Nygaard realization]
\label{def:r-nygaard-realization}
For a smooth quasi-compact $k$-scheme $X$, the \emph{$r$-Nygaard
realization} of $X$ is the $r$-gauge
\[
 \cH_{\Nyg_r}(X):=g_{r,*}f_r^*\cH_{\syn}(X)
 \in\CAlg\bigl(\Dqc(k^{\Nyg_r})\bigr).
\]
At $r=1$ it is $\cH_{\Nyg}(X)$.
\end{definition}

The map $f_r$ is \'etale, since on each chart it is $j_{\Nyg}$
composed with the isomorphism $f_{r,i}$.  Base change along it
gives $f_r^*\cH_{\syn}(X)\simeq\pi_{X,*}\cO_{X^{\NN_r}}$, so the
$r$-Nygaard realization is the cohomology of the genuine
$r$-Nygaard prismatization pushed forward to the naive stack
$k^{\Nyg_r}$,
$\cH_{\Nyg_r}(X)\simeq g_{r,*}\pi_{X,*}\cO_{X^{\NN_r}}$.

\begin{proposition}[The $r$-Nygaard comparison]
\label{prop:r-nygaard-comparison}
Let $X$ be a smooth quasi-compact $k$-scheme.  The $m$-th filtered piece of
$\cH_{\Nyg_r}(X)$ is the $r$-fold fibre product
\[
 \cH_{\Nyg_r}(X)^m
 \simeq
 \Fil^{\geq m}_{\Nyg}R\Gamma_{\crys}
 \times_{R\Gamma_{\crys}}\cdots\times_{R\Gamma_{\crys}}
 \Fil^{\geq m}_{\Nyg}R\Gamma_{\crys}
\]
of Definition~\ref{def:r-nygaard-filtration}, with the $i$-th factor
read through $W^{(i)}$.  After untwisting coefficients it is the
$m$-th piece of the $r$-Nygaard filtration.  The coordinates act on
the $i$-th factor by
\begin{equation}\label{eq:general-r-nygaard-actions}
 u|_i=p^iu_i,
 \qquad
 t|_i=p^{r-1-i}t_i .
\end{equation}
\end{proposition}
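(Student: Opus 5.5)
The plan is to apply the \v{C}ech formula of Proposition~\ref{prop:cech-formula-for-Rgr} to the genuine $r$-gauge $f_r^*\cH_{\syn}(X)$ described in Construction~\ref{con:f-pullback}. We then identify each term using Bhatt's comparison, Theorem~\ref{thm:ordinary-input}.

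First, the chart components. By Construction~\ref{con:f-pullback}, the $i$-th component is $E_i=f_{r,i}^*\cH_{\syn}(X)$. Since $j_{\Nyg}^*\cH_{\syn}(X)\simeq\cH_{\Nyg}(X)$ and $f_{r,i}$ is an isomorphism onto $k^{\Nyg}$ twisting coefficients by $W^{(i)}$, $E_i$ is $\cH_{\Nyg}(X)$ read through $W^{(i)}$. By Theorem~\ref{thm:ordinary-input}(1), its weight-$m$ piece is $\Fil^{\geq m}_{\Nyg}R\Gamma_{\crys}$, where $u_i,t_i$ act by the transition maps of the Nygaard Rees object.

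Next, the weight-$m$ formula. By the discussion following Proposition~\ref{prop:cech-formula-for-Rgr}, $(g_{r,*}f_r^*\cH_{\syn}(X))^m$ is the iterated fibre product of the $E_i^m$ over $E_i^{-\infty}\simeq E_{i+1}^{\infty}$. Two identifications are needed:
\begin{itemize}
\item By Theorem~\ref{thm:ordinary-input}(2), the map $E_i^m\to E_i^{-\infty}$ along $t$ is the inclusion $\iota_m:\Fil^{\geq m}_{\Nyg}R\Gamma_{\crys}\to R\Gamma_{\crys}$.
\item By Theorem~\ref{thm:ordinary-input}(3), the map $E_{i+1}^m\to E_{i+1}^{\infty}$ along $u$ lands in $R\Gamma_\Delta=(\sigma^{-1})^*R\Gamma_{\crys}$.
\end{itemize}

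The main point is to check that the map along $u$, composed with the descent isomorphism $\delta_i$ (the Frobenius $\varphi^*E^\infty\simeq E^{-\infty}$), is the divided Frobenius $\varphi_m$. Here I would use the trivialization of Definition~\ref{def:BK-twist}. On the Nygaard Rees object, $u=pt^{-1}$, so $u^{N}$ from weight $m$ to weight $m+N$ followed by the identification of $E^\infty$ with weight zero of the $u\neq0$ locus divides by $u^m$. The descent datum of $\cH_{\syn}(X)$ is the crystalline Frobenius. Untwisting the $u$-power therefore gives $\varphi/p^m$, which is Bhatt's description of the Hodge--Tate map in terms of $\varphi_m$ \cite[\S3.3]{BhattFGauges}.

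I expect the bookkeeping of Frobenius twists of coefficients to be the obstacle. The gluing $\alpha_i$ sends $a^{(i+1)}\mapsto\sigma(a)^{(i)}$, and this twist must exactly absorb the $\sigma$-semilinearity of $\varphi_m$. Once it does, the $i$-th factor read through $W^{(i)}$ becomes $W$-linear. Untwisting by $\sigma^{i}$ on the $i$-th factor then recovers Definition~\ref{def:r-nygaard-filtration}. The ordering, with $\iota_m$ on the left factor and $\varphi_m$ on the right, matches $t$ on chart $i$ and $u$ on chart $i+1$.

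Finally, the action formula \eqref{eq:general-r-nygaard-actions} follows from Proposition~\ref{prop:cech-formula-for-Rgr}. There each summand is a module over the coordinate ring of $k^{\Nyg_r}$ by restriction of scalars along $g_{r,i}$, which by Proposition~\ref{prop:contraction-map} sends $u\mapsto p^iu_i$ and $t\mapsto p^{r-1-i}t_i$.
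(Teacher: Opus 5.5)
Your proposal is correct and follows essentially the same route as the paper: apply the weightwise \v{C}ech formula of Proposition~\ref{prop:cech-formula-for-Rgr} to $f_r^*\cH_{\syn}(X)$, read off the rows and their maps $\iota_m$ and $\varphi_m$ to the two ends from Theorem~\ref{thm:ordinary-input}(1)--(3), and get the actions from Proposition~\ref{prop:contraction-map}. The paper cites Bhatt's theorem for the identification of the Hodge--Tate map with $\varphi_m$ instead of sketching it as you do, and it also remarks that all maps are multiplicative, so the identifications are compatible with products.
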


\begin{proof}
By Construction~\ref{con:f-pullback}, the $i$-th row of
$f_r^*\cH_{\syn}(X)$ is $\cH_{\Nyg}(X)$ read through $W^{(i)}$.  By
Theorem~\ref{thm:ordinary-input}(1)--(3), its $m$-th weight is
$\Fil^{\geq m}_{\Nyg}R\Gamma_{\crys}$, and its maps to the de Rham and
Hodge--Tate pullbacks are $\iota_m$ and $\varphi_m$.  The weightwise form of
Proposition~\ref{prop:cech-formula-for-Rgr} therefore gives the
fibre product, and Proposition~\ref{prop:contraction-map} gives the
actions.  All maps are maps of commutative algebra objects in graded
complexes, so the identifications of the filtered pieces are compatible
with products.
\end{proof}

\begin{remark}[Structure sheaf of the contraction]
\label{rem:Rg-structure-sheaf}
For every $r\geq1$, $g_{r,*}\cO_{k^{\NN_r}}\simeq\cO_{k^{\Nyg_r}}$.
Indeed, for $X=\Spec k$ we have $\cH_{\syn}(X)=\cO_{k^{\syn}}$ and
$f_r^*\cO_{k^{\syn}}=\cO_{k^{\NN_r}}$, so
Proposition~\ref{prop:r-nygaard-comparison} identifies the $m$-th
filtered piece of $g_{r,*}\cO_{k^{\NN_r}}$ with the $m$-th piece of
the $r$-Nygaard filtration on $W$.  By
Example~\ref{ex:r-nygaard-on-W} this is $p^{rm}W$ for $m\geq0$ and
$W$ for $m\leq0$, with $u$ acting as $p^r$ and $t$ as the inclusion.
That is the coordinate ring of $k^{\Nyg_r}$, and the Frobenius
twists along the chain do not change it.
\end{remark}

We now identify the specializations of the $r$-Nygaard realization
$\cH_{\Nyg_r}(X)$, as Theorem~\ref{thm:ordinary-input} does for
$\cH_{\Nyg}(X)$.  We begin with the open loci
$(k^{\Nyg_r})_{u\neq0}$ and $(k^{\Nyg_r})_{t\neq0}$, over which the
contraction is an isomorphism.

\begin{lemma}[The contraction over the open loci]
\label{lem:contraction-open-loci}
The preimages under $g_r$ of $(k^{\Nyg_r})_{u\neq0}$ and
$(k^{\Nyg_r})_{t\neq0}$ are $(k^{\Nyg}_0)_{u_0\neq0}$ and
$(k^{\Nyg}_{r-1})_{t_{r-1}\neq0}$, and $g_r$ restricts to
isomorphisms
\[
 (k^{\Nyg}_0)_{u_0\neq0}\xrightarrow{\ \sim\ }(k^{\Nyg_r})_{u\neq0},
 \qquad
 (k^{\Nyg}_{r-1})_{t_{r-1}\neq0}\xrightarrow{\ \sim\ }(k^{\Nyg_r})_{t\neq0},
\]
given on coordinates by $a\mapsto a^{(0)}$, $u\mapsto u_0$, and by
$a\mapsto\sigma^{1-r}(a)^{(r-1)}$, $t\mapsto t_{r-1}$.
\end{lemma}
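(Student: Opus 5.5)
The plan is to compute the preimages chart by chart, using the formulas of Proposition~\ref{prop:contraction-map}. On the chart $k^{\Nyg}_i$ the function $u$ pulls back to $p^iu_i$. Since the chart is a $p$-adic formal stack, $p$ is topologically nilpotent, so any $p^iu_i$ with $i\geq1$ is never invertible and its nonvanishing locus is empty. On chart $0$ the pullback is $u_0$ itself, so $g_r^{-1}((k^{\Nyg_r})_{u\neq0})$ meets only chart $0$, where it is $(k^{\Nyg}_0)_{u_0\neq0}$. The same argument applies to $t$: it pulls back to $p^{r-1-i}t_i$, so its nonvanishing locus is empty unless $i=r-1$, and on chart $r-1$ it is $(k^{\Nyg}_{r-1})_{t_{r-1}\neq0}$. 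To make "empty" precise I would note that inverting $p^j$ with $j\geq1$ in a $p$-complete ring gives the zero ring after completion, exactly as in the disjointness of the two loci in \S\ref{sec:syntomic-point}. Because these loci are open and are computed on the Zariski cover by charts, the preimages are as claimed.

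Next I would check that the restricted maps are isomorphisms by writing out the coordinate rings. Taking weight zero identifies $(k^{\Nyg_r})_{u\neq0}$ with $\Spf(W[u^{\pm1}]^\wedge_p)/\Gm\simeq\Spf(W)$, because $t=p^ru^{-1}$ there. Likewise $(k^{\Nyg}_0)_{u_0\neq0}\simeq\Spf(W^{(0)}[u_0^{\pm1}]^\wedge_p)/\Gm\simeq\Spf(W)$. The map $g_{r,0}$ sends $a\mapsto a^{(0)}$, $u\mapsto u_0$ and $t\mapsto p^{r-1}t_0=p^ru_0^{-1}$. It is therefore the graded isomorphism $W[u^{\pm1}]\to W^{(0)}[u_0^{\pm1}]$, and it is compatible with the relation $ut=p^r$ that eliminates $t$.

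For the $t$-locus the map $g_{r,r-1}$ sends $t\mapsto t_{r-1}$, $u\mapsto p^{r-1}u_{r-1}=p^rt_{r-1}^{-1}$ and $a\mapsto\sigma^{1-r}(a)^{(r-1)}$. This is again an isomorphism $W[t^{\pm1}]\to W^{(r-1)}[t_{r-1}^{\pm1}]$, because $\sigma$ is an automorphism of $W$, $k$ being perfect.

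The only step needing care is the emptiness claim for $p$-multiples together with its compatibility with the $\Gm$-quotient and the gluing. Since $g_r$ is defined by gluing chart maps, its preimage of an open substack is the union of the chart preimages, and I would simply cite Zariski locality; no separate argument on the overlaps is needed.
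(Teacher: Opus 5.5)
Your proposal is correct and follows the paper's own proof: pull $u$ and $t$ back chart by chart via Proposition~\ref{prop:contraction-map}, and use topological nilpotence of $p$ to see that only the end charts contribute. On those end charts the restricted maps are then the isomorphisms of graded coordinate rings $W[u^{\pm1}]\to W^{(0)}[u_0^{\pm1}]$ and $W[t^{\pm1}]\to W^{(r-1)}[t_{r-1}^{\pm1}]$, which is exactly the paper's argument, and your extra remarks on making ``empty'' precise and on Zariski locality of the preimage only spell out details it leaves implicit.
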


\begin{proof}
On the $i$-th chart $k^{\Nyg}_i$ of $k^{\NN_r}$, the coordinate $u$ pulls back to $p^iu_i$
(Proposition~\ref{prop:contraction-map}).  Since $p$ is
topologically nilpotent, this is invertible only if $i=0$ and $u_0$
is invertible.  Likewise $t=p^{r-1-i}t_i$ is invertible only on
$(k^{\Nyg}_{r-1})_{t_{r-1}\neq0}$.  On these loci the formulas of
Proposition~\ref{prop:contraction-map} reduce to the stated ones,
which are isomorphisms of the graded coordinate rings
$W[u^{\pm1}]\to W^{(0)}[u_0^{\pm1}]$ and
$W[t^{\pm1}]\to W^{(r-1)}[t_{r-1}^{\pm1}]$.
\end{proof}

Although the lemma is a statement about the map of stacks $g_r$, it
computes the pushforward of every genuine $r$-gauge over the open
loci.  Indeed, by flat base change along the open immersions, for
every genuine $r$-gauge $\mathcal E=(E_i,\delta_i)$ we have
\[
 (g_{r,*}\mathcal E)|_{u\neq0}\simeq E_0|_{u_0\neq0},
 \qquad
 (g_{r,*}\mathcal E)|_{t\neq0}\simeq E_{r-1}|_{t_{r-1}\neq0},
\]
read through the coordinate maps of the lemma.  In the weightwise
fibre product after Proposition~\ref{prop:cech-formula-for-Rgr}, only
the Hodge--Tate pullback $E_0^{\infty}$ of the first row and the
de Rham pullback $E_{r-1}^{-\infty}$ of the last row survive.

\begin{remark}[The interior open loci of the chain]
\label{rem:interior-open-loci}
The chain $k^{\NN_r}$ has $r-1$ more open loci isomorphic to
$k^\Delta$ besides the two of
Lemma~\ref{lem:contraction-open-loci}.  These are the overlaps
$k^{\Nyg}_i\cap k^{\Nyg}_{i+1}$.  On the $i$-th overlap we have
$u_i=pt_i^{-1}$, so both $g_r^*u=p^{i+1}t_i^{-1}$ and
$g_r^*t=p^{r-1-i}t_i$ are divisible by $p$.  Hence $g_r$ sends this
overlap to the $W$-point $(u,t)=(p^{i+1},p^{r-1-i})$ of
$k^{\Nyg_r}$.  This point reduces to the origin modulo $p$ but lies
in neither divisor.  In the naive stack $k^{\Nyg_r}$ all these factorizations of
$p^r$ lie over its $A_{r-1}$-singularity, and the genuine stack $k^{\NN_r}$
separates them into open loci.
\end{remark}

The two open loci cover $k^{\Nyg_r}$ away from the origin
$V(u,t)$.  As at level one (Construction~\ref{con:closed-loci}), we
study the origin through the two divisors $u=0$ and $t=0$ that meet
there.

\begin{construction}[The Hodge-filtered and conjugate-filtered $r$-de Rham--Witt pullbacks]
\label{con:dRW-pullbacks}
For $r\geq1$, the zero loci of $u$ and $t$ in $k^{\Nyg_r}$ are the
Cartier divisors
\[
 i_{\dRW_r,+}:\mathbb A^1_{W_r,t}/\Gm=(k^{\Nyg_r})_{u=0}
 \lhook\joinrel\longrightarrow k^{\Nyg_r},
 \qquad
 i_{\dRW_r,c}:\mathbb A^1_{W_r,u}/\Gm=(k^{\Nyg_r})_{t=0}
 \lhook\joinrel\longrightarrow k^{\Nyg_r},
\]
with $|t|=-1$ and $|u|=1$, and their intersection is the
origin $i_{\mathrm{Hodge}_r}:B\mathbb G_{m,W_r}=V(u,t)\hookrightarrow k^{\Nyg_r}$.
The two divisors are Tor-independent: $(u,t)$ is a regular sequence
in the coordinate ring of $k^{\Nyg_r}$, with quotients $W_r[t]$ and
$W_r[u]$ whose derived tensor product is $W_r$, so their fibre
product is the classical quotient stack $B\mathbb G_{m,W_r}$.  For
an $r$-gauge $E$ we call $i_{\dRW_r,+}^*E$ its \emph{Hodge-filtered
$r$-de Rham--Witt pullback}, $i_{\dRW_r,c}^*E$ its
\emph{conjugate-filtered $r$-de Rham--Witt pullback}, and
$i_{\mathrm{Hodge}_r}^*E$ its \emph{$r$-Hodge pullback}.
\end{construction}

To compute these pullbacks of $\cH_{\Nyg_r}(X)$ we use that,
\'etale locally on $X$, its $r$-Nygaard filtration is the
Beilinson-connective cover of the $p^r$-adic filtration, by
Theorem~\ref{thm:andriopoulos}.  Since $X$ is not affine, we
therefore work at the level of sheaves on $X_{\et}$.  Let
$\mathcal C_{X/W}\in D(X_{\et},W)$ be the sheafified crystalline
complex, represented on an \'etale affine $\Spec S\to X$ by the
$p$-torsion-free complex $W\Omega^\bullet_{S/k}$, and write
$p^{r\bullet}\mathcal C_{X/W}$ for its $p^r$-adic filtration.

\begin{definition}[Filtered d\'ecalage]
\label{def:filtered-decalage}
A filtered object $F^\bullet$ of $D(X_{\et},W)$ is
\emph{Beilinson-connective} if $\gr^iF^\bullet\in D^{\leq i}$ for all
$i\in\mathbb Z$.  The \emph{filtered d\'ecalage} of $\mathcal C_{X/W}$
is the Beilinson-connective cover of its $p^r$-adic filtration,
\[
 \cN^\bullet_{p^r}(\mathcal C_{X/W})
 :={}^{B}\!\tau_{\leq0}\bigl(p^{r\bullet}\mathcal C_{X/W}\bigr).
\]
Multiplication by $p^r$ and the filtration inclusions give maps $u$
and $t$ with $ut=tu=p^r$, which make
$\Rees\bigl(\cN^\bullet_{p^r}(\mathcal C_{X/W})\bigr)$ a graded module
over $W[u,t]/(ut-p^r)$, with $|u|=1$ and $|t|=-1$.
\end{definition}

By the description of Beilinson truncation on associated graded
pieces,
\[
 \gr^i\cN^\bullet_{p^r}(\mathcal C_{X/W})
 \simeq
 \tau^{\leq i}\bigl(\mathcal C_{X/W}/p^r\bigr),
\]
and the underlying unfiltered object is the d\'ecalage
$L\eta_{p^r}\mathcal C_{X/W}$ of \cite[\S6]{BMS18}.

\begin{corollary}[Global comparison with filtered d\'ecalage]
\label{cor:bridge}
Let $X/k$ be smooth and quasi-compact.  For every $m$ there is a canonical
equivalence
\[
 \cH_{\Nyg_r}(X)^m
 \;\simeq\;
 (\sigma^{-1})^*R\Gamma\bigl(X,\,\cN^m_{p^r}(\mathcal C_{X/W})\bigr),
\]
and these assemble into an equivalence of graded algebra objects
\[
 \cH_{\Nyg_r}(X)
 \;\simeq\;
 (\sigma^{-1})^*R\Gamma\bigl(X,\,\Rees\bigl(\cN^\bullet_{p^r}(\mathcal C_{X/W})\bigr)\bigr)
\]
in $\Dqc(k^{\Nyg_r})$, compatible with \eqref{eq:general-r-nygaard-actions}.
\end{corollary}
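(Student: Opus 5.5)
The plan is to sheafify Proposition~\ref{prop:r-nygaard-comparison} on $X_{\et}$, reduce to affines by descent, and there apply Andriopoulos' characterization, Theorem~\ref{thm:andriopoulos}.

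First, globalize the fibre-product description. For an \'etale map $\Spec S\to X$, the construction $S\mapsto\cH_{\Nyg_r}(\Spec S)$ is an \'etale sheaf of graded algebras on $X_{\et}$. Indeed, $\cH_{\syn}$ satisfies \'etale descent, and $f_r^*$ and $g_{r,*}$ are exact and commute with limits; the latter because $g_r$ is covered by affine chart maps and the \v{C}ech formula of Proposition~\ref{prop:cech-formula-for-Rgr} is a finite limit. So $\cH_{\Nyg_r}(X)^m\simeq R\Gamma(X,\mathcal F^m)$, where $\mathcal F^m$ is the sheafification of the $r$-fold fibre product of Nygaard pieces from Proposition~\ref{prop:r-nygaard-comparison}. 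The same holds for $\cN^m_{p^r}(\mathcal C_{X/W})$, which is defined sheafwise. It therefore suffices to produce, functorially in \'etale affines $\Spec S$ and compatibly with $u$, $t$ and products, an equivalence
\[
 \Fil^{\geq m}_{\Nyg_r}R\Gamma_{\crys}(S/W)\simeq(\sigma^{-1})^*\cN^m_{p^r}(W\Omega^\bullet_{S/k}),
\]
and then take $R\Gamma(X,-)$.

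On an affine, use the $r$-fold Frobenius $\varphi^r$ of Theorem~\ref{thm:andriopoulos}. It is a $\sigma^r$-semilinear filtered map to $p^{r\bullet}R\Gamma_{\crys}$, and its source is Beilinson-connective by property~(2). By the universal property of the Beilinson-connective cover, it factors through $\cN^\bullet_{p^r}$. This factored map is an isomorphism: both sides are complete, and on graded pieces both are $\tau^{\leq i}$ of $R\Gamma_{\crys}/p^r$, again by property~(2). The semilinearity is absorbed by a Frobenius twist of coefficients. Combined with the twists $W^{(i)}$ in Proposition~\ref{prop:r-nygaard-comparison}, this should leave exactly the net twist $(\sigma^{-1})^*$. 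That claim needs bookkeeping against the conventions of Lemma~\ref{lem:contraction-open-loci}, where the last chart carries $\sigma^{1-r}$.

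Next, check the actions. On $\cN^\bullet_{p^r}$, $t$ is the inclusion and $u$ is multiplication by $p^r$. Under $\varphi^r$ these must match $u|_i=p^iu_i$ and $t|_i=p^{r-1-i}t_i$ from \eqref{eq:general-r-nygaard-actions}, using $\varphi^r=p^{rm}\varphi_{r,m}$. Multiplicativity follows because $\varphi^r$ is a ring map and the Beilinson cover is lax monoidal. Functoriality in $S$ makes everything descend, and the Rees assembly is then formal.

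The main obstacle is making the affine equivalence canonical and functorial in $S$ at the $\infty$-categorical level, as a map of sheaves of filtered $\mathbf E_\infty$-algebras rather than objectwise. I would handle this with the explicit $p$-torsion-free de Rham--Witt model of Example~\ref{ex:r-nygaard-dRW}. There $F^r$ maps the pieces $p^{r(m-n-1)}V^r\WOm^n$ strictly into the explicit d\'ecalage subcomplex (degree $n$ term $p^{r(m-n)}\WOm^n$ for $n<m$), so the comparison becomes a strictly functorial map of presheaves of filtered dg-algebras. Its being an equivalence is then checked by the graded argument above.
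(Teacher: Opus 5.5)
\textbf{The gap: sections on \'etale affines.} Your outline follows the paper's proof. You sheafify the fibre product of Proposition~\ref{prop:r-nygaard-comparison} on $X_{\et}$, identify it on \'etale affines with the Beilinson-connective cover through $\varphi^r$ and Theorem~\ref{thm:andriopoulos}, and read off twist and actions on the rightmost factor. There $u=p^{r-1}\cdot p=p^r$ and $t$ is the inclusion. The net twist is $(\sigma^{-1})^*$, coming from reading that factor through $W^{(r-1)}$ together with the $\sigma^r$-semilinearity of $\varphi^r$, as you guessed.

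However, you skip the one step that carries the content of passing from affine to general $X$. On $U=\Spec S$, Theorem~\ref{thm:andriopoulos} identifies $\Fil^{\geq m}_{\Nyg_r}$ with the cover of the $p^r$-adic filtration on $R\Gamma(U,\mathcal C_{X/W})=W\Omega^\bullet_{S/k}$, formed in $D(W)$ \emph{after} taking sections. The corollary's right-hand side is instead $R\Gamma(X,-)$ of the cover $\cN^m_{p^r}(\mathcal C_{X/W})$ formed in $D(X_{\et},W)$. Its graded pieces are the sheaf truncations $\tau^{\leq i}(\mathcal C_{X/W}/p^r)$. Sheaf truncations do not commute with sections in general, which is exactly why Theorem~\ref{thm:andriopoulos} requires affineness. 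So when you write ``the same holds for $\cN^m_{p^r}(\mathcal C_{X/W})$, which is defined sheafwise'' and then ``take $R\Gamma(X,-)$'', you silently assume $R\Gamma(U,\cN^m_{p^r}(\mathcal C_{X/W}))\simeq\cN^m_{p^r}(W\Omega^\bullet_{S/k})$. That is the thing to prove.

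\textbf{How the paper closes it, and how your model could.} The paper argues via acyclicity on $U$:
\begin{itemize}
\item The $W\Omega^n_{S/k}$ have no higher cohomology on $U$, so $p^{r\bullet}W\Omega^\bullet_{S/k}$ computes $R\Gamma(U,p^{r\bullet}\mathcal C_{X/W})$.
\item The cohomology sheaves of $\mathcal C_{X/W}/p^r$ are the quasi-coherent $W_r\Omega^q_{X/k}$ \cite[Proposition~2.7.1]{BLM21}, which are also acyclic on $U$.
\item Hence $R\Gamma(U,-)$ commutes with the truncations $\tau^{\leq i}(\mathcal C_{X/W}/p^r)$, i.e.\ with the graded pieces of the cover, and hence with the cover itself.
\end{itemize}

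Your explicit model in the last paragraph could also supply this, but only once you record two facts. First, the corresponding complex of sheaves computes $\cN^m_{p^r}(\mathcal C_{X/W})$ in $D(X_{\et},W)$. Second, its terms are acyclic on \'etale affines. Your model is normalized with $F^r$ in place of $\varphi^r=p^{rn}F^r$, so its differential must be $p^{-r}d$ for $F^r$ to be a chain map. In that normalization the terms are $p^{r(m-n)}W\Omega^n$ for $n<m$ and $F^rW\Omega^n\cong W\Omega^n$ for $n\geq m$, and these are indeed acyclic on \'etale affines.

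Relatedly, the strict $\infty$-categorical functoriality you single out as the main obstacle is not where the difficulty lies. The paper dispatches it by the functoriality in $S$ of Andriopoulos' identification.
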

\begin{proof}
For an \'etale map $U\to X$, let $\cN^m_r(U)$ be the iterated fibre
product of Definition~\ref{def:r-nygaard-filtration} for $U$, with
the $i$-th factor read through $W^{(i)}$.  The Nygaard filtration
satisfies \'etale descent, since $\cH_{\Nyg}(U)=\pi_{U,*}\cO_{U^{\Nyg}}$
does, and $R\Gamma$ commutes with finite limits.  So
$U\mapsto\cN^m_r(U)$ is an \'etale sheaf, whose value on $X$ is
$\cH_{\Nyg_r}(X)^m$ by Proposition~\ref{prop:r-nygaard-comparison}.

On an \'etale affine $U=\Spec S$, Theorem~\ref{thm:andriopoulos}
identifies $\cN^m_r(U)$, through the rightmost factor and
$\varphi^r$, with the $m$-th piece of the Beilinson-connective cover
of the $p^r$-adic filtration on $R\Gamma_{\crys}(S/W)$.  Since the
$W\Omega^n_{S/k}$ have no higher cohomology on $U$, this filtration is
$R\Gamma(U,p^{r\bullet}\mathcal C_{X/W})$.  The cohomology sheaves of
$\mathcal C_{X/W}/p^r$ are the quasi-coherent
$W_r\Omega^q_{X/k}$ \cite[Proposition~2.7.1]{BLM21}, which have no
higher cohomology on $U$ either.  So $R\Gamma(U,-)$ commutes with the
truncations $\tau^{\leq i}(\mathcal C_{X/W}/p^r)$, which are the graded
pieces of the Beilinson-connective cover, and hence with the cover
itself.  Thus the cover is
$R\Gamma\bigl(U,\cN^m_{p^r}(\mathcal C_{X/W})\bigr)$.  The
identification is functorial in $S$, and \'etale affines form a basis
of $X_{\et}$, so the two sheaves agree, and taking sections over $X$
gives the first equivalence.  The Frobenius twist $(\sigma^{-1})^*$
comes from reading the rightmost factor through $\sigma^{-(r-1)}$ and
the $\sigma^r$-semilinearity of $\varphi^r$.

On the rightmost factor the actions
\eqref{eq:general-r-nygaard-actions} are $u=p^{r-1}\cdot p=p^r$ and
the inclusion $t$, which are those on
$\Rees\bigl(\cN^\bullet_{p^r}(\mathcal C_{X/W})\bigr)$.  Since
$\varphi^r$ is multiplicative, the equivalences assemble into the
asserted equivalence of graded algebra objects.
\end{proof}

\begin{definition}[The Bockstein complex]
\label{def:f-Bockstein-complex}
The \emph{Bockstein complex} $\Bock_{p^r}(\mathcal C_{X/W})$ is the
cochain complex of sheaves of $W_r$-modules with terms
$\mathcal H^q(\mathcal C_{X/W}/p^r)$ and differential the connecting
map $\beta$ of $0\to W_r\xrightarrow{p^r}W_{2r}\to W_r\to0$.  If
$x$ lifts a cocycle modulo $p^r$ and $dx=p^ry$, then
$\beta([\bar x])=[\bar y]$.  We give it the stupid, or Hodge,
filtration $\Fil_H^i:=\Bock_{p^r}(\mathcal C_{X/W})^{\geq i}$.
\end{definition}

The Bockstein complex is the de Rham--Witt complex.

\begin{proposition}[Crystalline Bockstein--de Rham--Witt comparison]
\label{prop:crystalline-Bockstein-dRW}
For every $r\geq1$, there is a canonical multiplicative equivalence of
cochain complexes on $X_{\et}$
\[
 \Bock_{p^r}(\mathcal C_{X/W})
 \xrightarrow{\ \sim\ }
 W_r\Omega^\bullet_{X/k}.
\]
It is a $W_r$-linear equivalence onto
$(\sigma^r)^*W_r\Omega^\bullet_{X/k}$, and it carries $\beta$ to the
de Rham--Witt differential.
\end{proposition}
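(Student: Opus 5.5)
The comparison is local on $X_{\et}$ and functorial in $X$, so I will construct it on \'etale affines $U=\Spec S$ and glue. On such $U$, $\mathcal C_{X/W}$ is represented by the $p$-torsion-free complex $\WOm^\bullet_{S/k}$, with $\varphi=p^nF$ in degree $n$ (Example~\ref{ex:r-nygaard-dRW}). Then $\mathcal C_{X/W}/p^r$ is represented by $\WOm^\bullet_{S/k}/p^r$. The first step is to compute its cohomology, following Illusie's argument for $r=1$ (the Cartier isomorphism) and \cite[\S8]{BLM21} for general $r$: the map $F^r:\WOm^q_{S/k}\to\WOm^q_{S/k}$ induces an isomorphism
\[
 W_r\Omega^q_{S/k}\xrightarrow{\ \sim\ }\mathcal H^q(\WOm^\bullet_{S/k}/p^r).
\]
It is well defined because $F^r(V^r\WOm^q+dV^r\WOm^{q-1})\subset p^r\WOm^q+d(\cdots)$, using $F^rV^r=p^r$ and $F^rdV^r=d$. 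It lands in cocycles because $dF^rx=p^rF^rdx$. Bijectivity is the standard fact, also recorded in \cite[Proposition~2.7.1]{BLM21}, that the $p^r$-cocycles modulo $p^r$-coboundaries are $F^r\WOm^q$ modulo the image of $V^r$ and $dV^r$. Since $F$ is $\sigma$-semilinear, this is $W_r$-linear onto $(\sigma^r)^*W_r\Omega^q$, which gives the Frobenius twist in the statement.

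Next I will identify the differentials. For $\omega\in W_r\Omega^q$, choose a lift $\tilde\omega\in\WOm^q$ and set $x=F^r\tilde\omega$. Then $dx=dF^r\tilde\omega=p^rF^rd\tilde\omega$, so by Definition~\ref{def:f-Bockstein-complex} we get $\beta([\bar x])=[F^rd\tilde\omega]$, which is the image of $d\omega$. So $\beta$ corresponds to the de Rham--Witt differential, and the term-by-term isomorphisms form an isomorphism of cochain complexes. Multiplicativity holds because $F^r$ is a ring map on $\WOm^\bullet$. The Bockstein is a derivation with respect to the induced product on $\mathcal H^*$, and the map $\mathcal H^*\otimes\mathcal H^*\to\mathcal H^*$ induced by the product on $\WOm^\bullet/p^r$ agrees with the product of $W_r\Omega^\bullet$ under $F^r$.

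Finally, I will check canonicity. The resulting map must not depend on the chosen model $\WOm^\bullet_{S/k}$ of $R\Gamma_{\crys}$; that independence is what makes it descend to $X_{\et}$. For this I would use functoriality of the de Rham--Witt complex in $S$, together with the fact that the Bockstein complex depends only on the object $\mathcal C_{X/W}$ of $D(X_{\et},W)$, through its cohomology sheaves mod $p^r$ and $p^{2r}$ and the connecting map. Étale base change for $W_r\Omega^q$ then glues the local isomorphisms.

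I expect the main obstacle to be the surjectivity half of the cohomology computation. One has to show that every $x\in\WOm^q$ with $dx\in p^r\WOm^{q+1}$ lies in $F^r\WOm^q+p^r\WOm^q+d\WOm^{q-1}$. I would argue by induction on $r$, using the case $r=1$ (Illusie, \cite[I.3]{Illusie79}: $\ker d$ mod $p$ is $F\WOm$ plus exact forms plus $p\WOm$), injectivity of $F$, and $p$-torsion-freeness. Alternatively, I would cite \cite[Proposition~2.7.1]{BLM21} directly and only verify that its isomorphism is induced by $F^r$.
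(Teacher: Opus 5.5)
Your proposal is correct and follows the paper's proof: both use the higher Cartier map $\omega\mapsto[F^r\tilde\omega]$, check that it is well defined via $F^rV^r=p^r$ and $F^rdV^r=d$, and check compatibility with $\beta$ via $dF^r=p^rF^rd$. For surjectivity both use the same induction on $r$ from the $r=1$ fact $F(M)=\{x:dx\in pM\}$ (saturation, which your ``$FW\Omega+pW\Omega+dW\Omega$'' formulation is equivalent to), while the paper also proves injectivity directly, by applying $V^r$ and using $V^rd=p^r\,dV^r$ and $p$-torsion-freeness, where you only cite it.
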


\begin{proof}
The assertion is \'etale local, so let $\Spec S\to X$ be an \'etale
affine.  Since $S$ is smooth over $k$, the de Rham--Witt complex
$M:=W\Omega^\bullet_{S/k}$, which represents $\mathcal C_{X/W}$ on
$\Spec S$, is the saturated de Rham--Witt complex of $S$
\cite[Theorem~4.4.12]{BLM21}.  So $M$ is $p$-torsion-free, $F$ is
injective with $F(M)=\{x:dx\in pM\}$, and
$W_r\Omega^\bullet_{S/k}=M/(\operatorname{im}V^r+\operatorname{im}dV^r)$.
By induction on $r$, $F^r(M)=\{x:dx\in p^rM\}$: if $dx=p^ry$, then
$dy=0$, so $x=Fx'$ and $y=Fy'$, and $dx'=p^{r-1}y'$.  We now follow
\cite[\S2.7]{BLM21}.  The \emph{higher Cartier map}
\[
 C_r^{-1}:W_r\Omega^i_{S/k}\longrightarrow
 H^i\bigl(W\Omega^\bullet_{S/k}/p^r\bigr),
 \qquad
 C_r^{-1}(\omega)=[F^r\widetilde\omega],
\]
where $\widetilde\omega$ is any lift of $\omega$, is well defined since
$F^rV^r=p^r$ and $F^rdV^r=d$, and surjective since the cycles of
$M/p^r$ are represented by $F^r(M)$.  It is injective: if
$F^ry=p^rw+dz$, then applying $V^r$ and using $V^rd=p^r\,dV^r$ gives
$y=V^rw+dV^rz$.  So $C_r^{-1}$ is a $\sigma^r$-semilinear isomorphism.
Since $dF^r=p^rF^rd$, the Bockstein of the class of
$F^r\widetilde\omega$ is the class of $F^r(d\widetilde\omega)$, so
$C_r^{-1}$ carries $d$ to $\beta$.  It is multiplicative because $F$
is, and it is compatible with \'etale localization, so it sheafifies.
Its inverse is the asserted equivalence.
\end{proof}

We can now compute the fibres of the filtered d\'ecalage on the two
divisors.

\begin{proposition}[Fibres of the sheafified $r$-Nygaard Rees object]
\label{prop:Rees-Bockstein}
\label{prop:sheaf-fibres}
Let $X/k$ be smooth.  There are natural multiplicative equivalences
of graded complexes on $X_{\et}$
\[
 \begin{aligned}
 \Rees\bigl(\cN^\bullet_{p^r}(\mathcal C_{X/W})\bigr)|_{u=0}
 &\simeq(\sigma^r)^*\Rees(\Fil_H^\bullet W_r\Omega^\bullet_{X/k}),\\
 \Rees\bigl(\cN^\bullet_{p^r}(\mathcal C_{X/W})\bigr)|_{t=0}
 &\simeq\Rees\bigl(\tau^{\leq\bullet}(\mathcal C_{X/W}/p^r)\bigr),
 \end{aligned}
\]
where both restrictions are derived.  The first is the decreasing
Hodge filtration, with $i$-th filtered piece
$(\sigma^r)^*W_r\Omega^{\geq i}_{X/k}$.  The second is the increasing
canonical filtration, with $i$-th filtered piece
$\tau^{\leq i}(\mathcal C_{X/W}/p^r)$ and $u$ acting by the inclusions
$\tau^{\leq i-1}\to\tau^{\leq i}$.  In both cases the $i$-th graded
piece is $(\sigma^r)^*W_r\Omega^i_{X/k}[-i]$.
\end{proposition}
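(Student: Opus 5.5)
The plan is to compute both derived restrictions in the standard way for Rees objects, as cofibres of $u$ and of $t$, and then identify the results with the Bockstein complex through Proposition~\ref{prop:crystalline-Bockstein-dRW}. Write $\cN^\bullet:=\cN^\bullet_{p^r}(\mathcal C_{X/W})$.

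\textbf{The divisor $t=0$.} Restricting to $t=0$ is taking $\cofib(t)$. Weightwise this is $\cofib(\cN^{i+1}\to\cN^i)=\gr^i\cN^\bullet\simeq\tau^{\leq i}(\mathcal C_{X/W}/p^r)$, by the formula for graded pieces of a Beilinson truncation recorded after Definition~\ref{def:filtered-decalage}. The action of $u$ is multiplication by $p^r$ from $\cN^i$ to $\cN^{i+1}$. On graded pieces it induces the natural map $\tau^{\leq i-1}\to\tau^{\leq i}$. I would check this on an étale affine using the explicit model of Example~\ref{ex:r-nygaard-dRW}: there $\cN^i$ in degree $n<i$ is $p^{r(i-n)}$ times the terms of $W\Omega^\bullet_{S/k}$, and multiplication by $p^r$ is visibly the canonical inclusion after dividing. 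The graded pieces of the increasing filtration are then $\mathcal H^i(\mathcal C_{X/W}/p^r)[-i]$. By Proposition~\ref{prop:crystalline-Bockstein-dRW} this is $(\sigma^r)^*W_r\Omega^i_{X/k}[-i]$, via the higher Cartier map $C_r^{-1}$.

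\textbf{The divisor $u=0$.} Restricting to $u=0$ is taking $\cofib(u)$, whose weight-$i$ piece is $\cofib(p^r:\cN^{i-1}\to\cN^i)$. The key step is a filtered version of the Bhatt--Lurie--Mathew argument that $L\eta_p$ modulo $p$ is the Bockstein complex. I would argue as follows.
\begin{enumerate}
\item Since $\cN^\bullet$ is Beilinson-connective and complete, the filtration on $\cN^\bullet/u$ induced by $t$ has $j$-th graded piece $\cofib(\gr^{j-1}\xrightarrow{u}\gr^j)$. By the previous paragraph this is $\cofib(\tau^{\leq j-1}\to\tau^{\leq j})=\mathcal H^j(\mathcal C_{X/W}/p^r)[-j]$.
\item These graded pieces lie in degree exactly $j$. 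So the $t$-filtration on each weight of $\cN^\bullet/u$ is the Beilinson t-structure heart.
\item By Beilinson's theorem, the heart is equivalent to cochain complexes, with differential the boundary map $\gr^j\to\gr^{j+1}[1]$. Concretely, each weight $i$ is represented by the stupid truncation in degrees $\geq i$ of the complex of $\mathcal H^j(\mathcal C_{X/W}/p^r)$, with differential that boundary map.
\end{enumerate}
What remains is to identify this boundary map with the Bockstein $\beta$ of Definition~\ref{def:f-Bockstein-complex}. I would do this in the explicit model: $\cN^{j}/p^r\cN^{j-1}$ is a two-step extension, and the connecting map divides $dx$ by $p^r$, which is exactly $\beta$. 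Proposition~\ref{prop:crystalline-Bockstein-dRW} then turns the stupidly filtered Bockstein complex into $(\sigma^r)^*\Rees(\Fil_H^\bullet W_r\Omega^\bullet_{X/k})$, with graded pieces $(\sigma^r)^*W_r\Omega^i[-i]$.

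\textbf{Naturality and multiplicativity.} All constructions are functorial in étale $S$, so the identifications sheafify. Multiplicativity follows because Beilinson truncation and the realization functor from the heart are lax monoidal, and $C_r^{-1}$ is multiplicative.

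\textbf{Main obstacle.} The delicate step is the $u=0$ case: making the passage through Beilinson's heart canonical and multiplicative, and matching the boundary map with $\beta$ including signs. If the heart argument proves awkward to make multiplicative, the fallback is to work entirely with the explicit strict model of Example~\ref{ex:r-nygaard-dRW}. There, $\cN^i/p^r\cN^{i-1}$ is computed termwise, and a direct filtered quasi-isomorphism to $\Bock^{\geq i}$ is given by $x\mapsto[x/p^{r(i-n)}]$.
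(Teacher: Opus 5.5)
Your proposal is correct and follows the paper's proof. Both compute the derived restrictions as $\cofib(t)$ and $\cofib(u)$ and identify $u$ on graded pieces with the truncation maps $\tau^{\leq i-1}\to\tau^{\leq i}$. Both then observe that $\cofib(u)$ has graded pieces $\mathcal H^j(\mathcal C_{X/W}/p^r)[-j]$, so it lies in the Beilinson heart, and conclude with Proposition~\ref{prop:crystalline-Bockstein-dRW}.

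The only difference is that you check these identifications by hand in the strict de Rham--Witt model. The paper instead argues that $u=p^r$ already identifies successive graded pieces of the $p^r$-adic filtration, and that $\cofib(u)$ is the $E_1$-page of that filtration, whose $d_1$ is the Bockstein.

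One correction to your fallback formula. Normalize so that $\cN^i$ has terms $p^{r(i-n)}W\Omega^n_{S/k}$ for $n<i$ and $F^rW\Omega^n_{S/k}$ for $n\geq i$, with differential $p^{-r}d$. Then the filtered quasi-isomorphism to $\Bock^{\geq i}$ is $x\mapsto[x]$ in degrees $n\geq i$ and zero below. It is not $x\mapsto[x/p^{r(i-n)}]$, which would kill every class in degrees $n>i$.
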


\begin{proof}
Write $\cN^\bullet=\cN^\bullet_{p^r}(\mathcal C_{X/W})$.  Since $u$
and $t$ are non-zero-divisors in $W[u,t]/(ut-p^r)$, the derived
fibres at $u=0$ and $t=0$ are the cofibres of $u$ and of $t$.

For $u=0$, the cofibres
$\mathcal H^i_u:=\operatorname{cofib}(u:\cN^{i-1}\to\cN^i)$ form a
filtration through the maps induced by $t$, complete because
$\cN^\bullet$ is.  Its $i$-th graded piece is the cofibre of the map
$\gr^{i-1}\cN^\bullet\to\gr^i\cN^\bullet$ induced by $u$.  Before
truncation, $u=p^r$ identifies successive graded pieces of the
$p^r$-adic filtration, so this map is the truncation map
$\tau^{\leq i-1}(\mathcal C_{X/W}/p^r)\to\tau^{\leq i}(\mathcal C_{X/W}/p^r)$.
Hence $\gr^i\mathcal H^\bullet_u\simeq\mathcal H^i(\mathcal C_{X/W}/p^r)[-i]$,
so $\mathcal H^\bullet_u$ lies in the heart of the Beilinson
$t$-structure.  This heart is equivalent to the category of cochain
complexes: the complex has terms $\mathcal H^i(\mathcal C_{X/W}/p^r)$,
its differential is given by the boundary maps $\gr^i\to\gr^{i+1}[1]$,
and the filtration is the stupid filtration
\cite[Theorem~3.21, Definition~3.24 and Example~3.23]{AntieauDecalage}.
Since $u=p^r$ identifies the $p^r$-adic filtration with its shift,
the cofibre of $u$ is the heart truncation of the $p^r$-adic
filtration, that is, its $E_1$-page \cite[\S4]{AntieauDecalage}.  So
the differential is the $d_1$ of the $p^r$-adic filtration, which is
the Bockstein $\beta$ (compare \cite[Proposition~6.12]{BMS18} for
$r=1$).  Thus
$\Rees\bigl(\cN^\bullet_{p^r}(\mathcal C_{X/W})\bigr)|_{u=0}\simeq\Rees(\Fil_H^\bullet \Bock_{p^r}(\mathcal C_{X/W}))$,
and Proposition~\ref{prop:crystalline-Bockstein-dRW} identifies this
with $(\sigma^r)^*\Rees(\Fil_H^\bullet W_r\Omega^\bullet_{X/k})$.

For $t=0$, the $i$-th term of the cofibre of $t$ is
$\gr^i\cN^\bullet\simeq\tau^{\leq i}(\mathcal C_{X/W}/p^r)$, and $u$
acts by the truncation maps identified above.  Its graded pieces are
$\mathcal H^i(\mathcal C_{X/W}/p^r)[-i]$, which
Proposition~\ref{prop:crystalline-Bockstein-dRW} identifies with
$(\sigma^r)^*W_r\Omega^i_{X/k}[-i]$.

All maps above are multiplicative, and the $E_1$-page is a
commutative algebra in graded cochain complexes
\cite[Example~8.11]{AntieauDecalage}.
\end{proof}

We now pass to global sections.  Corollary~\ref{cor:bridge}
identifies $\cH_{\Nyg_r}(X)$ with the $r$-Nygaard filtration, and
Proposition~\ref{prop:sheaf-fibres} computes its restrictions to the
divisors $u=0$ and $t=0$ sheafwise.  Together they give the level-$r$
analogue of Theorem~\ref{thm:ordinary-input}.  Each locus of the naive
$r$-Nygaard stack $k^{\Nyg_r}$ recovers a classical invariant of $X$,
now with $W_r\Omega^\bullet_{X/k}$ in place of $\Omega^\bullet_{X/k}$.
In particular, the two divisors give the Hodge and the conjugate
filtration, which explains the names of the pullbacks in
Construction~\ref{con:dRW-pullbacks}.  Their intersection, the origin
$u=t=0$, gives the cohomology of the sheaves $W_r\Omega^i_{X/k}$, and
it is the starting point of \S\ref{sec:hodge-stack}.

\begin{theorem}[Specializations of $\cH_{\Nyg_r}(X)$]
\label{thm:r-nygaard-specializations}
Let $X$ be a smooth quasi-compact $k$-scheme and $r\geq1$.
\begin{enumerate}
\item $\cH_{\Nyg_r}(X)$ is the Rees object of the $r$-Nygaard
 filtration, with $m$-th filtered piece
 \[
  (\sigma^{-1})^*R\Gamma\bigl(X,\cN^m_{p^r}(\mathcal C_{X/W})\bigr).
 \]
 After coefficient untwisting this is
 $\Fil^{\geq m}_{\Nyg_r}R\Gamma_{\crys}(X/W)$.
\item On $(k^{\Nyg_r})_{t\neq0}$, the de Rham pullback is
 $\cH_{\Nyg_r}(X)^{-\infty}\simeq(\sigma^{r-1})^*R\Gamma_{\crys}(X/W)$.
\item On $(k^{\Nyg_r})_{u\neq0}$, the Hodge--Tate pullback is
 $\cH_{\Nyg_r}(X)^{\infty}\simeq(\sigma^{-1})^*R\Gamma_{\crys}(X/W)=R\Gamma_\Delta(X/W)$.
\item On $(k^{\Nyg_r})_{u=0}=\mathbb A^1_{W_r,t}/\Gm$, the
 Hodge-filtered $r$-de Rham--Witt pullback is
 \[
  i_{\dRW_r,+}^*\cH_{\Nyg_r}(X)
  \simeq(\sigma^{r-1})^*\Rees\bigl(\Fil_H^\bullet R\Gamma(X,W_r\Omega^\bullet_{X/k})\bigr),
 \]
 where $\Fil_H^aR\Gamma(X,W_r\Omega^\bullet_{X/k})
 =R\Gamma(X,W_r\Omega^{\geq a}_{X/k})$ is the Hodge
 filtration.
\item On $(k^{\Nyg_r})_{t=0}=\mathbb A^1_{W_r,u}/\Gm$, the
 conjugate-filtered $r$-de Rham--Witt pullback is
 \[
  i_{\dRW_r,c}^*\cH_{\Nyg_r}(X)
  \simeq(\sigma^{-1})^*\Rees\bigl(\Fil^{\mathrm{conj}}_\bullet R\Gamma_{\crys}(X/W_r)\bigr),
 \]
 where $\Fil^{\mathrm{conj}}_iR\Gamma_{\crys}(X/W_r)
 :=R\Gamma(X,\tau^{\leq i}(\mathcal C_{X/W}/p^r))$ is the conjugate
 filtration, with
 $\gr_i\simeq(\sigma^r)^*R\Gamma(X,W_r\Omega^i_{X/k})[-i]$.  So the
 $i$-th graded piece of the pullback is
 \[
  (\sigma^{r-1})^*R\Gamma(X,W_r\Omega^i_{X/k})[-i]
 \]
 (compare \cite[Corollary~5.15]{AndriopoulosTR}).
\item On $(k^{\Nyg_r})_{u=t=0}=B\mathbb G_{m,W_r}$, the $r$-Hodge
 pullback has weight-$i$ piece
 $(i_{\mathrm{Hodge}_r}^*\cH_{\Nyg_r}(X))^i
 \simeq(\sigma^{r-1})^*R\Gamma(X,W_r\Omega^i_{X/k})[-i]$.
\end{enumerate}
At $r=1$ these are the statements of Theorem~\ref{thm:ordinary-input}.
\end{theorem}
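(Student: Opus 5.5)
The plan is to deduce every item from Corollary~\ref{cor:bridge}, which identifies $\cH_{\Nyg_r}(X)$ with $(\sigma^{-1})^*R\Gamma\bigl(X,\Rees(\cN^\bullet_{p^r}(\mathcal C_{X/W}))\bigr)$ as graded algebras over the coordinate ring of $k^{\Nyg_r}$. Restrictions to the loci are then computed sheafwise and passed through global sections. For this we need $R\Gamma(X,-)$ to commute with the relevant base changes. This holds because each restriction is a finite colimit (cofibres of $u$ or $t$) or a $p$-completed filtered colimit (inverting $u$ or $t$). The functor $R\Gamma$ on the quasi-compact (hence finite cohomological dimension on quasi-coherent-type sheaves) scheme $X$ commutes with finite colimits, and with these filtered colimits after $p$-completion.

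Item (1) is then Corollary~\ref{cor:bridge} together with Proposition~\ref{prop:r-nygaard-comparison}, the latter identifying the fibre product with $\Fil^{\geq m}_{\Nyg_r}$ after untwisting. For items (2) and (3) I use the flat base change following Lemma~\ref{lem:contraction-open-loci}. The de Rham pullback is $E_{r-1}^{-\infty}$, and the Hodge--Tate pullback is $E_0^{\infty}$, where $E_i=f_{r,i}^*\cH_{\syn}(X)$. By Theorem~\ref{thm:ordinary-input}(2),(3), these are $R\Gamma_{\crys}$ and $R\Gamma_\Delta=(\sigma^{-1})^*R\Gamma_{\crys}$. They are read through the coordinate maps of the lemma, which twist coefficients by $\sigma^{1-r}$ and by the identity respectively. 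This yields $(\sigma^{r-1})^*R\Gamma_{\crys}$ and $(\sigma^{-1})^*R\Gamma_{\crys}$. Only bookkeeping of the twist direction is needed here, and I would check it against $r=1$.

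For items (4) and (5), apply Proposition~\ref{prop:sheaf-fibres} sheafwise. The derived fibre at $u=0$ is the cofibre of $u$, so it commutes with $R\Gamma$. We obtain $(\sigma^{-1})^*(\sigma^r)^*\Rees(\Fil_H^\bullet R\Gamma(X,W_r\Omega^\bullet))$. Here $R\Gamma(X,W_r\Omega^{\geq a})$ is the global sections of the stupid truncation, and the twist combines to $(\sigma^{r-1})^*$. At $t=0$ we likewise get $(\sigma^{-1})^*\Rees(R\Gamma(X,\tau^{\leq\bullet}(\mathcal C_{X/W}/p^r)))$. Since $R\Gamma$ commutes with the cofibres defining graded pieces, the graded pieces are $(\sigma^{-1})^*(\sigma^r)^*R\Gamma(X,W_r\Omega^i)[-i]$. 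The twist $\gr_i\simeq(\sigma^r)^*R\Gamma(X,W_r\Omega^i)[-i]$ comes from Proposition~\ref{prop:crystalline-Bockstein-dRW}. Item (6) follows from either divisor: by Tor-independence (Construction~\ref{con:dRW-pullbacks}), $i_{\mathrm{Hodge}_r}^*$ is the further cofibre of $t$ on the $u=0$ fibre. On a Rees object this gives the associated graded, whose $i$-th piece is $(\sigma^{r-1})^*R\Gamma(X,W_r\Omega^i)[-i]$.

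The main obstacle I expect is the consistent tracking of Frobenius twists. The twists arise from three sources: the chart coefficients $W^{(i)}$, the $\sigma^r$-semilinearity of $\varphi^r$, and the $\sigma^r$ in the Bockstein comparison. I would fix conventions by computing $X=\Spec k$ via Remark~\ref{rem:Rg-structure-sheaf} and checking that $r=1$ reproduces Theorem~\ref{thm:ordinary-input}. A secondary point is completeness. The Rees object need not be $t$-complete globally, but the identifications above are made before any completion and so do not require it.
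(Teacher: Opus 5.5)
Your proposal is correct and follows the paper's proof essentially step for step: (1) from Corollary~\ref{cor:bridge} and Proposition~\ref{prop:r-nygaard-comparison}; (2)--(3) from the flat base change after Lemma~\ref{lem:contraction-open-loci} together with Theorem~\ref{thm:ordinary-input}(2)--(3); (4)--(5) by applying $(\sigma^{-1})^*R\Gamma(X,-)$ to Proposition~\ref{prop:sheaf-fibres}, the restrictions being cofibres of $u$ and $t$; and (6) as the associated graded of (4). Your opening remark about $R\Gamma(X,-)$ commuting with $p$-completed filtered colimits is never actually needed, since you treat the open loci through the charts rather than through Corollary~\ref{cor:bridge}.
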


\begin{proof}
Assertion (1) is Proposition~\ref{prop:r-nygaard-comparison}
together with Corollary~\ref{cor:bridge}.

For (2) and (3), apply the formula after
Lemma~\ref{lem:contraction-open-loci} to $\mathcal E=f_r^*\cH_{\syn}(X)$.
The restriction of $\cH_{\Nyg_r}(X)$ to $(k^{\Nyg_r})_{t\neq0}$ is the
restriction of $f_{r,r-1}^*\cH_{\Nyg}(X)$ to
$(k^{\Nyg}_{r-1})_{t_{r-1}\neq0}$, read through the coordinate map
$a\mapsto\sigma^{1-r}(a)^{(r-1)}$.  By
Theorem~\ref{thm:ordinary-input}(2) this is $R\Gamma_{\crys}(X/W)$
with its coefficients twisted by $\sigma^{1-r}$.  On
$(k^{\Nyg_r})_{u\neq0}$ the chart is $k^{\Nyg}_0$, the coordinate map
is $a\mapsto a^{(0)}$, and Theorem~\ref{thm:ordinary-input}(3) gives
$(\sigma^{-1})^*R\Gamma_{\crys}(X/W)$.

For (4) and (5), assertion (1) gives
$\cH_{\Nyg_r}(X)\simeq(\sigma^{-1})^*R\Gamma(X,\Rees\bigl(\cN^\bullet_{p^r}(\mathcal C_{X/W})\bigr))$.
The derived restrictions to $u=0$ and to $t=0$ are the cofibres of
$u$ and of $t$, which commute with $R\Gamma(X,-)$.  So both
equivalences follow from Proposition~\ref{prop:sheaf-fibres} by
applying $(\sigma^{-1})^*R\Gamma(X,-)$, the filtrations being
formed before global sections, and the graded pieces in (5) are
those of the proposition.

For (6), the origin is the transversal intersection of the two
divisors (Construction~\ref{con:dRW-pullbacks}), so
$i_{\mathrm{Hodge}_r}^*$ is the restriction of $i_{\dRW_r,+}^*$ along
the zero section $t=0$ of $\mathbb A^1_{W_r,t}/\Gm$, which takes the
Rees object of a filtration to its associated graded.  Apply this
to (4).
\end{proof}

We now pass to $r=\infty$, the limit over $r$, which Andriopoulos considers
algebraically \cite[\S5.1]{AndriopoulosTR}.

\begin{definition}[The Hodge--Witt and $\infty$-Nygaard filtrations]
\label{def:hodge-witt-filtration}
Let $X$ be a smooth $k$-scheme.  The \emph{Hodge--Witt filtration} on
$R\Gamma_{\crys}(X/W)\simeq R\Gamma(X,W\Omega^\bullet_{X/k})$ is the
filtration by the stupid truncations of the de Rham--Witt complex,
\[
 \Fil^{\geq m}_{\HW}R\Gamma_{\crys}(X/W)
 :=R\Gamma\bigl(X,W\Omega^{\geq m}_{X/k}\bigr).
\]
The \emph{$\infty$-Nygaard filtration} is the limit
\[
 \Fil^{\geq m}_{\Nyg_\infty}R\Gamma_{\crys}(X/W)
 :=R\varprojlim_r\Fil^{\geq m}_{\Nyg_r}R\Gamma_{\crys}(X/W)
\]
along the restriction maps $\pi_r$ of
Construction~\ref{con:r-nygaard-operators}, where each
$\Fil^{\geq m}_{\Nyg_r}R\Gamma_{\crys}$ is a $W$-module through the
inclusion $\iota$.
\end{definition}

\begin{construction}[The $\infty$-Nygaard stacks]
\label{con:infinite-nygaard-stacks}
Let $k^{\Nyg_\infty}$ and $k^{\NN_\infty}$ be the colimits of the naive
and of the genuine $r$-Nygaard stacks along the restriction maps
$\mathsf R_r$ (Definition~\ref{def:level-maps}).  By
Proposition~\ref{prop:level-maps-compatible},
$g_{r+1}\circ\mathsf R_r=\mathsf R_r\circ g_r$, so the contractions induce
a map $g_\infty:k^{\NN_\infty}\to k^{\Nyg_\infty}$.  The same proposition
gives $f_{r+1}\circ\mathsf R_r=\Phi_{\syn}\circ f_r$.  So the maps
$f'_r:=\Phi_{\syn}^{-r}\circ f_r$ satisfy
$f'_{r+1}\circ\mathsf R_r=f'_r$, and they induce
$f_\infty:k^{\NN_\infty}\to k^{\syn}$.  Finally, let
\[
 \nu_r:k^{\Nyg_r}\longrightarrow\mathbb A^1_{W,t}/\Gm
 :=\Spf\bigl(W[t]^\wedge_p\bigr)/\Gm,
 \qquad t\longmapsto t,
\]
be the map that forgets $u$.  Since $\mathsf R_r$ fixes $t$, we have
$\nu_{r+1}\circ\mathsf R_r=\nu_r$, and the coordinates $t$ define a
coordinate on $k^{\Nyg_\infty}$.  Let $(k^{\Nyg_\infty})_{t=0}$ be the locus where $t$ vanishes, and call
the locus $k^{\mathrm{Hodge}_\infty}:=(k^{\NN_\infty})_{t=0}$ where
$g_\infty^*t$ vanishes the \emph{$\infty$-Hodge stack}.  Let
\[
 g'_\infty:k^{\mathrm{Hodge}_\infty}\longrightarrow(k^{\Nyg_\infty})_{t=0},
 \qquad
 e_\infty:k^{\mathrm{Hodge}_\infty}\longrightarrow k^{\syn}
\]
be the restrictions of $g_\infty$ and $f_\infty$.
\end{construction}

\begin{lemma}[The naive $\infty$-Nygaard stack]
\label{lem:naive-infinite-nygaard}
The maps $\nu_r$ induce an equivalence
$k^{\Nyg_\infty}\simeq\mathbb A^1_{W,t}/\Gm$.
\end{lemma}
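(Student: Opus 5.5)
The plan is to compute the colimit on graded coordinate rings, since each $k^{\Nyg_r}$ is $\Spf$ of a $p$-complete graded ring modulo $\Gm$ and the transition maps $\mathsf R_r$ are induced by graded ring maps. A colimit of such stacks along $\Gm$-equivariant maps should correspond to the $p$-completed limit of the graded coordinate rings
\[
 A_r:=\bigl(W[u,t]/(ut-p^r)\bigr)^\wedge_p,\qquad \mathsf R_r^*:A_{r+1}\to A_r,\quad u\mapsto pu,\ t\mapsto t,
\]
computed weight by weight. So I would first compute $\varprojlim_r A_r$ in each weight, then show that the induced map $\varprojlim_r A_r\leftarrow W[t]^\wedge_p$ given by the $\nu_r^*$ is an isomorphism, and finally justify that the stacky colimit is the formal spectrum of this limit.

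For the computation, note that the weight-$n$ part of $A_r$ is free of rank one over $W$: for $n\geq0$ it is $W\cdot u^n$, and for $n<0$ it is $W\cdot t^{-n}$. The map $\mathsf R_r^*$ is the identity on $W\cdot t^{m}$ and sends $u^n\mapsto p^nu^n$. Hence in negative and zero weight the tower is constant with limit $W\,t^{-n}$. In weight $n\geq1$, it is $\cdots\xrightarrow{p^n}W\xrightarrow{p^n}W$, whose limit is $0$ and whose $\varprojlim^1$ is $W/\bigl(\bigcap_k p^{nk}W\bigr)$-type data. Since $W$ is $p$-adically complete and separated, the derived limit of multiplication by $p^n$ is the derived $p$-completion of $W[1/p]$ shifted, which vanishes. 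Thus $R\varprojlim_r A_r\simeq W[t]^\wedge_p$, with $t$ in weight $-1$. The maps $\nu_r^*$ are compatible, $\nu_{r+1}\circ\mathsf R_r=\nu_r$, and realize this identification.

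The main obstacle is making precise that the colimit of the formal stacks $\Spf(A_r)/\Gm$ is $\Spf(\varprojlim A_r)/\Gm$. Colimits of affines are not affine in general, and here the $u$-directions collapse only in the limit. I would argue on functors of points on $p$-nilpotent test rings $S$ with a line bundle. A map to $\Spf(A_r)/\Gm$ is a line bundle $L$ with sections $u\in L$ and $t\in L^{-1}$ satisfying $ut=p^r$. Since $p$ is nilpotent on $S$, the condition $ut=p^r$ becomes $ut=0$ for large $r$. Any pair $(L,t)$ lifts to level $r$ with $u=0$, once $p^r=0$ on $S$. Two lifts $(L,u,t)$ and $(L,u',t)$ at level $r$ become equal after applying $\mathsf R$ enough times, since $p^ku=p^ku'=0$ for $k\gg0$.

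So the filtered colimit of the groupoids $\Map(S,k^{\Nyg_r})$ maps by $\nu$ to $\Map(S,\mathbb A^1_{W,t}/\Gm)$. This map is essentially surjective by the lifting, and faithful and full by the same nilpotence argument applied to isomorphisms of line bundles, which are unchanged by $\mathsf R$. Hence the colimit, formed as a colimit of prestacks and then sheafified, is $\mathbb A^1_{W,t}/\Gm$. The ring-level computation of the previous paragraph is then a consistency check, giving $\cO$ on global sections.
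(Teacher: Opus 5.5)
Your argument is correct and is essentially the paper's proof: the paper also reduces to $T$-points with $p$ nilpotent, uses that the $T$-points of the filtered colimit are the colimit of the groupoids of $T$-points, lifts $(L,t)$ to $(L,0,t)$ at level $N$ when $p^N=0$, and notes that $N$ restriction maps send every $(L,u,t)$ to $(L,0,t)$ while leaving isomorphisms of $(L,t)$ unchanged. The ring-level computation of $R\varprojlim_rA_r$ is not needed for the lemma and can be dropped; note also that your aside describing $\varprojlim^1$ as ``$W/\bigcap_kp^{nk}W$-type data'' is wrong, since $\varprojlim^1$ vanishes, as your own conclusion that the derived limit is zero already says.
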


\begin{proof}
Both sides are $p$-adic formal stacks, so it suffices to compare
$T$-points for rings $T$ in which $p$ is nilpotent.  The colimit is
filtered, so its $T$-points form the colimit of the groupoids of
$T$-points.  A $T$-point of $k^{\Nyg_r}$ is a line bundle $L$ on
$\Spec T$ with sections $u$ of $L$ and $t$ of $L^{-1}$ such that
$ut=p^r$, and $\mathsf R_r$ sends it to $(L,pu,t)$.  Suppose $p^N=0$
in $T$.  Then every $T$-point $(L,t)$ of $\mathbb A^1_{W,t}/\Gm$
lifts to $(L,0,t)$ at level $N$, and every $T$-point of
$k^{\Nyg_r}$ becomes $(L,0,t)$ after $N$ restriction maps.
Isomorphisms between such points are isomorphisms of $(L,t)$.  So the
colimit of the groupoids of $T$-points maps isomorphically to the
groupoid of $T$-points of $\mathbb A^1_{W,t}/\Gm$.
\end{proof}

In particular the Hodge--Tate locus $u\neq0$ disappears in the
limit, and the naive $\infty$-Nygaard stack $k^{\Nyg_\infty}$ is the Rees stack of a
filtration on $W$-modules.

\begin{theorem}[The Hodge--Witt filtration]
\label{thm:hodge-witt}
Let $X$ be a smooth quasi-compact $k$-scheme.
\begin{enumerate}
\item The $\infty$-Nygaard filtration is the Hodge--Witt filtration:
 \[
  \Fil^{\geq m}_{\Nyg_\infty}R\Gamma_{\crys}(X/W)\simeq
  \Fil^{\geq m}_{\HW}R\Gamma_{\crys}(X/W),
 \]
 compatibly with the maps between filtered pieces.
\item Through Lemma~\ref{lem:naive-infinite-nygaard},
 \[
  g_{\infty,*}f_\infty^*\cH_{\syn}(X)\simeq
  (\sigma^{-1})^*\Rees\bigl(\Fil^{\geq\bullet}_{\HW}R\Gamma_{\crys}(X/W)\bigr)
 \]
 in $\Dqc(\mathbb A^1_{W,t}/\Gm)$.  Its restriction to $t\neq0$ is
 $(\sigma^{-1})^*R\Gamma_{\crys}(X/W)$.  Its pullback to
 $(k^{\Nyg_\infty})_{t=0}\simeq B\mathbb G_{m,W}$ is
 $g'_{\infty,*}e_\infty^*\cH_{\syn}(X)$, with weight-$i$ piece
 $(\sigma^{-1})^*R\Gamma(X,W\Omega^i_{X/k})[-i]$.
\end{enumerate}
\end{theorem}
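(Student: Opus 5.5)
The plan is to prove (1) sheafwise, using the de Rham--Witt model of Example~\ref{ex:r-nygaard-dRW}, and then to deduce (2) from (1) together with Corollary~\ref{cor:bridge}, the compatibilities of Proposition~\ref{prop:level-maps-compatible}, and the computation at the origin in Theorem~\ref{thm:r-nygaard-specializations}(6).

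\textbf{Step 1: the limit on an \'etale affine.} Let $U=\Spec S$ be an \'etale affine over $X$ and put $M=W\Omega^\bullet_{S/k}$. By Example~\ref{ex:r-nygaard-dRW}, read through the rightmost factor, $\Fil^{\geq m}_{\Nyg_r}$ is represented by the subcomplex with $n$-th term $p^{r(m-n-1)}V^rM^n$ for $n<m$ and $M^n$ for $n\geq m$. Under $\iota$ the restriction maps $\pi_r$ become the inclusions of these subcomplexes, because $\iota\circ\pi_r=\iota$. For $n<m$ the terms satisfy $p^{r(m-n-1)}V^rM^n\subset V^rM^n$. The subgroups $V^rM^n$ have intersection zero, and the $R^1\varprojlim$ vanishes for the following reason. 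Each quotient $M^n/p^{r(m-n-1)}V^rM^n$ is a $W$-module of bounded length in a filtered sense, and the tower of quotients is Mittag-Leffler with limit $M^n$, since $M^n$ is $V$-adically complete and the pieces form a cofinal system with the $V$-adic one. So the degreewise inverse limit of the subcomplexes is $M^{\geq m}$, the stupid truncation, and the derived limit agrees with the naive one because the transition maps are injective with Mittag-Leffler cokernels.

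\textbf{Step 2: globalize part (1).} Both sides of (1) satisfy \'etale descent. The left side does by Corollary~\ref{cor:bridge}, since $R\varprojlim$ commutes with $R\Gamma$. The right side is $R\Gamma(X,W\Omega^{\geq m})$. Because the identification in Step 1 is functorial in $S$, it sheafifies, and taking $R\Gamma(X,-)$ gives (1). Compatibility with the maps between filtered pieces holds because both sides are inclusions of subcomplexes.

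\textbf{Step 3: the stacky statement in part (2).} Since $k^{\NN_\infty}$ is the colimit of the $k^{\NN_r}$ along $\mathsf R_r$, quasi-coherent complexes on it are the limit of those on the $k^{\NN_r}$. Hence $g_{\infty,*}f_\infty^*E$, restricted to the $W[t]$-module picture, is $R\varprojlim_r\nu_{r,*}g_{r,*}f_r'^*E$. Here we use $f'_r=\Phi_{\syn}^{-r}f_r$ and $g_{r+1}\mathsf R_r=\mathsf R_r g_r$, and the transition maps are pullback along $\mathsf R_r$. On weights, $\mathsf R_r^*$ gives exactly the maps $\pi_r$, so Proposition~\ref{prop:r-nygaard-comparison} and part (1) yield the Rees object of the Hodge--Witt filtration. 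The twist $\Phi_{\syn}^{-r}$ cancels the $(\sigma^{r-1})$-type twists and leaves the $(\sigma^{-1})^*$ of Corollary~\ref{cor:bridge}. The restriction to $t\neq0$ is the colimit along $t$, which is $(\sigma^{-1})^*R\Gamma_{\crys}$ because the Hodge--Witt filtration is exhaustive. For the locus $t=0$, proper base change of $g_\infty$ to $k^{\mathrm{Hodge}_\infty}$ is available because $g_\infty$ is a limit of chart-affine pushforwards given by the \v{C}ech formula of Proposition~\ref{prop:cech-formula-for-Rgr}, and that formula commutes with $\cofib(t)$. This identifies the pullback with $g'_{\infty,*}e_\infty^*$. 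On the Rees side, $\cofib(t)$ takes the Rees object to its associated graded, which is $R\Gamma(X,W\Omega^i)[-i]$.

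\textbf{Main obstacle.} I expect the hardest part to be Step 1: controlling $R\varprojlim$ and checking that the iterated fibre products, read through the rightmost factor, really match the subcomplexes of Example~\ref{ex:r-nygaard-dRW} compatibly with $\pi_r$, including the Frobenius twists. A further point to verify in Step 3 is that the pushforward $g_{\infty,*}$ along the infinite chain commutes with the limit and with restriction to $t=0$.
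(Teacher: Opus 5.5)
Your overall route is the paper's: work on an \'etale affine with $M=W\Omega^\bullet_{S/k}$, identify the $r$-Nygaard pieces in degrees $n<m$ as $T^rM^n$ with $T=p^{m-n-1}V$, show that their derived limit along the inclusions $\pi_r$ vanishes, and globalize. Step 3 is essentially the paper's argument for (2).

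\textbf{The gap is in Step 1, in the proof that $R\varprojlim_rA_r=0$ for $A_r=p^{r(m-n-1)}V^rM^n$.}

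You claim the $A_r$ are cofinal with the $V$-adic filtration. This is false once $c=m-n-1\geq1$. Take $S=k[x]$ and $n=0$. In $W(k[x])$ one has $p(a_0,a_1,\ldots)=(0,a_0^p,a_1^p,\ldots)$, and $x$ is not a $p$-th power, so $V^{s-1}[x]\notin pW(k[x])$. Hence $V^s[x]\notin pVW(k[x])\supseteq A_1$ for every $s\geq1$, so no $V^sM^0$ lies in $A_1$.

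The Mittag-Leffler remark does not repair this. The quotient tower $M^n/A_r$ has surjective transitions anyway. For a decreasing chain one has $\varprojlim^1_rA_r\simeq\operatorname{coker}(M^n\to\varprojlim_rM^n/A_r)$. So what is needed is exactly that $M^n$ is complete and separated for the $A_r$-topology. Your separatedness argument, via $A_r\subseteq V^rM^n$, is fine; completeness is what fails.

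The fix splits into two cases.
\begin{enumerate}
\item For $c\geq1$, we have $p^{(c+1)r}M^n=p^{cr}V^rF^rM^n\subseteq A_r\subseteq p^{cr}M^n$. So the $A_r$ are cofinal with the $p$-adic filtration. Since $M^n$ is $p$-torsion-free and derived $p$-complete, it is $p$-adically complete and separated.
\item For $c=0$ the pieces are $V^rM^n$. Here $p^rM^n\subseteq V^rM^n\subseteq V^rM^n+dV^rM^{n-1}$, so the $V$-adic topology lies strictly between the $p$-adic and the canonical one. Its completeness for $n\geq1$ does not follow formally from either.
\end{enumerate}
The paper handles all $n<m$ uniformly. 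The canonical filtration $\Fil^s=V^s+dV^s$ of the strict Dieudonn\'e complex is complete and separated, and $T(\Fil^s)\subseteq\Fil^{s+1}$ because $Vd=p\,dV$. This gives $\bigcap_rT^rM^n=0$. A compatible system with $c_{r+1}-c_r=T^ry_r$ lifts to the convergent series $c_1+\sum_rT^ry_r$. Its tails lie in $T^NM^n$ by continuity of $T$.

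\textbf{A smaller point.} The relation $\iota\circ\pi_r=\iota$ does not by itself determine $\pi_r$ in the derived category, since $\iota$ is not a monomorphism there. So reading the pieces off Example~\ref{ex:r-nygaard-dRW} does not yet tell you that $\pi_r$ is the inclusion. The paper instead computes the iterated fibre product degreewise on the model. This is legitimate because in each degree one of $\iota_m,\varphi_m$ is surjective, and $\varphi_m\circ T=\mathrm{id}$. It gives $\Fil^{\geq m}_{\Nyg_r}M^n=T^rM^n$ with $\pi_r$ literally the inclusion, which is what your limit computation needs.
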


\begin{proof}
For (1) we work \'etale locally.  Let $\Spec S\to X$ be an \'etale
affine and $C=W\Omega^\bullet_{S/k}$, a strict Dieudonn\'e complex
\cite[Definition~4.1.1]{BLM21}.  The filtration
\[
 \Fil^{\geq m}_{\Nyg}C^i=p^{m-i-1}VC^i\quad(i<m),
 \qquad
 \Fil^{\geq m}_{\Nyg}C^i=C^i\quad(i\geq m)
\]
is carried by $p^iF$ into $p^mC$, with graded pieces
$\tau^{\leq m}(C/p)$ \cite[Proposition~8.2.1]{BLM21}.  These
properties characterize the Nygaard filtration
\cite[Theorem~3.2.1]{BhattFGauges}, so $\iota_m$ is the inclusion and
$\varphi_m=p^{i-m}F$ in degree $i$.

In degrees $i<m$ put $T:=p^{m-i-1}V$.  Then
$\Fil^{\geq m}_{\Nyg}C^i=TC^i$ and $\varphi_m\circ T=\mathrm{id}$ by
$FV=p$, while in degrees $i\geq m$ the map $\iota_m$ is the identity.
So the fibre products of Definition~\ref{def:r-nygaard-filtration} are
computed degreewise, the $r$-fold product consists of the
$x\in\Fil^{\geq m}_{\Nyg}C$ with $\varphi_m^k(x)\in\Fil^{\geq m}_{\Nyg}C$
for $0<k<r$, and
\[
 \Fil^{\geq m}_{\Nyg_r}C^i=T^rC^i\quad(i<m),
 \qquad
 \Fil^{\geq m}_{\Nyg_r}C^i=C^i\quad(i\geq m),
\]
with $\pi_r$ the inclusions.

It remains to show $R\varprojlim_rT^rC^i=0$ for $i<m$.  The canonical
filtration $\Fil^sC^i:=V^sC^i+dV^sC^{i-1}$ is complete and separated,
since $C$ is strict \cite[Definition~2.5.4]{BLM21}, and
$T(\Fil^s)\subseteq\Fil^{s+1}$ because $Vd=p\,dV$.  So
$\bigcap_rT^rC^i=0$.  Write a compatible system
in $\varprojlim_rC^i/T^rC^i$ as $c_1$ with corrections
$c_{r+1}-c_r=T^ry_r$.  The series $c=c_1+\sum_{r\geq1}T^ry_r$
converges, and as $T$ is continuous its tails factor as
\[
 c-c_N=\sum_{r\geq N}T^ry_r=T^N\Bigl(\sum_{r\geq N}T^{r-N}y_r\Bigr)
 \in T^NC^i,
\]
so $c$ lifts the system.  Hence $\varprojlim^1$ vanishes as well, and
$R\varprojlim_r\Fil^{\geq m}_{\Nyg_r}C\simeq C^{\geq m}$, functorially
in $S$.  The sheaves $W_n\Omega^i_{X/k}$ are quasi-coherent on
$W_n(X)$ and compatible with \'etale base change
\cite[I, Propositions~1.13.1 and~1.14]{Illusie79}, and the transition
maps between them are surjective.  So $W\Omega^i_{X/k}$ has no higher
cohomology on \'etale affines, and the identification globalizes and
commutes with $R\Gamma(X,-)$.
This proves (1).

For (2), $\Dqc(k^{\NN_\infty})\simeq\varprojlim_r\Dqc(k^{\NN_r})$, and
$f_\infty^*\cH_{\syn}(X)$ is the system $(f_r'^*\cH_{\syn}(X))_r$.
Mapping spaces in the limit are computed levelwise, so
\[
 g_{\infty,*}(\mathcal F_r)_r\simeq R\varprojlim_r\,\nu_{r,*}g_{r,*}\mathcal F_r,
\]
along the units of $\mathsf R_r^*\dashv\mathsf R_{r,*}$, and $\nu_{r,*}$
only forgets the action of $u$.  By
Proposition~\ref{prop:r-nygaard-comparison}, the weight-$m$ part of
$g_{r,*}f_r'^*\cH_{\syn}(X)$ is the fibre product of
Definition~\ref{def:r-nygaard-filtration}, with its last factor read
through $(\sigma^{-1})^*$, since $\Phi_{\syn}^{-r}$ twists
coefficients by $(\sigma^{-r})^*$.  As $\mathsf R_r$ sends the $i$-th
chart to the $(i+1)$-st, the \v{C}ech formula
(Proposition~\ref{prop:cech-formula-for-Rgr}) shows that the transition
maps forget the first factor, so they are the maps $\pi_r$, and (1)
gives the equivalence.  Its restriction to $t\neq0$ and its pullback
to $t=0$ are the underlying object and the associated graded of the
Hodge--Witt filtration.  The pullback to $t=0$ is the cofibre of $t$,
which commutes with the limit, so base change along $t=0$ identifies
it with $g'_{\infty,*}e_\infty^*\cH_{\syn}(X)$.
\end{proof}

\begin{remark}[Ekedahl's explicit description of the Hodge--Witt filtration]
\label{rem:hodge-witt-ekedahl}
The $r$-Nygaard filtration asks that the first $r$ iterates of the
divided Frobenius stay in the Nygaard filtration.  On cohomology,
Ekedahl gives an explicit description of the Hodge--Witt filtration of
a Mazur--Ogus variety by the same kind of condition: for $M=H^n_{\crys}(X/W)$ with Frobenius
$\varphi$, the image of $H^n(X,W\Omega^{\geq i}_{X/k})$ in $M$ is the
largest sub-$F$-crystal on which $p^{-i}\varphi$ is integral,
equivalently $\bigcap_{r\geq0}\varphi^{-r}(p^{ir}M)$
\cite[Theorem~III.4.5]{Ekedahl86Diagonal}.
Theorem~\ref{thm:hodge-witt}(1) is a counterpart for complexes.
\end{remark}

\begin{remark}[The genuine $\infty$-Nygaard stack]
\label{rem:genuine-infinite-nygaard}
Restriction sends the $i$-th chart of $k^{\NN_r}$ to the $(i+1)$-st
chart of $k^{\NN_{r+1}}$, so it preserves $j=r-1-i$.  Thus
$k^{\NN_\infty}$ is a chain of Nygaard charts indexed by $j\geq0$, and
the composite of $g_\infty$ with Lemma~\ref{lem:naive-infinite-nygaard}
sends the $j$-th chart to $\mathbb A^1_{W,t}/\Gm$ by $t\mapsto p^jt_j$.
Weightwise, the \v{C}ech formula for $g_{\infty,*}$ is the infinite
iterated fibre product by which Andriopoulos describes the
$\infty$-Nygaard filtration \cite[\S5.1]{AndriopoulosTR}.  The same
$\mathbf N$-indexed shape appears in mixed characteristic.  Lahoti and
Manam introduce the moduli stack $\mathbf Z_p^{\NN}$ of polyfiltered
Cartier--Witt divisors, an $\mathbf N$-indexed chain of copies of the
Nygaard stack of $\mathbf Z_p$ glued pairwise along the Hodge--Tate and
de Rham maps \cite[\S1.3]{LahotiManam}.
Frobenius commutes with restriction, and
$f'_{r+1}\circ\mathsf F_r=f'_r$ by
Proposition~\ref{prop:level-maps-compatible}.  So the Frobenius maps
induce an endomorphism of $k^{\NN_\infty}$ over $k^{\syn}$, which
moves the $j$-th chart to the $(j+1)$-st and lies over the map
$t\mapsto pt$ of $\mathbb A^1_{W,t}/\Gm$.

The $\infty$-Hodge stack $k^{\mathrm{Hodge}_\infty}$ plays the role at $r=\infty$ of the $r$-Hodge stack $k^{\mathrm{Hodge}_r}$
(Construction~\ref{con:universal-Hodge-extraction}).  It sits in the
cartesian square
\[
\begin{tikzcd}[column sep=large]
 k^{\syn}
 & k^{\NN_\infty} \arrow[l, "f_\infty"'] \arrow[d, "g_\infty"']
 & k^{\mathrm{Hodge}_\infty} \arrow[l] \arrow[d, "g'_\infty"]\\
 & \mathbb A^1_{W,t}/\Gm
 & B\mathbb G_{m,W}, \arrow[l, "t=0"]
\end{tikzcd}
\]
whose bottom arrow is the origin.  On the $j$-th chart,
$g_\infty^*t=p^jt_j$ is a non-zero-divisor, so $k^{\mathrm{Hodge}_\infty}$
is classical.  This is unlike the $r$-Hodge stack $k^{\mathrm{Hodge}_r}$, whose further
equation $p^iu_i$ on the $i$-th chart tends to zero as $i=r-1-j$ grows.
The $j$-th chart has graded coordinate ring
$W[u,t]/(ut-p,\,p^jt)=\pi_*\mathrm{THH}^{hC_{p^j}}$, so $g'_\infty$ is
the comparison map of even stacks for $\mathrm{TR}(k;\mathbf Z_p)$ in
Remark~\ref{rem:TR-motivation}.

Topologically, $k^{\mathrm{Hodge}_\infty}$ is a chain of closed and open
points
\[
 B\mathbb G_{m,k}\;-\;\Spf W_1(k)\;-\;B\mathbb G_{m,k}\;-\;\Spf W_2(k)
 \;-\;B\mathbb G_{m,k}\;-\;\cdots.
\]
The $j$-th copy of $B\mathbb G_{m,k}$ is the origin of the $j$-th
chart, where $\cH_{\syn}(X)$ restricts to Hodge cohomology.  The copy of
$\Spf W_j(k)=\mathrm{Spev}(\mathrm{THH}^{tC_{p^j}})$ is the overlap of
the $(j-1)$-st and the $j$-th chart, where
$\cH_{\syn}(X)$ restricts to $R\Gamma_{\crys}(X/W_j)$
(Theorem~\ref{thm:ordinary-input}, up to Frobenius twist).  By
Theorem~\ref{thm:hodge-witt}(2), the cohomology of the
$W\Omega^i_{X/k}$, which forms the $E_1$-page of the slope spectral
sequence \cite[II, \S3]{Illusie79}, is assembled from these two kinds
of points.
\end{remark}

\label{sec:hodge-koszul}
\label{sec:dRW-plus-component}
We now return to finite level and turn
Theorem~\ref{thm:r-nygaard-specializations} into a functor on
$F$-gauges.  By its item (4), the Hodge-filtered $r$-de Rham--Witt
pullback of $\cH_{\Nyg_r}(X)$ is the Hodge filtration on
$R\Gamma(X,W_r\Omega^\bullet_{X/k})$.  Its associated graded remembers
the terms $W_r\Omega^i_{X/k}$, and its boundary maps remember the
differential, so Koszul duality recovers the complex itself.  Applied
to an arbitrary $F$-gauge, the same construction gives the finite de
Rham--Witt realization $T_{\dRW_r}$
(Definition~\ref{def:finite-extraction}).  We follow Antieau \cite[\S3]{AntieauDecalage} and Raksit
\cite[\S\S3.2--3.3 and~5.1]{RaksitHH} throughout.

\begin{convention}[Regrading]
\label{conv:shearing}
Let $\mathcal C$ be $D(W_r)$ for some $r\geq1$, or the derived
$\infty$-category of sheaves of $W_r$-modules on $X_{\et}$, and let
$N\in\mathrm{Gr}(\mathcal C)$.  For $\mathcal C=D(W_r)$ this means
\[
 N\in\mathrm{Gr}(D(W_r))\simeq D(\mathrm{GrMod}_{W_r})
 \simeq\Dqc(B\mathbb G_{m,W_r}).
\]
The \emph{regraded} object $N[\bullet]\in\mathrm{Gr}(\mathcal C)$ has
components $(N[\bullet])^n=N^n[n]$.  On a representing complex of graded
modules, $(N[\bullet])^{n,q}=N^{n,q+n}$ with weight-$n$ differential
$(-1)^n\partial_{N^n}$.  Regrading is
inverse to Raksit's identification of the heart of the negative
$t$-structure on graded objects with static graded objects,
$\{X^n\}\mapsto\{X^n[-n]\}$.  As there, the symmetric monoidal structure
acquires the Koszul sign rule: on regraded objects the symmetry has
sign $(-1)^{qq'+nn'}$ for bidegrees
$(q,n),(q',n')$, and the comparison
$N[\bullet]\otimes N'[\bullet]\to(N\otimes N')[\bullet]$ has sign
$(-1)^{qn'}$.  With these signs regrading is symmetric monoidal.
It satisfies $(N(a)[b])[\bullet]\simeq
(N[\bullet])(a)[b-a]$ and
$(N^\vee)[\bullet]\simeq(N[\bullet])^\vee$.  For the associated graded of
a filtration, $N[\bullet]$ is Antieau's $\gr^\bullet[\bullet]$.
\end{convention}

\begin{construction}[Koszul duality]
\label{con:enhanced-gr}
\label{def:linear-Koszul}
Let $\Fil^\bullet C$ be a filtered complex of $W_r$-modules, or of
sheaves of $W_r$-modules on $X_{\et}$ for a smooth $k$-scheme $X$,
viewed as $\Rees(\Fil^\bullet C)$ over $\mathbb A^1_{W_r,t}/\Gm$.  Its
associated graded is the pullback along the zero section
$B\mathbb G_{m,W_r}\hookrightarrow\mathbb A^1_{W_r,t}/\Gm$.  In weight
$m$ it is the Koszul complex
$[\Fil^{m+1}C\,\varepsilon_t\xrightarrow{\,t\,}\Fil^mC]$ in degrees
$-1,0$, where the generator $\varepsilon_t$ has weight $-1$.  The fibre
sequences
$\gr^{m+1}(\Fil^\bullet C)\to\Fil^mC/\Fil^{m+2}C\to\gr^m(\Fil^\bullet C)$
give the boundary maps
\[
 \eta:\ \gr^m\bigl(\Fil^\bullet C\bigr)\longrightarrow
 \gr^{m+1}\bigl(\Fil^\bullet C\bigr)[1],\qquad
 \Fil^{m+1}C\,\varepsilon_t\xrightarrow{\ \mathrm{id}\ }\Fil^{m+1}C .
\]
On the Koszul complexes $\eta(\varepsilon_t)=1$ and $\eta(1)=0$.  For the $p^r$-adic
filtration of a complex of $W$-modules, these boundary maps are the
Bocksteins $\beta$ of Definition~\ref{def:f-Bockstein-complex}.

With their coherences, the boundary maps make the associated graded a
coherent cochain complex.  In Raksit's form, it is a module over the dual of the bar
construction of the augmented graded algebra $W_r[t]$,
\[
 D_-:=\mathrm{Bar}(W_r[t])^\vee,\qquad
 \mathrm{Bar}(W_r[t])=W_r\otimes^{L}_{W_r[t]}W_r
 \simeq W_r\oplus W_r(1)[1].
\]
In the homotopy category $D_-$ is the exterior algebra
$W_r[\eta]/\eta^2$, where $\eta$ has complex degree $1$ and weight $1$
and acts by the boundary maps.  We write
$\mathrm{gr}^\bullet(\Fil^\bullet C)$ for the associated graded with
this module structure, and we call
\[
 \mathrm{gr}^\bullet(\Fil^\bullet C)[\bullet]\in
 D(\mathrm{GrMod}_{W_r[d]/d^2})
\]
the \emph{associated cochain complex}, or \emph{Koszul dual}, of
$\Fil^\bullet C$, in the notation of
\cite[Notation~3.22]{AntieauDecalage}.  Regrading is
symmetric monoidal, so it carries $D_-$ to the graded ring $W_r[d]/d^2$
and $D_-$-modules to graded $W_r[d]/d^2$-complexes; in particular
$\eta$ becomes the operator $d$ of complex degree $0$ and internal
degree $1$.
\end{construction}

\begin{proposition}[{Koszul duality;
\cite[Corollary~3.32]{AntieauDecalage} and \cite[Theorem~3.2.15]{RaksitHH}}]
\label{prop:linear-Koszul-equivalence}
Let $\widehat D_{\mathrm{qc}}(\mathbb A^1_{W_r,t}/\Gm)\subset
\Dqc(\mathbb A^1_{W_r,t}/\Gm)$ be the full subcategory of derived
$t$-complete objects, which the Rees equivalence identifies with
complete filtered complexes of $W_r$-modules.  Then
\[
 \mathrm{gr}^\bullet(-)[\bullet]:\
 \widehat D_{\mathrm{qc}}(\mathbb A^1_{W_r,t}/\Gm)
 \xrightarrow{\ \sim\ }
 D(\mathrm{GrMod}_{W_r[d]/d^2})
\]
is an equivalence.
\end{proposition}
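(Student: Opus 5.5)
The plan is to deduce the statement from the case $r=\infty$ over $\mathbf Z$, or more precisely to rerun the Koszul duality argument of \cite[\S3]{AntieauDecalage} and \cite[\S\S3.2--3.3]{RaksitHH} over the base $W_r$. It is a base-changed form of the cited results, so the work is in checking that their hypotheses hold over $W_r$. The functor factors as the pullback along the zero section $B\mathbb G_{m,W_r}\hookrightarrow\mathbb A^1_{W_r,t}/\Gm$, enhanced to land in modules over $D_-=\mathrm{End}(W_r)$ computed over $W_r[t]$, followed by regrading (Convention~\ref{conv:shearing}), which is an equivalence carrying $D_-$-modules to graded $W_r[d]/d^2$-complexes. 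So it suffices to show that
\[
 W_r\otimes^{L}_{W_r[t]}(-):\ \widehat D_{\mathrm{qc}}(\mathbb A^1_{W_r,t}/\Gm)\longrightarrow \mathrm{Mod}_{D_-}\bigl(D(\mathrm{GrMod}_{W_r})\bigr)
\]
is an equivalence.

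First, I would identify this functor with $\RHomgr_{W_r[t]}(W_r,-)$ up to a shift and twist. This uses the Koszul resolution $[W_r[t](1)\xrightarrow{t}W_r[t]]$, under which pullback along the zero section is the cofibre of $t$ and $\RHomgr(W_r,-)$ is its fibre. Second, I would apply Barr--Beck--Lurie (Schwede--Shipley). The object $W_r$ is compact in graded $W_r[t]$-modules, and its endomorphism algebra is $D_-$. The functor $\RHomgr_{W_r[t]}(W_r,-)$ is conservative on derived $t$-complete objects, because an object killed by $t$-reduction is $t$-local, and a $t$-local, $t$-complete object is zero. It also preserves all colimits computed in the complete subcategory, that is, colimits followed by $t$-completion, because $W_r$ is $t$-torsion. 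Hence $\widehat D_{\mathrm{qc}}$, which is generated by $W_r$ and its twists, is identified with modules over $D_-$.

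The main obstacle is the grading. $D_-$-modules in graded objects are not simply modules in the naive sense. We need the graded generator family $\{W_r(n)\}_n$, and compact generation of the $t$-complete category by these twists. Weightwise, a complete filtered object is the limit of its quotients $\Fil^m/\Fil^{m+k}$, which are finite extensions of graded pieces, and this gives the generation. Third, I would check that the module structure so obtained agrees with the boundary-map description $\eta(\varepsilon_t)=1$ of Construction~\ref{con:enhanced-gr}. This is a calculation on the Koszul complex for the generator $W_r$ alone, since both structures are natural.

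Alternatively, and more simply, I would note that everything is base change from $\mathbf Z$. Filtered $W_r$-complexes are $W_r$-modules in filtered $\mathbf Z$-complexes, and likewise graded $W_r[d]/d^2$-complexes are $W_r$-modules in graded $\mathbf Z[d]/d^2$-complexes. Antieau's equivalence \cite[Corollary~3.32]{AntieauDecalage} is symmetric monoidal, so it induces an equivalence on $W_r$-modules, since $W_r$ with trivial filtration corresponds to $W_r$ with $d=0$. Completeness is preserved by restriction of scalars, so the complete subcategories match. The étale-sheaf variant follows in the same way by sheafifying, since both sides satisfy descent.
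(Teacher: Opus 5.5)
Your proposal is correct and has the same overall shape as the paper's proof. First, $\mathrm{gr}^\bullet$ identifies complete filtered objects with $D_-$-modules in $\mathrm{Gr}(D(W_r))$. Then regrading turns $D_-$ into the static graded ring $W_r[d]/d^2$, whose modules form $D(\mathrm{GrMod}_{W_r[d]/d^2})$.

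The difference is in how you justify the first step. The paper simply quotes Ariotta's $E_1$-page theorem in Raksit's form. These results hold over an arbitrary base (Ariotta's for general stable $\infty$-categories), so they apply to $W_r$ as it stands, and your base change from $\mathbf Z$ is not needed. You instead reprove the step by graded Schwede--Shipley. The twists of $W_r=W_r[t]/t$ are compact and jointly conservative in the $t$-complete category, and their graded endomorphism algebra is $\RHomgr_{W_r[t]}(W_r,W_r)\simeq D_-$.

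What each route buys:
\begin{itemize}
\item Your route is self-contained. It shows exactly why completeness is the right hypothesis: it is what makes $\RHomgr_{W_r[t]}(W_r,-)$ conservative.
\item The quoted route is shorter. The $D_-$-action it produces is by construction the one of Construction~\ref{con:enhanced-gr}. It also covers the sheaf-valued version used there and in Proposition~\ref{prop:Koszul-Rees-recovery} directly, with no compact-generation input.
\end{itemize}

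Two small corrections:
\begin{itemize}
\item Writing a complete object as a limit of the finite extensions $\Fil^m/\Fil^{m+k}$ does not show generation under colimits. You do not need it, though: compactness together with joint conservativity is exactly the input of Schwede--Shipley.
\item If you keep the base-change argument, the tensor product on graded $\mathbf Z[d]/d^2$-complexes must be the Leibniz one (over $\mathbf Z$, with $d$ primitive). Only then are modules over $W_r$ with $d=0$ the same as graded $W_r[d]/d^2$-complexes.
\end{itemize}
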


\begin{proof}
By Ariotta's $E_1$-page theorem \cite{Ariotta}, in Raksit's form, $\mathrm{gr}^\bullet$
identifies the source with $D_-$-modules in $\mathrm{Gr}(D(W_r))$.  Regrading is a
symmetric monoidal equivalence of $\mathrm{Gr}(D(W_r))$ carrying $D_-$
to the static graded ring $W_r[d]/d^2$, since $D_-$ is the image of the
square-zero ring under the inverse of regrading.  So it identifies
$D_-$-modules with modules over $W_r[d]/d^2$, and these form the
derived category of graded $W_r[d]/d^2$-modules
\cite[Corollary~3.32]{AntieauDecalage}.
\end{proof}

The Hodge filtration of a cochain complex is its stupid filtration,
which lies in the heart of the Beilinson $t$-structure.

\begin{proposition}[{The stupid filtration;
\cite[Example~3.23]{AntieauDecalage} and \cite[Construction~3.3.10,
Proposition~3.3.13 and Remark~3.3.15]{RaksitHH}}]
\label{prop:Koszul-Rees-recovery}
Let $C^\bullet$ be a cochain complex of $W_r$-modules, or of sheaves of
$W_r$-modules on $X_{\et}$, with Hodge filtration $\Fil_H^aC=C^{\geq a}$.
There is a natural equivalence
\[
 \mathrm{gr}^\bullet\bigl(\Rees(\Fil_H^\bullet C)\bigr)[\bullet]
 \simeq C^\bullet
\]
of graded $W_r[d]/d^2$-complexes, with $C^i$ in internal degree $i$
and $d$ the differential $C^i\to C^{i+1}$.
\end{proposition}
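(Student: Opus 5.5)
The plan is to compute both sides explicitly on the level of graded pieces and boundary maps, and then to use the $t$-structure argument of Proposition~\ref{prop:Rees-Bockstein} to make the identification canonical. First, the associated graded of the stupid filtration is computed by the Koszul complex of Construction~\ref{con:enhanced-gr}. In weight $m$ it is $[C^{\geq m+1}\varepsilon_t\xrightarrow{t}C^{\geq m}]$. Since $t$ is the inclusion $C^{\geq m+1}\hookrightarrow C^{\geq m}$, which is termwise split injective, this complex is quasi-isomorphic to the cokernel $C^m[-m]$. So $\mathrm{gr}^m\simeq C^m[-m]$, and after regrading $(\mathrm{gr}^\bullet[\bullet])^m\simeq C^m$ is static in internal degree $m$.

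Next I would identify the operator $\eta$. The fibre sequence $C^{m+1}[-m-1]\to C^{\geq m}/C^{\geq m+2}\to C^m[-m]$ has as its middle term the two-term complex $C^m\xrightarrow{d}C^{m+1}$. Its connecting map $C^m[-m]\to C^{m+1}[-m]$ is therefore $d$, up to the standard sign. Under regrading this becomes the operator $d$ of internal degree $1$ on $\bigoplus_m C^m$, and it is the differential of $C^\bullet$. One checks the sign using the convention $(N[\bullet])^{n,q}=N^{n,q+n}$ with differential $(-1)^n\partial$.

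The main obstacle is coherence. Knowing $\mathrm{gr}^m$ and $\eta$ only up to homotopy does not by itself determine a $W_r[d]/d^2$-module, that is, a $D_-$-module in the $\infty$-categorical sense. To handle this I would use the $t$-structure argument of Proposition~\ref{prop:Rees-Bockstein}. By the first step, each $\mathrm{gr}^m$ lies in $D^{\leq m}$, so $\Rees(\Fil_H^\bullet C)$ lies in the heart of the Beilinson $t$-structure. Antieau's theorem \cite[Theorem~3.21]{AntieauDecalage} identifies this heart with cochain complexes via $\gr^\bullet[\bullet]$ and $\eta$, and \cite[Example~3.23]{AntieauDecalage} shows that the stupid filtration corresponds to $C^\bullet$ itself. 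On the Koszul-dual side, graded $W_r[d]/d^2$-complexes whose internal-degree-$m$ part is static are exactly the abelian category of cochain complexes, which is a full subcategory. Hence the identification of objects and of the single operator $d$ determines the module structure up to contractible choice. Proposition~\ref{prop:linear-Koszul-equivalence} then shows that this matches the equivalence on the heart. Naturality in $C$ follows because every step is functorial.

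For sheaves on $X_{\et}$ the same argument applies verbatim, since all constructions are sheafwise and Antieau's results hold in any Grothendieck abelian context; alternatively, one can cite Raksit's \cite[Proposition~3.3.13]{RaksitHH} for the same conclusion.
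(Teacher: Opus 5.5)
Your proposal is correct and follows the same route as the paper. The paper gives no separate argument and cites Antieau's Example~3.23 and Raksit: the stupid filtration lies in the heart of the Beilinson $t$-structure and corresponds to $C^\bullet$ under $\mathrm{gr}^\bullet(-)[\bullet]$, with $\eta$ as differential, which is exactly the coherence step you invoke after computing $\mathrm{gr}^m\simeq C^m[-m]$ and $\eta=\pm d$. Two small precisions: membership in the heart also needs $\Fil_H^mC=C^{\geq m}\in D^{\geq m}$, equivalently that $\mathrm{gr}^m$ sits exactly in degree $m$, not only $\mathrm{gr}^m\in D^{\leq m}$; and any leftover sign in $\eta$ can be absorbed by rescaling each $C^m$ by $\pm1$.
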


We apply this construction to the Hodge-filtered $r$-de Rham--Witt
pullback of an $F$-gauge.

\begin{definition}[Finite de Rham--Witt realization]
\label{def:finite-extraction}
\label{conv:untwisted-tower}
For an $F$-gauge $E$ and $r\geq1$, the \emph{finite de Rham--Witt
realization} of $E$ at level $r$ is
\[
 T_{\dRW_r}(E):=(\sigma^{1-r})^*\Bigl(\mathrm{gr}^\bullet
   \bigl(i_{\dRW_r,+}^*g_{r,*}f_r^*E\bigr)[\bullet]\Bigr)
 \in D(\mathrm{GrMod}_{W_r[d]/d^2}).
\]
\end{definition}

The Frobenius twist $(\sigma^{1-r})^*$ cancels the twist
$(\sigma^{r-1})^*$ of Theorem~\ref{thm:r-nygaard-specializations}(4),
and it is trivial at $r=1$.  Since $\Phi_{\syn}$ applies $\sigma$ to
the coefficients of every chart and fixes $u_i$ and $t_i$, we have
$T_{\dRW_r}(\sigma^*E)\simeq\sigma^*T_{\dRW_r}(E)$.

For $F$-gauge cohomology, $T_{\dRW_r}$ is computed by
Theorem~\ref{thm:r-nygaard-specializations} and
Proposition~\ref{prop:Koszul-Rees-recovery}.

\begin{corollary}[The finite de Rham--Witt comparison]
\label{thm:filtered-comparison}
Let $X/k$ be smooth and quasi-compact and $r\geq1$.  There is a natural
equivalence
\[
 T_{\dRW_r}(\cH_{\syn}(X))
 \simeq R\Gamma(X,W_r\Omega^\bullet_{X/k})
 \quad\text{in }D(\mathrm{GrMod}_{W_r[d]/d^2}),
\]
with $R\Gamma(X,W_r\Omega^i_{X/k})$ in internal degree $i$ and
the de Rham--Witt differential as the operator $d$.
\end{corollary}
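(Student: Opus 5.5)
The plan is to unwind Definition~\ref{def:finite-extraction} for $E=\cH_{\syn}(X)$ and compose three earlier identifications.

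First, by Definition~\ref{def:r-nygaard-realization}, $g_{r,*}f_r^*\cH_{\syn}(X)=\cH_{\Nyg_r}(X)$. Theorem~\ref{thm:r-nygaard-specializations}(4) then identifies its Hodge-filtered $r$-de Rham--Witt pullback with
\[
 (\sigma^{r-1})^*\Rees\bigl(\Fil_H^\bullet R\Gamma(X,W_r\Omega^\bullet_{X/k})\bigr)
\]
over $\mathbb A^1_{W_r,t}/\Gm$. The functor $\mathrm{gr}^\bullet(-)[\bullet]$ commutes with the coefficient twist $(\sigma^{r-1})^*$, since that twist is an exact symmetric monoidal autoequivalence fixing $t$. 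Hence the twist $(\sigma^{1-r})^*$ built into $T_{\dRW_r}$ cancels it, and we obtain
\[
 T_{\dRW_r}(\cH_{\syn}(X))\simeq
 \mathrm{gr}^\bullet\bigl(\Rees(\Fil_H^\bullet R\Gamma(X,W_r\Omega^\bullet_{X/k}))\bigr)[\bullet].
\]

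Second, the graded object and its $d$ must be computed. I would not apply Proposition~\ref{prop:Koszul-Rees-recovery} to the global complex directly, because $R\Gamma(X,W_r\Omega^\bullet)$ is not literally a cochain complex with stupid filtration. Instead I would use that proposition in its sheaf form on $X_{\et}$, for $C^\bullet=W_r\Omega^\bullet_{X/k}$. This gives $\mathrm{gr}^\bullet(\Rees(\Fil_H^\bullet W_r\Omega^\bullet))[\bullet]\simeq W_r\Omega^\bullet_{X/k}$ as sheaves of graded $W_r[d]/d^2$-complexes, with $d$ the de Rham--Witt differential.

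Third, apply $R\Gamma(X,-)$. This step needs that $\mathrm{gr}^\bullet(-)[\bullet]$ commutes with $R\Gamma(X,-)$ on the filtered sheaf. Pullback to the zero section is the cofibre of $t$, which is a finite colimit and so commutes with the exact functor $R\Gamma$. The $D_-$-module structure, i.e.\ the boundary maps $\eta$, comes from the fibre sequences $\gr^{m+1}\to\Fil^m/\Fil^{m+2}\to\gr^m$ with their coherences. These are preserved by any exact functor, so $R\Gamma$ induces a functor on $D_-$-modules compatible with the sheaf-level construction. The Hodge filtration on $R\Gamma(X,W_r\Omega^\bullet)$ is, by definition in Theorem~\ref{thm:r-nygaard-specializations}(4), $R\Gamma$ of the sheaf filtration, so the identification globalizes. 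It yields $R\Gamma(X,W_r\Omega^i_{X/k})$ in internal degree $i$, since regrading shifts the weight-$i$ piece $R\Gamma(X,W_r\Omega^i)[-i]$ back by $[i]$, and $d$ is induced by the de Rham--Witt differential.

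The main obstacle is this compatibility of $R\Gamma(X,-)$ with the $\infty$-categorical $D_-$-module structure. The plain associated graded is clear, but the action must agree, not only its effect on the homotopy category. I would handle it by noting that Ariotta's equivalence in Proposition~\ref{prop:linear-Koszul-equivalence} is natural in exact functors $\mathcal C\to\mathcal C'$ between the two choices of $\mathcal C$ in Convention~\ref{conv:shearing}. Naturality follows because $\mathrm{gr}^\bullet$ is base change along $W_r[t]\to W_r$ with its $\mathrm{Bar}(W_r[t])$-coaction. Completeness of the Hodge filtration is automatic here, since it is finite for $X$ quasi-compact of bounded dimension, so both sides lie in the complete subcategory. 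Naturality in $X$ follows from naturality of each identification used.
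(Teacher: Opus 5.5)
Your proposal is correct and follows the paper's proof. The paper likewise writes $i_{\dRW_r,+}^*\cH_{\Nyg_r}(X)$ as $(\sigma^{r-1})^*R\Gamma\bigl(X,\Rees(\Fil_H^\bullet W_r\Omega^\bullet_{X/k})\bigr)$ with the Rees object formed on $X_{\et}$, commutes $R\Gamma(X,-)$ with $\mathrm{gr}^\bullet$ and regrading, applies Proposition~\ref{prop:Koszul-Rees-recovery} sheafwise, and cancels the Frobenius twists; it cites Theorem~\ref{thm:r-nygaard-specializations}(1) and Proposition~\ref{prop:sheaf-fibres} for the first step, which is how item (4) is proved, and your discussion of the $D_-$-module structure expands what the paper condenses into the remark that $R\Gamma(X,-)$ is exact and $W_r$-linear.
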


\begin{proof}
By Theorem~\ref{thm:r-nygaard-specializations}(1) and
Proposition~\ref{prop:sheaf-fibres},
\[
 i_{\dRW_r,+}^*\cH_{\Nyg_r}(X)\simeq(\sigma^{r-1})^*
 R\Gamma\bigl(X,\Rees(\Fil_H^\bullet W_r\Omega^\bullet_{X/k})\bigr),
\]
where the Rees object is a sheaf on $X_{\et}$.  The functor
$R\Gamma(X,-)$ is exact and $W_r$-linear, so it commutes with
$\mathrm{gr}^\bullet$ and with regrading.
Proposition~\ref{prop:Koszul-Rees-recovery} on $X_{\et}$ identifies the
associated cochain complex of this sheaf with $W_r\Omega^\bullet_{X/k}$ and its
differential.  The Frobenius twist $(\sigma^{1-r})^*$ of
Definition~\ref{def:finite-extraction} cancels $(\sigma^{r-1})^*$.
\end{proof}

At $r=\infty$, the corresponding functor is the de Rham--Witt
realization $T_{\dRW}$ of \S\ref{sec:representing-objects}, whose
reductions to level $r$ are the $T_{\dRW_r}$
(Theorem~\ref{thm:presentable-equivalence}).  For $X$ smooth proper,
Corollary~\ref{cor:geometric-raynaud-realization} identifies
$T_{\dRW}(\cH_{\syn}(X))$ with $R\Gamma(X,W\Omega^\bullet_{X/k})$.

\section{Ekedahl's derived equivalence}
\label{sec:ekedahl-monoidal}
\label{sec:comparison}

Our goal in this section is to prove that the de Rham--Witt
realization is an equivalence (Theorem~\ref{thm:intro-equivalence})
geometrically.  Its symmetric monoidal structure is the subject of
Section~\ref{sec:duality-and-tensor}.  The finite
de Rham--Witt realizations $T_{\dRW_r}$ of \S\ref{sec:hodge-koszul} see
one level at a time.  To assemble them into an object of $D(R)$ we need the
operators that relate different levels, and a way to pass to
$r=\infty$.  Both come from representability.  Each $T_{\dRW_r}$ is
corepresented by perfect $F$-gauges $G_{r,n}$ built from the $r$-Hodge
stack $k^{\mathrm{Hodge}_r}$ (\S\ref{sec:hodge-stack}).  Restriction and
Frobenius are induced by the maps
$\mathsf R_r,\mathsf F_r\colon k^{\NN_r}\to k^{\NN_{r+1}}$ that embed a
chain as the right and the left sub-chain of a longer one, and
Verschiebung is the transpose of Frobenius for the duality of these
objects.  These operators satisfy the relations of the Raynaud ring and
recover the classical ones (\S\ref{sec:geometric-verschiebung}).  The
colimits $G_n$ of the $G_{r,n}$ over $r$ carry a right action of the
Raynaud ring.  The de Rham--Witt realization $T_{\dRW}$ is the functor
they corepresent, and its left adjoint $S_{\mathrm{FG}}$ is computed by Ekedahl's
functor $S$ (\S\ref{sec:universal-kernel}).  The equivalence then reduces to
two facts, that $T_{\dRW}S_{\mathrm{FG}}$ is Raynaud completion and that $T_{\dRW}$ is
conservative.  Both are finite Koszul calculations on the Nygaard
charts, and finiteness criteria on the two sides then identify the
perfect objects (\S\ref{sec:perfect-coefficients}).

\subsection{The \texorpdfstring{$r$}{r}-Hodge stack and the corepresenting objects}
\label{sec:hodge-stack}

The finite de Rham--Witt realizations are computed on the following
derived stack (Lemma~\ref{lem:T-r-Hodge}).  Although $f_r$ is not
finite, its restriction to this stack is finite and oriented
(Lemma~\ref{lem:e-finite-oriented}), which makes $T_{\dRW_r}$
corepresentable by perfect $F$-gauges.

\begin{construction}[The $r$-Hodge stack]
\label{con:universal-Hodge-extraction}
We call the derived fibre product
\[
 k^{\mathrm{Hodge}_r}:=k^{\NN_r}\times^{R}_{k^{\Nyg_r}}B\mathbb G_{m,W_r}
\]
of the contraction $g_r$ and the inclusion $i_{\mathrm{Hodge}_r}$ of
the origin $B\mathbb G_{m,W_r}=V(u,t)$ of $k^{\Nyg_r}$ the
\emph{$r$-Hodge stack}.  Its projections $i'_{\mathrm{Hodge}_r}$ and
$g'_r$, and the composite $e_r:=f_r\circ i'_{\mathrm{Hodge}_r}$, fit into
the diagram
\[
\begin{tikzcd}[column sep=large, row sep=large]
 k^{\syn}
 & k^{\NN_r} \arrow[l, "f_r"'] \arrow[d, "g_r"']
 & k^{\mathrm{Hodge}_r} \arrow[l, "i'_{\mathrm{Hodge}_r}"'] \arrow[d, "g'_r"]
   \arrow[ll, bend right=30, "e_r"']\\
 & k^{\Nyg_r}
 & B\mathbb G_{m,W_r}, \arrow[l, "i_{\mathrm{Hodge}_r}"]
\end{tikzcd}
\]
in which the square is a derived pullback.
\end{construction}

\begin{remark}[The geometry of $k^{\mathrm{Hodge}_r}$]
\label{rem:hodge-geometry}
Recall that the $i$-th chart of $k^{\NN_r}$ is
$k^{\Nyg}_i=\Spf(B_i)/\Gm$, where
\[
 B_i=\bigl(W^{(i)}[u_i,t_i]/(u_it_i-p)\bigr)^{\wedge}_p,\qquad
 |u_i|=1,\quad |t_i|=-1,
\]
and that $g_r$ sends $u$ to $p^iu_i$ and $t$ to $p^{r-1-i}t_i$ there.
Over $k^{\Nyg}_i$, the $r$-Hodge stack $k^{\mathrm{Hodge}_r}$ is therefore the quotient by
$\Gm$ of the spectrum of the $1$-truncated animated ring
$B_i/^{L}(p^iu_i,p^{r-1-i}t_i)$, the quotient taken in the Koszul sense.
For $r\geq2$ the two elements do not form a regular sequence, since
$p^{r-1}t_0$ already vanishes on $B_0/(u_0)=k[t_0]$.  Thus
$k^{\mathrm{Hodge}_r}$ is classical only for $r=1$, where it is
$B\mathbb G_{m,k}$.
Globally, $k^{\mathrm{Hodge}_r}$ is the derived fibre over
$B\mathbb G_{m,W_r}$ of the resolution $g_r$ of the
$A_{r-1}$-singularity.  Its underlying reduced stack is the
exceptional chain of $r-1$ projective lines over $k$, modulo $\Gm$,
where the $j$-th line is glued from the locus $u_{j-1}=0$ in
$k^{\Nyg}_{j-1}$ and the locus $t_j=0$ in $k^{\Nyg}_j$ along
$u_j=t_{j-1}^{-1}$.  The underlying space is therefore a zigzag of $r$ closed points and
$r-1$ open points.  The closed points are the chart origins
$u_i=t_i=0$, each a copy of $B\mathbb G_{m,k}$.  The open points are the
free $\Gm$-orbits on the lines.  The one on the $j$-th line is the open
substack $\Spec\bigl(W/^{L}(p^j,p^{r-j})\bigr)$ where $u_j$ is a unit,
with $\pi_0=W_{\min(j,r-j)}$, and it specializes to the origins of
$k^{\Nyg}_{j-1}$ and $k^{\Nyg}_j$.
\end{remark}

\begin{lemma}[The finite de Rham--Witt realization on the $r$-Hodge stack]
\label{lem:T-r-Hodge}
For every $F$-gauge $E$ there is a natural equivalence in
$D(\mathrm{GrMod}_{W_r})$
\[
 T_{\dRW_r}(E)\simeq(\sigma^{1-r})^*\bigl((g'_r)_*e_r^*E\bigr)[\bullet],
\]
and $T_{\dRW_r}$ preserves colimits.  In particular
$T_{\dRW_1}(E)\simeq E_{\mathrm{Hodge}}[\bullet]$, where
$E_{\mathrm{Hodge}}:=E/^{L}(u,t)$ is the fibre of the underlying gauge
of $E$ at the Hodge point.
\end{lemma}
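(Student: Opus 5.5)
The plan is to compare the two sides through the derived base change square of Construction~\ref{con:universal-Hodge-extraction} and the factorization of the origin through the divisor $u=0$. First I show that the $\mathrm{gr}^\bullet$ in Definition~\ref{def:finite-extraction}, applied to the Rees object $i_{\dRW_r,+}^*g_{r,*}f_r^*E$ on $\mathbb A^1_{W_r,t}/\Gm$, has underlying graded object equal to the pullback along the zero section $B\mathbb G_{m,W_r}\hookrightarrow\mathbb A^1_{W_r,t}/\Gm$; this is how Construction~\ref{con:enhanced-gr} defines it. Composing with $i_{\dRW_r,+}$ gives the origin $i_{\mathrm{Hodge}_r}$, by the Tor-independence in Construction~\ref{con:dRW-pullbacks}. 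So, as graded $W_r$-complexes with the $d$-action forgotten,
\[
 \mathrm{gr}^\bullet\bigl(i_{\dRW_r,+}^*g_{r,*}f_r^*E\bigr)\simeq i_{\mathrm{Hodge}_r}^*g_{r,*}f_r^*E .
\]

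Next I apply derived base change to the cartesian square, which gives $i_{\mathrm{Hodge}_r}^*g_{r,*}\simeq (g'_r)_*(i'_{\mathrm{Hodge}_r})^*$. Then $(i'_{\mathrm{Hodge}_r})^*f_r^*=e_r^*$, and applying the regrading and the twist $(\sigma^{1-r})^*$ gives the claimed equivalence. I expect base change to be the only real obstacle. The map $g_r$ is not affine, but it is quasi-compact and quasi-separated, being covered by finitely many affine chart maps. I would verify base change directly with the \v{C}ech formula of Proposition~\ref{prop:cech-formula-for-Rgr}. The functor $i_{\mathrm{Hodge}_r}^*$ is tensoring with the Koszul complex $\Ksz(u,t)$ over the coordinate ring of $k^{\Nyg_r}$, which is a finite colimit, so it commutes with the finite limit defining $g_{r,*}$. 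On each chart $k^{\Nyg}_i$ and each overlap, tensoring with $\Ksz(u,t)$ via restriction of scalars along $g_{r,i}$ equals tensoring with $\Ksz(p^iu_i,p^{r-1-i}t_i)$ over $B_i$. This is exactly the pullback to the corresponding chart of $k^{\mathrm{Hodge}_r}$ described in Remark~\ref{rem:hodge-geometry}, pushed forward affinely, and the \v{C}ech formula for $(g'_r)_*$ has the same shape. A minor point is $p$-completeness: the Koszul quotient is already $p^r$-torsion, so no completion issues arise.

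Colimit preservation follows from the formula. The functor $e_r^*$ preserves colimits, and so do regrading and the Frobenius twist. The pushforward $(g'_r)_*$ is computed by the finite \v{C}ech limit of restrictions of scalars, each of which preserves colimits, and finite limits commute with colimits in the stable setting.

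At $r=1$ we have $g_1=\mathrm{id}$, so $k^{\mathrm{Hodge}_1}=B\mathbb G_{m,k}$, $g'_1=\mathrm{id}$, and $e_1$ is $j_{\Nyg}\circ i_{\mathrm{Hodge}}$. Hence $e_1^*E=E/^{L}(u,t)=E_{\mathrm{Hodge}}$, and the twist $(\sigma^{0})^*$ is trivial.
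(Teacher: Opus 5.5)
Your proposal is correct and follows essentially the same route as the paper: identify restriction to $u=0$ followed by $\mathrm{gr}^\bullet$ with $i_{\mathrm{Hodge}_r}^*$, i.e.\ tensoring with $\Ksz(u,t)$, then obtain base change along the square from the affine charts and the \v{C}ech formula \eqref{eq:g-pushforward}, and use $e_r=f_r\circ i'_{\mathrm{Hodge}_r}$. The only cosmetic difference is that you deduce colimit preservation from the right-hand formula via $(g'_r)_*$, whereas the paper uses that $\Ksz(u,t)\otimes-$, $f_r^*$ and $g_{r,*}$ commute with colimits; the two arguments are equivalent.
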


\begin{proof}
By Definition~\ref{def:finite-extraction} it suffices to identify the
associated graded of $i_{\dRW_r,+}^*g_{r,*}f_r^*E$ with
$(g'_r)_*e_r^*E$.  Restricting to $u=0$ and passing to the associated
graded is pullback along $i_{\mathrm{Hodge}_r}$.  Since $(u,t)$ is a
regular sequence on $k^{\Nyg_r}$ with zero locus $B\mathbb G_{m,W_r}$,
this pullback is the derived quotient $N\mapsto N/^{L}(u,t)$, that is,
tensor product with the Koszul complex
\[
 \Ksz(u,t)=\bigl[\cO\xrightarrow{(-t,\,u)}
 \cO(-1)\oplus\cO(1)
 \xrightarrow{(u,\,t)}\cO\bigr]
\]
in degrees $-2,-1,0$.  Base change in the square of
Construction~\ref{con:universal-Hodge-extraction} identifies
$i_{\mathrm{Hodge}_r}^*g_{r,*}$ with
$(g'_r)_*i'^*_{\mathrm{Hodge}_r}$, since on the affine charts of $g_r$
pushforward is restriction of scalars and the \v{C}ech formula
\eqref{eq:g-pushforward} assembles the charts.  As
$e_r=f_r\circ i'_{\mathrm{Hodge}_r}$, this gives the equivalence.
Finally, tensor product with $\Ksz(u,t)$, $f_r^*$ and $g_{r,*}$ commute
with colimits, and hence so does $T_{\dRW_r}$.  For $r=1$ we have
$g_1=\mathrm{id}_{k^{\Nyg}}$ and $f_1=j_{\Nyg}$, so
$e_1^*E=E/^{L}(u,t)$.
\end{proof}

To represent $T_{\dRW_r}$ by $F$-gauges, we push forward along $e_r$
and use duality.  Both rest on the following lemma.

\begin{lemma}[Finiteness and orientation of $e_r$]
\label{lem:e-finite-oriented}
The map $e_r$ is finite, and $e_{r,*}\cO$ is a perfect $F$-gauge.  Let
$e_r^!$ denote the right adjoint of $e_{r,*}$, which exists since
$e_r$ is affine, so that $e_{r,*}$ preserves colimits
\cite[Corollary~5.5.2.9]{LurieHTT}.  The ordered equations $(u,t)$ give an
orientation, that is, a natural equivalence of functors
$\Dqc(k^{\syn})\to\Dqc(k^{\mathrm{Hodge}_r})$
\[
 e_r^!\simeq e_r^*[-2].
\]
\end{lemma}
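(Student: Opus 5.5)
The plan is to factor $e_r=f_r\circ i'_{\mathrm{Hodge}_r}$ and treat the two factors separately. The closed immersion $i'_{\mathrm{Hodge}_r}$ is cut out by a Koszul sequence. The map $f_r$ is étale and becomes finite once it is restricted to a closed substack away from the two ends.

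\textbf{Finiteness.} The reduced stack underlying $k^{\mathrm{Hodge}_r}$ lies over the locus $g_r^*u=g_r^*t=0$. By Lemma~\ref{lem:contraction-open-loci}, this locus is disjoint from the two end loci $(k^{\Nyg}_0)_{u_0\neq0}$ and $(k^{\Nyg}_{r-1})_{t_{r-1}\neq0}$, because $g_r$ maps those loci into $u\neq0$ and $t\neq0$. Lemma~\ref{lem:syntomic-projection-fibre} then shows that $e_r$ is finite after base change along the étale cover $j_{\Nyg}$. Finiteness is étale local on the target, so $e_r$ is finite, and in particular affine.

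\textbf{Perfectness.} I would check perfectness of $e_{r,*}\cO$ after pulling back along $j_{\Nyg}$. By the same decomposition, the pullback is $\bigoplus_i f_{r,i,*}\bigl(B_i/^{L}(p^iu_i,p^{r-1-i}t_i)\bigr)$. Each summand is a Koszul complex on two elements over the regular chart ring $B_i$, so it is perfect, and $f_{r,i}$ is an isomorphism onto $k^{\Nyg}$.

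\textbf{Orientation.} First I would show that $i'_{\mathrm{Hodge}_r}$ is a quasi-smooth closed immersion, namely the derived zero locus of the section $(g_r^*u,g_r^*t)$ of $\cO(1)\oplus\cO(-1)$. This gives
\[
 (i'_{\mathrm{Hodge}_r})^!(-)\simeq(i'_{\mathrm{Hodge}_r})^*(-)\otimes\det(N)[-2],
\]
where the normal bundle $N$ is the restriction of $\cO(1)\oplus\cO(-1)$. Its determinant is $\cO(1)\otimes\cO(-1)\simeq\cO$, and the ordering $(u,t)$ fixes this trivialization. Concretely, $\Hom_{\cO}(\Ksz(u,t),\cO)\simeq\Ksz(u,t)[-2]$, with the top generator $\varepsilon_u\varepsilon_t$ of weight $0$ identified with $1$. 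One can also see this directly from the self-dual Koszul complex of Lemma~\ref{lem:T-r-Hodge}, since $i_{\mathrm{Hodge}_r}$ is a regular immersion of codimension two whose normal bundle has trivial determinant, and then base change along $g_r$. Next, $f_r$ is étale, so $f_r^!=f_r^*$ in the sense needed here. Composing the two gives $e_r^!\simeq e_r^*[-2]$.

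\textbf{Main obstacle.} The step I expect to be hardest is making the composite formula rigorous, because $f_r$ is not proper and $f_r^!$ is not the right adjoint of a pushforward that behaves well. To avoid this, I would instead compute $e_r^!$ étale locally on $k^{\syn}$. Since $e_r$ is finite, $e_r^!$ commutes with étale base change on the target. Over $k^{\Nyg}$ the map becomes a disjoint union of the finite maps $\Spf\bigl(B_i/^{L}(p^iu_i,p^{r-1-i}t_i)\bigr)/\Gm\to k^{\Nyg}$. For each of these, $e^!=\RHom_{B_i}(\Ksz,-)\simeq\Ksz\otimes(-)[-2]$ by Koszul self-duality. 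It remains to check that these local identifications glue, that is, that they are compatible with the gluing $\alpha_i$ and with the descent datum. This holds because the trivialization $\varepsilon_u\varepsilon_t\mapsto1$ is canonical: it depends only on the ordered equations $(u,t)$ pulled back along $g_r$, and these are global sections on $k^{\NN_r}$.
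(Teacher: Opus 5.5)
Your proposal is correct and follows essentially the paper's argument. Finiteness and perfectness come from Lemma~\ref{lem:syntomic-projection-fibre}, since the $r$-Hodge stack avoids the two ends. The orientation is obtained over the étale cover $j_{\Nyg}$: there $e_{r,*}\cO$ is a sum of chart Koszul algebras, each self-dual through the weight-zero generator $\varepsilon_u\varepsilon_t$, and the pieces glue because the ordered equations $(g_r^*u,g_r^*t)$ are compatible with the transition maps ($g_{r,i+1}\circ\alpha_i=g_{r,i}$).

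The paper packages this gluing slightly differently. It reduces, via $e_r^!E\simeq(e_{r,*}\cO)^\vee\otimes E$, to the module statement $(e_{r,*}\cO)^\vee\simeq e_{r,*}\cO[-2]$, and it descends the sum of the local pairings to a global trace $e_{r,*}\cO\to\cO_{k^{\syn}}[2]$. This is the same content as your étale-local computation of $e_r^!$.
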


\begin{proof}
The closed immersion $i'_{\mathrm{Hodge}_r}$ in $e_r=f_r\circ
i'_{\mathrm{Hodge}_r}$ is the base change along $g_r$ of
$i_{\mathrm{Hodge}_r}$, the zero locus of the regular sequence $(u,t)$ on
$k^{\Nyg_r}$ (Construction~\ref{con:dRW-pullbacks}).  So
$i'_{\mathrm{Hodge}_r,*}\cO\simeq g_r^*\Ksz(u,t)$, which over the chart
$k^{\Nyg}_i$ is the Koszul algebra
\begin{equation}\label{eq:Hodge-extraction-chart-Koszul}
 B_i/^{L}(p^iu_i,p^{r-1-i}t_i)
 \simeq
 [B_i(-1)\xrightarrow{\ p^iu_i\ }B_i]
 \otimes_{B_i}
 [B_i(1)\xrightarrow{\ p^{r-1-i}t_i\ }B_i]
\end{equation}
on generators $\varepsilon_u,\varepsilon_t$ of weights $1,-1$.  The two
factors are the pullbacks along $g_r$ of
$\Ksz(u)=[\cO(-1)\xrightarrow{u}\cO]$ and
$\Ksz(t)=[\cO(1)\xrightarrow{t}\cO]$, whose tensor product is
$\Ksz(u,t)$, and by abuse of notation we denote these pullbacks again by
$\Ksz(u)$ and $\Ksz(t)$.

\emph{Finiteness.}  On the chart $k^{\Nyg}_0$ the first equation is
$g_r^*u=u_0$, and on the chart $k^{\Nyg}_{r-1}$ the second is
$g_r^*t=t_{r-1}$.  So the $r$-Hodge stack $k^{\mathrm{Hodge}_r}$ is
empty over the two ends $(k^{\Nyg}_0)_{u_0\neq0}$ and
$(k^{\Nyg}_{r-1})_{t_{r-1}\neq0}$, and $e_r$ is finite by
Lemma~\ref{lem:syntomic-projection-fibre}.  The same lemma shows that
after pullback along $j_{\Nyg}$, $e_{r,*}\cO$ is the sum of the Koszul
algebras \eqref{eq:Hodge-extraction-chart-Koszul}, pushed forward along
the isomorphisms $f_{r,i}$, so it is perfect.

\emph{Orientation.}  Since $e_r$ is affine, $\Dqc(k^{\mathrm{Hodge}_r})$
is the category of $e_{r,*}\cO$-modules in $\Dqc(k^{\syn})$, with
$e_r^*E=e_{r,*}\cO\otimes E$ and
$e_r^!E=R\mathcal Hom(e_{r,*}\cO,E)\simeq(e_{r,*}\cO)^\vee\otimes E$.
So it suffices to construct an equivalence of $e_{r,*}\cO$-modules
\[
 (e_{r,*}\cO)^\vee\simeq e_{r,*}\cO[-2].
\]

The $r$-Hodge stack is the derived zero locus of the section $(u,t)$ of
$\mathcal V=\cO(1)\oplus\cO(-1)$, pulled back from $k^{\Nyg_r}$ along
$g_r$.  For a section of a vector bundle of rank two, the Koszul algebra
is self-dual up to the determinant of the bundle and the shift $[-2]$:
multiplication followed by projection to the top exterior power of the
dual bundle is a perfect pairing, whether or not the section is
regular.  Here this top exterior power $\det(\mathcal V^\vee)$ is
trivialized by $\varepsilon_u\varepsilon_t$, which has weight zero.  On $k^{\Nyg_r}$, where
$(u,t)$ is regular, this self-duality is the orientation
\[
 i_{\mathrm{Hodge}_r}^!\simeq i_{\mathrm{Hodge}_r}^*[-2]
\]
of the regular immersion $i_{\mathrm{Hodge}_r}$.

Since $f_r$ is \'etale but not finite, we push this pairing forward
after pullback along $j_{\Nyg}$, where $e_{r,*}\cO$ is the sum of the
Koszul algebras \eqref{eq:Hodge-extraction-chart-Koszul}.  The sum of
their pairings descends to a trace
$\operatorname{tr}_{e_r}:e_{r,*}\cO\to\cO_{k^{\syn}}[2]$, since the
descent datum carries the ordered generators $\varepsilon_u,\varepsilon_t$
of the chart $i+1$ to those of the chart $i$, as
$g_{r,i+1}\circ\alpha_i=g_{r,i}$.  The adjoint of $(x,y)\mapsto\operatorname{tr}_{e_r}(xy)$ is
$e_{r,*}\cO$-linear and an equivalence after pullback along $j_{\Nyg}$,
hence the required equivalence, under which the counit
$e_{r,*}e_r^!\cO\to\cO$ becomes $\operatorname{tr}_{e_r}[-2]$.
\end{proof}

At $r=\infty$, the corresponding map
$e_\infty:k^{\mathrm{Hodge}_\infty}\to k^{\syn}$ of
Construction~\ref{con:infinite-nygaard-stacks} is neither finite nor
oriented in the sense of Lemma~\ref{lem:e-finite-oriented}.  Over
$k^{\Nyg}$ it has one piece for each of the infinitely many charts, so it
is not finite.  On each piece only the single equation $p^jt_j$ remains
(Remark~\ref{rem:genuine-infinite-nygaard}), so the self-duality would
involve a single Koszul generator.  Moreover, the dual of the
infinite sum of the pieces is their product, so the pairings on the
pieces do not assemble into a perfect pairing as in
Lemma~\ref{lem:e-finite-oriented}.  For this reason
Verschiebung is constructed at finite level
(\S\ref{sec:geometric-verschiebung}) and passed to $r=\infty$
afterwards.

We now corepresent $T_{\dRW_r}$ by perfect $F$-gauges, computing its
weight components on the $r$-Hodge stack $k^{\mathrm{Hodge}_r}$.

\begin{proposition}[Perfect corepresenting objects]
\label{thm:oriented-finite-extraction}
For every integer $n$ there is a perfect $F$-gauge $G_{r,n}$ with
natural equivalences
$T_{\dRW_r}(E)^n\simeq\RHom_{k^{\syn}}(G_{r,n},E)$ for $F$-gauges $E$.
Concretely, one may take
\[
 G_{r,n}:=(\sigma^{r-1})^*\,
 e_{r,*}\cO_{k^{\mathrm{Hodge}_r}}(-n)[-n-2].
\]
Then the equivalence is $W_r$-linear for the action of $W_r$ on
$G_{r,n}$ through $g'_r$ by endomorphisms of $F$-gauges, the
\emph{right $W_r$-action}, and
$G_{r,n}^{\vee}\simeq G_{r,-n}[2]$.
\end{proposition}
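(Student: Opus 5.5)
The plan is to combine Lemma~\ref{lem:T-r-Hodge} with the finite orientation of Lemma~\ref{lem:e-finite-oriented}. By Lemma~\ref{lem:T-r-Hodge}, $T_{\dRW_r}(E)^n\simeq(\sigma^{1-r})^*\bigl(((g'_r)_*e_r^*E)^n\bigr)[n]$. On $B\mathbb G_{m,W_r}$ the weight-$n$ component is $\RHom(\cO(-n),-)$ with the $\Gm$-twist convention of the paper. Since $(g'_r)^*\cO(-n)=\cO_{k^{\mathrm{Hodge}_r}}(-n)$, adjunction gives
\[
 ((g'_r)_*e_r^*E)^n\simeq\RHom_{k^{\mathrm{Hodge}_r}}\bigl(\cO(-n),e_r^*E\bigr).
\]
Next rewrite $e_r^*E\simeq e_r^!E[2]$ by Lemma~\ref{lem:e-finite-oriented}. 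The adjunction $e_{r,*}\dashv e_r^!$ then gives $\RHom_{k^{\syn}}(e_{r,*}\cO(-n),E)[2]$. Absorbing the shifts $[n]$ and $[2]$ into the source yields $\RHom(e_{r,*}\cO(-n)[-n-2],E)$. The Frobenius twist $(\sigma^{1-r})^*$ on the target is moved onto the source as $(\sigma^{r-1})^*$. This uses $\Phi_{\syn}$: pulling back along the automorphism $\Phi_{\syn}^{r-1}$ commutes with $\RHom$ up to twisting coefficients, as in the remark after Definition~\ref{def:finite-extraction}. The result is the formula for $G_{r,n}$. Naturality in $E$ is automatic, since every step is an adjunction or a natural equivalence.

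Perfectness follows directly. By Lemma~\ref{lem:e-finite-oriented}, $e_{r,*}\cO$ is perfect, and $e_{r,*}\cO(-n)$ is perfect by the same argument. Concretely, after pullback along $j_{\Nyg}$ it is a finite sum of twisted Koszul algebras $B_i/^L(p^iu_i,p^{r-1-i}t_i)(-n)$. Shifts and Frobenius twists preserve perfectness.

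For the right $W_r$-action, note that $W_r$ acts on $\cO_{B\mathbb G_{m,W_r}}(-n)$. Pulling back along $g'_r$ gives a $W_r$-action on $\cO_{k^{\mathrm{Hodge}_r}}(-n)$ by $e_r$-linear endomorphisms, hence on $e_{r,*}\cO(-n)$ by endomorphisms of $F$-gauges. Each adjunction above is linear for endomorphisms of the source, so the equivalence intertwines this action with the $W_r$-module structure on $T_{\dRW_r}(E)^n$. One must check the twist: $(\sigma^{r-1})^*$ on $G$ matches the $(\sigma^{1-r})^*$ untwisting on the $W_r$-coefficients of the target. That is a bookkeeping check with $\lambda_i$.

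The duality step is the main obstacle. By Lemma~\ref{lem:e-finite-oriented}, $(e_{r,*}\cO)^\vee\simeq e_{r,*}\cO[-2]$ as $e_{r,*}\cO$-modules. I expect the cleanest general statement to be Grothendieck duality for $e_r$ with the orientation:
\[
 (e_{r,*}M)^\vee\simeq e_{r,*}(M^\vee)[-2]
\]
for perfect $M$ on $k^{\mathrm{Hodge}_r}$. This follows from $\RHom(e_{r,*}M,\cO)\simeq e_{r,*}\mathcal{RH}om(M,e_r^!\cO)$ and $e_r^!\cO\simeq\cO[-2]$. Applying it to $M=\cO(-n)$ gives
\[
 (e_{r,*}\cO(-n)[-n-2])^\vee\simeq e_{r,*}\cO(n)[n+2-2]=e_{r,*}\cO(n)[n].
\]
This equals $G_{r,-n}[2]$, since $G_{r,-n}=e_{r,*}\cO(n)[n-2]$. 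The Frobenius twist commutes with duals.

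The delicate point is the sign and weight bookkeeping: whether $\cO(-n)^\vee=\cO(n)$ matches the paper's weight convention for $(-n)$, and whether the trace $\operatorname{tr}_{e_r}$ is weight zero. The latter is ensured by $\varepsilon_u\varepsilon_t$ having weight zero. I would verify this chartwise on the Koszul algebras, where everything is explicit.
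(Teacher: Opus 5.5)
Your proposal is correct and follows the paper's proof essentially step for step. Both arguments combine Lemma~\ref{lem:T-r-Hodge} with the weight-$n$ adjunction along $g'_r$, the orientation $e_r^!\simeq e_r^*[-2]$, the adjunction $e_{r,*}\dashv e_r^!$, and, for duality, the internal form of that adjunction giving $(e_{r,*}\cO(-n))^\vee\simeq e_{r,*}\cO(n)[-2]$. The one cosmetic difference is how the Frobenius twist is handled: the paper replaces $E$ by $(\sigma^{1-r})^*E$ at the outset, using $T_{\dRW_r}(\sigma^*E)\simeq\sigma^*T_{\dRW_r}(E)$, whereas you move the twist onto the source at the end.
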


\begin{proof}
By Lemma~\ref{lem:e-finite-oriented}, $e_{r,*}\cO$ is perfect, and hence
so is $G_{r,n}$.  Since $\sigma^*$ is an autoequivalence with
$T_{\dRW_r}(\sigma^*E)\simeq\sigma^*T_{\dRW_r}(E)$, we may replace $E$ by
$(\sigma^{1-r})^*E$ and drop the twist.  Then
\[
 \begin{aligned}
 T_{\dRW_r}(E)^n
 &\simeq\bigl((g'_r)_*e_r^*E\bigr)^n[n]
 \simeq R\Gamma\bigl(k^{\mathrm{Hodge}_r},e_r^*E(n)\bigr)[n]\\
 &\simeq\RHom_{k^{\mathrm{Hodge}_r}}\bigl(\cO(-n),e_r^!E\bigr)[n+2]
 \simeq\RHom_{k^{\syn}}\bigl(e_{r,*}\cO(-n),E\bigr)[n+2].
 \end{aligned}
\]
The first equivalence is Lemma~\ref{lem:T-r-Hodge}, with $[n]$ from
regrading.  The second takes the weight-$n$ part, since $\cO(n)$ is
pulled back along $g'_r$, and by the projection formula for the affine
map $e_r$ its target is also $R\Gamma(k^{\syn},E\otimes e_{r,*}\cO(n))[n]$.
The third is
Lemma~\ref{lem:e-finite-oriented}, and the fourth is adjunction.  All
four are $W_r$-linear for the action through $g'_r$, which adjunction
carries to precomposition with the action on $\cO(-n)$.

For the duality, the internal form of the adjunction
$e_{r,*}\dashv e_r^!\simeq e_r^*[-2]$ gives
$(e_{r,*}\cO(-n))^\vee\simeq e_{r,*}\cO(n)[-2]$, hence
\[
 G_{r,n}^\vee
 \simeq(\sigma^{r-1})^*\bigl(e_{r,*}\cO(-n)\bigr)^\vee[n+2]
 \simeq(\sigma^{r-1})^*e_{r,*}\cO(n)[n]
 =G_{r,-n}[2].
\]
\end{proof}

\begin{lemma}[Breuil--Kisin twists at finite level]
\label{lem:BK-finite}
For $E\in\Dqc(k^{\syn})$ and $a\in\mathbb Z$ there is an equivalence
$T_{\dRW_r}(E\{a\})\simeq T_{\dRW_r}(E)\{a\}$, natural in $E$.
Consequently $G_{r,n}\simeq G_{r,0}\{-n\}[-n]$ for every $n$.
\end{lemma}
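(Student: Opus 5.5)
The plan is to reduce the statement to the restriction of the Breuil--Kisin line bundle to the $r$-Hodge stack. By Lemma~\ref{lem:T-r-Hodge},
\[
 T_{\dRW_r}(E\{a\})\simeq(\sigma^{1-r})^*\bigl((g'_r)_*e_r^*(E\otimes\cO\{a\})\bigr)[\bullet],
\]
and $e_r^*$ is symmetric monoidal. So the first claim follows once we produce an equivalence
\[
 e_r^*\cO\{1\}\simeq(g'_r)^*\cO(1)
\]
of line bundles on $k^{\mathrm{Hodge}_r}$. Given this, the projection formula for $g'_r$ yields $(g'_r)_*e_r^*(E\{a\})\simeq\bigl((g'_r)_*e_r^*E\bigr)(a)$. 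By Convention~\ref{conv:shearing}, regrading converts the internal shift $(a)$ into $(a)[-a]=\{a\}$, and the coefficient twist $(\sigma^{1-r})^*$ commutes with all of this.

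To build the line-bundle identification, restrict $\cO\{1\}$ to $k^{\NN_r}$. On the $i$-th chart, $f_r^*\cO\{1\}$ is the tautological bundle $L_i$ by Definition~\ref{def:BK-twist}, and $u_i\colon\cO\to L_i$ and $t_i\colon L_i\to\cO$ are its sections. On the graded chart ring $B_i$ we may take $L_i=B_i(1)$, so that the weight-$1$ generator matches $g_r^*\cO(1)=B_i(1)$. The gluing $\delta_i$ for $L$ is $u_{i+1}\mapsto t_i^{-1}$. This agrees with the gluing of $g_r^*\cO(1)$ because $g_r^*u$ is $p^{i+1}u_{i+1}$ on chart $i+1$ and $p^iu_i=p^{i+1}t_i^{-1}$ on chart $i$ (Proposition~\ref{prop:contraction-map}). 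This identifies $f_r^*\cO\{1\}\simeq g_r^*\cO(1)$ already on $k^{\NN_r}$, and pulling back along $i'_{\mathrm{Hodge}_r}$ gives the desired equivalence, since $g_r\circ i'_{\mathrm{Hodge}_r}=i_{\mathrm{Hodge}_r}\circ g'_r$.

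The main obstacle is checking that the chartwise identifications respect the Frobenius descent datum defining $\cO\{1\}$ on $k^{\syn}$, including the factor $p$ in $\varphi^*W\xrightarrow{p}W$. Concretely, one must verify that the isomorphism $\varphi^*L^\infty\simeq L^{-\infty}$, transported through $\alpha_i$, is exactly the coordinate change $u\mapsto t^{-1}$, with no extra unit. I would check this on the overlap $(k^{\Nyg}_i)_{t_i\neq0}$ using the explicit trivialization of Definition~\ref{def:BK-twist}, where $u$ trivializes $L$ on the Hodge--Tate side and $t^{-1}$ on the de Rham side, and $ut=p$ accounts for the factor $p$.

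For the consequence, apply Proposition~\ref{thm:oriented-finite-extraction}:
\[
 \RHom(G_{r,n},E)\simeq T_{\dRW_r}(E)^n .
\]
By the first part,
\[
 T_{\dRW_r}(E)^n\simeq\bigl(T_{\dRW_r}(E\{n\})\{-n\}\bigr)^n=T_{\dRW_r}(E\{n\})^0[n]\simeq\RHom(G_{r,0},E\{n\})[n],
\]
and this is $\RHom(G_{r,0}\{-n\}[-n],E)$. Yoneda then gives $G_{r,n}\simeq G_{r,0}\{-n\}[-n]$. Alternatively, one can read this off directly from $\cO(-n)\simeq e_r^*\cO\{-n\}$ and the projection formula for $e_r$.
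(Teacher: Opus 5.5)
Your proposal is correct and is essentially the paper's argument. The paper also observes that $\cO\{a\}$ restricts to a pure weight twist on each chart of $k^{\NN_r}$, with $u$, $t$, the ends and the gluing unchanged; it cites Bhatt's notes for the descent-datum check you flag. It then converts $(a)$ into $\{a\}$ by regrading, and obtains $G_{r,n}\simeq G_{r,0}\{-n\}[-n]$ from $\cO(-n)\simeq e_r^*\cO\{-n\}$ and the projection formula for $e_r$, which is your alternative to the Yoneda argument.

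One small point: Lemma~\ref{lem:T-r-Hodge} only gives an equivalence in $D(\mathrm{GrMod}_{W_r})$, so as written your argument does not yet show the twist is compatible with $d$, which Corollary~\ref{lem:Koszul-BK-shift} later uses. To fix this, apply your identification $f_r^*\cO\{1\}\simeq g_r^*\cO(1)$ and the projection formula for $g_r$ on $k^{\NN_r}$ itself, before restricting to $u=0$ and taking the Koszul dual, rather than on the Hodge stack.
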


\begin{proof}
The line bundle $\cO\{a\}$ is trivial on the two copies of $\Spf W$
and restricts to a weight twist on $k^{\Nyg}$
\cite[\S4.3]{BhattFGauges}.  So the twist
$\{a\}$ shifts the weights, $(E\{a\})^m=E^{m+a}$, and leaves $u$, $t$,
the two ends and the gluing unchanged, and likewise on the charts of
$k^{\NN_r}$.  By Convention~\ref{conv:shearing}, regrading turns $(a)$
into $(a)[-a]=\{a\}$, which gives the equivalence.  For the last
claim, the same description shows that $\cO(-n)=e_r^*\cO\{-n\}$ on the
$r$-Hodge stack $k^{\mathrm{Hodge}_r}$, so the projection formula gives
$e_{r,*}\cO(-n)\simeq e_{r,*}\cO\otimes\cO\{-n\}$.  As $\sigma^*$
fixes $\cO\{-n\}$, this is $G_{r,n}\simeq G_{r,0}\{-n\}[-n]$.
\end{proof}

\subsection{Restriction, Frobenius, and Verschiebung}
\label{sec:geometric-verschiebung}

The finite de Rham--Witt realizations of different levels are related
by three operators.  Restriction and Frobenius are pullbacks along the
restriction map $\mathsf R_r$ and the Frobenius map $\mathsf F_r$ of
Definition~\ref{def:level-maps}, and Verschiebung is the transpose of
Frobenius for the duality of the corepresenting objects of
Proposition~\ref{thm:oriented-finite-extraction}.  We compute all three
on the Koszul complexes of the charts, check that they satisfy the
relations of the Raynaud ring and commute with restriction, and
identify them with the classical operators on the de Rham--Witt
complex.  We first recall the Raynaud ring.

\begin{definition}[{The Raynaud ring and its finite reductions; \cite[I, \S1]{IllusieRaynaud83}}]
\label{def:raynaud-ring}
The \emph{Raynaud ring} is the semilinear graded ring
\[
 R=W_\sigma\langle F,V,d\rangle/
 (FV=VF=p,\ FdV=d,\ d^2=0),
\]
where $Fa=\sigma(a)F$, $Va=\sigma^{-1}(a)V$, $da=ad$,
$\deg F=\deg V=0$, and $\deg d=1$.
In $D(R)$ the internal degree and the complex degree
are independent gradings, as in the conventions of the introduction.
We write $R(-n)$ for the free left module generated in
internal degree $n$.  For finite reduction put
\[
 R_r=R/(V^rR+dV^rR),
\]
a $(W_r[d]/d^2,R)$-bimodule, so that $R_r\otimes_R^{L}M$ is a graded
$W_r[d]/d^2$-complex for every $M\in D(R)$.
\end{definition}

Computations with $R$ and with its finite reductions $R_r$ reduce to
computations with $W$-modules through explicit bases.  Every element of
$R$ is a unique $W$-linear combination of powers of $V$ and $F$, with
$d$ placed to the left of the powers of $V$ and to the right of the
powers of $F$, and $R_r$ truncates the coefficients according to the
power of $V$.

\begin{remark}[{Normal forms; \cite[I, \S1]{IllusieRaynaud83}}]\label{lem:raynaud-normal-form}
The ring $R$ is concentrated in internal degrees $0,1$.  As left
$W$-modules, respectively $W_r$-modules,
\[
 \begin{aligned}
 R^0&=\bigoplus_{j>0}WV^j\oplus\bigoplus_{\ell\geq0}WF^\ell,&
 R^1&=\bigoplus_{j>0}WdV^j\oplus\bigoplus_{\ell\geq0}WF^\ell d,\\
 R_r^0&=\bigoplus_{1\leq j<r}W_{r-j}V^j\oplus\bigoplus_{\ell\geq0}W_rF^\ell,&
 R_r^1&=\bigoplus_{1\leq j<r}W_{r-j}dV^j\oplus\bigoplus_{\ell\geq0}W_rF^\ell d.
 \end{aligned}
\]
\end{remark}

\begin{definition}[{The simple functor; \cite[\S0.3]{Ekedahl86Diagonal}}]
\label{def:simple-functor}
An object $M\in D(R)$ represented by a complex of graded $R$-modules in
a finite internal range is a double complex, with the differential $d$
in the internal direction and its own differential $\partial_M$ in the
complex direction.  On such objects, Ekedahl's \emph{simple functor}
$\mathbf s$, with values in $D(W)$, totalizes the two differentials,
placing $M^{i,q}$ in degree $i+q$ with differential
$d+(-1)^i\partial_M$.  As the internal range is finite, $\mathbf s$
preserves quasi-isomorphisms.  The same totalization applies to any
graded complex in a finite internal range with a square-zero operator of
internal degree one.  For
a smooth proper $k$-scheme $X$, $\mathbf s\,R\Gamma(X,W\Omega^\bullet_{X/k})
\simeq R\Gamma_{\crys}(X/W)$ \cite[II, Th\'eor\`eme~1.4]{Illusie79}.
\end{definition}

We now construct the three operators on the system
$\{T_{\dRW_r}(E)\}_r$.  Restriction and Frobenius come from the two
embeddings of the level-$r$ chain into the level-$(r+1)$ chain.
Verschiebung goes in the
opposite direction, from level $r$ to level $r+1$, and is not a
pullback.  It is defined instead by transposing Frobenius through the
duality of the corepresenting objects.

\begin{construction}[Restriction, Frobenius, and Verschiebung on $T_{\dRW_r}$]
\label{def:intrinsic-geometric-V}
We construct three natural transformations
\[
 \pi_r:T_{\dRW_{r+1}}\to T_{\dRW_r},\qquad
 F_r:T_{\dRW_{r+1}}\to\sigma_*T_{\dRW_r},\qquad
 V_r:\sigma_*T_{\dRW_r}\to T_{\dRW_{r+1}}.
\]
Let $E$ be an $F$-gauge, and recall that on a common chart
$\mathsf R_r$ multiplies the $u$-equation by $p$ and $\mathsf F_r$
multiplies the $t$-equation by $p$ (Definition~\ref{def:level-maps}).
\begin{itemize}
\item \emph{Restriction.}  The two squares of
 Proposition~\ref{prop:level-maps-compatible} give a map of
 $r$-gauges
 \[
  \mathsf R_r^*\,g_{r+1,*}f_{r+1}^*E
  \longrightarrow g_{r,*}\mathsf R_r^*f_{r+1}^*E
  \simeq g_{r,*}f_r^*\sigma^*E,
 \]
 the first arrow being the Beck--Chevalley map of the commutative
 square $g_{r+1}\circ\mathsf R_r=\mathsf R_r\circ g_r$, and the
 equivalence by
 $f_{r+1}\circ\mathsf R_r=\Phi_{\syn}\circ f_r$.  Restricting to $u=0$,
 where $\mathsf R_r$ is the closed immersion
 $\mathbb A^1_{W_r,t}/\Gm\to\mathbb A^1_{W_{r+1},t}/\Gm$, and passing
 to associated cochain complexes gives the \emph{restriction} $\pi_r$.
\item \emph{Frobenius.}  The same steps with $\mathsf F_r$, which on
 $u=0$ applies $\sigma$ to the coefficients and sends $t$ to $pt$, give
 the \emph{Frobenius} $F_r$.
\item \emph{Verschiebung.}  By
 Proposition~\ref{thm:oriented-finite-extraction} and the Yoneda lemma,
 $\pi_r$ and $F_r$ are, in weight $n$, precomposition with maps
 $G_{r,n}\to G_{r+1,n}$.  Dualizing the Frobenius map
 $G_{r,-n}\to G_{r+1,-n}$ through $G_{r,n}^\vee\simeq G_{r,-n}[2]$ and
 shifting by $[-2]$ gives a map $G_{r+1,n}\to G_{r,n}$, and
 precomposition with it is the \emph{Verschiebung} $V_r$.
\end{itemize}
The Frobenius twists of Definition~\ref{def:finite-extraction},
together with $T_{\dRW_r}(\sigma^*E)\simeq\sigma^*T_{\dRW_r}(E)$, make
all three $W$-linear with the stated sources and targets.  Together with
the differential $d$ of $T_{\dRW_r}(E)$, which raises internal degree by
one, they are the operators $\pi,F,V,d$ on the system
$\{T_{\dRW_r}(E)\}_r$.
\end{construction}

\begin{remark}[The twist convention]
\label{rem:twist-one-minus-r}
The twists $(\sigma^{1-r})^*$ in $T_{\dRW_r}$ and $(\sigma^{r-1})^*$ in
$G_{r,n}$ measure the length of the chain.  The coefficients of
$k^{\Nyg_r}$ are those of the $0$-th chart, which carries the
Hodge--Tate locus $u\neq0$ in Bhatt's normalization, while
$T_{\dRW_r}$ is read off on the $(r-1)$-st chart, $r-1$ Frobenius
gluings away (Lemma~\ref{lem:contraction-open-loci}).  As
$f_{r+1}\circ\mathsf R_r=\Phi_{\syn}\circ f_r$
(Proposition~\ref{prop:level-maps-compatible}), these twists make
$\pi_r$ $W$-linear.  Up to a twist by $\sigma$ independent of $r$, they
are the normalization $f'_r=\Phi_{\syn}^{-r}\circ f_r$ of
Construction~\ref{con:infinite-nygaard-stacks}, so
$T_{\dRW}=R\varprojlim_rT_{\dRW_r}$ is the $r=\infty$ counterpart of
the finite realizations, and the remaining $\sigma$ is the
$(\sigma^{-1})^*$ of Theorem~\ref{thm:hodge-witt}(2).
\end{remark}

By the Yoneda lemma, natural transformations between the functors
$T_{\dRW_r}$ are the same as maps between the corepresenting objects
$G_{r,n}$, which are twists of $e_{r,*}\cO$.  In particular $\pi$, $F$
and $V$ are natural in $E$.  For Frobenius and Verschiebung these maps
are geometric.  The Frobenius maps
$\mathsf F_r$ preserve the origins, so they induce maps of $r$-Hodge
stacks, again denoted $\mathsf F_r$, with commutative squares
\[
\begin{tikzcd}[column sep=large, row sep=large]
 k^{\syn} \arrow[d, "\Phi_{\syn}"']
 & k^{\mathrm{Hodge}_r} \arrow[l, "e_r"'] \arrow[r, "g'_r"]
   \arrow[d, "\mathsf F_r"]
 & B\mathbb G_{m,W_r} \arrow[d, "\mathsf F_r|_{u=t=0}"]\\
 k^{\syn}
 & k^{\mathrm{Hodge}_{r+1}} \arrow[l, "e_{r+1}"] \arrow[r, "g'_{r+1}"']
 & B\mathbb G_{m,W_{r+1}}.
\end{tikzcd}
\]
Since $e_r$ is finite and $e_{r+1}$ is affine, $\mathsf F_r$ is finite,
and the orientations of $e_r$ and $e_{r+1}$
(Lemma~\ref{lem:e-finite-oriented}) give $\mathsf F_r^!\cO\simeq\cO$.
Pullback of functions along $\mathsf F_r$ and the \emph{trace}
$\operatorname{tr}_{\mathsf F_r}:\mathsf F_{r,*}\cO\simeq\mathsf
F_{r,*}\mathsf F_r^!\cO\to\cO$, the counit, give maps of $F$-gauges
\[
 F_r:e_{r+1,*}\cO(n)\to\sigma_*e_{r,*}\cO(n),\qquad
 V_r:\sigma_*e_{r,*}\cO(n)\to e_{r+1,*}\cO(n).
\]
Counits compose, so $\operatorname{tr}_{e_{r+1}}\circ
e_{r+1,*}\operatorname{tr}_{\mathsf F_r}=\operatorname{tr}_{e_r}$, and
the trace is linear over $\cO_{k^{\mathrm{Hodge}_{r+1}}}$.  Hence
$\operatorname{tr}_{e_{r+1}}(V_r(x)\,y)=\operatorname{tr}_{e_r}(x\,F_r(y))$,
so $V_r$ is the transpose of $F_r$ for the trace pairings.  Under the Yoneda correspondence, $F$ and $V$ of
Construction~\ref{def:intrinsic-geometric-V} are $F_r$ and $V_r$.
The underlying gauge of $e_{r,*}\cO$, its pullback along $j_{\Nyg}$,
is the sum of the Koszul algebras of the charts,
\[
 j_{\Nyg}^*e_{r,*}\cO\simeq\bigoplus_{i=0}^{r-1}
 f_{r,i,*}\bigl(B_i/^{L}(p^iu_i,\,p^{r-1-i}t_i)\bigr)
\]
(Lemma~\ref{lem:syntomic-projection-fibre}), and we compute $F_r$ and
$V_r$ on each summand.

\begin{proposition}[Local formulas for Frobenius and Verschiebung]
\label{prop:geometric-V-local}
Fix $0\leq i<r$, let $k^{\Nyg}_i=\Spf(B_i)/\Gm$ with
$B_i=\bigl(W^{(i)}[u_i,t_i]/(u_it_i-p)\bigr)^\wedge_p$ be the $i$-th
chart, on which $g_r^*t=p^{r-1-i}t_i$, and consider the $i$-th summand
of the underlying gauge $j_{\Nyg}^*e_{r,*}\cO$.  As in the proof of
Lemma~\ref{lem:e-finite-oriented}, let $\Ksz(u)$ and $\Ksz(t)$ be the
pullbacks along $g_r$ of the Koszul complexes on $k^{\Nyg_r}$, so that
$\Ksz(t)$ is $B_i/^{L}(p^{r-1-i}t_i)$ at level $r$ and
$B_i/^{L}(p^{r-i}t_i)$ at level $r+1$.
Then on the $i$-th summand the maps $F_r$ and $V_r$ are the identity
on $\Ksz(u)$, and on $\Ksz(t)$ they are
\[
 F_r:\quad
 \begin{CD}
 B_i @>{p^{r-i}t_i}>> B_i\\
 @V{p}VV @VV{1}V\\
 B_i @>{p^{r-1-i}t_i}>> B_i
 \end{CD}
 \qquad
 V_r:\quad
 \begin{CD}
 B_i @>{p^{r-1-i}t_i}>> B_i\\
 @V{1}VV @VV{p}V\\
 B_i @>{p^{r-i}t_i}>> B_i,
 \end{CD}
\]
with rows in degrees $-1,0$, after trivializing the line of
$\varepsilon_t$ and absorbing the Frobenius twist.  On the chart
$k^{\Nyg}_r$ of level $r+1$, $V_r$ vanishes.
\end{proposition}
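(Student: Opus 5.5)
The plan is to compute everything chartwise after pullback along the étale cover $j_{\Nyg}$. There, by Lemma~\ref{lem:syntomic-projection-fibre} and the decomposition displayed before the proposition, both $e_{r,*}\cO$ and $e_{r+1,*}\cO$ are sums of the Koszul algebras of the charts.

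\emph{Frobenius.} By Definition~\ref{def:level-maps}, $\mathsf F_r$ sends the chart $k^{\Nyg}_i$ of level $r$ to the chart $k^{\Nyg}_i$ of level $r+1$. It applies $\sigma$ to coefficients and fixes $u_i,t_i$. By Proposition~\ref{prop:contraction-map}, on the $i$-th chart the $u$-equation is $p^iu_i$ at both levels. The $t$-equation is $p^{r-i}t_i$ at level $r+1$ and $p^{r-1-i}t_i$ at level $r$. The pullback of functions $\mathsf F_r^*$ on Koszul algebras is the map of Koszul complexes induced by the identity of $B_i$ (after absorbing the $\sigma$-twist). It must send the generator $\varepsilon_u$ to $\varepsilon_u$. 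It sends $\varepsilon_t$ to $p\,\varepsilon_t$, since $p^{r-i}t_i=p\cdot p^{r-1-i}t_i$ and the Koszul differential must commute. This gives the stated square for $F_r$ on $\Ksz(t)$ and the identity on $\Ksz(u)$. The chart $k^{\Nyg}_r$ of level $r+1$ is not in the image of $\mathsf F_r$. Since $\mathsf F_r$ is the inclusion of the left sub-chain, the corresponding summand of $\mathsf F_{r,*}\cO$ is zero over it.

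\emph{Verschiebung.} $V_r$ is the trace $\operatorname{tr}_{\mathsf F_r}$, so the plan is to compute it as the transpose of $F_r$. By the relation $\operatorname{tr}_{e_{r+1}}(V_r(x)\,y)=\operatorname{tr}_{e_r}(x\,F_r(y))$, $V_r$ is determined by $F_r$ and the two trace pairings. By Lemma~\ref{lem:e-finite-oriented} these are, chartwise, the Koszul pairings: multiply, then project onto the coefficient of $\varepsilon_u\varepsilon_t$. On the $\Ksz(t)$ factor the pairing puts $1$ against $\varepsilon_t$. Taking the transpose of the matrix $\operatorname{diag}(p,1)$ in the basis $(\varepsilon_t,1)$ gives, in the dual basis $(1,\varepsilon_t)$, the map $1\mapsto p$, $\varepsilon_t\mapsto\varepsilon_t$. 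This is the stated square: identity in degree $-1$, $p$ in degree $0$. On $\Ksz(u)$ the transpose of the identity is the identity. The vanishing of $V_r$ on the chart $k^{\Nyg}_r$ follows because $\mathsf F_{r,*}\cO$ has no summand there.

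\emph{Main obstacle.} The delicate point is the compatibility of the trace with the chartwise decomposition and with the descent data. One must check that $\operatorname{tr}_{\mathsf F_r}$, defined as a counit via $\mathsf F_r^!\cO\simeq\cO$ assembled from the two orientations, is really the chartwise transpose. This requires the ordered generators $\varepsilon_u,\varepsilon_t$ to match on both levels, so that no unit or sign enters. I would check it by restricting the identity $\operatorname{tr}_{e_{r+1}}\circ e_{r+1,*}\operatorname{tr}_{\mathsf F_r}=\operatorname{tr}_{e_r}$ to each summand, using that $\mathsf F_r$ maps the $i$-th summand into the $i$-th summand. The Frobenius twists are absorbed as in the statement, and they are harmless because $\sigma$ is an automorphism of $W$.
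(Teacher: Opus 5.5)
Your chart computations are the paper's: $F_r$ comes from the coordinate formula for $\mathsf F_r$ ($\varepsilon_t'\mapsto p\,\varepsilon_t$, $1\mapsto1$, identity on $\Ksz(u)$), $V_r$ is its transpose for the orientation pairing of $1$ with $\varepsilon_t$, and $V_r$ vanishes on $k^{\Nyg}_r$ because $\mathsf F_r$ omits that chart.

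\textbf{What is missing.} You omit the step the paper's proof explicitly leaves for last. The chartwise descriptions have to be glued across the overlap $(k^{\Nyg}_{r-1})_{t_{r-1}\neq0}\simeq(k^{\Nyg}_r)_{u_r\neq0}$ of the level-$(r+1)$ chain. After $j_{\Nyg}^*$ the summands are independent. But $V_r$ is a map of $F$-gauges, and the descent datum of $e_{r+1,*}\cO$ identifies the de Rham end of the chart-$(r-1)$ summand with the Hodge--Tate end of the chart-$r$ summand. On that overlap your chart-$(r-1)$ formula is the chain map $\varepsilon_t\mapsto\varepsilon_t'$, $1\mapsto p$ from $[B_{r-1}\xrightarrow{t_{r-1}}B_{r-1}]$ to $[B_{r-1}\xrightarrow{pt_{r-1}}B_{r-1}]$, with $t_{r-1}$ inverted. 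That is not the zero chain map, whereas on chart $r$ you set $V_r=0$. Until you say how the two are matched, the chartwise formulas do not describe the map of $F$-gauges. Your proposed check never sees this cross-summand gluing, since it restricts $\operatorname{tr}_{e_{r+1}}\circ e_{r+1,*}\operatorname{tr}_{\mathsf F_r}=\operatorname{tr}_{e_r}$ to each summand separately.

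\textbf{How the paper closes it.} On the overlap $t_{r-1}$ is a unit, so the source $B_{r-1}/^{L}t_{r-1}$ vanishes there. Hence the space of gluings with the zero map on chart $r$ is contractible, and the unique gluing is the one carried by the global transpose. Explicitly, $h(1)=t_{r-1}^{-1}\varepsilon_t'$, $h(\varepsilon_t)=0$ satisfies $\partial h+h\partial=V_r$. Your remark that $\mathsf F_{r,*}\cO$ has no component over $k^{\Nyg}_r$ is exactly the needed input, since the chart-$(r-1)$ part of $k^{\mathrm{Hodge}_r}$ lies in $t_{r-1}=0$. You only have to draw the gluing conclusion from it.

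\textbf{Why it matters.} On chart $r$, where $V_r=0$, the relation $V_rF_r=p$ holds through a nullhomotopy of $p$ that restricts to $h\circ F_r$ on the overlap (Remark~\ref{rem:local-raynaud-relations}). The same homotopy reappears for $\rho_r$ in Lemma~\ref{lem:scalar-transfer-projection}.

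By contrast, the point you single out as the main obstacle is routine. Matching $\operatorname{tr}_{\mathsf F_r}$ with the chartwise Koszul transposes is already built into Lemma~\ref{lem:e-finite-oriented} and the composition of counits, so it needs nothing beyond what you sketch.
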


\begin{proof}
Since $\mathsf F_r:k^{\Nyg_r}\to k^{\Nyg_{r+1}}$ fixes $u$ and sends $t$
to $pt$ (Definition~\ref{def:level-maps}), pullback along $\mathsf F_r$
fixes $\Ksz(u)$ and sends $1$ to $1$ and $\varepsilon_t$ to
$p\varepsilon_t$ on $\Ksz(t)$, which is the first square.  The
orientation pairs $1$ with $\varepsilon_t$, so the transpose exchanges
the two coefficients, which is the second square.  The chart
$k^{\Nyg}_r$ is omitted by $\mathsf F_r$, so the Frobenius map has no
component there and its transpose vanishes.

It remains to glue these descriptions on the overlap
$(k^{\Nyg}_{r-1})_{t_{r-1}\neq0}$.  There the equation $t_{r-1}$ is invertible,
so $B_{r-1}/^{L}t_{r-1}$ is zero and its mapping space to the target is
contractible.  Hence the gluing with the zero map on $k^{\Nyg}_r$ is
unique, and it agrees with the gluing supplied by the global transpose.
Writing $\varepsilon_t$ and $\varepsilon_t'$ for the generators of the
source and the target, with $\partial\varepsilon_t=t_{r-1}$ and
$\partial\varepsilon_t'=pt_{r-1}$, an explicit gluing homotopy is
\begin{equation}\label{eq:V-overlap-homotopy}
 h(1)=t_{r-1}^{-1}\varepsilon_t',\qquad h(\varepsilon_t)=0,
 \qquad \partial h+h\partial=V_r.
\end{equation}
Tensoring with $\Ksz(u)$ gives the formulas on the full chart algebra
\eqref{eq:Hodge-extraction-chart-Koszul}.
\end{proof}

\begin{remark}[The Raynaud relations]
\label{rem:local-raynaud-relations}
Write $\varepsilon_t$ and $\varepsilon_t'$ for the generators of
$\Ksz(t)$ at levels $r$ and $r+1$.  By
Proposition~\ref{prop:geometric-V-local}, $F_r$ multiplies
$\varepsilon_t'$ by $p$ and fixes $1$, while $V_r$ sends
$\varepsilon_t$ to $\varepsilon_t'$ and multiplies $1$ by $p$.  In each
degree exactly one of them is multiplication by $p$, so
$F_rV_r=V_rF_r=p$ on the charts of level $r$.  On the chart
$k^{\Nyg}_r$ of level $r+1$, where $V_r$ vanishes, $V_rF_r=p$ holds
through the nullhomotopy of $p=u_rt_r$ given by multiplication by
$u_r\varepsilon_t'$, as $\partial\varepsilon_t'=t_r$ there.  On the
overlap with the previous chart $u_r=t_{r-1}^{-1}$, so this is the
homotopy \eqref{eq:V-overlap-homotopy} applied to $F_r(1)=1$.  The differential $d$ is induced by the boundary map
$\eta$ of Construction~\ref{con:enhanced-gr}, with
$\eta(\varepsilon_t)=1$ and $\eta(1)=0$, which moves the degree
$-1$ generator to the degree $0$ one.  Since $F_r$ is $p$ only in degree
$-1$ and $V_r$ only in degree $0$, the two factors of $p$ never meet in
$F_r\eta V_r$:
\[
 F_r\eta V_r(\varepsilon_t)=F_r\eta(\varepsilon_t')=F_r(1)=1
 =\eta(\varepsilon_t),
 \qquad F_r\eta V_r(1)=0=\eta(1).
\]
In the other orders they do meet, and
$\eta F_r=pF_r\eta$ and $V_r\eta=p\eta V_r$.  After regrading
these identities are the Raynaud relations $FV=VF=p$, $FdV=d$,
$dF=pFd$ and $Vd=pdV$.
\end{remark}

At $r=\infty$ we do not repeat these computations.  The operators on
$T_{\dRW}$ reduce to those on the $T_{\dRW_r}$
(Theorem~\ref{thm:presentable-equivalence}), so they satisfy the same
relations, and \S\ref{sec:representing-objects} assembles them into a
coherent action of $R$.

\begin{lemma}[Restriction and the operator tower]
\label{lem:restriction-operator-compatibility}
For every $F$-gauge $E$ and $r\geq2$, restriction commutes with $F$,
$V$, and $d$:
\[
 \sigma_*(\pi_{r-1})\circ F_r=F_{r-1}\circ\pi_r,\qquad
 \pi_r\circ V_r=V_{r-1}\circ\sigma_*(\pi_{r-1}),\qquad
 \pi_r\circ d=d\circ\pi_r,
\]
naturally in $E$ and compatibly with composites of restrictions.
\end{lemma}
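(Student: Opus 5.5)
Since $\pi$, $F$, $V$ and $d$ are natural transformations between functors corepresented by the perfect $F$-gauges $G_{r,n}$ (Proposition~\ref{thm:oriented-finite-extraction}), the Yoneda lemma reduces each identity to an identity of maps between corepresenting objects, that is, between twists of $e_{r-1,*}\cO$, $e_{r,*}\cO$ and $e_{r+1,*}\cO$. For $d$, which is precomposition with the boundary $\eta$ relating weights $n$ and $n+1$, this is an identity between maps $G_{r-1,n+1}\to G_{r,n}[1]$ or similar. The plan is to prove the three identities geometrically, using the maps of $r$-Hodge stacks induced by $\mathsf R$ and $\mathsf F$, and then to check the remaining coherence on the chart Koszul algebras.

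\emph{Frobenius.} The key input is that the two embeddings commute: $\mathsf R_r\circ\mathsf F_{r-1}=\mathsf F_r\circ\mathsf R_{r-1}$ as maps $k^{\NN_{r-1}}\to k^{\NN_{r+1}}$. Both send chart $i$ to chart $i+1$ and apply $\sigma^2$ to coefficients, and the same holds on the naive stacks, where both send $u\mapsto pu$ and $t\mapsto pt$. The maps $\pi$ and $F$ are built from Beck--Chevalley maps of the squares in Proposition~\ref{prop:level-maps-compatible}. Beck--Chevalley maps compose along pasted squares, so both composites $\sigma_*(\pi_{r-1})F_r$ and $F_{r-1}\pi_r$ are the Beck--Chevalley map of the same composite square, up to the coefficient twists. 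The twists agree because both sides apply $\Phi_{\syn}$ twice.

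\emph{Differential.} The identity for $d$ holds because $\pi_r$ is induced by a map of filtered objects on $u=0$, namely pullback along the closed immersion $\mathbb A^1_{W_{r-1},t}/\Gm\to\mathbb A^1_{W_r,t}/\Gm$. Koszul duality (Proposition~\ref{prop:linear-Koszul-equivalence}) is functorial for maps of filtered complexes, and in particular compatible with change of base ring $W_r\to W_{r-1}$. So $\pi_r$ is a map of $D_-$-modules, and after regrading a map of $W_r[d]/d^2$-complexes. The same argument applied to $F_r$ gives the $F$-compatibility of $d$ as needed elsewhere.

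\emph{Verschiebung.} This is the main obstacle, since $V$ is defined by transposition rather than by pullback. The plan is to transpose the Frobenius identity. Dualizing $\sigma_*(\pi_{r-1})F_r=F_{r-1}\pi_r$ through $G_{r,n}^\vee\simeq G_{r,-n}[2]$ gives $\pi^{T}V=V\pi^{T}$, where $\pi^{T}$ is the transpose of restriction. So it must be shown that restriction is self-transpose: under the trace pairings, the corepresenting map for $\pi$ in weight $n$ is dual to the one in weight $-n$.

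To establish this, I would check on the chart Koszul algebras that $\mathsf R$, which multiplies the $u$-equation by $p$, fixes $\Ksz(t)$. On $\Ksz(u)$ it acts by the square $\varepsilon_u\mapsto p\varepsilon_u$, $1\mapsto1$. It also identifies the chart indices $i\mapsto i+1$, compatibly with the ordered generators $\varepsilon_u,\varepsilon_t$ that define the orientation in Lemma~\ref{lem:e-finite-oriented}. The orientation pairs $1$ with $\varepsilon_u\varepsilon_t$ and $\varepsilon_u$ with $\varepsilon_t$. One must therefore verify that the transpose of the $\mathsf R$-map agrees with the $\mathsf R$-map itself on the relevant corepresenting objects.

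If this direct check is awkward, the fallback is to verify $\pi_rV_r=V_{r-1}\sigma_*(\pi_{r-1})$ chartwise using the formulas of Proposition~\ref{prop:geometric-V-local}. On each chart both sides act as $1$ on $\varepsilon_t$ and as $p$ on $1$. The remaining step is to check that the end-chart homotopy \eqref{eq:V-overlap-homotopy} is carried to the corresponding homotopy. This step is unique, because the relevant mapping spaces on the overlap are contractible, as in the proof of Proposition~\ref{prop:geometric-V-local}.

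Compatibility with composites of restrictions follows from the functoriality of Beck--Chevalley maps under pasting.
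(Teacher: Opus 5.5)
Your arguments for $F$ and $d$ match the paper's. The commutation $\mathsf R\circ\mathsf F=\mathsf F\circ\mathsf R$ on both towers gives the first identity. On $u=0$, restriction is pullback along a closed immersion fixing $t$, hence a filtered map, which gives the third. (Your aside that the same argument handles $F_r$ and $d$ is wrong as stated: on $u=0$ the map $\mathsf F_r$ covers $t\mapsto pt$, and indeed $dF=pFd$.)

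The gap is in the Verschiebung identity, and your main route there fails. Under the trace pairings, the transpose of restriction is not restriction but $\rho_r\colon T_{\dRW_r}\to T_{\dRW_{r+1}}$, induced by $[x]\mapsto[px]$ from $R_r$ to $R_{r+1}$ (Lemma~\ref{lem:scalar-transfer-projection}). The weight-$n$ corepresenting map of $\pi_r$ goes $G_{r,n}\to G_{r+1,n}$, while the dual of the weight-$(-n)$ one goes $G_{r+1,n}\to G_{r,n}$ up to shift. So ``restriction is self-transpose'' is not even well typed. Dualizing $F\pi=\pi F$ gives $\rho V=V\rho$, which is a different identity. The one you need, $\pi V=V\pi$, is the transpose of $F\rho=\rho F$. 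That is no easier, since $\rho$ is again a trace, now along $\mathsf R$.

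Your fallback does not close the gap either. The chart formulas of Proposition~\ref{prop:geometric-V-local} do give matching components, once you account for the shift of chart index and the $\Ksz(u)$-factor coming from $\mathsf R$, which your description omits. But the uniqueness argument there applies only on overlaps where the source or target vanishes because a Koszul equation becomes invertible, as for $t_{r-1}\neq0$. The two composites also carry gluing data on the interior overlaps. Over these, $k^{\mathrm{Hodge}_r}$ consists of the nonzero open points $\Spec W/^{L}(p^j,p^{r-j})$ of Remark~\ref{rem:hodge-geometry}. There the mapping spaces between the overlap terms are not contractible: they are $\RHom$'s between nonzero torsion complexes, with nonzero $\mathrm{Ext}^{-1}$. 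So chartwise agreement does not by itself identify the two maps of $F$-gauges.

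The missing idea is the paper's, and it has two parts.
\begin{enumerate}
\item The square of $r$-Hodge stacks with sides $\mathsf F_{r-1},\mathsf R_{r-1},\mathsf R_r,\mathsf F_r$ is derived cartesian. Over a common chart, each Hodge stack is the product of the derived zero loci of $g^*u$ and $g^*t$. $\mathsf R$ changes only the first factor and $\mathsf F$ only the second, so the square is the product of a square with invertible horizontal maps and one with invertible vertical maps.
\item The relative orientation of $\mathsf F$ lives on the $t$-factor, where $\mathsf R$ is the identity, so the orientations of $\mathsf F_{r-1}$ and $\mathsf F_r$ correspond.
\end{enumerate}
Base change along this square then carries the trace along $\mathsf F_r$, pulled back along $\mathsf R_r$, to the trace along $\mathsf F_{r-1}$ applied after pullback along $\mathsf R_{r-1}$. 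That is exactly $\pi_r\circ V_r=V_{r-1}\circ\sigma_*(\pi_{r-1})$, with all gluing data handled at once.
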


\begin{proof}
Restriction and Frobenius are pullbacks along the maps $\mathsf R$ and
$\mathsf F$ of $r$-Hodge stacks, and $\mathsf R\circ\mathsf F=\mathsf
F\circ\mathsf R$ on the genuine and the naive stacks
(Definition~\ref{def:level-maps}).  This gives the first identity.  On
the locus $u=0$, $\mathsf R$ is the closed immersion
$\mathbb A^1_{W_r,t}/\Gm\to\mathbb A^1_{W_{r+1},t}/\Gm$, which fixes
$t$, so restriction is induced by a map of filtered objects and
commutes with the differential of the associated cochain complexes.
This gives the third.

The second identity is not formal, since Verschiebung is the trace
along $\mathsf F$ rather than a pullback.  It follows from base change,
once we know that the square of $r$-Hodge stacks
\[
\begin{tikzcd}[column sep=large]
 k^{\mathrm{Hodge}_{r-1}} \arrow[r, "\mathsf F_{r-1}"] \arrow[d, "\mathsf R_{r-1}"']
 & k^{\mathrm{Hodge}_r} \arrow[d, "\mathsf R_r"]\\
 k^{\mathrm{Hodge}_r} \arrow[r, "\mathsf F_r"]
 & k^{\mathrm{Hodge}_{r+1}}
\end{tikzcd}
\]
is derived cartesian and that the relative orientations of
$\mathsf F_{r-1}$ and $\mathsf F_r$ correspond.  The images of
$\mathsf R_r$ and $\mathsf F_r$ in the chain of length $r+1$ meet in its
middle sub-chain, so it suffices to check this over a common chart.
There an $r$-Hodge stack is the fibre product of the derived zero loci
of the two equations $g^*u$ and $g^*t$, and $\mathsf R$ changes only the
first factor, $\mathsf F$ only the second.  So the square is the product
of a square with invertible horizontal maps and one with invertible
vertical maps, hence derived cartesian.  The relative orientation of
$\mathsf F$ lives on the second factor
(Proposition~\ref{prop:geometric-V-local}), where $\mathsf R$ is the
identity, so the orientations correspond.  Composites of restrictions follow by iteration.
\end{proof}

For a smooth proper $k$-scheme $X$, we will compare $\pi$, $F$, $V$
and $d$ with the classical operators on
$R\Gamma(X,W_r\Omega^\bullet_{X/k})$, through Ekedahl's functor
(Corollary~\ref{cor:geometric-raynaud-realization}).

\subsection{The de Rham--Witt realization and Ekedahl's functor}
\label{sec:universal-kernel}
\label{sec:representing-objects}

We pass from the objects $G_{r,n}$ of
Proposition~\ref{thm:oriented-finite-extraction} to $r=\infty$, in
three steps.  Definition~\ref{def:representing-objects} and
Proposition~\ref{prop:universal-G} define the colimits $G_n$ and
compute their underlying gauges.  The maps between the $G_n$ satisfy
the Raynaud relations only up to homotopy, so
Construction~\ref{con:ladder} replaces $G_n$ by $S(R(-n))$, the value
of Ekedahl's functor $S$, on which $R$ acts by right multiplication on the nose
(Lemma~\ref{lem:free-ladder} and Construction~\ref{con:right-action-G}).
This action defines $T_{\dRW}$ and its left adjoint $S_{\mathrm{FG}}$
(Definition~\ref{def:geometric-adjunction}), and
Lemma~\ref{thm:ladder-is-quasi-inverse} identifies $S_{\mathrm{FG}}$
with Ekedahl's functor $S$.  From now on in this section, $A:=(W[u,t]/(ut-p))^\wedge_p$
denotes the coordinate ring of $k^{\Nyg}$.

\begin{definition}[The corepresenting objects]
\label{def:representing-objects}
Put
\[
 G_n:=\colim_r G_{r,n}\quad\text{in }\Dqc(k^{\syn}),
\]
where the transition maps $G_{r,n}\to G_{r+1,n}$ are those whose
precomposition is restriction
(Construction~\ref{def:intrinsic-geometric-V}).%
\footnote{Ekedahl's projective generators $G_r$
\cite[\S II.2, Example~v)]{Ekedahl86Diagonal} corepresent
$M\mapsto M^r$, whereas our $G_n$ corepresent
$E\mapsto T_{\dRW}(E)^n$ and are the images $S_{\mathrm{FG}}(R(-n))$ of free
$R$-modules (Lemma~\ref{lem:free-ladder}).}
The right $W_r$-actions
on the $G_{r,n}$ give a \emph{right $W$-action} on $G_n$.  By
Proposition~\ref{thm:oriented-finite-extraction},
\[
 \RHom(G_n,E)\simeq R\varprojlim_rT_{\dRW_r}(E)^n ,
\]
the limit taken along the restriction maps $\pi_r$.
\end{definition}

The right $W$-action is special to the corepresenting objects.  A
general $F$-gauge carries no action of $W$: for instance,
$\operatorname{End}(\cO_{k^{\syn}})=W^{\sigma=1}=\mathbf Z_p$.  On $G_n$, by contrast,
$W$ acts by endomorphisms of $F$-gauges, by $\sigma^b$ on the $b$-th
summand of the underlying gauge (Proposition~\ref{prop:universal-G}),
compatibly with the Frobenius twist between adjacent summands.  In fact
the whole endomorphism ring is large.  By the Yoneda lemma and
Lemma~\ref{lem:path-output} below,
\[
 \operatorname{End}(G_n)^{\mathrm{op}}\simeq
 R\varprojlim_rT_{\dRW_r}(G_n)^n
 \simeq\widehat R^0
 =\bigl(W_\sigma\langle F,V\rangle/(FV-p,\,VF-p)\bigr)^\wedge_V,
\]
the \emph{complete Dieudonn\'e ring}, where $Fa=\sigma(a)F$ and
$Va=\sigma^{-1}(a)V$ for $a\in W$.  So $\widehat R^0$ acts on $G_n$ on the
right, and the right $W$-action is its restriction to $W\subset\widehat R^0$.  Explicitly, the elements of
$\widehat R^0$ are the sums
$\sum_{j\geq1}a_jV^j+\sum_{\ell\geq0}b_\ell F^\ell$ with $a_j,b_\ell\in W$
and $b_\ell\to0$, since $p^rF^\ell=V^rF^{\ell+r}$ lies in $V^rR^0$
(Remark~\ref{lem:raynaud-normal-form}).

\begin{proposition}[The underlying gauge of $G_n$]\label{prop:universal-G}
The underlying gauge of $G_n$ is
\begin{equation}\label{eq:universal-G-chart}
 j_{\Nyg}^*G_n
 \simeq
 \left(\bigoplus_{b\geq0}
       \Ksz_{A}(p^bt)(-n-1)\right)^\wedge_p[-n-1],
\end{equation}
where $(-)^\wedge_p$ is derived $p$-completion.  On the summand
indexed by $b$ the right $W$-action is $a\mapsto\sigma^b(a)$.  The
descent datum identifies the de Rham pullback of the summand $b$ with
the Hodge--Tate pullback of the summand $b-1$, and the de Rham pullback
of the summand $b=0$ vanishes.
\end{proposition}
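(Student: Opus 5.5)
Since $j_{\Nyg}^*$ commutes with colimits, we compute $j_{\Nyg}^*G_{r,n}$ for each $r$ and then take the colimit along the restriction maps. By Lemma~\ref{lem:syntomic-projection-fibre} and the finiteness part of Lemma~\ref{lem:e-finite-oriented}, $j_{\Nyg}^*e_{r,*}\cO$ is the sum over $0\leq i<r$ of the chart Koszul algebras $B_i/^{L}(p^iu_i,p^{r-1-i}t_i)$, pushed forward along $f_{r,i}$. We reindex by $b:=r-1-i$, the index preserved by $\mathsf R_r$ (Remark~\ref{rem:genuine-infinite-nygaard}). Under $f_{r,i}$ the chart is identified with $k^{\Nyg}$, with $u_i,t_i\mapsto u,t$ and $B_i\simeq A$ after untwisting the coefficients by $\sigma^i$. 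So the $b$-th summand is $\Ksz_A(p^{r-1-b}u)\otimes_A\Ksz_A(p^bt)$. Including the twist $(\sigma^{r-1})^*$ and the shift $(-n)[-n-2]$ from Proposition~\ref{thm:oriented-finite-extraction}, the coefficient twist on summand $b$ becomes $\sigma^{r-1}\circ\sigma^{-i}=\sigma^b$. This is the claimed right $W$-action, since $W$ acts through $g'_r$, that is, through $\lambda_i$.

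Next we identify the transition maps $G_{r,n}\to G_{r+1,n}$. These are dual to restriction, so they are induced by $\mathsf R_r$. On the common chart with fixed $b$, $\mathsf R_r$ multiplies the $u$-equation by $p$ and leaves the $t$-equation unchanged. The map on the $u$-factor is therefore
\[
 \Ksz_A(p^{r-1-b}u)\to\Ksz_A(p^{r-b}u),
\]
given by $\varepsilon_u\mapsto p\varepsilon_u$ and $1\mapsto1$. It is the identity on $\Ksz_A(p^bt)$, and new summands appear at $b=r$. We must check the direction and normalization carefully. The corepresenting map for a pullback goes via the unit $\cO\to\mathsf R_{r*}\cO$ on $e_{r+1,*}\cO$ versus $e_{r,*}\cO$, so the map goes from level $r$ to level $r+1$ as the transpose, and the scalar $p$ lands on $\varepsilon_u$ or on $1$ accordingly. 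The main obstacle is to get this right, together with the shift $[-2]$ and the weight bookkeeping.

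In the colimit over $r$, the $u$-factor becomes
\[
 \colim\bigl(A(-1)\xrightarrow{p}A(-1)\xrightarrow{p}\cdots\bigr)\to A,
\]
that is, $A[1/p](-1)\to A$, and its derived $p$-completion kills $A[1/p]$. Because $p$-completion of the full colimit is not part of the definition of $G_n$ in $\Dqc(k^{\syn})$, we use that $\Dqc$ consists of $p$-complete objects: the colimit there is the completed colimit. Hence the $u$-factor contributes $A$ in degree $0$, up to the twist by the weight of $\varepsilon_u$. Combining this with the shift $(-n)[-n-2]$ and with the fact that $\varepsilon_u\varepsilon_t$ has weight zero, we fix the normalization $(-n-1)[-n-1]$. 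Summing over $b\geq0$ and $p$-completing gives \eqref{eq:universal-G-chart}.

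Finally, we read off the descent datum from Construction~\ref{con:f-pullback}. On the overlap of charts $i$ and $i+1$, with $b$ and $b-1$ respectively, the de Rham pullback of chart $i$ is glued to the Hodge--Tate pullback of chart $i+1$. So the de Rham pullback of the summand $b$ is glued to the Hodge--Tate pullback of the summand $b-1$. For $b=0$, the de Rham pullback of $\Ksz_A(t)$ vanishes because $t$ is inverted there. The same identifications follow from Proposition~\ref{prop:syntomic-projection}.
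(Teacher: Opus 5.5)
\textbf{Gap: the transition maps are backwards.} Your outline is the paper's: chartwise Koszul algebras, reindexing by $b=r-1-i$, colimit along the transposes of restriction, completion, then reading off the gluing. Your treatment of the right $W$-action and of the descent datum is fine. But the central step is wrong, and the final normalization is asserted rather than derived.

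The map you write, $\Ksz_A(p^{r-1-b}u)\to\Ksz_A(p^{r-b}u)$ with $\varepsilon_u\mapsto p\varepsilon_u$ and $1\mapsto1$, is not a chain map. Indeed $\partial\varepsilon_u=p^{r-1-b}u$ is sent to $p^{r-1-b}u$, while $\partial(p\varepsilon_u')=p^{r+1-b}u$. Those formulas describe the pullback of functions along $\mathsf R_r$, which goes from level $r+1$ to level $r$ and induces $\pi_r$. The transition map $G_{r,n}\to G_{r+1,n}$ is its transpose under the orientation of Lemma~\ref{lem:e-finite-oriented}. That orientation pairs $1$ with $\varepsilon_u$, so transposing exchanges the two coefficients: the map sends $1\mapsto p$ and $\varepsilon_u\mapsto\varepsilon_u$. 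Consequently the colimit is
\[
 \colim_i\Ksz_A(p^iu)\simeq\bigl[A(-1)\,\varepsilon_u\xrightarrow{\ u\ }A[1/p]\bigr],
\]
with $1$ at stage $i$ identified with $1/p^i$. After derived $p$-completion, what survives is the $\varepsilon_u$-line $A(-1)[1]$, not the unit $A$ in degree $0$ as you conclude.

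\textbf{Why this changes the answer.} With your conclusion, tensoring with $\Ksz_A(p^bt)(-n)[-n-2]$ would give $\Ksz_A(p^bt)(-n)[-n-2]$. This is off by $(-1)[1]$ from \eqref{eq:universal-G-chart}, and it would also not match the Koszul pairs of Lemma~\ref{lem:free-ladder}, which sit in complex degrees $n,n+1$. The weight-zero class $\varepsilon_u\varepsilon_t$ plays no role here and cannot supply the missing twist. The normalization $(-n-1)[-n-1]$ comes precisely from the surviving generator $\varepsilon_u$, which has weight $1$ and sits in degree $-1$.

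Once the transition map is corrected, the rest of your argument goes through. The index $b$ is preserved, the map is the identity on $\Ksz_A(p^bt)$, and $j_{\Nyg}^*$ commutes with the colimit. The descent-datum statement then follows as you say.
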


\begin{proof}
By \eqref{eq:Hodge-extraction-chart-Koszul}, after identifying each
chart with $k^{\Nyg}$ through $f_{r,i}$,
\[
 j_{\Nyg}^*G_{r,n}[n+2]\simeq\bigoplus_{b=0}^{r-1}
 \bigl(\Ksz_A(p^{r-1-b}u)\otimes\Ksz_A(p^bt)\bigr)(-n),
\]
where the summand $b$ comes from the chart $i=r-1-b$.  Its right
$W_r$-action is $\sigma^b$, combining the structural map $\lambda_i$ of
$B_i$ with the Frobenius twist $(\sigma^{r-1})^*$.  The transition map
$G_{r,n}\to G_{r+1,n}$ is the transpose of restriction, which on
$\Ksz(u)$ sends $\varepsilon_u$ to $p\varepsilon_u$ and fixes $1$, as
$\mathsf R_r$ multiplies the $u$-equation by $p$
(Definition~\ref{def:level-maps}).  So it
preserves $b$, it is the identity on $\Ksz_A(p^bt)$, and on $\Ksz(u)$
it is the map $\Ksz_A(p^iu)\to\Ksz_A(p^{i+1}u)$ sending $1$ to $p$ and
$\varepsilon_u$ to $\varepsilon_u$.  Since $j_{\Nyg}^*$ commutes with
colimits, it suffices to compute, for fixed $b$,
\[
 \colim_i\Ksz_A(p^iu)\simeq\bigl[A(-1)\xrightarrow{\ u\ }A[1/p]\bigr],
\]
where $x$ in degree zero of the $i$-th term is identified with
$x/p^i$.  After derived $p$-completion $A[1/p]$ vanishes, and only
$A(-1)[1]$ remains.  Tensoring with $\Ksz_A(p^bt)(-n)[-n-2]$ gives
\eqref{eq:universal-G-chart}, with right $W$-action $\sigma^b$ on
the summand $b$.

It remains to identify the descent datum.  Inverting $t$ turns
$\Ksz_A(p^bt)$ into $W/p^b$, and inverting $u$, so that $t=p/u$, turns
it into $W/p^{b+1}$.  By Lemma~\ref{lem:syntomic-projection-fibre},
the descent datum carries the chart $i$ to the chart $i+1$, that is,
the summand $b$ to the summand $b-1$.  It therefore identifies the de
Rham pullback $W/p^b$ of the summand $b$ with the Hodge--Tate pullback
$W/p^b$ of the summand $b-1$, and its coefficient Frobenius matches
$\sigma^b$ with $\sigma^{b-1}$.  For $b=0$ the de Rham pullback is
$W/p^0=0$.
\end{proof}

Since $G_n$ is defined as a colimit, the operators of
\S\ref{sec:geometric-verschiebung} give maps between the $G_n$, but
their relations hold only up to unspecified homotopies.  So they do not
yet make $T_{\dRW}(E)=\bigoplus_n\RHom(G_n,E)$ a module over $R$.  To
obtain a right action of $R$ on the family $\{G_n\}_n$ with all its
coherences, we replace $G_n$ by a model built from complexes.  The free
modules $R(-n)$ carry the right action of $R$ by right multiplication,
whose relations hold on the nose, and Ekedahl's construction, recalled
next, is a dg functor from complexes of graded $R$-modules to $F$-gauges, so
applying it to the free modules gives the desired action.  We show that it sends $R(-n)$ to $G_n$, read off the resulting
right action, extend $n\mapsto G_n$ to all $M\in D(R)$, and
check that the extension is again computed by Ekedahl's construction.

\begin{construction}[Ekedahl's functor]\label{con:ladder}
Let $M\in D(R)$ be represented by a complex of graded $R$-modules
supported in a finite interval of internal degrees.  In gauge weight $m$ put
\begin{equation}\label{eq:ladder-complex}
 S(M)^m=
 \mathbf s\left[\cdots\longrightarrow\sigma_*M^{m-1}
       \xrightarrow{dV}M^m\xrightarrow dM^{m+1}
       \longrightarrow\cdots\right],
\end{equation}
the simple complex (Definition~\ref{def:simple-functor}) of the row with
differential $d_S$, which is $d$ away from the turn
$\sigma_*M^{m-1}\xrightarrow{dV}M^m$.  The maps between adjacent weights are
\[
\begin{tikzcd}[column sep=2.4em, row sep=2.6em, ampersand replacement=\&]
 S(M)^{m+1} \arrow[d, shift right=0.7ex, "t"'] \& \cdots \arrow[r]
 \& \sigma_*M^{m-1} \arrow[r, "d"]
   \arrow[d, shift right=0.7ex, "p"']
 \& \sigma_*M^{m} \arrow[r, "dV"]
   \arrow[d, shift right=0.7ex, "V"']
 \& M^{m+1} \arrow[r, "d"]
   \arrow[d, shift right=0.7ex, "1"']
 \& \cdots\\
 S(M)^{m} \arrow[u, shift right=0.7ex, "u"'] \& \cdots \arrow[r]
 \& \sigma_*M^{m-1} \arrow[r, "dV"']
   \arrow[u, shift right=0.7ex, "1"']
 \& M^{m} \arrow[r, "d"']
   \arrow[u, shift right=0.7ex, "F"']
 \& M^{m+1} \arrow[r]
   \arrow[u, shift right=0.7ex, "p"']
 \& \cdots
\end{tikzcd}
\]
On the entry of internal degree $i$ the maps $u$ and $t$ are
\[
 \begin{array}{c|ccc}
  & i<m & i=m & i>m\\\hline
  u & 1 & F & p\\
  t & p & V & 1
 \end{array}
\]
After derived
$p$-completion, $u$ is the identity in weights above the range of $M$
and $t$ in weights below it, so $S(M)^{\infty}\simeq\sigma_*\mathbf
s(M)^\wedge_p$ and $S(M)^{-\infty}\simeq\mathbf s(M)^\wedge_p$.  The
identity $\varphi^*S(M)^{\infty}\simeq\mathbf s(M)^\wedge_p\simeq
S(M)^{-\infty}$ makes $S(M)$ an
$F$-gauge.  We call $S$ \emph{Ekedahl's functor}, since in homotopy
categories it is his complex of $F$-gauges
\cite[Definition~II.3.1]{Ekedahl86Diagonal}.
\end{construction}

The relations of $R$ make $u$ and $t$ chain maps with $ut=tu=p$.  At
the turn this uses $FdV=d$, $VF=p$ and $Vd=pdV$, and away from it the
maps are identities or multiplication by $p$.  Since the interval of internal degrees is finite, this construction
preserves quasi-isomorphisms.  It is a dg functor on such complexes,
and hence induces an exact $\infty$-functor on this range.

\begin{example}[Dieudonn\'e modules and the domino $U_0$]
\label{ex:S-examples}
\begin{enumerate}
\item Let $M$ be a Dieudonn\'e module, regarded as a graded $R$-module
 in internal degree $0$ with $d=0$.  Only the entry of internal degree
 $0$ contributes to \eqref{eq:ladder-complex}, and it is $M$ in weights
 $m\leq0$ and $\sigma_*M$ in weights $m\geq1$.  So $S(M)$ is
 \[
 \begin{tikzcd}[column sep=2.6em]
  \cdots \arrow[r, shift left, "p"]
  & M \arrow[l, shift left, "1"] \arrow[r, shift left, "p"]
  & M \arrow[l, shift left, "1"] \arrow[r, shift left, "F"]
  & \sigma_*M \arrow[l, shift left, "V"] \arrow[r, shift left, "1"]
  & \sigma_*M \arrow[l, shift left, "p"] \arrow[r, shift left, "1"]
  & \cdots \arrow[l, shift left, "p"]
 \end{tikzcd}
 \]
 in weights $-1,0,1,2$, with $u$ pointing right and $t$ pointing left.
 The Frobenius and Verschiebung of $M$ appear only between weights $0$
 and $1$, so $S(M)$ has Hodge--Tate weights $0$ and $1$.
\item Let $U_0:=(R/R(F,Fd))^\wedge$ be the elementary domino of Illusie
 and Raynaud \cite[I, \S2]{IllusieRaynaud83}.  It is killed by $p$, with
 \[
  (U_0)^0=\prod_{j\geq0}k\,V^j,\qquad (U_0)^1=\prod_{j\geq0}k\,dV^j,
 \]
 and $d:V^j\mapsto dV^j$ is bijective.  Hence the row
 \eqref{eq:ladder-complex} is acyclic in every weight except $1$, where
 it is the turn $\sigma_*(U_0)^0\xrightarrow{dV}(U_0)^1$,
 $V^j\mapsto dV^{j+1}$.  This map is injective with cokernel $k\,d$.  So
 $S(U_0[1])$ is
 \[
 \begin{tikzcd}[column sep=2.6em]
  \cdots \arrow[r, shift left, "0"]
  & 0 \arrow[l, shift left, "0"] \arrow[r, shift left, "0"]
  & k \arrow[l, shift left, "0"] \arrow[r, shift left, "0"]
  & 0 \arrow[l, shift left, "0"] \arrow[r, shift left, "0"]
  & \cdots \arrow[l, shift left, "0"]
 \end{tikzcd}
 \]
 in weights $0,1,2$, that is, the pushforward $e_{1,*}\cO(-1)$ from the
 Hodge point.  Both
 computations are also in \cite[Example~2.14]{ZhangHN}.
\end{enumerate}
\end{example}

\begin{lemma}[Ekedahl's functor and Breuil--Kisin twists]
\label{lem:S-BK}
For $M$ as in Construction~\ref{con:ladder} and $a\in\mathbb Z$ there is
a natural equivalence of $F$-gauges $S(M\{a\})\simeq S(M)\{a\}$.
\end{lemma}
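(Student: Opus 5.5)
The plan is to compute both sides weight by weight from the explicit formula \eqref{eq:ladder-complex} and match them by reindexing. By the conventions, $M\{a\}=M(a)[-a]$, so $(M\{a\})^{i}=M^{i+a}[-a]$ as complexes, and the operators $F,V,d$ are unchanged apart from the signs coming from the shift $[-a]$. By Lemma~\ref{lem:BK-finite} and its proof, or directly from Definition~\ref{def:BK-twist}, the twist $E\{a\}$ of an $F$-gauge has weights $(E\{a\})^m=E^{m+a}$. It has the same maps $u$ and $t$, and its descent datum is that of $E$ up to the trivialization $\cO\{a\}|_{k^\Delta}=W$. The Frobenius structure on $k^\Delta$ is then multiplied by $p^{-a}$.

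First I compare underlying gauges. In weight $m$, the row defining $S(M\{a\})^m$ has entries $\sigma_*(M\{a\})^{i}=\sigma_*M^{i+a}[-a]$ for $i<m$ and $(M\{a\})^i=M^{i+a}[-a]$ for $i\geq m$, with the turn $dV$ at $\sigma_*M^{m-1+a}[-a]\to M^{m+a}[-a]$. This is the row for $S(M)^{m+a}$, reindexed by $i\mapsto i+a$ and shifted by $[-a]$. The simple functor $\mathbf s$ places the entry of internal degree $i$ in degree $i+q$. The shift $[-a]$ moves it up by $a$, while the reindexing $i\mapsto i+a$ moves it down by $a$. Hence $\mathbf s$ of the shifted row agrees with $\mathbf s$ of the original row in the same total degrees, so $S(M\{a\})^m\simeq S(M)^{m+a}$. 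The table of $u$ and $t$ depends only on whether $i<m$, $i=m$ or $i>m$, and reindexing preserves this. So the maps $u$ and $t$ match, and the underlying gauges satisfy $S(M\{a\})\simeq S(M)\{a\}$ as $A$-modules. This identification is natural and dg in $M$, so it passes to the $\infty$-functor.

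The main obstacle is the sign and the descent datum. For signs, $\mathbf s$ uses the differential $d+(-1)^i\partial_M$. Under $i\mapsto i+a$ together with the shift $[-a]$, which changes $\partial$ by $(-1)^a$, the two sign changes cancel. Where they leave a discrepancy, I would fix it by the automorphism multiplying entry $(i,q)$ by a suitable sign $\epsilon(i,q)$, chosen compatibly with $u$ and $t$. For the descent datum, both ends are colimits along $u$ and $t$. Under the reindexing, $S(M\{a\})^{\infty}$ is identified with $S(M)^{\infty}$ through the composite of $a$ copies of $u$, and $S(M\{a\})^{-\infty}$ with $S(M)^{-\infty}$ through $a$ copies of $t$. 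In the range above $M$ these maps are $u=p$ and $t=1$, and below it $u=1$ and $t=p$. As a result, the identification $\varphi^*S(M)^\infty\simeq S(M)^{-\infty}$ transported to $S(M\{a\})$ differs from the original one by the factor $p^{\pm a}$. I will check that this is exactly the Frobenius structure $p^{-a}\sigma$ of $\cO\{a\}|_{k^\Delta}$. The check is cleanest for $M=R(-n)$ and $a=1$, and the general case follows by naturality and iteration.
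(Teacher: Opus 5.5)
Your weight-by-weight comparison is the paper's argument: you reindex $i\mapsto i+a$, you note that the shift $[-a]$ and the sign $(-1)^i$ in $\mathbf s$ cancel exactly, and you observe that the table for $u,t$ only sees the position relative to the turn. The sign cancellation is exact, so no auxiliary sign automorphism is needed. The descent-datum paragraph, however, which you single out as the main obstacle, is wrong, and the check you plan there would fail.

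First, you have reversed the roles of $u$ and $t$. By the table in Construction~\ref{con:ladder}, in a weight $m$ above the internal range of $M$ all entries have $i<m$, so $u=1$ and $t=p$. In weights below the range, $t=1$ and $u=p$. Hence the ends stabilize:
\begin{itemize}
\item $S(M)^{\infty}=S(M)^m=\sigma_*\mathbf s(M)$ for $m\gg0$;
\item $S(M)^{-\infty}=S(M)^m=\mathbf s(M)$ for $m\ll0$,
\end{itemize}
both after derived $p$-completion. Your comparisons of the ends of $S(M\{a\})$ with those of $S(M)$ through $u^a$ and $t^a$ therefore take place exactly where these maps are the identity. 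So $S(M\{a\})$ and $S(M)\{a\}$ both have ends $\sigma_*\mathbf s(M)$ and $\mathbf s(M)$ glued by the identity, and no power of $p$appears.

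Second, even if a factor $p^{\pm a}$ appeared, it could not be absorbed. The descent datum is an integral isomorphism $\varphi^*E^{\infty}\xrightarrow{\sim}E^{-\infty}$, and twisting by $\{a\}$ leaves it unchanged: $\cO\{a\}$ is trivial on $\Spf W$ and only shifts weights (proof of Lemma~\ref{lem:BK-finite}). The $p^{-a}$ in the Frobenius structure of $\cO\{a\}|_{k^\Delta}$ is not a change of the gluing. It appears only after you identify the ends rationally with the weight-zero piece through $t$ and $u$, and the weight-zero piece of $E\{a\}$ is the weight-$a$ piece of $E$. A discrepancy of $p^{\pm a}$ in the gluing would therefore contradict the lemma rather than confirm it.

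Replace that paragraph by the stabilization observation above, and your proof coincides with the paper's.
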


\begin{proof}
By the proof of Lemma~\ref{lem:BK-finite}, it suffices to compare
weights.  In \eqref{eq:ladder-complex}, the entry of internal degree $i$
of $M\{a\}$ is the entry of internal degree $i+a$ of $M$, and
$\mathbf s(N\{a\})=\mathbf s(N)$, so $S(M\{a\})^m\simeq S(M)^{m+a}$ with
the same maps $u$ and $t$.  Both sides have ends $\mathbf s(M)$ and
$\sigma_*\mathbf s(M)$, with the identity gluing.
\end{proof}

\begin{lemma}[Ekedahl's functor on a free module]\label{lem:free-ladder}
There is a natural equivalence of $F$-gauges $S(R(-n))\simeq G_n$,
compatible with the right $W$-actions.
\end{lemma}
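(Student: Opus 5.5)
The plan is to compute both sides on the cover $j_{\Nyg}:k^{\Nyg}\to k^{\syn}$, compare them there, and then check that the descent data agree. The first step is to compute $S(R(-n))$ from Construction~\ref{con:ladder} using the normal forms of Remark~\ref{lem:raynaud-normal-form}. The module $R(-n)$ lives in internal degrees $n$ and $n+1$, with
\[
R(-n)^n=R^0=\textstyle\bigoplus_{j>0}WV^j\oplus\bigoplus_{\ell\geq0}WF^\ell,
\qquad
R(-n)^{n+1}=R^1.
\]
The differential $d:R^0\to R^1$ is left multiplication, so it sends $V^j\mapsto dV^j$ and $F^\ell\mapsto dF^\ell=p^\ell F^\ell d$.

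Next I evaluate the row \eqref{eq:ladder-complex} in each gauge weight $m$, which gives a two-term complex of $W$-modules. For $m\leq n$ it is $d:R^0\to R^1$. For $m=n+1$ it is the turn $dV:\sigma_*R^0\to R^1$. For $m\geq n+2$ it is $d:\sigma_*R^0\to\sigma_*R^1$. The key organizing step is to split $R$ by the basis monomials into pieces indexed by $b\geq0$. The piece $b$ collects $V^{b}$, $dV^{b}$ (or $F^{-b}$-type monomials), normalized so that the $W$-action is through $\sigma^b$. In this way $S(R(-n))$ splits weightwise as a direct sum over $b$, possibly only after $p$-completion, since $R$ itself is not complete.

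On the piece $b$ I then identify the weight components and the maps $u,t$ with the two-term Koszul complex $\Ksz_A(p^bt)(-n-1)[-n-1]$. Concretely, in each weight $m$ this is $[A^{m-n}\xrightarrow{p^bt}A^{m-n-1}]$ up to shift. The expected pattern is that $d$ on the $V^b$-summand is an isomorphism $WV^b\to WdV^b$ and contributes nothing, while the $F$-summands produce the cokernels $W/p^{\ell}$. These are exactly $\Ksz_A(p^bt)$ read weight by weight, matching the table of $u,t$ (for instance $u=F$ and $t=V$ at the turn). The right $W$-action on $R(-n)$ by right multiplication is $\sigma^{\pm b}$ on the $b$-th monomial, and this should match $\sigma^b$ in Proposition~\ref{prop:universal-G}.

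Then I check the ends and the descent datum. From Construction~\ref{con:ladder} we have $S(M)^{\pm\infty}=\mathbf s(M)^\wedge_p$, with the identity gluing. Decomposing $\mathbf s(R(-n))^\wedge_p$ by $b$ should show that the gluing carries the de Rham end of piece $b$ to the Hodge--Tate end of piece $b-1$. The reason is that $V$ and $F$ shift the index $b$, and the piece $b=0$ has vanishing de Rham end. This is precisely the descent datum described in Proposition~\ref{prop:universal-G}.

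To obtain an actual map rather than just an abstract match of descriptions, I would construct a comparison $G_{r,n}\to S(R(-n))$ compatibly in $r$. Here I use Yoneda with Proposition~\ref{thm:oriented-finite-extraction}, so such a map is a class in $T_{\dRW_r}(S(R(-n)))^n$. I would take the image of $1\in R(-n)^n$ under $R(-n)\to R_r\otimes R(-n)$, after computing $T_{\dRW_r}S(M)\simeq R_r\otimes^L_RM$ on free modules by the same chart Koszul calculation. Passing to the colimit over $r$ gives $G_n\to S(R(-n))$. This map is an equivalence on the underlying gauges by the explicit computation above, and an $F$-gauge map which is an equivalence after $j_{\Nyg}^*$ is an equivalence.

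The main obstacle is the bookkeeping that makes the weightwise computation match $\Ksz_A(p^bt)$ on the nose, including the $\sigma$-twists and the completion, and in particular proving that the comparison map is compatible with the transition maps in $r$. I would handle this by verifying it chartwise against Proposition~\ref{prop:geometric-V-local}.
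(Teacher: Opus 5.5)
Your plan up to the descent data is the paper's proof. The paper first reduces to $n=0$ by Lemmas~\ref{lem:S-BK} and~\ref{lem:BK-finite}, which is only cosmetic. However, the decomposition, which is the whole content of the lemma, is set up incorrectly.

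\textbf{The decomposition.} The monomials $V^j$ with $j\geq1$ never enter a Koszul piece. Moreover, no sub-gauge is obtained by fixing the same monomials in every weight, because $u=F$ and $t=V$ at the turn move them. The paper splits $S(R(-n))$ into two kinds of pairs.
\begin{itemize}
\item Koszul pairs, spanned by $e_b=F^b$ in weight $n$ and $h_b=F^bd$ in weight $n+1$. They read $(F^b,F^bd)$ in weights $\leq n$, $(F^{b+1},F^bd)$ in weight $n+1$, and $(F^{b+1},F^{b+1}d)$ in weights $\geq n+2$. Their cohomology is $W/p^b$ below the turn and $W/p^{b+1}$ from the turn on.
\item Contractible pairs, spanned by $e_{-j}=V^{j-1}\in\sigma_*R^0$ and $h_{-j}=dV^j$ in weight $n+1$. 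They read $(V^j,dV^j)$ below the turn and $(V^{j-1},dV^{j-1})$ above it.
\end{itemize}
This splitting already holds before completion. The right $W$-action on the Koszul pair $b$ is exactly $\sigma^b$, since $F^ba=\sigma^b(a)F^b$, so there is no $\sigma^{\pm b}$ ambiguity. The relabelling across the turn is what produces the shift $b\mapsto b-1$ at the ends. The identity of $\mathbf s(R(-n))$ sends the de Rham end of the Koszul pair $b\geq1$ to the Hodge--Tate end of the pair $b-1$. It sends the de Rham end of the pair $0$, which is labelled $(1,d)$ and is acyclic, into the contractible pair $j=1$.

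\textbf{The gap in your last paragraph.} Knowing that both underlying gauges are abstractly $\bigl(\bigoplus_b\Ksz_A(p^bt)(-n-1)\bigr)^\wedge_p[-n-1]$ does not show that the particular map $G_n\to S(R(-n))$ obtained from the class of $1$ is an equivalence. You would have to compute that map summand by summand through the orientation and the trace. That computation amounts to Remark~\ref{rem:canonical-class}, which the paper deduces from the present lemma. Computing $T_{\dRW_r}S(R(-n))\simeq R_r(-n)$ directly is not circular, since the first half of the proof of Lemma~\ref{lem:path-output} works for any $S(M)$. But it does not by itself identify the map.

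The Yoneda detour is also unnecessary, because your explicit computation already gives an actual map: the projection of $S(R(-n))$ onto its Koszul pairs.
\begin{itemize}
\item It is a map of gauges.
\item It intertwines the descent datum of $S(R(-n))$ with the induced datum on the Koszul part. That induced datum is the reindexing of Proposition~\ref{prop:universal-G}, and it is zero on the acyclic de Rham end of the pair $0$.
\item It is a quasi-isomorphism, since the discarded pairs are contractible.
\end{itemize}
This is how the paper concludes.
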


\begin{proof}
Since $R(-n)=R\{-n\}[-n]$, Lemmas~\ref{lem:S-BK} and~\ref{lem:BK-finite}
reduce us to $n=0$.  There are two internal positions, $0$ and $1$.
Normal forms give a decomposition of the underlying gauge of $S(R)$
into the \emph{Koszul pairs} and the \emph{contractible pairs}
\[
 \bigl[A\,e_b\xrightarrow{p^bt}A(-1)\,h_b\bigr]
 \quad(b\geq0),\qquad
 \bigl[A(-1)\,e_{-j}\xrightarrow{1}A(-1)\,h_{-j}\bigr]
 \quad(j\geq1),
\]
all in complex degrees $0,1$.
Here $e_b=F^b$ sits in weight $0$, while
$e_{-j}=V^{j-1}\in\sigma_*R^0$ and $h_b=F^bd$,
$h_{-j}=dV^j\in R^1$ sit in weight $1$.
For example $ue_b=F^{b+1}$, $te_{-j}=V^j$, and the turning
differential sends $e_{-j}$ to $dV^j$.  These identities verify
the decomposition with its $u,t$-maps.
The right $W$-actions on the Koszul pair $b$ and the contractible
pair $j$ are $\sigma^b$ and $\sigma^{-j}$, respectively.

Project away the contractible pairs and take derived
$p$-completion.  The Koszul pair $b$ becomes
$\Ksz_{A}(p^bt)(-1)[-1]$, and with the usual shift sign its
identification sends $e_b\mapsto-\varepsilon_t$ and $h_b\mapsto1$.

It remains to see that the projection is an equivalence of
$F$-gauges.  On the de Rham locus, the pair labelled by $F^b,F^bd$ for $b\geq1$
matches the pair $b-1$ on the Hodge--Tate locus.
The pair $b=0$, labelled by $1,d$, matches the first
contractible pair there, and the negative pairs
$V^j,dV^j$ match the shifted negative pairs.
Projection therefore gives exactly the reindexing between the two
loci of Proposition~\ref{prop:universal-G}, with its Frobenius
twist.  So the projection is an equivalence of $F$-gauges.
Applying $\{-n\}[-n]$, for general $n$ the Koszul pairs are
$[A(-n)e_b\xrightarrow{p^bt}A(-n-1)h_b]$ in complex degrees $n,n+1$,
and the identification sends $e_b\mapsto(-1)^{n+1}\varepsilon_t$ and
$h_b\mapsto1$.
\end{proof}

\begin{construction}[The right $R$-action on the corepresenting objects]
\label{con:right-action-G}
For $x\in R^k$, right multiplication by $x$ is a map
$R(-n-k)\to R(-n)$, and Ekedahl's functor turns it into a map
$G_{n+k}\to G_n$ (Lemma~\ref{lem:free-ladder}).  These maps form a
\emph{right $R$-action} on the family $\{G_n\}_n$, with all
coherences transported from the dg functor $S$.  Explicitly, include
the Koszul pairs in $S(R(-n))$, apply right multiplication, and project
back.  This gives
\[
 F:\ e_b\mapsto e_{b+1},\quad h_b\mapsto p\,h_{b+1},\qquad
 V:\ e_b\mapsto p\,e_{b-1},\quad h_b\mapsto h_{b-1}\quad(b\geq1),
\]
and $V$ is zero on the pair $b=0$.
For right $d:R(-n-1)\to R(-n)$,
normal forms give
\[
 e_b^{(n+1)}\longmapsto h_b^{(n)},\qquad
 e_{-j}^{(n+1)}\longmapsto p^{j-1}u h_{-j}^{(n)},\qquad
 h_c^{(n+1)}\longmapsto0\quad(c\in\mathbb Z).
\]
Thus right $d$ preserves the contractible pairs, and its
projection is the boundary map $\eta$ on $\Ksz(t)$, as a map of
$F$-gauges.  In the shifted Koszul coordinates it is
$(-1)^{n+2}\eta:G_{n+1}\to G_n$, since
$e_b^{(n+1)}=(-1)^{n+2}\varepsilon_t$ and $h_b^{(n)}=1$.
Since right $V$
is zero on the pair $b=0$, the relation $FV=p$ holds there through
the homotopy $h_0\mapsto ue_0$, which exists because $ut=p$.
\end{construction}

\begin{definition}[The Morita adjunction]
\label{def:geometric-adjunction}
By Construction~\ref{con:right-action-G}, the graded object
$G=(G_n)_n$ is a right $R$-module object of $\Dqc(k^{\syn})$.  The
relative tensor product over $R$ and the graded mapping complex
\cite[\S4.8.4]{LurieHA} give an adjunction
\[
 S_{\mathrm{FG}}=G\otimes_R(-):D(R)\rightleftarrows\Dqc(k^{\syn}):
 T_{\dRW}=\RHomgr(G,-).
\]
Here $G\otimes_RM$ pairs $G_n$ with the internal degree $n$ part of
$M$, and $\RHomgr(G,E)^n=\RHom(G_n,E)$ is the mapping complex in
$\Dqc(k^{\syn})$.  Since $R(-n)$ is free on a generator of internal
degree $n$, $S_{\mathrm{FG}}(R(-n))=G_n$.  We call $T_{\dRW}(E)$ the
\emph{de Rham--Witt realization} of $E$.
\end{definition}

If the $G_n$ were compact generators with graded endomorphism ring $R$,
Morita theory, in the form of Schwede and Shipley
\cite[Theorem~7.1.2.1]{LurieHA} for a family of generators, would make
this adjunction an equivalence.  The $G_n$ generate
(Lemma~\ref{lem:N-conservative}), but they are not compact, since their
underlying gauges are infinite sums (Proposition~\ref{prop:universal-G}).
Correspondingly, Theorem~\ref{thm:presentable-equivalence} identifies
$\Dqc(k^{\syn})$ with the full subcategory $\widehat D(R)$ of complete
objects of $D(R)$.

\begin{lemma}[The Raynaud ring as a tensor algebra]
\label{lem:R-tensor-algebra}
The derived $p$-completion of $R^1\otimes_{R^0}^{L}R^1$ vanishes.  Hence,
after derived $p$-completion, the natural map from the tensor algebra of
the $R^0$-bimodule $R^1$ to $R=R^0\oplus R^1$ is an equivalence.
\end{lemma}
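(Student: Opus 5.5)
The plan is to compute $R^1\otimes^{L}_{R^0}R^1$ explicitly from the normal forms of Remark~\ref{lem:raynaud-normal-form}, and then read off the tensor-algebra statement from degree considerations.

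\emph{Step 1: a presentation of $R^1$.} The first task is to resolve $R^1$ as a left $R^0$-module. By the normal forms, $R^1$ has $W$-basis $dV^j$ for $j\geq1$ and $F^\ell d$ for $\ell\geq0$. As a left $R^0$-module it is generated by the elements $dV^j$:
\begin{itemize}
\item $F^\ell d=F^\ell\cdot dV^0$ is not literally of this form, but $F^{\ell+1}dV=F^\ell d$ by $FdV=d$;
\item $V\cdot dV^j=p\,dV^{j+1}$, from $Vd=p\,dV$.
\end{itemize}
So $R^1$ is generated by the single element $dV$ together with its right translates, and left multiplication by $V$ lands in $p$ times the next generator. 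Symmetrically, as a right $R^0$-module $R^1$ is generated by $d$, with $F^\ell d=F^\ell\cdot d$ and $dV^j=d\cdot V^j$, and with relations coming from $dF=p\,Fd$.

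\emph{Step 2: the tensor product modulo $p$.} Since derived $p$-completion kills an object exactly when it vanishes after $\otimes^{L}\mathbf Z/p$, it suffices to show
\[
(R^1\otimes^{L}_{R^0}R^1)/^{L}p\simeq R^1/p\otimes^{L}_{R^0/p}R^1/p\simeq0 .
\]
This reduction is legitimate because $R$ is $p$-torsion-free. Modulo $p$, the relations become $FV=VF=0$, $Vd=0$ (as $Vd=p\,dV$) and $dF=0$ (as $dF=p\,Fd$). The plan is then to write $x\otimes y$ with $y=d\cdot r$ and $x=r'\cdot d$ or $x=r'\cdot dV$, and to move $d$ across the tensor sign.

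For the generator $x=dV^j$ (with $j\geq1$) tensored against $y=d\,r$, we get $dV^j\otimes d\,r$. Writing $dV^j=d\cdot V^j$ would require $V^j$ to pass through, giving $d\otimes V^jd\,r$, and $V^jd\equiv0$. For $x=F^\ell d$, we instead write $y=F^{m}d$ or $dV^{j}$:
\begin{itemize}
\item if $y=F^m d$, use $F^\ell d=F^{\ell+1}dV$ and move $V$ across to get $F^{\ell+1}d\otimes VF^m d$; this vanishes if $m\geq1$ (since $VF\equiv0$), and for $m=0$ it is $\cdots\otimes Vd\equiv0$;
\item if $y=dV^j$, move the right factor $d$ to the left, obtaining $F^\ell d\,d\otimes V^j=0$ by $d^2=0$.
\end{itemize}
This shows that $H^0$ vanishes mod $p$. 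The main obstacle is the higher Tor terms. To handle them I would use a two-term free resolution of $R^1/p$ over $R^0/p$: the generator $d$ in the right-module presentation, killed by $F$ on the left (since $dF=0$), so that $R^1/p\simeq R^0/p\,/\,(R^0/p)F$ up to the $dV^j$ part. I would then compute $\operatorname{Tor}$ against the left module $R^1/p$, where $F$ acts by an injection with cokernel spanned by the classes handled above. Concretely, the aim is to show that multiplication by $F$ on $R^1/p$ is bijective between the relevant graded pieces. If the resolution has extra terms, each should be checked similarly using the $\mathbf Z$-grading by powers of $F$ and $V$.

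\emph{Step 3: the tensor algebra.} The algebra $R$ is concentrated in internal degrees $0$ and $1$, and $T_{R^0}(R^1)=\bigoplus_k(R^1)^{\otimes^{L}k}$. The map $T_{R^0}(R^1)\to R$ is the identity in tensor degrees $0$ and $1$ and is zero on tensor degrees $k\geq2$, since $d^2=0$ and $R$ has no internal degree $2$. After $p$-completion, every term $(R^1)^{\otimes k}$ with $k\geq2$ vanishes, because it contains $R^1\otimes^{L}_{R^0}R^1$ as a tensor factor. Hence the map is an equivalence after $p$-completion.
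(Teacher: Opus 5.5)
Your Step~3 and the reduction to $R^1/p\otimes^{L}_{R^0/p}R^1/p$ are fine. Step~2 has a genuine gap, though: you yourself name the higher Tor terms as the main obstacle, but you never prove they vanish, and the plan you sketch rests on a false presentation.

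As a right $R^0$-module, $R^1$ is not generated by $d$. The identity $F^\ell d=F^\ell\cdot d$ is a \emph{left} multiplication. The right submodule $d\cdot R^0$ only contains $dF^\ell=p^\ell F^\ell d$, so the elements $F^\ell d$ with $\ell\geq1$ are missing from it. For the same reason you cannot ``move $d$ across the tensor sign'': the tensor product is over $R^0$ and $d\notin R^0$. So the step $F^\ell d\otimes dV^j=F^\ell d\,d\otimes V^j$ is invalid.

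Your degree-zero conclusion is nevertheless true, by a uniform argument. We have $R^1=R^1\cdot V$, since $dV^j=dV^{j-1}\cdot V$ and $F^\ell d=F^{\ell+1}d\cdot V$. We also have $V\cdot R^1\subseteq pR^1$. Hence $x'V\otimes y=x'\otimes Vy$ is divisible by $p$.

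The module you actually describe, $d\cdot(R^0/p)\cong(R^0/p)/F(R^0/p)$, has no two-term resolution. Because $FV=VF=0$ in $R^0/p$, its resolution alternates between left multiplication by $F$ and by $V$ forever. ``Checking the extra terms similarly'' is therefore not yet an argument.

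The missing idea is that no resolution is needed: $R^1$ is flat as a right $R^0$-module. Put $dV^{-\ell}:=F^\ell d$. Then $x\mapsto dx$ extends to an isomorphism of right $R^0$-modules
\[
R^0[V^{-1}]\xrightarrow{\sim}R^1,\qquad V^j\mapsto dV^j\quad(j\in\mathbb Z).
\]
For instance, $dV^j\cdot F=p\,dV^{j-1}$ for all $j$, matching $V^jF=pV^{j-1}$. As a right module, $R^0[V^{-1}]$ is the filtered colimit of $R^0$ along left multiplication by $V$, hence flat. In particular $R^1/p$ does have a two-term free resolution, namely the telescope of this colimit, but not the one you describe.

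So $R^1\otimes^{L}_{R^0}R^1\simeq R^1[V^{-1}]$ is concentrated in degree zero. Left multiplication by $F$ is bijective on $R^1$, since $F\cdot dV^j=dV^{j-1}$. Left multiplication by $V$ equals $p$ times the inverse of $F$. So inverting $V$ inverts $p$, and the derived $p$-completion vanishes.

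In your mod-$p$ formulation this reads: $R^1/p$ is a flat right $R^0/p$-module on which $V$ acts by zero, so $R^1/p\otimes^{L}_{R^0/p}R^1/p\simeq(R^1/p)[V^{-1}]=0$. Your observation that $F$ is bijective on $R^1/p$ is the right input. What is missing is the flatness that makes it, together with $V=0$, suffice.
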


\begin{proof}
Recall that $R^1$ has the $W$-basis $dV^j$ for $j\geq0$ and $F^\ell d$
for $\ell\geq1$ (Remark~\ref{lem:raynaud-normal-form}).  Write
$dV^{-\ell}:=F^\ell d$, so that $dV^{-\ell}\cdot V^\ell=d$ by $FdV=d$.
Then $x\mapsto dx$ extends to an isomorphism of right $R^0$-modules
\[
 R^0[V^{-1}]\xrightarrow{\ \sim\ }R^1,\qquad V^j\longmapsto dV^j\quad(j\in\mathbb Z),
\]
where $R^0[V^{-1}]$ is the localization at the normal element $V$.  As
a right $R^0$-module, $R^0[V^{-1}]$ is the filtered colimit of $R^0$
along left multiplication by $V$, hence flat, so
\[
 R^1\otimes_{R^0}^{L}R^1\simeq R^0[V^{-1}]\otimes_{R^0}R^1\simeq R^1[V^{-1}].
\]
Left multiplication by $F$ is bijective on $R^1$, as $F\cdot dV^j=dV^{j-1}$
\cite[I~(1.1)]{IllusieRaynaud83}, and it commutes with $V$.  So $F$ and
$V$, hence $p=FV$, act invertibly on $R^1[V^{-1}]$, and its derived
$p$-completion vanishes.  The higher tensor powers of $R^1$ contain
$R^1\otimes_{R^0}^{L}R^1$ as a factor, so their completions vanish as
well, which gives the second claim.
\end{proof}

\begin{lemma}[Comparison with Ekedahl's functor]\label{thm:ladder-is-quasi-inverse}
For a derived $p$-complete $M\in D(R)$ in a finite internal
range, there is a natural equivalence $S_{\mathrm{FG}}(M)\simeq S(M)$.
\end{lemma}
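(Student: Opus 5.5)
We construct a natural comparison map and then check that it is an equivalence by reducing to free modules. The functor $S$ of Construction~\ref{con:ladder} is a dg functor on complexes of graded $R$-modules in a finite internal range. It is exact, and it commutes with the right $R$-action on free modules by transport of structure (Construction~\ref{con:right-action-G}). By Lemma~\ref{lem:free-ladder} we have $S(R(-n))\simeq G_n=S_{\mathrm{FG}}(R(-n))$, compatibly with the right $R$-actions, since these actions on $G$ were \emph{defined} through $S$.

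For any $M$, we resolve $M$ by its bar construction $\mathrm{Bar}_\bullet(R,R,M)$, a simplicial object with terms $R\otimes R^{\otimes k}\otimes M$ over $W$. Applying $S$ termwise and using that $G\otimes_R(-)$ is computed by the same bar construction gives a natural map
\[
 S_{\mathrm{FG}}(M)\simeq|G\otimes R^{\otimes\bullet}\otimes M|\longrightarrow S(M).
\]
Equivalently, the restriction of $S$ to free modules is right $R$-linear, so by the universal property of the relative tensor product (the left Kan extension from free modules) it induces a map $G\otimes_RM\to S(M)$ for every $M$ on which $S$ is defined.

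The main obstacle is that $S$ is only defined and exact on the finite internal range, and that it is not obviously colimit-preserving. A bar resolution of $M$ by free modules leaves the finite range, since $R$ itself sits in internal degrees $0,1$, but its tensor powers over $W$ spread out. Here Lemma~\ref{lem:R-tensor-algebra} is the key input. After derived $p$-completion, $R$ is the tensor algebra of the $R^0$-bimodule $R^1$ with vanishing square. So we replace the bar construction over $W$ by the relative one over $R^0$. Then $R^0$-module structure is internal degree zero, and a two-step resolution suffices:
\[
 0\to R\otimes_{R^0}R^1\otimes_{R^0}M\to R\otimes_{R^0}M\to M\to0.
\]
This is the standard resolution for modules over a tensor algebra, exact after $p$-completion. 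Its terms stay in a bounded internal range when $M$ does, so $S$ applies to them.

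It then remains to check that both $S$ and $S_{\mathrm{FG}}$ send modules of the form $R\otimes_{R^0}N(-n)$, with $N$ a $p$-complete $R^0$-module concentrated in one internal degree, to the same thing, namely $G_n\otimes_{\widehat R^0}N$. For $S_{\mathrm{FG}}$ this is formal. For $S$ we write $N$ as a colimit of free $R^0$-modules via a resolution and use that $S$ commutes with direct sums up to $p$-completion. This holds because $S$ is given weightwise by the finite formula \eqref{eq:ladder-complex}, and both sides are $p$-complete, with $M$ assumed derived $p$-complete.

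Naturality in $M$ follows from naturality of the two-step resolution. The sign and $p$-completion bookkeeping matches the Koszul-pair identification in the proof of Lemma~\ref{lem:free-ladder}.
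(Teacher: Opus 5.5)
Your proposal is correct and follows the paper's proof. You use the two-term tensor-algebra resolution over $R^0$ from Lemma~\ref{lem:R-tensor-algebra}, which keeps the terms in a bounded internal range, resolve the induced terms by free modules, and use that $S$ and $S_{\mathrm{FG}}$ agree on free modules and commute with direct sums in a fixed internal range. Your explicit construction of the comparison map via the universal property of $G\otimes_R(-)$ only makes precise what the paper leaves to the naturality of the resolution.

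One small correction: the common value on $R\widehat\otimes^{L}_{R^0}N(-n)$ is $G_n\widehat\otimes^{L}_{R^0}N$, not a tensor product over $\widehat R^0$. A derived $p$-complete $R^0$-module such as $N$ need not be $V$-adically complete, so $\widehat R^0$ need not act on it.
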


\begin{proof}
Both functors are exact on complexes in a fixed finite internal range
and agree on the free modules $R(-n)$, by Lemma~\ref{lem:free-ladder}
and Definition~\ref{def:geometric-adjunction}.  So it suffices to
resolve $M$, naturally and within a finite internal range, by modules
induced from $R^0$.  As the target is derived $p$-complete,
$S_{\mathrm{FG}}$ factors through derived $p$-completion, and we read
the tensor products below in derived $p$-complete modules.  By
Lemma~\ref{lem:R-tensor-algebra}, $R$ is then the tensor algebra of
$R^1$ over $R^0$, so $M$ has the two-term resolution of a module over a
tensor algebra,
\[
 R\widehat\otimes_{R^0}^{L}R^1
    \widehat\otimes_{R^0}^{L}M
 \longrightarrow
 R\widehat\otimes_{R^0}^{L}M
 \longrightarrow M ,
\]
whose first arrow is multiplication minus the action on $M$, and which
is natural in $M$.  If $M$ lies in internal degrees $[a,b]$, its terms
lie in $[a,b+2]$, as $R$ sits in internal degrees $0$ and $1$, and
resolving them over $R^0$, which sits in internal degree zero,
preserves this range.  The resulting induced free terms may have
infinite rank, and $S_{\mathrm{FG}}$ and $S$ agree on them, since both
commute with direct sums in a fixed internal range: $S_{\mathrm{FG}}$
as a left adjoint, and $S$ because, after derived $p$-completion,
$S(M)^m$ is a finite totalization of the entries of $M$.  Both also
commute with finite totalization, which proves the comparison.
\end{proof}

\subsection{Proof of Ekedahl's derived equivalence}
\label{sec:length-one-adjunction}
\label{sec:finite-universal-calculation}

We now prove that the adjunction of
Definition~\ref{def:geometric-adjunction} restricts to an equivalence
onto $\widehat D(R)$.  Write
$\eta:\mathrm{id}\to T_{\dRW}S_{\mathrm{FG}}$ and
$\epsilon:S_{\mathrm{FG}} T_{\dRW}\to\mathrm{id}$ for its unit and
counit.  By the triangle identities it suffices to show two things:
that $T_{\dRW}S_{\mathrm{FG}}$ is Raynaud completion, with $\eta$ the
completion unit, and that $T_{\dRW}$ is conservative.

The first is a Koszul calculation on the charts.  After recalling
Raynaud completion (Definition~\ref{def:raynaud-completion} and
Proposition~\ref{prop:Raynaud-completion}), we compute $T_{\dRW_r}$ on
the corepresenting objects (Lemma~\ref{lem:path-output}), identify $T_{\dRW_r}S_{\mathrm{FG}}$
with the finite reduction $R_r\otimes_R^{L}(-)$ together with its
operators (Proposition~\ref{prop:finite-kernel-calculation}), and pass to $r=\infty$
(Lemma~\ref{lem:limit-action-comparison}).  The second uses the closed
origin and the open substack $\Spf(W)$ of $k^{\syn}$
(Lemma~\ref{lem:N-conservative}).
Theorem~\ref{thm:presentable-equivalence} assembles the two.  We follow
Li and Yang for Raynaud completion.

\begin{definition}[{Raynaud completion; \cite[I~(1.3)]{IllusieRaynaud83} and \cite[Construction~2.24]{LiYang}}]
\label{def:raynaud-completion}
For $M\in D(R)$ put
$\widehat M:=R\varprojlim_r(R_r\otimes_R^{L}M)$.
The system $\{R_r\}$, with its left operators between levels, is a
pro-object with a commuting right $R$-action, so $\widehat M$
carries a graded $R$-action.  An object is \emph{complete} if the
natural map $M\to\widehat M$ is an equivalence, and
$\widehat D(R)\subset D(R)$ denotes
the full subcategory of complete objects.
\end{definition}

\begin{proposition}[{Raynaud completion as a localization;
\cite[Propositions~2.28--2.30]{LiYang}}]
\label{prop:Raynaud-completion}
Completion is a localization of $D(R)$ with essential image
$\widehat D(R)$.  In particular $\widehat M$ is complete for every
$M\in D(R)$, and complete objects are closed under retracts.
\end{proposition}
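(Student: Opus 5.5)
The plan is to realize completion as a Bousfield-type localization at the class of maps $f$ with $R_r\otimes_R^{L}f$ an equivalence for all $r$, and then check the three assertions in turn. Write $L_r:=R_r\otimes_R^{L}(-)$ and $c(M):=\widehat M=R\varprojlim_rL_rM$, with unit $\eta_M:M\to c(M)$. The statement amounts to showing that $c$ is idempotent: both maps $c(\eta_M)$ and $\eta_{c(M)}:c(M)\to c(c(M))$ are equivalences, and they agree. A functor with a unit satisfying this is a localization onto its essential image, and that image consists of the objects with $\eta_M$ an equivalence, that is, of $\widehat D(R)$.

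\emph{Step 1, the key computation.} For each fixed $s$, the map $L_s\eta_M:L_sM\to L_s\widehat M$ should be an equivalence. I would reduce this to a pro-statement about the finite reductions. The natural maps $R_s\otimes_R^{L}R_r\to R_s$ for $r\geq s$ should form a pro-isomorphism
\[
 \{R_s\otimes_R^{L}R_r\}_r\simeq R_s .
\]
To see this, compute $\mathrm{Tor}^R_*(R_s,R_r)$ from an explicit resolution of $R_r$ as a right $R$-module, using the normal forms of Remark~\ref{lem:raynaud-normal-form}. The relevant resolution is $R\to R\oplus R\to R$, built on the generators $V^r$ and $dV^r$ with the relation coming from $d^2=0$ and $dV^r$. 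The higher Tor terms should be killed by transition maps, which multiply by $p$ or $V$, in the pro-system. In addition, $R_s$ is pseudo-coherent and $p^s$-torsion in each internal degree, so $L_s$ commutes with $R\varprojlim_r$ of uniformly bounded-below systems; this follows by writing $R_s$ via a resolution by finite free modules in each internal degree. Combining the two facts gives
\[
 L_s\widehat M\simeq R\varprojlim_rL_sL_rM\simeq L_sM .
\]

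\emph{Step 2.} With Step 1 in hand, $c(\eta_M)$ is $R\varprojlim_sL_s\eta_M$, which is an equivalence. The map $\eta_{c(M)}$ is $R\varprojlim_s$ of the same maps $L_s\eta_M$, up to the canonical identification. Checking that the two maps coincide is a naturality argument in the bifunctor $(r,s)\mapsto L_sL_rM$, together with the usual diagonal/cofinality trick.

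\emph{Step 3, the consequences.} Idempotence of $c$ gives that $\widehat M$ is complete for every $M$. For closure under retracts, if $N$ is a retract of a complete $M$, then $\eta_N$ is a retract of the equivalence $\eta_M$, by naturality of $\eta$, and is therefore an equivalence.

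The main obstacle is Step 1: the pro-vanishing of $\mathrm{Tor}_{>0}^R(R_s,R_r)$ and the commutation of $L_s$ with the inverse limit. Here $R$ is noncommutative, only semilinear over $W$, and non-noetherian in the relevant sense. I would first try to prove the pro-isomorphism internal degree by internal degree, exploiting that $R$ lives in degrees $0,1$ and that $R_r^0$ and $R_r^1$ have the explicit $W_{r-j}$-bases above. Following Li--Yang, as cited, I would treat completeness as derived $V$-adic completeness twisted by $dV$.
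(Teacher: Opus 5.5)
The paper gives no argument for this proposition; it imports it from Li--Yang. Your skeleton is the right one: show that $L_s\eta_M$ is an equivalence for all $s$ and all $M$, deduce that $c(\eta_M)$ and $\eta_{c(M)}$ are both equivalences, and apply the criterion for localizations. Step~3 is fine. But Steps~1 and~2 have a genuine gap as formulated.

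The objects $R_s\otimes_R^{L}R_r$ and $L_sL_rM$ do not exist. $R_r$ is a right $R$-module carrying only a left $W_r[d]/d^2$-action, so $L_rM$ is just a $W_r[d]/d^2$-complex, and $L_s$ cannot be applied to it. The $R$-action on $\widehat M$ comes only from the level-shifting operators $F\colon R_{r+1}\to R_r$, $V\colon R_r\to R_{r+1}$ and $d$ on the tower, and this is exactly what the computation of $L_s\widehat M$ must keep track of.

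The resolution you describe is also not the right one. \eqref{eq:Raynaud-finite-resolution} starts with $R(-1)$, and its relation is $dV^s\cdot F^s=V^s\cdot F^sd=p^sd$, not $d^2=0$. Since it has length two with finite free terms, $L_s$ commutes with all limits. Your bounded-below hedge is therefore unnecessary, and taken literally it would leave the unbounded objects of $D(R)$ uncovered.

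\emph{Repairing Step~1.} By \eqref{eq:Raynaud-finite-resolution}, $L_s\widehat M$ is a finite totalization of copies of $\widehat M$ joined by $F^s$, $F^sd$, $dV^s$ and $V^s$. This commutes with the limit defining $\widehat M$, giving
\[
 L_s\widehat M\simeq R\varprojlim_r\bigl(\Lambda_{s,r}\otimes_R^{L}M\bigr),\qquad
 \Lambda_{s,r}:=\bigl[R_{r+s}(-1)\xrightarrow{(F^s,\,-F^sd)}R_r(-1)\oplus R_r\xrightarrow{dV^s+V^s}R_{r+s}\bigr],
\]
a complex of right $R$-modules whose differentials are left multiplications. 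What must be proved is that the augmentation $\{\Lambda_{s,r}\}_r\to R_s$ is a pro-isomorphism. With the normal forms this is a short computation, which your proposal only anticipates. The complex splits along the basis vectors $V^j$, $dV^j$, and one finds $H^0=R_s$ and $H^{-1}=0$. The group $H^{-2}$ is a sum of groups $p^rW_{r+k}\subset W_{r+k}$ with $1\le k\le s$, which the $s$-fold transition map kills. The map $L_s\eta_M$ is induced by the termwise projections from the resolution of $R_s$ to $\Lambda_{s,r}$, compatibly with the augmentations, so it is an equivalence for every $M$.

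\emph{Repairing Step~2.} The symmetry in $(r,s)$ behind the diagonal argument for $p$-adic completion is not available here, since there is no bifunctor $(r,s)\mapsto L_sL_rM$. Argue directly instead. The unit $N\to L_sN$ is the inclusion of the degree-zero term of the resolution. For $N=\widehat M$ it is therefore induced by $R_{r+s}\to\Lambda_{s,r}$, whose composite with the augmentation is the projection $R_{r+s}\to R_s$. Hence, under $L_s\widehat M\simeq L_sM$, the components of $\eta_{\widehat M}$ are the projections $\widehat M\to L_sM$, and $\eta_{\widehat M}$ is the identity of $R\varprojlim_sL_sM$. Compatibility in $s$ comes from the map of resolutions over $R_{s+1}\to R_s$ given by $x\mapsto px$, $(y,z)\mapsto(Vy,Vz)$ and $1$. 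With these two changes your argument proves the proposition.
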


By \cite[I, Proposition~(3.2)]{IllusieRaynaud83}, the right $R$-module $R_r$ has
the resolution
\begin{equation}\label{eq:Raynaud-finite-resolution}
 R(-1)\xrightarrow{\ x\mapsto(F^rx,-F^rdx)\ }
 R(-1)\oplus R
 \xrightarrow{\ (y,z)\mapsto dV^ry+V^rz\ }R
\end{equation}
in complex degrees $-2,-1,0$.  In particular $R_r\otimes_R^{L}(-)$ commutes with
limits.

Both $T_{\dRW_r}$ and $S_{\mathrm{FG}}$ preserve colimits, so
$T_{\dRW_r}S_{\mathrm{FG}}$ is determined by its values
$T_{\dRW_r}(G_n)$ on the free modules $R(-n)$ together with right
multiplication.  We first compute these values.

\begin{lemma}[The finite realizations of the corepresenting objects]
\label{lem:path-output}
There is an equivalence in $D(\mathrm{GrMod}_{W_r[d]/d^2})$
\[
 T_{\dRW_r}(G_n)\simeq R_r(-n),
\]
compatible with the right action of $R$.  In particular
$T_{\dRW_r}(G_n)$ is concentrated in complex degree zero.
\end{lemma}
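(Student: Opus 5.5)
We compute $T_{\dRW_r}(G_n)$ directly on the underlying gauge of $G_n$ from Proposition~\ref{prop:universal-G}. By the finite-level twist formula (Lemma~\ref{lem:BK-finite}) and the equivalence $G_n\simeq G_0\{-n\}[-n]$ inherited through Lemma~\ref{lem:free-ladder} and Lemma~\ref{lem:S-BK}, it suffices to treat $n=0$; the twist then gives $R_r\{-n\}[-n]=R_r(-n)$. By Lemma~\ref{lem:T-r-Hodge}, $T_{\dRW_r}(G_0)$ is the regraded pushforward $(g'_r)_*e_r^*G_0$, computed chartwise. On the $i$-th chart we have the Koszul algebra $B_i/^{L}(p^iu_i,p^{r-1-i}t_i)$, and $f_r^*G_0$ restricted to this chart is the $p$-completed sum over $b\geq0$ of $\Ksz_A(p^bt)(-1)[-1]$ with coefficients twisted by $\sigma^i$. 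The descent datum shifts $b$ by one between adjacent charts. So the first step is to tensor, on each chart and for each $b$, the Koszul complex $\Ksz(p^bt_i)$ with $\Ksz(p^iu_i,p^{r-1-i}t_i)$, compute the cohomology explicitly as a graded $W$-module, and then assemble the charts by the \v{C}ech formula \eqref{eq:g-pushforward}, restricted to the origin.

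Concretely, in each weight the result is an iterated fibre product over the overlaps. The overlaps contribute $W/p^b$-type terms from inverting $t$ or $u$. The expectation is that, after the fibre product, the summand $b$ produces exactly the $W_r$-span of $F^b$ in internal degree $0$ and of $F^bd$ in internal degree $1$. Meanwhile the overlaps, where a Koszul pair on one side matches a contractible or shifted pair on the other, produce the terms $W_{r-j}V^j$ and $W_{r-j}dV^j$ for $1\le j<r$. This recovers the normal form of $R_r$ in Remark~\ref{lem:raynaud-normal-form}. I would organise the computation by the index $j=r-1-i$ on charts and check the truncation $W_{r-j}$ by using that on the overlap of charts $j-1$ and $j$ the local ring is $W/^{L}(p^j,p^{r-j})$ (Remark~\ref{rem:hodge-geometry}). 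The cohomology in complex degree zero only follows because, in the relevant weights, each Koszul complex has its higher cohomology cancelled by the fibre-product connecting maps. Showing this concentration in degree zero is the step I expect to be the main obstacle. I would first try to make it explicit by a filtration on $b$, reducing to the case of the single pair $b$, where everything is a two- or three-term complex over $W[u,t]/(ut-p)$.

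For compatibility with the right $R$-action, I would use the explicit formulas of Construction~\ref{con:right-action-G}. Right $F$ shifts $e_b\mapsto e_{b+1}$ and $h_b\mapsto p h_{b+1}$, right $V$ shifts down, and right $d$ is the boundary $\eta$ on $\Ksz(t)$. Under the identification above, these become right multiplication by $F$, $V$, $d$ on the basis $F^b,F^bd,V^j,dV^j$ of $R_r$, using the Raynaud relations and the local formulas of Remark~\ref{rem:local-raynaud-relations}. Since the action on $\{G_n\}$ is transported from the dg functor $S$, coherence follows once the identification is realised at the level of complexes. A cleaner alternative is to realise the identification as $T_{\dRW_r}S(R(-n))$ computed via the ladder complex \eqref{eq:ladder-complex}, functorially in the free module. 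Right multiplication is then visibly $R_r\otimes_R(\cdot x)$, and the chart computation is used only to check that the natural map $R_r(-n)\to T_{\dRW_r}(G_n)$ is an equivalence.
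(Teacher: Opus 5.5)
There is a genuine gap: you never carry out the computation the lemma asserts. In your main strand, both the identification with the normal form of $R_r$ and the concentration in complex degree zero are stated only as expectations. The reduction you propose for the second of these fails.

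The genuine $r$-gauge $f_r^*G_0$ does not split according to $b$. By Proposition~\ref{prop:universal-G}, the gluing $\delta_i$ identifies the de Rham end $W/p^b$ of the summand $b$ on chart $i$ with the Hodge--Tate end of the summand $b-1$ on chart $i+1$. So a single Koszul pair on a single chart is not a direct summand, and treating it alone leaves an unglued end that changes the answer. What does split off are the diagonals $b+i=\mathrm{const}$. Even there, the chart and overlap terms, such as $\Ksz(p^bt_i)\otimes\Ksz(p^iu_i,p^{r-1-i}t_i)$ and $W/p^b\otimes^{L}W/^{L}(p^{i+1},p^{r-1-i})$, have cohomology in several degrees. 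That cohomology cancels only inside the \v{C}ech complex, not after taking cohomology chart by chart.

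Your account of the terms $W_{r-j}V^j$ is also off. The contractible pairs were already projected away in Proposition~\ref{prop:universal-G}, and $\pi_0=W_{\min(j,r-j)}$ of the open points of $k^{\mathrm{Hodge}_r}$ is not $W_{r-j}$. These terms come from the diagonals that end in a summand $b=0$ before reaching the last chart.

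Your ``cleaner alternative'' is the paper's route, but it lacks the observation that makes it work, and you still defer the equivalence to the unfinished chart computation. Here is that observation. For any $M$ in a finite internal range, the weight-$m$ \v{C}ech fibre product for $g_{r,*}f_r^*S(M)$ consists of tuples $(x_0,\dots,x_{r-1})$ with $t^\infty x_i=u^\infty x_{i+1}$. On the entry of internal degree $j$, one of these two maps is the identity: $u^\infty=1$ and $t^\infty=p^{m-j-1}V$ for $j<m$, and $t^\infty=1$ and $u^\infty=p^{j-m}F$ for $j\geq m$. So the \v{C}ech map is surjective termwise, and the fibre product collapses to a single row. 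That row is determined by $x_0$ below the turn and by $x_{r-1}$ from the turn on. It is the ladder of Construction~\ref{con:ladder} with $p,F,V$ replaced by $p^r,F^r,V^r$, and turn differential $dV^r$.

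Now take $M=R(-n)$. Dividing by $u$ and projecting onto $M^i/(V^rM^i+dV^rM^{i-1})$ gives a map of $t$-filtered objects to the Hodge filtration of $R_r\otimes_RM$, natural in $M$. Since $t$ is $1$ above the turn and the injective map $V^r$ at it, the graded pieces are the two-term complexes $\sigma^r_*M^{m-1}/F^rM^{m-1}\xrightarrow{dV^r}M^m/V^rM^m$. By the exactness of \eqref{eq:Raynaud-finite-resolution}, their only cohomology is $M^m/(V^rM^m+dV^rM^{m-1})$, in degree zero. Proposition~\ref{prop:Koszul-Rees-recovery} then gives the lemma. Naturality in $M$ gives compatibility with right multiplication, so no check against Construction~\ref{con:right-action-G} is needed.

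Finally, the map $R_r(-n)\to T_{\dRW_r}(G_n)$ you invoke is never constructed. The natural candidate, $x\mapsto x_*c$ of Remark~\ref{rem:canonical-class}, is only known to factor through $R_r(-n)$ once the lemma is proved.
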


\begin{proof}
Let $M$ be a complex of graded $R$-modules in a finite internal range.
We first compute the $r$-gauge $E:=g_{r,*}f_r^*S(M)$, and then its fibre
over $u=0$, whose associated cochain complex is $T_{\dRW_r}(S(M))$
(Definition~\ref{def:finite-extraction}).

By Construction~\ref{con:f-pullback}, $f_r^*S(M)$ is the genuine
$r$-gauge with $r$ rows, each a copy of $S(M)$, in which the de Rham end
$S(M)^{-\infty}$ of each row is glued to the Hodge--Tate end
$S(M)^{\infty}$ of the next.  By the \v{C}ech formula
\eqref{eq:g-pushforward}, $E^m$ therefore consists of the tuples
$(x_0,\dots,x_{r-1})$ in $S(M)^m$ with $t^\infty x_i=u^\infty x_{i+1}$,
where $t^\infty$ and $u^\infty$ are the composites of $t$ and $u$ from
weight $m$ to the two ends.  On the entry of internal degree $j$ they
are (Construction~\ref{con:ladder})
\[
 t^\infty=p^{m-j-1}V,\ \ u^\infty=1\quad(j<m),\qquad
 t^\infty=1,\ \ u^\infty=p^{j-m}F\quad(j\geq m).
\]
On each entry one of the two is the identity, so the maps in the
\v{C}ech formula are surjective in each degree, and the fibre product
is computed by these tuples.  Below the turn a tuple is thus determined by $x_0$, as
$x_{i+1}=p^{m-j-1}Vx_i$, and from the turn on by $x_{r-1}$, as
$x_i=p^{j-m}Fx_{i+1}$.  So $E^m$ is a single row, made of the first copy
below the turn and of the last copy from the turn on, and its
differential at the turn is $dV$ applied to $x_{r-1}=V^{r-1}x_0$, that
is, $dV^r$.  Since $u$ and $t$ act on the $i$-th row by $p^iu_i$ and
$p^{r-1-i}t_i$ (Proposition~\ref{prop:contraction-map}), they are $1$
and $p^r$ below the turn, $p^r$ and $1$ above it, and $F^r$ and $V^r$
at the turn, where the gluings contribute $F^{r-1}$ and $V^{r-1}$.  Up
to Frobenius twists of coefficients, $E$ is therefore
\[
\begin{tikzcd}[column sep=2.4em, row sep=2.6em, ampersand replacement=\&]
 E^{m+1} \arrow[d, shift right=0.7ex, "t"'] \& \cdots \arrow[r]
 \& \sigma^r_*M^{m-1} \arrow[r, "d"]
   \arrow[d, shift right=0.7ex, "p^r"']
 \& \sigma^r_*M^{m} \arrow[r, "dV^r"]
   \arrow[d, shift right=0.7ex, "V^r"']
 \& M^{m+1} \arrow[r, "d"]
   \arrow[d, shift right=0.7ex, "1"']
 \& \cdots\\
 E^{m} \arrow[u, shift right=0.7ex, "u"'] \& \cdots \arrow[r]
 \& \sigma^r_*M^{m-1} \arrow[r, "dV^r"']
   \arrow[u, shift right=0.7ex, "1"']
 \& M^{m} \arrow[r, "d"']
   \arrow[u, shift right=0.7ex, "F^r"']
 \& M^{m+1} \arrow[r]
   \arrow[u, shift right=0.7ex, "p^r"']
 \& \cdots
\end{tikzcd}
\]
that is, the diagram of Construction~\ref{con:ladder} with $p,F,V$
replaced by $p^r,F^r,V^r$, as befits $ut=p^r$ on $k^{\Nyg_r}$.

Now let $M=R(-n)$, so that $S(M)=G_n$ (Lemma~\ref{lem:free-ladder}).
Then $F^r$, $V^r$ and $p^r$ are injective on $M$, and
$R_r\otimes_RM=M/(V^rM+dV^rM)$.  By
Proposition~\ref{prop:Koszul-Rees-recovery}, it suffices to identify
$E/u$, as a $t$-filtered object, with the Hodge filtration of
$R_r\otimes_RM$ by internal degree, compatibly with right
multiplication.  Dividing by $u$ kills the entries below the turn and
reduces those above it modulo $p^r$, so the $m$-th filtered piece of
$E/u$ is
\[
 \mathbf s\Bigl[\sigma^r_*M^{m-1}/F^rM^{m-1}\xrightarrow{\ dV^r\ }
 M^m/p^rM^m\xrightarrow{\ d\ }M^{m+1}/p^rM^{m+1}
 \xrightarrow{\ d\ }\cdots\Bigr],
\]
with $t$ equal to $V^r$ at the turn and to $1$ above it.  The
projections onto $M^i/(V^rM^i+dV^rM^{i-1})$ kill $V^r$ and $dV^r$, so
they define a map of filtered objects to the Hodge filtration of
$R_r\otimes_RM$, natural in $M$.  As both filtrations vanish in high
weights, it suffices to check this map on graded pieces.  Here $t$ is
injective, since at the turn it is
$V^r:\sigma^r_*M^m/F^rM^m\to M^m/p^rM^m$, which is injective as
$p^r=V^rF^r$ and $V^r$ is injective.  So dividing by $t$ is the naive
quotient, which kills the entries above the turn and leaves
\[
 \sigma^r_*M^{m-1}/F^rM^{m-1}\xrightarrow{\ dV^r\ }M^m/V^rM^m
\]
in internal degrees $m-1$ and $m$.  By the exactness of
\eqref{eq:Raynaud-finite-resolution}, its only cohomology is
$M^m/(V^rM^m+dV^rM^{m-1})$, in degree zero, which is the $m$-th graded
piece of the Hodge filtration.  Here dividing by $t$ removes $V^r$, and
$dV^r$ is the differential that remains.
\end{proof}

\begin{remark}[The canonical class]
\label{rem:canonical-class}
We normalize the equivalence of Lemma~\ref{lem:path-output} by the
canonical maps $G_{r,n}\to G_n$.  Let $c\in T_{\dRW_r}(G_n)^n$ be the
class of this map (Definition~\ref{def:representing-objects}).  As it
includes a finite stage into the colimit, it projects each chart onto
its $u$-generator (proof of Proposition~\ref{prop:universal-G}).  Under
the orientation of Lemma~\ref{lem:e-finite-oriented}, this projection
becomes the unit of $\Ksz(u)$, and the identity of the two copies of
$\Ksz(t)$ becomes their diagonal class, which on the last chart is the
generator $1$ of $R(-n)$ at the turn, up to sign
(Lemma~\ref{lem:free-ladder}).  The sign does not depend on $r$, as $c$
and $1$ are both compatible with restriction, and we fix the sign of the
equivalence of Proposition~\ref{thm:oriented-finite-extraction} in
weight $n$ so that it is $+1$.  As the equivalence of
Lemma~\ref{lem:path-output} respects $d$ and the right action of $R$,
the map $x\mapsto x_*c$, for $x\in R^k$ acting as $G_{n+k}\to G_n$, is
then the projection $R(-n)\to R_r(-n)$ followed by the inverse of that
equivalence.  So it is an isomorphism $R_r(-n)\simeq T_{\dRW_r}(G_n)$
of graded $W_r[d]/d^2$-modules and of right $R$-modules under which $c$
corresponds to $1$, and the right action of $d$ corepresents the
operator $d$.  We use this isomorphism from now on.
\end{remark}

\begin{proposition}[{The composite \texorpdfstring{$T_{\dRW_r}S_{\mathrm{FG}}$}{T\_dRW\_r S\_FG}}]
\label{prop:finite-kernel-calculation}
There is a natural equivalence
\[
 T_{\dRW_r}S_{\mathrm{FG}}
 \simeq R_r\otimes_R^{L}(-):D(R)\longrightarrow D(\mathrm{GrMod}_{W_r[d]/d^2}).
\]
It respects the transition maps and the operators $F,V,d$ between
finite levels.  Under this equivalence the unit of
Definition~\ref{def:geometric-adjunction}, projected to level $r$, is
$M\to R_r\otimes_R^{L}M$.
\end{proposition}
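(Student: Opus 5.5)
Both functors $T_{\dRW_r}S_{\mathrm{FG}}$ and $R_r\otimes_R^{L}(-)$ are exact and preserve colimits: the first because $S_{\mathrm{FG}}$ is a left adjoint and $T_{\dRW_r}$ preserves colimits (Lemma~\ref{lem:T-r-Hodge}), the second because it is a tensor product. Both take values in $D(\mathrm{GrMod}_{W_r[d]/d^2})$. The plan is to identify them as colimit-preserving functors out of $D(R)$. Such a functor is determined by a right $R$-module object in the target, namely its values on the generators $R(-n)$ together with the right multiplications $R(-n-k)\to R(-n)$. The target is stable and presentable, and $D(R)$ is generated under colimits by the $R(-n)$, so this is the graded Schwede--Shipley/Morita description of \cite[\S4.8.4]{LurieHA}.

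\emph{The kernel.} Since $S_{\mathrm{FG}}(R(-n))=G_n$, the kernel of $T_{\dRW_r}S_{\mathrm{FG}}$ is the family $T_{\dRW_r}(G_n)$ with the right $R$-action induced by Construction~\ref{con:right-action-G}. The kernel of $R_r\otimes_R^{L}(-)$ is the bimodule $R_r$, that is, the family $R_r(-n)$ with right multiplication. Lemma~\ref{lem:path-output}, normalized as in Remark~\ref{rem:canonical-class}, gives isomorphisms $R_r(-n)\simeq T_{\dRW_r}(G_n)$ that respect $d$ and the right $R$-action.

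\emph{The main obstacle} is to upgrade these levelwise isomorphisms to an equivalence of right $R$-module objects with all coherences. I would handle it by exploiting discreteness. By Lemma~\ref{lem:path-output}, each $T_{\dRW_r}(G_n)$ is concentrated in complex degree zero. The right action is transported from the dg functor $S$, and $R$ itself is discrete. The relevant mapping spaces of module objects between discrete modules over a discrete ring are therefore discrete, so a module structure on such a family is just an action on the heart. Its coherences are then automatic, and a map of families is a module map as soon as it commutes with the generators $F$, $V$, $d$ and $W$ on the nose. That is exactly the content of Remark~\ref{rem:canonical-class}, where $x\mapsto x_*c$ is right $R$-linear and sends $c$ to $1$. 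This yields the natural equivalence $T_{\dRW_r}S_{\mathrm{FG}}\simeq R_r\otimes_R^{L}(-)$.

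\emph{Compatibilities.} Under the Yoneda correspondence, the transition maps $\pi_r$ and the operators $F_r$, $V_r$ between levels are natural transformations of functors, so they are given by maps between kernels. On the $G_n$ they correspond, by Proposition~\ref{prop:geometric-V-local} and Remark~\ref{rem:local-raynaud-relations}, to the left operators between the $R_r$: projection, left $F$ and left $V$. The same discreteness argument reduces their agreement to a check on the generators $1\in R_r(-n)$, which uses that the canonical class $c$ is compatible with restriction. Naturality of $d$ is already built into the equivalence.

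\emph{The unit.} The unit $R(-n)\to T_{\dRW}S_{\mathrm{FG}}R(-n)$, projected to level $r$, sends $1$ to the class of the canonical map $G_{r,n}\to G_n$, which is $c$. Under the normalization above, $c$ corresponds to $1\in R_r(-n)$, so on the free modules the projected unit is the quotient map $R(-n)\to R_r(-n)$. Both the projected unit and the quotient map $M\to R_r\otimes_R^{L}M$ are natural transformations between colimit-preserving functors. They agree on the free modules, compatibly with the right action, so they agree for all $M$.
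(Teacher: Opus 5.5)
Your architecture matches the paper's. Both functors preserve colimits and are determined by their kernels on the $R(-n)$ with right multiplication. Lemma~\ref{lem:path-output}, normalized as in Remark~\ref{rem:canonical-class}, matches the kernels. Concentration in complex degree zero makes all coherences automatic. Natural transformations reduce to their value on the generator $1=c$, and the unit is handled through $c\leftrightarrow 1$.

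The gap is in the one non-formal part of the statement, the compatibility with the operators between levels. You reduce it correctly to computing $\pi(1)$, $F(1)$ and $V(1)$, but you carry this out only for restriction. For Frobenius and Verschiebung you assert that they ``correspond, by Proposition~\ref{prop:geometric-V-local} and Remark~\ref{rem:local-raynaud-relations}, to left $F$ and left $V$'', which is exactly what has to be proved. Proposition~\ref{prop:geometric-V-local} describes $F_r$ and $V_r$ as maps of Koszul complexes on the charts of $e_{r,*}\cO$. Extracting an element of $R_r$ from it would mean transporting those formulas through the orientation and the identification of Remark~\ref{rem:canonical-class}. For $V$ this is harder still: on the extra chart $k^{\Nyg}_r$ of level $r+1$ the map $V_r$ is zero, and $V_rF_r=p$ holds only through the overlap homotopy \eqref{eq:V-overlap-homotopy}.

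The paper fills this with two short arguments, which you should supply.
\begin{itemize}
\item For $F$, use the row model from the proof of Lemma~\ref{lem:path-output}. A class at the turn is represented on the last chart, and $\mathsf F_r$ forgets the last chart. The gluing at the turn replaces the representative $x$ on the last chart by $Fx$ on the chart before it, so $F(1)=F$.
\item For $V$, do not compute the transpose chartwise. Since $V$ is natural, it commutes with the right $R$-action, so
\[
V(1)\cdot F=V(1\cdot F)=V(F(1))=p
\]
by $V_rF_r=p$ (Remark~\ref{rem:local-raynaud-relations}). The normal forms of Remark~\ref{lem:raynaud-normal-form} show that $V$ is the only $z\in R^0_{r+1}$ with $zF=p$, hence $V(1)=V$.
\end{itemize}
With these two identities and $\pi(1)=1$, your generator argument completes the proof.
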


\begin{proof}
Both sides preserve colimits, so they are determined by their values
on the free modules $R(-n)$ together with the right action of $R$
\cite[Theorem~4.8.4.1]{LurieHA}.  On these they agree as right
$R$-modules by Lemma~\ref{lem:path-output}, through the projected unit
$x\mapsto x_*c$ (Remark~\ref{rem:canonical-class}).  These values are concentrated in complex degree zero,
so the $\infty$-category they span is the nerve of an ordinary category
\cite[Propositions~2.3.4.5 and~2.3.4.18]{LurieHTT}, and a diagram of
them is determined by the underlying maps of modules.

It remains to compare the operators between levels: restriction,
Frobenius and its transpose $V$, as Lemma~\ref{lem:path-output} already
respects $d$.  On
$T_{\dRW_r}(G_n)$ they are natural in the $F$-gauge, so they commute
with the right action of $R$, which is postcomposition.  On $R_r(-n)$
they are left multiplication by $1,F,V$, which is right $R$-linear.
Since $R_r(-n)$ is generated by $1=c$ as a right $R$-module, it
suffices to check
\[
 \pi(1)=1,\qquad F(1)=F,\qquad V(1)=V.
\]
In the proof of Lemma~\ref{lem:path-output}, a class at the turn is
represented on the last chart.  Restriction forgets the first chart, so
$\pi(1)=1$.  Frobenius forgets the last chart, and the gluing replaces
the representative $x$ on the last chart by $Fx$ on the one before, so
$F(1)=F$.  Finally,
$V(1)\cdot F=V(F(1))=p$ by Remark~\ref{rem:local-raynaud-relations},
and $V$ is the only element $z$ of $R_{r+1}^0$ with $zF=p$, so
$V(1)=V$.  Extending from free modules gives the equivalence, and
the statement on the unit holds because $c$ corresponds to $1$
(Remark~\ref{rem:canonical-class}).
\end{proof}

We now pass to $r=\infty$.

\begin{lemma}[{The composite \texorpdfstring{$T_{\dRW}S_{\mathrm{FG}}$}{T\_dRW S\_FG}}]\label{lem:limit-action-comparison}
For every $F$-gauge $E$, the identification
$T_{\dRW}(E)^n\simeq R\varprojlim_rT_{\dRW_r}(E)^n$ of
Definition~\ref{def:representing-objects} carries the right action of
$R$ on $\{G_n\}_n$ to the operators $F,V,d$ of
Construction~\ref{def:intrinsic-geometric-V}.  Consequently there is an
equivalence of functors
\begin{equation}\label{eq:NL-completion}
 T_{\dRW}S_{\mathrm{FG}}\simeq\widehat{(-)}
\end{equation}
under which the unit $\eta$ is the completion unit.
\end{lemma}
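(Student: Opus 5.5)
The plan is to prove the first assertion by reduction to the corepresenting objects, and then deduce \eqref{eq:NL-completion} from Proposition~\ref{prop:finite-kernel-calculation} by passing to the limit over $r$.

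\emph{Step 1: the right action on the finite stages.} By the Yoneda lemma and Proposition~\ref{thm:oriented-finite-extraction}, the operators $F,V,d$ on $\{T_{\dRW_r}(E)\}_r$ are precomposition with maps between the objects $G_{r,n}$. Passing to colimits along the restriction transition maps gives maps between the $G_n$. These are well defined because of Lemma~\ref{lem:restriction-operator-compatibility}: the operators commute with $\pi$ compatibly with composites of restrictions. The right action of $x\in R^k$ from Construction~\ref{con:right-action-G} is likewise a map $G_{n+k}\to G_n$.

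To compare the two families of maps, I evaluate them on the universal element. Under the Yoneda lemma a map $G_{n+k}\to G_n$ is determined by its restriction to each finite stage $G_{r,n+k}$, that is, by an element of $T_{\dRW_r}(G_n)^{n+k}$. By Lemma~\ref{lem:path-output} this group is $R_r(-n)^{n+k}$, with the normalization of Remark~\ref{rem:canonical-class}. There the right action of $x$ sends the canonical class $c$ to $x_*c=x$. Proposition~\ref{prop:finite-kernel-calculation} shows that the geometric operators send $c=1$ to $F$, $V$ and $d$ in $R_r$. So the two families of maps agree at every finite level. The $R\varprojlim^1$ issues vanish because each $T_{\dRW_r}(G_n)$ is static and the transitions $R_{r+1}\to R_r$ are surjective. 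Hence the maps agree on $G_n=\colim_rG_{r,n}$.

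\emph{Step 2: coherence.} The statement concerns coherent actions, not merely homotopy classes. Here I use that the relevant mapping spaces are discrete. Maps $\colim_rG_{r,n+k}\to G_n$ are computed by $R\varprojlim_rR_r(-n)^{n+k}=\widehat R^{k}$ shifted appropriately, which is concentrated in degree zero. So the full subcategory spanned by the $G_n$ is (the nerve of) an ordinary category, by the same argument as in the proof of Proposition~\ref{prop:finite-kernel-calculation}. An action is then determined by the underlying maps, and Step~1 suffices.

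\emph{Step 3: identifying the composite.} Both $T_{\dRW}S_{\mathrm{FG}}$ and $R\varprojlim_r T_{\dRW_r}S_{\mathrm{FG}}$ are computed weightwise via $\RHom(G_n,-)=R\varprojlim_r\RHom(G_{r,n},-)$, so
\[
 T_{\dRW}S_{\mathrm{FG}}(M)\simeq R\varprojlim_rT_{\dRW_r}S_{\mathrm{FG}}(M)\simeq R\varprojlim_r(R_r\otimes_R^{L}M)=\widehat M,
\]
using Proposition~\ref{prop:finite-kernel-calculation}. That proposition's compatibility with the transition maps makes the limits match. Step~1 shows that the $R$-module structures agree: the left structure on $\widehat M$ comes from the operators on $\{R_r\}$ (Definition~\ref{def:raynaud-completion}), and the right action on $G$ gives the structure on $T_{\dRW}$. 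Finally, the unit projects at each level to $M\to R_r\otimes_R^{L}M$ by Proposition~\ref{prop:finite-kernel-calculation}, so in the limit it is the completion unit.

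The main obstacle is Step~2. Making rigorous that matching on the generator $c$ determines the full coherent identification of $R$-module structures requires the discreteness of $\RHom(G_{n+k},G_n)$ and control of $\varprojlim^1$ along $R_{r+1}\to R_r$.
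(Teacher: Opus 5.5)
Your proposal is correct and follows essentially the same route as the paper. Yoneda turns $F$, $V$, $d$ and the right $R$-action into maps between the $G_n$; discreteness of $\Map(G_m,G_n)\simeq\varprojlim_r\Map(G_{r,m},G_n)$ (Lemma~\ref{lem:path-output}) reduces all coherences to the images of $\mathrm{id}_{G_n}$, which agree at each finite level by Remark~\ref{rem:canonical-class} and Proposition~\ref{prop:finite-kernel-calculation}; and the inverse limit of that proposition, with its statement on the unit, gives $T_{\dRW}S_{\mathrm{FG}}\simeq\widehat{(-)}$, exactly as in the paper.
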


\begin{proof}
By Lemma~\ref{lem:restriction-operator-compatibility}, $F$, $V$ and $d$
are natural transformations of the tower $\{T_{\dRW_r}\}_r$.  By the
Yoneda lemma they correspond to
$\Zp$-linear maps between the $G_n$, as do the elements of $R$.  These
mapping spaces are discrete, since
\[
 \Map_{\Dqc(k^{\syn})}(G_m,G_n)
 \simeq\varprojlim_r\Map_{\Dqc(k^{\syn})}(G_{r,m},G_n)
\]
and each term on the right is discrete by Lemma~\ref{lem:path-output}.
So these objects span the nerve of an ordinary category
\cite[Propositions~2.3.4.5 and~2.3.4.18]{LurieHTT}, and each such map,
and every coherence among such maps and their finite sums, is
determined by the image of $\mathrm{id}_{G_n}$, which is $1$ at
every finite level by Remark~\ref{rem:canonical-class}.  These images agree
by Proposition~\ref{prop:finite-kernel-calculation}, which proves the
first assertion.  The inverse limit of
Proposition~\ref{prop:finite-kernel-calculation}, including its
statement on the unit, gives \eqref{eq:NL-completion}.
\end{proof}

\begin{lemma}[Conservativity]\label{lem:N-conservative}
The functor $T_{\dRW}:\Dqc(k^{\syn})\to D(R)$ is conservative.
\end{lemma}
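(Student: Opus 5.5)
We will show that if $T_{\dRW}(E)\simeq0$ then $E\simeq0$; since $T_{\dRW}$ is exact, this gives conservativity. The plan uses the stratification of $k^{\syn}$ into the closed origin $B\mathbb G_{m,k}$ and the open substack $\Spf(W)$ (Remark~\ref{rem:two-points-of-syn}). Reducing modulo the open part, the vanishing of $E$ amounts to two facts: $E_{\mathrm{Hodge}}\simeq0$, and $j_\Delta^*E\simeq0$.

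\emph{Step 1: the Hodge fibre.} Suppose $T_{\dRW}(E)=0$. By Definition~\ref{def:representing-objects} we have $R\varprojlim_rT_{\dRW_r}(E)^n=0$ for all $n$. We want to deduce $T_{\dRW_1}(E)=0$, because Lemma~\ref{lem:T-r-Hodge} gives $T_{\dRW_1}(E)\simeq E_{\mathrm{Hodge}}[\bullet]$.

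The route is to show $T_{\dRW_1}\simeq R_1\otimes^L_RT_{\dRW}$ on all of $\Dqc(k^{\syn})$, or at least that $T_{\dRW_1}(E)$ is a functor of $T_{\dRW}(E)$. On corepresenting objects, $G_{1,n}$ should be a cofibre of maps among the $G_m$, namely $S_{\mathrm{FG}}$ of the resolution \eqref{eq:Raynaud-finite-resolution} with $r=1$. Concretely, $S_{\mathrm{FG}}(R_1(-n))\simeq G_{1,n}$. To check this, compare the underlying gauges from Proposition~\ref{prop:universal-G}, using the explicit actions of $V$ and $dV$ in Construction~\ref{con:right-action-G}. The cofibre of $(dV,V)$ acting on the Koszul pairs should kill every summand $b\geq1$ and leave $\Ksz_A(t)\otimes\Ksz_A(u)$ shifted, which is $e_{1,*}\cO$ up to twist. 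Given this, adjunction yields
\[
T_{\dRW_1}(E)^n\simeq\RHom(S_{\mathrm{FG}}(R_1(-n)),E)\simeq\RHom_R(R_1(-n),T_{\dRW}(E))=0.
\]
This identification $G_{1,n}\simeq S_{\mathrm{FG}}(R_1(-n))$ is the main obstacle, since the pairs and shifts have to be tracked carefully. A fallback is the formula $T_{\dRW_r}\simeq R_r\otimes^L_RT_{\dRW}$, obtained by combining Proposition~\ref{prop:finite-kernel-calculation} with the fact that $R_r\otimes^L_R$ commutes with limits.

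\emph{Step 2: the open part.} Once $E_{\mathrm{Hodge}}=0$, derived Nakayama for the regular sequence $(u,t)$ on each weight shows that the underlying gauge $E$ has $u$ and $t$ invertible after $p$-completion. In that case every weight computes the de Rham pullback, and this equals $E^{-\infty}\simeq E^{\infty}$. The $p$-completeness of $E$ lets us take a limit of Nakayama over the $p$-adic filtration: we work with $E/p$ on the $k$-fibre, where $ut=0$, so that $E/p$ is built from its $u=0$ and $t=0$ parts.

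It remains to show $E^{-\infty}=0$. Take $n\ll0$ in the row description of $T_{\dRW_r}$ from the proof of Lemma~\ref{lem:path-output}. Alternatively, observe directly that $T_{\dRW}(E)$ totalized by $\mathbf s$ recovers $E^{-\infty}$, as $S(M)^{-\infty}=\mathbf s(M)^\wedge_p$. Both routes give $E^{-\infty}=0$. Hence $E$ vanishes on $\Spf(W)$ and at the origin, and therefore $E=0$, since the underlying gauge is $p$-complete and is detected by these two loci.
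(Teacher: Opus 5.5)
Your frame is the right one: detect $E$ at the Hodge point and on $\Spf(W)$, using probes transported through $S_{\mathrm{FG}}$ by adjunction. However, the identification you single out as the main obstacle is wrong. $R_1=R/(VR+dVR)$ is a right $R$-module; its defining right ideal is not a left ideal, since $F\cdot dV=d$. Also, \eqref{eq:Raynaud-finite-resolution} is a resolution by left multiplications. So neither $S_{\mathrm{FG}}(R_1(-n))$ nor $\RHom_R(R_1(-n),-)$ is defined. Reading its last map on the $G_n$ as right multiplication by $V$ and $dV$ does not repair this. By Construction~\ref{con:right-action-G} these maps lower the index $b$ ($e_b\mapsto pe_{b-1}$, $h_b\mapsto h_{b-1}$, $e_b^{(n+1)}\mapsto h_{b-1}^{(n)}$), so no $e_c$ is hit and the cofibre does not isolate the pair $b=0$.

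The correct probe comes from the \emph{dual} of that resolution. Right multiplication by $F$ and $Fd$ raises $b$ and covers every pair with $b\geq1$. Its cokernel is the left module $R/R(F,Fd)$, whose completion is the domino $U_0$. Example~\ref{ex:S-examples}(2), with Lemmas~\ref{thm:ladder-is-quasi-inverse} and~\ref{lem:S-BK}, gives $G_{1,n}\simeq S_{\mathrm{FG}}(U_0(1-n)[-2])$, hence $\RHom(G_{1,n},E)\simeq\RHom_R(U_0(1-n)[-2],T_{\dRW}(E))=0$.

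Your fallback is circular. \eqref{eq:raynaud-reduction} is deduced in Theorem~\ref{thm:presentable-equivalence} from the counit $\epsilon_E$ being an equivalence, which is exactly where this lemma is used. Proposition~\ref{prop:finite-kernel-calculation} only computes $T_{\dRW_r}S_{\mathrm{FG}}$. Commuting $R_r\otimes_R^{L}$ with limits does not help, because the individual $T_{\dRW_r}(E)$ are not $R$-modules.

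Step~2 also fails as written. The underlying gauge of an arbitrary $F$-gauge is not derived $(u,t)$-complete, so Nakayama does not apply. Moreover $u$ and $t$ cannot both become invertible, since then $p=ut$ would be and $E$ would already vanish; for instance $j_{\Delta,*}W$ has zero Hodge fibre but is nonzero. What the Hodge probes actually give is $E\simeq j_{\Delta,*}j_\Delta^*E$, and only after passing to the fibre $P$ of $E\to j_{\Delta,*}j_\Delta^*E$:
\begin{itemize}
\item Because $j_\Delta^*G_{1,n}=0$, the vanishing of $\RHom(G_{1,n},-)$ transfers to $P$ and to $P/p$.
\item The cohomology of $P/p$ is $u$- and $t$-power torsion.
\item The cofibre $C$ of $u$ on $P/p$ has $C/t=(P/p)_{\mathrm{Hodge}}=0$, so $t$ is invertible on $C$. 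As its cohomology is $t$-power torsion, $C=0$.
\item Hence $u$ is invertible on $P/p$, and $P/p=0$.
\end{itemize}

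Finally, both of your routes to $E^{-\infty}=0$ misuse formulas valid only for objects of the form $S(M)$. These are the row description in the proof of Lemma~\ref{lem:path-output} and $S(M)^{-\infty}\simeq\mathbf s(M)^\wedge_p$. Applying them to an arbitrary $E$ presupposes $E\simeq S_{\mathrm{FG}}T_{\dRW}(E)$. The formula must be applied to the probe instead. By Lemma~\ref{lem:free-ladder}, $j_\Delta^*G_n\simeq\mathbf s(R(-n))^\wedge_p\simeq(\bigoplus_{\ell\geq1}W/p^\ell)^\wedge_p[-n-1]$, which has $W/p[-n-1]$ as a retract. Then $0=\RHom(G_n,j_{\Delta,*}j_\Delta^*E)\simeq\RHom_W(j_\Delta^*G_n,j_\Delta^*E)$ gives $\RHom_W(W/p,j_\Delta^*E)=0$, so $j_\Delta^*E=0$ by derived $p$-completeness.
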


\begin{proof}
Let $j_\Delta:\Spf(W)\hookrightarrow k^{\syn}$ be the inclusion of the
open substack (Definition~\ref{def:syntomification}), and consider the
localization sequence
\[
 \ker(j_\Delta^*)\longrightarrow\Dqc(k^{\syn})
 \xrightarrow{\ j_\Delta^*\ }\Dqc(\Spf W),
\]
where $\Dqc(\Spf W)$ is the category of derived $p$-complete
$W$-complexes.  Under Ekedahl's functor, $j_\Delta^*$ corresponds to the
$p$-completed simple functor (Definition~\ref{def:simple-functor}),
since $S(M)^{-\infty}\simeq\mathbf s(M)^\wedge_p$
(Construction~\ref{con:ladder}).  We show
that $T_{\dRW}$ detects objects at both ends.  Let $E$ be an $F$-gauge
with $T_{\dRW}(E)=0$.

The kernel consists of the objects supported at the Hodge point, and we
probe it with the skyscrapers $G_{1,n}$, which come from the domino
$U_0$.  Indeed, since $e_{1,*}\cO(-n)=e_{1,*}\cO(-1)\{1-n\}$,
Example~\ref{ex:S-examples}(2), Lemma~\ref{thm:ladder-is-quasi-inverse}
and Lemma~\ref{lem:S-BK} give
\[
 G_{1,n}=e_{1,*}\cO(-n)[-n-2]\simeq S_{\mathrm{FG}}\bigl(U_0(1-n)[-2]\bigr),
\]
so $\RHom(G_{1,n},E)\simeq\RHom_R(U_0(1-n)[-2],T_{\dRW}(E))=0$ for every
$n$.  Let $F$ be the fibre of $E\to j_{\Delta,*}j_\Delta^*E$, which lies
in the kernel.  As $j_\Delta^*G_{1,n}=0$, also $\RHom(G_{1,n},F)=0$.
Since $F$ vanishes on the de Rham and Hodge--Tate loci, along which
$k^{\syn}$ is glued, we may compute after pullback along $j_{\Nyg}$.
As $F$ is derived $p$-complete, it suffices to show $F/p=0$, and
$\RHom(G_{1,n},F/p)=0$ for all $n$ as well.  The vanishing on the
formal open loci says that $(F/p)[1/u]=(F/p)[1/t]=0$, so the cohomology
of $F/p$ is $u$-power and $t$-power torsion.  By
Proposition~\ref{thm:oriented-finite-extraction} and
Lemma~\ref{lem:T-r-Hodge}, $\RHom(G_{1,n},F/p)\simeq
T_{\dRW_1}(F/p)^n$ is a shift of the weight-$n$ part of
$(F/p)_{\mathrm{Hodge}}=(F/p)/^{L}(u,t)$, so its vanishing for all $n$
says that $(F/p)_{\mathrm{Hodge}}=0$.  Let $C$ be the cofibre of $u$ on
$F/p$.  Then $t$ is invertible on $C$, whose cohomology is $t$-power
torsion, so $C=0$.  Thus $u$ is invertible on $F/p$, whose cohomology
is $u$-power torsion, so $F/p=0$.  Hence
$E\simeq j_{\Delta,*}j_\Delta^*E$.

We probe the quotient with $j_\Delta^*G_n\simeq\mathbf s(R(-n))^\wedge_p$.  The
only cohomology of $\mathbf s(R(-n))$ is
$R^1/dR^0\simeq\bigoplus_{\ell\geq1}W/p^\ell$, spanned by the
$F^\ell d$, in degree $n+1$, so
\[
 j_\Delta^*G_n\simeq\Bigl(\bigoplus_{\ell\geq1}W/p^\ell\Bigr)^\wedge_p[-n-1],
\]
which contains $W/p[-n-1]$, the summand $\ell=1$, as a retract.  As
$\RHom(G_n,E)=0$ and $E\simeq j_{\Delta,*}j_\Delta^*E$, adjunction shows
that $\RHom_W(W/p,j_\Delta^*E)$ vanishes, so $j_\Delta^*E=0$ by derived
$p$-completeness.  Hence $E=0$.

On perfect $F$-gauges, the kernel is the category of finite-length
$F$-gauges, and the correspondence above matches it with Ekedahl's
$\mathbf s$-acyclic diagonal dominoes
\cite[Definition~III.2.4]{Ekedahl86Diagonal}.
\end{proof}

\begin{theorem}[Ekedahl's derived equivalence]
\label{thm:presentable-equivalence}
\label{prop:affine-Cech-Raynaud-counit}
The de Rham--Witt realization is an equivalence of stable $\infty$-categories
\[
 T_{\dRW}:\Dqc(k^{\syn})\xrightarrow{\sim}\widehat D(R),
\]
with inverse the restriction of $S_{\mathrm{FG}}$.  For every $F$-gauge
$E$ and every $r\geq1$ there is an equivalence
\begin{equation}\label{eq:raynaud-reduction}
 R_r\otimes_R^{L}T_{\dRW}(E)\xrightarrow{\sim}T_{\dRW_r}(E),
\end{equation}
natural in $E$ and compatible with $\pi$, $F$, $V$ and $d$.
\end{theorem}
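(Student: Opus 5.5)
The argument is the formal one announced at the start of \S\ref{sec:length-one-adjunction}: the adjunction $S_{\mathrm{FG}}\dashv T_{\dRW}$ of Definition~\ref{def:geometric-adjunction} has unit identified with Raynaud completion by Lemma~\ref{lem:limit-action-comparison}, and its right adjoint is conservative by Lemma~\ref{lem:N-conservative}. First, $T_{\dRW}$ lands in $\widehat D(R)$. By \eqref{eq:NL-completion}, $\eta_{T_{\dRW}E}$ is the completion unit of $T_{\dRW}(E)$. The triangle identity gives $T_{\dRW}(\epsilon_E)\circ\eta_{T_{\dRW}E}=\mathrm{id}$. So $T_{\dRW}(E)$ is a retract of its completion $\widehat{T_{\dRW}(E)}$, which is complete, and it is therefore complete by Proposition~\ref{prop:Raynaud-completion}.

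Second, the counit is an equivalence. The other triangle identity exhibits $T_{\dRW}(\epsilon_E)$ as a one-sided inverse of $\eta_{T_{\dRW}E}$. Since $T_{\dRW}(E)$ is complete, the completion unit $\eta_{T_{\dRW}E}$ is an equivalence. Hence $T_{\dRW}(\epsilon_E)$ is an equivalence, and conservativity gives that $\epsilon_E$ is one.

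Third, the unit is an equivalence on $\widehat D(R)$, since by \eqref{eq:NL-completion} it is the completion unit there. So $S_{\mathrm{FG}}|_{\widehat D(R)}$ and $T_{\dRW}$ are mutually inverse. The only subtle point is that \eqref{eq:NL-completion} must identify $\eta$ itself, and not merely the functor, with the completion unit. I take this from the unit clause of Lemma~\ref{lem:limit-action-comparison}.

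For \eqref{eq:raynaud-reduction}, write $E\simeq S_{\mathrm{FG}}(M)$ with $M=T_{\dRW}(E)$ complete. Proposition~\ref{prop:finite-kernel-calculation} gives
\[
 T_{\dRW_r}(E)\simeq T_{\dRW_r}S_{\mathrm{FG}}(M)\simeq R_r\otimes_R^{L}M,
\]
compatibly with the transition maps and with $F$, $V$, $d$. The map in \eqref{eq:raynaud-reduction} should be the natural one. It is adjoint to the level-$r$ projection $T_{\dRW}(E)\to T_{\dRW_r}(E)$, $R_r$ acting through $W_r[d]/d^2$ as in Definition~\ref{def:representing-objects}. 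This map has to agree with the equivalence just constructed, and checking this is the main bookkeeping. The plan is to use that both are natural in $E$ and colimit-preserving, which reduces the comparison to $E=G_n$. There it is the normalization $c\leftrightarrow1$ of Remark~\ref{rem:canonical-class}. Compatibility with $\pi,F,V,d$ is then inherited from Proposition~\ref{prop:finite-kernel-calculation} and Lemma~\ref{lem:limit-action-comparison}.
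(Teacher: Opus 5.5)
Your proposal is correct and follows the paper's proof step by step. The unit clause of Lemma~\ref{lem:limit-action-comparison}, together with the triangle identity $T_{\dRW}(\epsilon_E)\circ\eta_{T_{\dRW}(E)}=\mathrm{id}$ (the same identity you used in your first step, not ``the other'' one), gives completeness of $T_{\dRW}(E)$ and invertibility of $T_{\dRW}(\epsilon_E)$; conservativity then finishes, and \eqref{eq:raynaud-reduction} is Proposition~\ref{prop:finite-kernel-calculation} at $M=T_{\dRW}(E)$ composed with $T_{\dRW_r}(\epsilon_E)$. Your final check against a map ``adjoint to the projection'' is unnecessary: the paper simply takes this composite as the equivalence, and it is already natural in $E$ and compatible with $\pi$, $F$, $V$ and $d$.
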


\begin{proof}
By \eqref{eq:NL-completion} the unit $\eta$ is the completion unit, so
$\eta_M$ is an equivalence for complete $M$.  By the triangle identity
$T_{\dRW}(\epsilon_E)\circ\eta_{T_{\dRW}(E)}=\mathrm{id}$, the object
$T_{\dRW}(E)$ is a retract of its completion, hence complete
(Proposition~\ref{prop:Raynaud-completion}).  So $\eta_{T_{\dRW}(E)}$,
and with it $T_{\dRW}(\epsilon_E)$, is an equivalence, and $\epsilon_E$
is an equivalence by conservativity (Lemma~\ref{lem:N-conservative}).

Finally, applying $T_{\dRW_r}$ to $\epsilon_E$ and
Proposition~\ref{prop:finite-kernel-calculation} to $M=T_{\dRW}(E)$
gives
\[
 R_r\otimes_R^{L}T_{\dRW}(E)\simeq
 T_{\dRW_r}S_{\mathrm{FG}}T_{\dRW}(E)
 \xrightarrow[\sim]{\ T_{\dRW_r}(\epsilon_E)\ }T_{\dRW_r}(E),
\]
compatibly with $\pi$, $F$, $V$ and $d$.
\end{proof}

\begin{corollary}[Breuil--Kisin twists at $r=\infty$]
\label{lem:Koszul-BK-shift}
For $E\in\Dqc(k^{\syn})$ and $a\in\mathbb Z$,
$T_{\dRW}(E\{a\})\simeq T_{\dRW}(E)\{a\}$.
\end{corollary}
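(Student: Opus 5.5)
We reduce Corollary~\ref{lem:Koszul-BK-shift} to the finite-level statement, Lemma~\ref{lem:BK-finite}, by passing to the limit through Theorem~\ref{thm:presentable-equivalence}. The first step is to make the twist on the corepresenting objects precise. By Lemma~\ref{lem:BK-finite}, $G_{r,n}\simeq G_{r,0}\{-n\}[-n]$. Since $\cO\{a\}$ is invertible, the same projection formula gives $G_{r,n}\{-a\}\simeq G_{r,n+a}[a]$. These equivalences come from $e_{r,*}\cO(-n)\simeq e_{r,*}\cO\otimes\cO\{-n\}$, so they are compatible with the restriction transition maps $G_{r,n}\to G_{r+1,n}$. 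Indeed, those maps are transposes of pullbacks along $\mathsf R_r$, which are linear over $\cO_{k^{\syn}}$ by Proposition~\ref{prop:level-maps-compatible}, and the twist is pulled back from $k^{\syn}$ with $\Phi_{\syn}^*\cO\{a\}\simeq\cO\{a\}$. Passing to the colimit over $r$ then gives $G_n\{-a\}\simeq G_{n+a}[a]$.

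The main point is that these equivalences respect the right $R$-action on the family $\{G_n\}_n$. For this I would use Lemma~\ref{lem:free-ladder} together with Lemma~\ref{lem:S-BK}. Since $S$ is a dg functor, the equivalence $S(M\{a\})\simeq S(M)\{a\}$ of Lemma~\ref{lem:S-BK} is a natural transformation of dg functors on complexes in a finite internal range; in weights it is just a relabelling. Applied to $M=R(-n)$, where $R(-n)\{-a\}=R(-n-a)[a]$ as right $R$-modules, it identifies the family $\{G_n\{-a\}\}_n$ with $\{G_{n+a}[a]\}_n$ as right $R$-module objects, coherently. In the language of Definition~\ref{def:geometric-adjunction}, this says $G\otimes\cO\{-a\}\simeq G(a)[a]$ as right $R$-modules, where the shift is read on the graded family.

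Next we compute the realization. By adjunction for the invertible object $\cO\{a\}$,
\[
 T_{\dRW}(E\{a\})^n=\RHom(G_n,E\{a\})\simeq\RHom(G_n\{-a\},E)\simeq\RHom(G_{n+a},E)[-a]=T_{\dRW}(E)^{n+a}[-a],
\]
and this is $\bigl(T_{\dRW}(E)\{a\}\bigr)^n$ by the convention $M\{a\}=M(a)[-a]$. Because the identification of the previous step respects the right $R$-action, this is an equivalence in $D(R)$ and not merely of graded complexes, and it is natural in $E$.

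The hard step is the compatibility with the right $R$-action, in particular with $d$ and its signs under the shift $[a]$. Doing this at finite level through $V$ as a transpose would be messy. Routing it instead through the dg functor $S$ via Lemma~\ref{lem:free-ladder} avoids this, since all coherences there are strict. As a consistency check, the resulting equivalence reduces at each finite level $r$, through \eqref{eq:raynaud-reduction}, to Lemma~\ref{lem:BK-finite}.
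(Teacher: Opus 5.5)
Your argument is correct, but it takes a different route from the paper.

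\textbf{The paper's route.} The paper stays at finite level. The equivalences $T_{\dRW_r}(E\{a\})\simeq T_{\dRW_r}(E)\{a\}$ of Lemma~\ref{lem:BK-finite} are weight relabellings, hence compatible with $\pi$, $F$, $V$ and $d$. Then \eqref{eq:raynaud-reduction} shows that $T_{\dRW}(E\{a\})$ and $T_{\dRW}(E)\{a\}$ are complete objects with the same reduction system, so they agree after $R\varprojlim_r$.

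\textbf{Your route.} You work directly at $r=\infty$ with the Morita description $T_{\dRW}=\RHomgr(G,-)$. The right $R$-action on $G$ is by construction transported from the dg functor $S$. The isomorphism $S(M\{a\})\cong S(M)\{a\}$ of Lemma~\ref{lem:S-BK} is a strict, dg-natural relabelling, since $\mathbf s(M\{a\})=\mathbf s(M)$ and $S(M\{a\})^m=S(M)^{m+a}$. Applying it to $R(-n)\{-a\}=R(-n-a)[a]$ gives $G\{-a\}\simeq G(a)[a]$ as right $R$-module objects, and adjunction for the invertible $\cO\{a\}$ finishes.

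\textbf{What each buys.} Your route never uses completeness, \eqref{eq:raynaud-reduction} or Theorem~\ref{thm:presentable-equivalence}. The $R$-linearity, with its coherences, comes for free from the dg level. The paper's one-line argument instead rests on the finite-level equivalences assembling into a compatible map of reduction systems, so that the limit is $R$-linear. Its advantage is brevity, given the finite-level machinery already in place.

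Your first paragraph, with the finite-level equivalences and their colimit over $r$, is not needed. The $S$-argument already produces $G_n\{-a\}\simeq G_{n+a}[a]$ with its right $R$-action at $r=\infty$.
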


\begin{proof}
The equivalences of Lemma~\ref{lem:BK-finite} are internal shifts, so
they are compatible with $\pi$, $F$, $V$ and $d$.  Hence
\eqref{eq:raynaud-reduction} and completeness give the claim.
\end{proof}

\subsection{Perfect \texorpdfstring{$F$}{F}-gauges and \texorpdfstring{$F$}{F}-gauge cohomology}
\label{sec:perfect-coefficients}

In this subsection, we show that Ekedahl's derived equivalence
(Theorem~\ref{thm:presentable-equivalence}) identifies perfect
$F$-gauges with $D_c^b(R)$ (Corollary~\ref{thm:gauge-raynaud-equivalence})
and sends the $F$-gauge cohomology of a smooth proper $k$-scheme to its
de Rham--Witt cohomology
(Corollary~\ref{cor:geometric-raynaud-realization}).

\begin{definition}[{Coherent complexes; \cite[Definition~0.5.13]{Ekedahl85}}]
\label{def:coherent-complexes}
\label{rem:coherence-criterion}
A bounded complex $M\in D(R)$ is \emph{coherent} if it is complete and
$R_1\otimes_R^{L}M$ has finite-dimensional total cohomology after
forgetting the internal grading.  We write $D_c^b(R)\subset D(R)$ for
the full subcategory of coherent complexes.
\end{definition}

By \cite[Proposition~III.1.1]{Ekedahl85}, these are the bounded
complexes whose cohomology modules are coherent in the sense of Illusie
and Raynaud \cite[I, (3.9)]{IllusieRaynaud83}.  In particular a coherent
complex has finite internal range, and it is derived $p$-complete, as
the derived limit of the complexes $R_r\otimes_R^{L}M$ killed by $p^r$.

\begin{lemma}[Nakayama's lemma on $k^{\syn}$]
\label{lem:graded-perfect-lifting}
Let $E$ be an $F$-gauge, with fibre $E_{\mathrm{Hodge}}$ at the Hodge
point (Lemma~\ref{lem:T-r-Hodge}).
\begin{enumerate}
\item If $E$ is perfect, its underlying gauge is derived
 $(u,t)$-complete.
\item If the underlying gauge of $E$ is bounded above and derived
 $(u,t)$-complete, and $E_{\mathrm{Hodge}}$ has finite-dimensional
 total cohomology, then $E$ is perfect.
\item Let $f$ be a map of $F$-gauges whose underlying gauges are
 derived $(u,t)$-complete.  If $f_{\mathrm{Hodge}}$ is an equivalence,
 so is $f$.
\end{enumerate}
\end{lemma}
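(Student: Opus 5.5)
We work throughout on the underlying gauge $j_{\Nyg}^*E$, a derived $p$-complete graded module over $A=(W[u,t]/(ut-p))^\wedge_p$. Perfectness of $F$-gauges is \'etale local, and $j_{\Nyg}$ is an \'etale cover, so $E$ is perfect if and only if $j_{\Nyg}^*E$ is a perfect graded $A$-module. This reduces all three parts to graded commutative algebra over $A$. The only geometric input is that $(u,t)$ generates an ideal containing $p$, since $ut=p$. Consequently derived $(u,t)$-completeness implies derived $p$-completeness, and the fibre $E_{\mathrm{Hodge}}=E/^{L}(u,t)$ is a graded $k$-complex.

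For (1), I would note that $A$ itself is derived $(u,t)$-complete as a graded module. In each weight $n$ the tower $A^n/(u,t)^N$ stabilizes to the $p$-adic completion of $A^n\cong W$, because $u$ and $t$ shift weights and the ideal is generated by $p$ up to weight shifts. Twists $A(m)$ are then complete, and derived $(u,t)$-complete objects form a thick subcategory. Since perfect objects are built from twists of $A$ by finite colimits and retracts, they are complete.

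For (2), the plan is graded Nakayama over the graded local ring $A$, whose unique graded maximal ideal is $(p,u,t)=(u,t)+(p)$ and whose residue field is $k$.
\begin{enumerate}
\item Since $E_{\mathrm{Hodge}}$ has finite total dimension and $E$ is bounded above, I take the top cohomology of $E$ and lift a homogeneous basis of the top cohomology of $E_{\mathrm{Hodge}}$ to a map $A(m_1)\oplus\cdots\oplus A(m_s)[-q]\to E$.
\item This map is surjective on top $H^0_{\mathrm{Hodge}}$. By derived Nakayama for the $(u,t)$-complete cofibre, it reduces the total dimension of the Hodge fibre.
\item I iterate, as in the standard proof that a complete bounded-above complex with perfect reduction is perfect. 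The process terminates because the total Hodge dimension is finite.
\item The final cofibre $C$ is $(u,t)$-complete with $C/^{L}(u,t)=0$, hence $C=0$.
\end{enumerate}
The key input in the last step is that complete objects with vanishing derived reduction vanish, which is the graded derived Nakayama lemma; this is (3) applied to $\mathrm{cofib}(f)$.

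For (3), set $C=\mathrm{cofib}(f)$. Then $C$ is derived $(u,t)$-complete and $C/^{L}(u,t)=0$. First, $C/^{L}u$ is derived $t$-complete with $(C/^{L}u)/^{L}t=0$, so $t$ acts invertibly on it. Then $C/^{L}u\simeq\lim(\cdots\xrightarrow{t}C/^{L}u)$ vanishes, because completeness means the limit along $t$ is zero. Applying the same argument to $u$ gives $C=0$. Working weightwise handles the grading, since $u$ and $t$ are homogeneous and completeness is checked via $R\varprojlim$ of multiplication maps.

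The main obstacle is (2), specifically making the Nakayama iteration terminate and produce a perfect complex rather than merely a bounded-above one. Two points need care. The grading is unbounded in both directions, so completeness must be used weightwise. Graded local rings in the $\Gm$-equivariant setting require that minimal homogeneous generators lift; here $A$ is Noetherian, which should suffice. I would also use that finite total Hodge cohomology forces finite Tor-amplitude over $A$. This follows because $A$ is regular of dimension $2$, so $A/^{L}(u,t)$ detects Tor-dimension: local cohomological criteria give perfectness once the fibre at the closed point is bounded and finite.
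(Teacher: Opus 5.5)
Your proposal is correct and follows the paper's proof essentially step by step. You reduce along the étale cover $j_{\Nyg}$ to graded $A$-modules, get (1) from weightwise cofinality of the $(u,t)$-adic and $p$-adic filtrations, get (3) from derived Nakayama, and get (2) by repeatedly lifting a homogeneous basis of the top cohomology of the Hodge fibre; complete Nakayama places that top cohomology in the top degree of the underlying gauge, which is why the basis lifts. The closing remarks about regularity of $A$ and Tor-amplitude are not needed, since the finite iteration already exhibits the underlying gauge as a finite extension of finite free modules.
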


\begin{proof}
Perfectness and equivalences can be checked on underlying gauges, that
is, after pullback along the \'etale cover $j_{\Nyg}$, and an underlying
gauge is a graded $A$-module.  For (1), in each weight of $A(-n)$ the
$(u,t)$-adic and $p$-adic filtrations are cofinal.  Part (3) is derived
Nakayama \cite[Tag~0G1U]{StacksProject}, whose proof applies to graded
modules.  For (2), let $Q$ be the underlying gauge of $E$.  Complete
Nakayama shows that the top cohomology of $Q$ sits in the top degree
$b$ of $Q/^{L}(u,t)$.  A homogeneous basis of $H^b(Q/^{L}(u,t))$ lifts
to a map from a finite graded free module to $Q$, whose cofibre has one
fewer nonzero cohomology group after reduction modulo $(u,t)$.  After
finitely many steps the reduction vanishes, so the cofibre does too,
and $Q$ is a finite extension of finite free modules.
\end{proof}

\begin{lemma}[Uniform weight bounds]
\label{lem:finite-weight-bounds}
Let $E$ be an $F$-gauge whose underlying gauge is, up to retract, a
bounded complex of completed direct sums of free graded $A$-modules
$A(-m)$ with $a\leq m\leq b$.  Then $T_{\dRW_r}(E)^n=0$ unless
$a-1\leq n\leq b+1$, and the complex degrees of $T_{\dRW_r}(E)$ lie in
a range independent of $r$.  In particular, $T_{\dRW}(E)$ is derived
$p$-complete and bounded in both gradings.  This applies to perfect
$E$.
\end{lemma}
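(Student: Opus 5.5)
The plan is to compute $T_{\dRW_r}$ on a single free summand $A(-m)$ through the corepresenting objects and then assemble. By Proposition~\ref{thm:oriented-finite-extraction}, $T_{\dRW_r}(E)^n\simeq\RHom_{k^{\syn}}(G_{r,n},E)$. Since $G_{r,n}$ is perfect with $G_{r,n}^\vee\simeq G_{r,-n}[2]$, this is $R\Gamma(k^{\syn},G_{r,-n}[2]\otimes E)$. I would instead use the formula of Lemma~\ref{lem:T-r-Hodge}: $T_{\dRW_r}(E)^n$ is a shift by $[n]$ of the weight-$n$ part of $(g'_r)_*e_r^*E$. That part is computed by the \v{C}ech formula \eqref{eq:g-pushforward} applied to the chart Koszul algebras $B_i/^{L}(p^iu_i,p^{r-1-i}t_i)\otimes f_{r,i}^*E$. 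All functors involved are exact and commute with completed direct sums in a fixed weight range, and retracts are harmless. So it suffices to treat a single free gauge $A(-m)$, or rather an $F$-gauge whose underlying gauge is built from such pieces; the gluing only enters through the \v{C}ech differential, which preserves weights.

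\emph{Weight bound.} On a chart, $A(-m)\otimes\Ksz(p^iu_i,p^{r-1-i}t_i)$ has terms $A(-m)$, $A(-m-1)\oplus A(-m+1)$ and $A(-m)$. Reducing the rings $B_i$ modulo the two equations, the weight-$n$ part of the derived quotient of $A(-m)$ by $(u_i,t_i)$ up to $p$-power factors is the Koszul complex on $A$ in weights $n-m$, $n-m\pm1$. The key observation is that the derived quotient $B_i/^{L}(p^iu_i,p^{r-1-i}t_i)$ is supported at $u_i=t_i=0$ only up to $p$-torsion; in weights $|w|\geq2$ the maps $u_i$, $t_i$ from weight $w\mp1$ to $w$ cover all of $A^w$ after multiplying by $p$. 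So I would instead argue on the overlap-free level using $g_r$: via Lemma~\ref{lem:T-r-Hodge}, $(g'_r)_*e_r^*E\simeq i_{\mathrm{Hodge}_r}^*g_{r,*}f_r^*E$. By Remark~\ref{rem:Rg-structure-sheaf} and the projection formula, $g_{r,*}f_r^*$ of an $F$-gauge whose underlying gauge is generated in weights $[a,b]$ is an $r$-gauge whose weight components are governed by $\cO_{k^{\Nyg_r}}$-modules generated in weights $[a,b]$. Pulling back to $B\mathbb G_{m,W_r}$ is tensoring with $\Ksz(u,t)$, which shifts weights by at most one, giving the support $[a-1,b+1]$. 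The step needing care is that $g_{r,*}f_r^*E$ of a free $A(-m)$ summand is, as a naive $r$-gauge, generated in weight $m$. Here I would use the explicit row description in the proof of Lemma~\ref{lem:path-output}: each weight is a single row of the first copy below the turn and the last copy from the turn on. The generating entry in weight $m$ then determines all others by $u,t$.

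\emph{Complex degrees.} The same explicit description shows that, for a bounded complex of length $\ell$ of such sums, every term contributes only after tensoring with a three-term Koszul complex and a two-term \v{C}ech complex. This bounds the complex degrees of $T_{\dRW_r}(E)^n$ in $[c-2-b-1,\,c'+1+b+1]$, where $[c,c']$ is the amplitude of the complex; both bounds are independent of $r$.

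\emph{Conclusion.} Each $T_{\dRW_r}(E)$ is killed by $p^r$, hence derived $p$-complete. By Theorem~\ref{thm:presentable-equivalence}, $T_{\dRW}(E)\simeq R\varprojlim_r T_{\dRW_r}(E)$, because $R\varprojlim_r R_r\otimes_R^L T_{\dRW}(E)$ is the completion and $T_{\dRW}(E)$ is complete. A limit of uniformly bounded derived $p$-complete objects is derived $p$-complete and bounded, with one extra degree coming from $\varprojlim^1$. For perfect $E$, the underlying gauge is a finite complex of finite free $A(-m)$, so the hypothesis holds with finite $a,b$. The main obstacle is the generation-in-weight claim for $g_{r,*}f_r^*$ on free pieces in the presence of the gluing; if it fails literally, I would replace it by a direct chart-by-chart estimate.
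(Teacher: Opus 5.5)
There is a genuine gap in the weight bound, which is the substance of the lemma. The reduction to a single free piece $A(-m)$ is not available. $A(-m)$ alone carries no descent datum, so $f_r^*$ and $T_{\dRW_r}$ are not defined on it, and the descent datum $\varphi^*E^\infty\simeq E^{-\infty}$ of $E$ mixes the summands.

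The claim that $g_{r,*}f_r^*E$ is generated in weights $[a,b]$ is also unjustified. The identity $g_{r,*}\cO\simeq\cO$ and the projection formula compute $g_{r,*}$ only on objects of the form $g_r^*N$, and $f_r^*E$ is not of that form. The row description in the proof of Lemma~\ref{lem:path-output} concerns $S(M)$ for a complex $M$ of $R$-modules, not a gauge assembled from pieces $A(-m)$.

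More importantly, generation in a weight range is the wrong property. For instance, $W_r[u]/(u^k)$ with $t=0$ is generated in weight $0$, but its derived quotient by $(u,t)$ has cohomology in weights $k-1$ and $k$. So tensoring with $\Ksz(u,t)$ produces no vanishing by itself.

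A chart-by-chart estimate fails too. For $1\le i\le r-1$ the chart algebra $B_i/^{L}(p^iu_i,p^{r-1-i}t_i)$ has $H^0\simeq W/p^{\min(i,r-i)}\ne0$ in every positive weight, and the overlaps contribute $W/^{L}(p^{i+1},p^{r-1-i})$ in every weight. The bound comes from cancellation between charts and overlaps, so the gluing is essential. It is not merely a weight-preserving differential.

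The missing idea is a property intrinsic to the underlying gauge. On $A(-m)$, the map $u\colon A(-m)^n\to A(-m)^{n+1}$ is an isomorphism for $n\ge m$, and $t\colon A(-m)^n\to A(-m)^{n-1}$ is one for $n\le m$. This survives completed direct sums, bounded complexes and retracts. So on the underlying gauge of $E$, $u$ is invertible in weights $\ge b$ and $t$ in weights $\le a$, with no need to split the descent datum.

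The paper computes $T_{\dRW_r}(E)^n$ by the \v{C}ech complex \eqref{eq:g-pushforward} tensored with $\Ksz(u)\otimes\Ksz(t)$. For $n\ge b+2$, restriction from the $(i+1)$-st chart to the $i$-th overlap (the locus $u_{i+1}\ne0$) is an isomorphism in the weights that occur, compatibly with the Koszul factors. These terms cancel in pairs, leaving only the $0$-th chart, where $g_r^*u=u_0$ and $\Ksz(u_0)$ is acyclic. The case $n\le a-2$ is symmetric, using the last chart and $t_{r-1}$.

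In your formulation, the same fact shows the following. In weights $\ge b$ the weightwise fibre product for $N=g_{r,*}f_r^*E$ collapses onto the $0$-th factor, where $u$ acts by $u_0$. In weights $\le a$ it collapses onto the last factor, where $t$ acts by $t_{r-1}$. It is this invertibility of $u$ and $t$ outside $[a,b]$, not generation, that kills $N/^{L}(u,t)$ there.

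Your complex-degree count (Koszul and \v{C}ech lengths, then regrading over finitely many weights) is correct in substance. The upper end of your range should be $c'+1-(a-1)$, since regrading shifts weight $n$ by $[n]$. The passage to the limit is also fine, and the limit formula is already Definition~\ref{def:representing-objects}.
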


\begin{proof}
On $A(-m)$, the map $u$ is an isomorphism in weights at least $m$ and
$t$ in weights at most $m$, so on the underlying gauge of $E$, $u$ is an
isomorphism in weights above $b$ and $t$ in weights below $a$.  Compute
$T_{\dRW_r}(E)$ by the \v{C}ech complex of the chain of $r$ Nygaard
charts \eqref{eq:g-pushforward}, tensored with $\Ksz(u)\otimes\Ksz(t)$.
Let $n\geq b+2$.  For $0\leq i\leq r-2$, restriction from the
$(i+1)$-st chart to the $i$-th overlap, its locus $u_{i+1}\neq0$, is an
isomorphism in weights $n-1$ and $n$ compatible with the Koszul
factors, so these two terms cancel.  The remaining term is that of the
$0$-th chart, where $\Ksz(u)=[\cO(-1)\xrightarrow{u_0}\cO]$ is acyclic
in weight $n$.  Symmetrically, for $n\leq a-2$ the $i$-th chart cancels
against the $i$-th overlap through $t_i\neq0$, and $\Ksz(t)$ is acyclic
on the last chart.  So $T_{\dRW_r}(E)^n=0$ unless $a-1\leq n\leq b+1$.
Before regrading, the complex degrees lie in a range independent of
$r$, since $E$ is bounded and the Koszul and \v{C}ech complexes have
length at most three.  The regrading shifts weight $n$ by $[n]$, and
only $a-1\leq n\leq b+1$ occur, so the same holds after regrading.
Both bounds pass to
$T_{\dRW}(E)^n\simeq R\varprojlim_rT_{\dRW_r}(E)^n$
(Definition~\ref{def:representing-objects}), with one more degree for
the derived limit, which is derived $p$-complete as a limit of
complexes killed by $p^r$.  A perfect $F$-gauge has underlying gauge a
retract of a finite complex of finite free graded $A$-modules.
\end{proof}

\begin{corollary}[Perfect $F$-gauges and coherent complexes of $R$-modules]
\label{thm:gauge-raynaud-equivalence}
The equivalence of Theorem~\ref{thm:presentable-equivalence}
restricts to an equivalence
\[
 T_{\dRW}:\Perf(k^{\syn})\xrightarrow{\sim}D_c^b(R),
\]
and for $M\in D_c^b(R)$ the inverse $S_{\mathrm{FG}}(M)$ is
$S(M)$, with $S$ Ekedahl's functor of Construction~\ref{con:ladder}.
\end{corollary}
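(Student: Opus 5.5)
The plan is to check both inclusions, $T_{\dRW}(\Perf(k^{\syn}))\subseteq D_c^b(R)$ and $S_{\mathrm{FG}}(D_c^b(R))\subseteq\Perf(k^{\syn})$, and then compare $S_{\mathrm{FG}}$ with $S$. Since Theorem~\ref{thm:presentable-equivalence} already gives an equivalence $\Dqc(k^{\syn})\simeq\widehat D(R)$ with inverse $S_{\mathrm{FG}}$, these two inclusions suffice. The comparison of $S_{\mathrm{FG}}$ with $S$ is then Lemma~\ref{thm:ladder-is-quasi-inverse}. It applies because a coherent complex has finite internal range and is derived $p$-complete (remark after Definition~\ref{def:coherent-complexes}).

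\emph{Perfect to coherent.}  Let $E$ be perfect.  By Lemma~\ref{lem:finite-weight-bounds}, $T_{\dRW}(E)$ is bounded in both gradings and derived $p$-complete.  It lies in $\widehat D(R)$ by Theorem~\ref{thm:presentable-equivalence}, so it is complete.  It remains to see that $R_1\otimes^L_RT_{\dRW}(E)$ has finite-dimensional total cohomology.  By \eqref{eq:raynaud-reduction} this object is $T_{\dRW_1}(E)$, and Lemma~\ref{lem:T-r-Hodge} identifies it with $E_{\mathrm{Hodge}}[\bullet]$, the regraded fibre at the Hodge point.  This fibre is $E/^L(u,t)$ of a retract of a finite complex of finite free graded $A$-modules.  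It is therefore a perfect graded $k$-complex with finitely many weights, hence finite-dimensional in total.

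\emph{Coherent to perfect.}  Let $M\in D_c^b(R)$ and put $E:=S_{\mathrm{FG}}(M)\simeq S(M)$.  I would apply Lemma~\ref{lem:graded-perfect-lifting}(2).  First, $E_{\mathrm{Hodge}}[\bullet]\simeq T_{\dRW_1}(E)\simeq R_1\otimes^L_R T_{\dRW}S_{\mathrm{FG}}(M)\simeq R_1\otimes_R^LM$, using \eqref{eq:NL-completion} and completeness of $M$.  This has finite-dimensional total cohomology by coherence, and regrading only shifts finitely many weights.  Second, the underlying gauge of $S(M)$ is bounded above, since $M$ is bounded with finite internal range and each weight $S(M)^m$ in \eqref{eq:ladder-complex} is a finite totalization of entries of $M$ in a uniformly bounded range.

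\emph{The main obstacle} is derived $(u,t)$-completeness of the underlying gauge of $S(M)$.  The plan is to use the explicit shape of Construction~\ref{con:ladder}.  In weights above the internal range of $M$, $u$ is an isomorphism and $t=p$; in weights below it, $t$ is an isomorphism and $u=p$.  So $(u,t)$-completeness reduces, weightwise, to derived $p$-completeness of the finitely many distinct weights together with the two ends $\sigma_*\mathbf s(M)^\wedge_p$ and $\mathbf s(M)^\wedge_p$.  The entries $M^{i}$ are derived $p$-complete because $M$ is, and finite totalizations preserve this.  I would make the reduction precise by writing the derived $u$-completion of a graded module on which $u$ is eventually $p$ or an isomorphism as a limit over powers of $u$, which in each fixed weight becomes a $p$-adic limit, as in the cofinality argument of Lemma~\ref{lem:graded-perfect-lifting}(1); the same applies to $t$.

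With both inclusions in hand, the two restricted functors are mutually inverse because the ambient ones are, and this gives the corollary.
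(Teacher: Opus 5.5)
Your proposal is correct and follows the paper's proof essentially step for step. For perfect to coherent, both use Lemma~\ref{lem:finite-weight-bounds}, completeness, and \eqref{eq:raynaud-reduction} with Lemma~\ref{lem:T-r-Hodge}; for the converse, both apply Lemma~\ref{lem:graded-perfect-lifting}(2) to $S(M)$, reading off $(u,t)$-completeness from $u=p$, $t=1$ below the internal range and $u=1$, $t=p$ above it. The differences are minor: you identify the Hodge fibre with $R_1\otimes^{L}_RM$ through \eqref{eq:NL-completion} instead of Proposition~\ref{prop:finite-kernel-calculation}, and you spell out why $E_{\mathrm{Hodge}}$ is finite-dimensional for perfect $E$, which the paper leaves implicit.
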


\begin{proof}
Let $E$ be perfect and put $M=T_{\dRW}(E)$.  Then $M$ is complete by
Theorem~\ref{thm:presentable-equivalence} and bounded by
Lemma~\ref{lem:finite-weight-bounds}.  By
\eqref{eq:raynaud-reduction} and Lemma~\ref{lem:T-r-Hodge},
$R_1\otimes_R^{L}M\simeq T_{\dRW_1}(E)\simeq E_{\mathrm{Hodge}}[\bullet]$,
which has finite-dimensional total cohomology.  So $M\in D_c^b(R)$ by
Definition~\ref{def:coherent-complexes}.

Conversely, let $M\in D_c^b(R)$.  By
the discussion after Definition~\ref{def:coherent-complexes}, $M$ is
derived $p$-complete with
finite internal range, so $S_{\mathrm{FG}}(M)\simeq S(M)$ is a bounded
$F$-gauge by Lemma~\ref{thm:ladder-is-quasi-inverse}.  Its underlying
gauge is derived $(u,t)$-complete: below the internal range of $M$ it
has $u=p$ and $t=1$, and above it $u=1$ and $t=p$, so the derived limits
along $u$ towards weight $-\infty$ and along $t$ towards weight
$+\infty$ are limits along multiplication by $p$, so they vanish by
derived $p$-completeness, and this is derived $(u,t)$-completeness
\cite[Tags~091S and 091Q]{StacksProject}.  Its fibre at the Hodge point
is, up to regrading, $T_{\dRW_1}S_{\mathrm{FG}}(M)\simeq R_1\otimes_R^{L}M$
(Lemma~\ref{lem:T-r-Hodge} and
Proposition~\ref{prop:finite-kernel-calculation}), which has
finite-dimensional total cohomology.  So $S_{\mathrm{FG}}(M)$ is perfect
by Lemma~\ref{lem:graded-perfect-lifting}(2).
\end{proof}

We now compute $T_{\dRW}$ on the $F$-gauge cohomology $\cH_{\syn}(X)$.
The point is that Ekedahl's functor turns a $p$-torsion-free graded
$R$-module into the Rees object of its Nygaard filtration.  For a
$p$-torsion-free Dieudonn\'e module $M$ in internal degree $0$, this
filtration is $M\supset VM\supset pVM\supset\cdots$, and we recover
Example~\ref{ex:S-examples}(1).  For $W\Omega^\bullet_{X/k}$, the
weight-$m$ row of $S$ is the modification of $W\Omega^\bullet_{X/k}$
introduced by Nygaard \cite[Definition~1.1]{Nygaard81}, and it becomes
the $m$-th Nygaard filtered piece.

\begin{lemma}[Dieudonn\'e complexes and the Nygaard filtration]
\label{lem:saturated-ladder}
Let $C$ be a $p$-torsion-free, derived $p$-complete graded $R$-module in
a finite internal range, with Nygaard filtration
\[
 (\Fil^{\geq m}_{\Nyg}C)^i=
 \begin{cases}
 p^{m-i-1}VC^i,&i<m,\\
 C^i,&i\geq m,
 \end{cases}
\]
as in \cite[Construction~8.1.1]{BLM21}.  Then there is a natural
equivalence
\[
 S_{\mathrm{FG}}(C)\simeq\Rees(\Fil^{\geq\bullet}_{\Nyg}C),
\]
where the Rees object is an $F$-gauge through the divided Frobenius.
In particular, for $X$ smooth over $k$,
$\cH_{\syn}(X)\simeq R\Gamma(X,S(W\Omega^\bullet_{X/k}))$, with $S$
applied sheafwise.
\end{lemma}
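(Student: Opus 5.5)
By Lemma~\ref{thm:ladder-is-quasi-inverse} I may replace $S_{\mathrm{FG}}(C)$ by Ekedahl's complex $S(C)$ of Construction~\ref{con:ladder}. The plan is then to compare $S(C)$ weight by weight with $\Fil^{\geq m}_{\Nyg}C$, both as complexes and as gauges, and to handle the descent datum separately.

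\emph{Step 1: a map in each weight.} Define a map from the weight-$m$ row
\[
 \cdots\to\sigma_*C^{m-2}\xrightarrow{d}\sigma_*C^{m-1}\xrightarrow{dV}C^m\xrightarrow{d}C^{m+1}\to\cdots
\]
to $\Fil^{\geq m}_{\Nyg}C$. On an entry of internal degree $i<m$ it is $x\mapsto p^{m-i-1}Vx$, which lands in $p^{m-i-1}VC^i$. On an entry with $i\geq m$ it is the identity.
\begin{itemize}
\item These maps form a chain map. At the turn, $dV$ on the source matches $d\circ V$ on the target. Below the turn I use $Vd=p\,dV$, which gives $d(p^{m-i-1}Vx)=p^{m-i-2}V(dx)$.
\item Each component is injective, since $C$ is $p$-torsion-free and $V$ is injective on it ($FV=p$). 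Each component is surjective onto $p^{m-i-1}VC^i$, respectively $C^i$, by definition.
\item Hence the map is an isomorphism of complexes. After totalization by $\mathbf s$, this identifies $S(C)^m$ with $\Fil^{\geq m}_{\Nyg}C$, viewed as a subcomplex of $\mathbf s(C)$ via the inclusion $\iota_m$ from weight $m$ to $-\infty$.
\end{itemize}

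\emph{Step 2: the operators $u$ and $t$.} Check against the table of Construction~\ref{con:ladder}:
\begin{itemize}
\item Under Step 1, $t$ becomes the filtration inclusion $\Fil^{\geq m+1}\subset\Fil^{\geq m}$. For example, at $i=m$ the table gives $t=V$, which matches $C^m\supset VC^m$.
\item Under Step 1, $u$ becomes multiplication by $p$. At the turn $u=F$, and $p^{m-i}V\circ F$ equals $p$ times $p^{m-i-1}V$.
\end{itemize}
So $S(C)\simeq\Rees(\Fil^{\geq\bullet}_{\Nyg}C)$ as gauges, naturally in $C$.

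\emph{Step 3: the descent datum.} The de Rham end $S(C)^{-\infty}=\mathbf s(C)^\wedge_p$ is the underlying complex of the Rees object. The Hodge--Tate end is reached along $u$; on the Rees side this is the colimit along the divided Frobenius $\varphi_m=p^{i-m}F$, as in the proof of Theorem~\ref{thm:hodge-witt}. Ekedahl's gluing is the identity $\varphi^*S^\infty\simeq S^{-\infty}$. I would verify that, after untwisting $\sigma_*$ in high weights, the Step 1 maps identify this gluing with the map induced by the divided Frobenius. This is bookkeeping of the $\sigma$-twists and of the $F$ appearing in $u$ at the turn.

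\emph{Step 4: the application to $X$.} Apply the above sheafwise to $W\Omega^\bullet_{X/k}$ on \'etale affines $\Spec S$.
\begin{itemize}
\item $W\Omega^\bullet_{S/k}$ is $p$-torsion-free and strict (\cite{BLM21}). Its filtration above is the Nygaard filtration, represented as in \cite[Proposition~8.2.1]{BLM21}.
\item By Theorem~\ref{thm:ordinary-input}(1), together with the descent datum of $\cH_{\syn}$, this Rees object is $\cH_{\Nyg}(S)$ with its $F$-gauge structure.
\item Both sides satisfy \'etale descent, and $W\Omega^i$ has no higher cohomology on affines (as in the proof of Theorem~\ref{thm:hodge-witt}). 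Taking $R\Gamma(X,-)$ therefore gives $\cH_{\syn}(X)\simeq R\Gamma(X,S(W\Omega^\bullet_{X/k}))$.
\end{itemize}

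\emph{Main obstacle.} I expect Step 3 to be the hardest: matching Ekedahl's identity gluing with Bhatt's descent datum, including the Frobenius twists. I would first check it on the structure sheaf $C=W$, using Example~\ref{ex:r-nygaard-on-W} and Example~\ref{ex:S-examples}(1), and then argue by naturality and multiplicativity.
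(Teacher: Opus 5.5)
Your proposal follows the paper's proof essentially step for step. You use the same reduction to $S(C)$ via Lemma~\ref{thm:ladder-is-quasi-inverse}, the same weightwise maps $x\mapsto p^{m-i-1}Vx$ for $i<m$ and the identity for $i\geq m$, and the same checks via $Vd=p\,dV$, $t=V$ and $VF=p$. One correction: at the turn the weight-$m$ map is the identity on $C^m$, so the $u$-check is simply $V\circ F=p$, not the formula you wrote. The sheafwise application to $W\Omega^\bullet_{X/k}$, through the characterization of the Nygaard filtration and Theorem~\ref{thm:ordinary-input}(1), is also the paper's.

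The step you flag as the main obstacle is a one-line check and needs no reduction to $C=W$. In weights below the internal range your maps are the identity. In weights above it the divided Frobenius $p^{i-m}F$ sends $p^{m-i-1}Vx$ back to $x$, because $FV=p$. So your Step~1 maps carry Ekedahl's identity gluing to the divided-Frobenius gluing.
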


\begin{proof}
By Lemma~\ref{thm:ladder-is-quasi-inverse} it suffices to identify
$S(C)$.  Consider the maps
\[
 S(C)^{m,i}\longrightarrow(\Fil^{\geq m}_{\Nyg}C)^i,\qquad
 x\longmapsto
 \begin{cases}
 p^{m-i-1}Vx,&i<m,\\
 x,&i\geq m.
 \end{cases}
\]
For $i<m$ the source is $\sigma_*C^i$, so the map is $W$-linear, and it
is a bijection onto $p^{m-i-1}VC^i$ as $C$ is $p$-torsion-free.  These
maps commute with the row differentials, by $Vd=pdV$ below the turn and
by the definition of the turn $dV$.  They also commute with $u$ and
$t$, which act on the Rees object by $p$ and by inclusion: away from
$i=m$ the power of $p$ absorbs the factors $1$ and $p$, and at $i=m$
the relevant identities are $VF=p$ and $t=V$.  Finally, the two
$F$-gauge structures agree.  Below the internal range of $C$ the map is
the identity, and above it the divided Frobenius sends $p^{m-i-1}Vx$ to
$x$, since $FV=p$.

Now let $X$ be smooth over $k$.  The formulas above are compatible with
\'etale localization.  Moreover, $W\Omega^\bullet_{X/k}$ is saturated
\cite[Theorem~4.4.12]{BLM21}, and it is derived $p$-complete as the
limit of the $W_r\Omega^\bullet_{X/k}$, so they apply to
$W\Omega^\bullet_{X/k}$.  Its Nygaard filtration is complete with the
graded pieces of \cite[Proposition~8.2.1]{BLM21}, and these
characterize Bhatt's Nygaard filtration
\cite[Theorem~3.2.1]{BhattFGauges}.  So
Theorem~\ref{thm:ordinary-input}(1) identifies its derived global
sections with $\cH_{\Nyg}(X)$.  The $F$-gauge structure of
$\cH_{\syn}(X)$ induces the divided Frobenius on this filtration
\cite[Proposition~4.4.2]{BhattFGauges}, and all these identifications
are multiplicative.
\end{proof}

\begin{corollary}[The de Rham--Witt realization of $F$-gauge cohomology]\label{cor:geometric-raynaud-realization}
Let $X$ be a smooth proper $k$-scheme.  Then
\[
 T_{\dRW}(\cH_{\syn}(X))
 \simeq R\Gamma(X,W\Omega^\bullet_{X/k}),
\]
and its finite reductions are
$R_r\otimes_R^{L}T_{\dRW}(\cH_{\syn}(X))\simeq
R\Gamma(X,W_r\Omega^\bullet_{X/k})$, compatibly with $\pi$, $F$, $V$
and $d$.
\end{corollary}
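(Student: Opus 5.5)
The plan is to deduce both assertions from Lemma~\ref{lem:saturated-ladder} and Theorem~\ref{thm:presentable-equivalence}. Put $M:=R\Gamma(X,W\Omega^\bullet_{X/k})$, a bounded complex of graded $R$-modules in internal degrees $[0,\dim X]$. By Illusie--Raynaud its cohomology is coherent, so $M\in D_c^b(R)$, and in particular $M$ is complete and derived $p$-complete. Lemma~\ref{lem:saturated-ladder} gives $\cH_{\syn}(X)\simeq R\Gamma(X,S(W\Omega^\bullet_{X/k}))$ with $S$ applied sheafwise. I will show that $R\Gamma(X,-)$ commutes with $S$ here, so that $\cH_{\syn}(X)\simeq S(M)$.

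To see this, note that $S(C)^m$ is, by \eqref{eq:ladder-complex}, a finite totalization of the entries $\sigma_*C^{i}$ and $C^i$ in the finite internal range $[0,\dim X]$. The maps $u,t$ are built from $F,V,p$ and identities, and the $F$-gauge structure comes from the ends $\mathbf s(C)^\wedge_p$. All of these are computed termwise by finite limits and colimits together with derived $p$-completion, and $R\Gamma(X,-)$ commutes with each of them. So I apply $R\Gamma$ to a representing complex of sheaves of graded $R$-modules, for instance a Godement or \v{C}ech resolution, which remains a complex of graded $R$-modules in the same internal range. Then $R\Gamma(X,S(W\Omega^\bullet))\simeq S(R\Gamma(X,W\Omega^\bullet))$ as $F$-gauges. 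By Lemma~\ref{thm:ladder-is-quasi-inverse} or Corollary~\ref{thm:gauge-raynaud-equivalence}, $S(M)\simeq S_{\mathrm{FG}}(M)$.

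Applying $T_{\dRW}$ and using Theorem~\ref{thm:presentable-equivalence}, where the unit is the completion unit by \eqref{eq:NL-completion} and $M$ is complete, gives
\[
 T_{\dRW}(\cH_{\syn}(X))\simeq T_{\dRW}S_{\mathrm{FG}}(M)\simeq\widehat M\simeq M.
\]
This equivalence respects the $R$-action, since $T_{\dRW}S_{\mathrm{FG}}\simeq\widehat{(-)}$ as functors to $D(R)$ by Lemma~\ref{lem:limit-action-comparison}. For the finite reductions, I first tensor with $R_r$ to get $R_r\otimes_R^LM$. Then I identify $R_r\otimes^L_R M\simeq R\Gamma(X,W_r\Omega^\bullet_{X/k})$ by applying the resolution \eqref{eq:Raynaud-finite-resolution} sheafwise. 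This needs the classical exactness $W\Omega^\bullet/(V^r+dV^r)\simeq W_r\Omega^\bullet$ together with the fact that $F^r$ and $(F^r,-F^rd)$ are injective on the saturated complex $W\Omega^\bullet$, so the derived tensor product is underived. Compatibility with $\pi,F,V,d$ then follows from \eqref{eq:raynaud-reduction} and from the fact that on $R_r\otimes_R(-)$ these operators are left multiplication by $1,F,V,d$, which on $W\Omega^\bullet/(V^r+dV^r)$ are the classical operators. As a cross-check, this agrees with Corollary~\ref{thm:filtered-comparison}.

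I expect the main obstacle to be commuting $R\Gamma$ with $S$ while keeping the $F$-gauge structure and the descent datum $\varphi^*S^\infty\simeq S^{-\infty}$. This requires a model of $R\Gamma(X,W\Omega^\bullet)$ as an honest complex of graded $R$-modules, with the operators $F,V$ acting on resolutions compatibly. I would first try the \v{C}ech complex for an affine open cover, since $W\Omega^i$ has no higher cohomology on affines (as in the proof of Theorem~\ref{thm:hodge-witt}); it is a finite complex of graded $R$-modules, and $S$ commutes with it termwise.
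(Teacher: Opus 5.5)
Your argument is correct, but it is organized differently from the paper's.

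\emph{The paper's route.} It works with the sheaf $S(W\Omega^\bullet_{X/k})$ throughout. It computes its finite realizations as $R_r\otimes_R^{L}W\Omega^\bullet_{X/k}$ by Proposition~\ref{prop:finite-kernel-calculation}, and reads off their concentration in complex degree zero from the geometric side (Proposition~\ref{prop:sheaf-fibres}). It then totalizes over an \'etale resolution and reaches $r=\infty$ through $W\Omega^\bullet_{X/k}=\varprojlim_rW_r\Omega^\bullet_{X/k}$, using only $T_{\dRW}=R\varprojlim_rT_{\dRW_r}$.

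\emph{Your route.} You globalize first, moving $R\Gamma$ past $S$ via the \v{C}ech complex. You then invoke the global identification $T_{\dRW}S_{\mathrm{FG}}\simeq\widehat{(-)}$ of \eqref{eq:NL-completion}. This is why you need $M$ to be complete, and why you import that from Illusie--Raynaud coherence.

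\emph{What each buys.} The coherence input is avoidable: your own finite computation already gives $\widehat M\simeq R\varprojlim_rR\Gamma(X,W_r\Omega^\bullet_{X/k})\simeq M$. Avoiding it is also what lets the paper's route produce coherence as an output. Together with perfectness of $\cH_{\syn}(X)$ and Corollary~\ref{thm:gauge-raynaud-equivalence}, it yields coherence of $R\Gamma(X,W\Omega^\bullet_{X/k})$ rather than assuming it. Your route, in exchange, obtains the degree-zero concentration by elementary algebra instead of through Proposition~\ref{prop:sheaf-fibres}.

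\emph{A step that needs repair.} Injectivity of $F^r$ only kills $H^{-2}$ of the complex computing $R_r\otimes_R^{L}W\Omega^\bullet$ via \eqref{eq:Raynaud-finite-resolution}. The vanishing of $H^{-1}$ needs saturation and $p$-torsion-freeness, and the argument runs as follows.
\begin{itemize}
\item Suppose $V^rz+dV^ry=0$. Applying $F^r$ and using $F^rdV^r=d$ gives $dy=-p^rz$.
\item Hence $y=F^rx$ for some $x$, since $F^r(W\Omega^\bullet)=\{w:dw\in p^rW\Omega^\bullet\}$.
\item Then $dF^r=p^rF^rd$ yields $p^r(z+F^rdx)=0$, which forces $z=-F^rdx$ by $p$-torsion-freeness.
\end{itemize}
So $(y,z)$ is the image of $x$, and the middle cohomology vanishes.
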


\begin{proof}
We compare sheaves, which identifies all four operators, Verschiebung
included, together with their coherences.  By
Lemma~\ref{lem:saturated-ladder}, $\cH_{\syn}(X)$ is the derived global
sections of $S(W\Omega^\bullet_{X/k})$.  By
Proposition~\ref{prop:finite-kernel-calculation} and
Lemma~\ref{lem:path-output}, the finite de Rham--Witt realization of
$S(W\Omega^\bullet_{X/k})$ is $R_r\otimes_R^{L}W\Omega^\bullet_{X/k}$,
with the overlap component of $V$.  By
Proposition~\ref{prop:sheaf-fibres}, this reduction is sheafwise
concentrated in complex degree zero, where
\[
 H^0(R_r\otimes_R^{L}W\Omega^\bullet_{X/k})
 =W\Omega^\bullet_{X/k}/(V^rW\Omega^\bullet_{X/k}+dV^rW\Omega^\bullet_{X/k})
 =W_r\Omega^\bullet_{X/k}.
\]
On this quotient the operators of the system $\{R_r\}$, which
Proposition~\ref{prop:finite-kernel-calculation} matches with
$\pi,F,V,d$, induce the classical restriction, Frobenius, Verschiebung
and differential.  As the form degree has finite range, these
identifications can be totalized over an \'etale resolution and passed
to derived global sections.  Finally,
$W\Omega^\bullet_{X/k}=\varprojlim_rW_r\Omega^\bullet_{X/k}$, and
derived global sections commute with this limit.
\end{proof}

\section{Multiplicative structures and duality}
\label{sec:duality-and-tensor}

In this section, we show that Ekedahl's derived equivalence respects
tensor products and duality.  The key input in both cases is the cup
product on the finite de Rham--Witt realizations $T_{\dRW_r}$, which
comes from the lax monoidal structure of pushforward.  In
\S\ref{sec:corrected-monoidality} we observe that cup products obey
Ekedahl's rules for the star product, and deduce that $T_{\dRW}$ is
symmetric monoidal by reducing to level one, where both sides are
monoidal (Theorem~\ref{thm:symmetric-monoidal-equivalence}).  In
\S\ref{sec:geometric-ekedahl-duality} we use cup products to identify
the duality of perfect $F$-gauges with Ekedahl's dualizing functor
(Theorem~\ref{cor:duality-comparison}), and deduce Poincar\'e duality
for de Rham--Witt cohomology from Poincar\'e duality for $F$-gauge cohomology
(Corollary~\ref{thm:geometric-Ekedahl-duality}).  These complete the
proof of Theorem~\ref{thm:intro-equivalence} and prove
Theorem~\ref{thm:intro-duality}.

As in Section~\ref{sec:ekedahl-monoidal}, we write
$A=(W[u,t]/(ut-p))^\wedge_p$ for the coordinate ring of $k^{\Nyg}$ in
this section.

\subsection{Tensor products}
\label{sec:corrected-monoidality}

In this subsection, we show that Ekedahl's derived equivalence is
symmetric monoidal, for the usual tensor product of $F$-gauges and
Ekedahl's star product of complexes of $R$-modules.  As an application, we
compare the K\"unneth formulas on the two sides
(Corollary~\ref{cor:geometric-kunneth}).  We begin by recalling the
star product.

\begin{definition}[{The star product; \cite[Definition~I.3.1]{Ekedahl85}}]
\label{def:star-product}
Let $M,N,P$ be graded $R$-modules.  A family of $W$-bilinear maps
$\beta:M^i\times N^j\to P^{i+j}$ is \emph{$(F,V,d)$-bilinear} if, for
$x\in M^i$ and $y\in N^j$,
\[
\begin{aligned}
 F\beta(x,y)&=\beta(Fx,Fy),\\
 V\beta(x,Fy)&=\beta(Vx,y),&
 V\beta(Fx,y)&=\beta(x,Vy),\\
 d\beta(x,y)&=\beta(dx,y)+(-1)^{i}\beta(x,dy).
\end{aligned}
\]
The \emph{star product} $M\star N$ is the graded $R$-module
corepresenting $(F,V,d)$-bilinear maps, so that
\[
 \Hom_R(M\star N,P)\simeq
 \{(F,V,d)\text{-bilinear maps }M\times N\to P\}
\]
naturally in $P$, where $\Hom_R$ denotes maps of graded $R$-modules.
We write $x\star y$ for the image of $(x,y)$ under the universal
$(F,V,d)$-bilinear map $M\times N\to M\star N$.  The unit of $\star$ is
$W$ in internal degree zero, with $F=\sigma$, $V=p\sigma^{-1}$ and
$d=0$.
\end{definition}

The star product extends a classical construction.  A Dieudonn\'e
module over $k$ is a module over $R^0$, the semilinear ring generated
by $F$ and $V$ over $W$.  For Dieudonn\'e modules, Antieau and Nikolaus
call a $W$-bilinear map \emph{$(V,F)$-bilinear} if it satisfies the
relations above involving $F$ and $V$, and the box product
$\boxtimes_W$ corepresents such maps, with unit $(W,\sigma,p\sigma^{-1})$
\cite[\S4]{AntieauNikolaus}.  The star product is thus the box product
for graded modules with $d$, the remaining relation being the Leibniz
rule.

\begin{proposition}[The star product in internal degree zero]
\label{prop:star-box-degree-zero}
For graded $R$-modules $M,N$ concentrated in nonnegative internal
degrees, the universal $(F,V,d)$-bilinear map induces a natural
isomorphism
\[
 (M\star N)^0\simeq M^0\boxtimes_W N^0.
\]
\end{proposition}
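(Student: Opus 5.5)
We compare universal properties in internal degree zero. Since $M$ and $N$ live in nonnegative internal degrees, an $(F,V,d)$-bilinear map $\beta:M\times N\to P$ has a degree-zero part $\beta^0:M^0\times N^0\to P^0$. The relations of Definition~\ref{def:star-product} involving only $F$ and $V$, restricted to degree zero, say that $\beta^0$ is $(V,F)$-bilinear in the sense of Antieau--Nikolaus. The universal map $M\times N\to M\star N$ therefore induces a natural map
\[
 \phi:M^0\boxtimes_W N^0\longrightarrow(M\star N)^0 .
\]
We will show that $\phi$ is surjective and then construct an inverse. The inverse will be a map out of $(M\star N)^0$, produced by extending a $(V,F)$-bilinear map on degree zero to an $(F,V,d)$-bilinear map on everything.

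\emph{Surjectivity.} The first step is to describe $M\star N$ by generators and relations: it is generated as an $R$-module by the symbols $x\star y$, subject to the four relations. The degree-zero part of $R$ is $R^0$, and $R^1$ raises internal degree. By Remark~\ref{lem:raynaud-normal-form}, the only way to land in degree zero is to apply an element of $R^0$ to some $x\star y$ with $|x|+|y|=0$. This forces $x\in M^0$ and $y\in N^0$, since both degrees are nonnegative. The Frobenius relation and the two $V$-relations rewrite $F(x\star y)$ and $V(x\star y)$ again as images of $M^0\times N^0$, exactly as in the box product. Hence $(M\star N)^0$ is spanned by the image of $\phi$.

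\emph{Inverse.} Take $P$ to be the graded $R$-module $P:=Q$ concentrated in internal degree zero, where $Q=M^0\boxtimes_W N^0$ and $d=0$. This is a legitimate $R$-module, because all relations of $R$ involving $d$ are trivially satisfied when $d=0$. Define $\beta$ to be the universal box-product map on $M^0\times N^0$ and zero in all positive total degrees. The $F$- and $V$-relations hold: in degree zero by construction, and elsewhere both sides vanish because $F$ and $V$ preserve internal degree. For the Leibniz rule with $x\in M^i$ and $y\in N^j$, the left side $d\beta(x,y)$ is zero, since $d=0$ on $P$. The right side involves $\beta(dx,y)$ and $\beta(x,dy)$, which have total degree $i+j+1\geq1$ and hence vanish. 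So $\beta$ is $(F,V,d)$-bilinear, and the universal property yields a map $(M\star N)^0\to Q$ that inverts $\phi$ on generators. Together with surjectivity, this makes $\phi$ an isomorphism. Naturality follows because all constructions are functorial.

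The step I expect to be the main obstacle is surjectivity, specifically ruling out degree-zero elements that arise from $d$ through some relation. The $d$-relations and the relations of $R$ could in principle combine to create extra degree-zero classes. The argument above relies on the nonnegativity hypothesis: no generator $x\star y$ lives in negative degree, so nothing can be moved into degree zero. I would make this precise by presenting $M\star N$ as a quotient of $R\otimes_W(M\otimes_W N)$ and reading off its degree-zero part.
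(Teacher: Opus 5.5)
Your proof is correct, but it obtains injectivity differently from the paper. The paper reads both generators and relations of $(M\star N)^0$ directly off the presentation of $M\star N$. The degree-zero generators are the $x\star y$ with $x\in M^0$ and $y\in N^0$. The Leibniz relations, and the $F,V$-relations on pairs of positive total degree, lie in internal degree $\geq1$. Since multiplication by $R$ does not lower internal degree, the submodule they generate has nothing in degree zero, and what remains is literally the presentation of $M^0\boxtimes_W N^0$.

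You use the presentation only for generators, to get surjectivity. You then replace the analysis of the relation submodule by a test object: $Q=M^0\boxtimes_W N^0$, made an $R$-module in internal degree zero through $R\to R/R^1=R^0$, with an $(F,V,d)$-bilinear map that is zero off $M^0\times N^0$. This universal-property argument makes it transparent why the $d$-relations impose nothing in degree zero; the paper's version is just shorter.

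Two small corrections. First, the danger from the $d$-relations concerns injectivity, since relations can only kill classes, not create them. So the obstacle you flag at the end is exactly what your test module already disposes of, and the presentation-based check you propose would simply reproduce the paper's proof. Second, in the surjectivity step, $V(x\star y)$ is not in general of the form $x'\star y'$, so the $V$-relations do not ``rewrite'' it. The correct reason is that $\phi$ is $R^0$-linear, so its image is an $R^0$-submodule containing the $R^0$-generators $x\star y$ of $(M\star N)^0$.
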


\begin{proof}
The generators of $M\star N$ in internal degree zero are the
$x\star y$ with $x\in M^0$ and $y\in N^0$.  Among them, the relations
are $W$-bilinearity and the three relations involving $F$ and $V$.
The relations involving $d$ lie in positive internal degree, and
multiplication by $R$ does not lower the internal degree.  This is the
presentation of the box product.
\end{proof}

In positive internal degree the star product of two Dieudonn\'e
modules can be nonzero, so it carries more than the box product.

\begin{definition}[{Derived and completed products; \cite[\S I.4]{Ekedahl85} and \cite[\S3.2]{LiYang}}]
\label{def:derived-star}
The symmetric monoidal structure
$(D(R),\star^{L})$ is constructed from the
star product on free modules by monoidal cocompletion and
stabilization \cite[Constructions~3.27--3.32]{LiYang}.  On homotopy
categories it extends Ekedahl's derived star product, which he defined
when one input is bounded above \cite[Proposition~I.4.5]{Ekedahl85}
\cite[Remark~3.33]{LiYang}.  Completion is a symmetric monoidal
localization, with product $M\widehat\star^{L}N:=\widehat{\,M\star^{L}N\,}$
on $\widehat D(R)$ \cite[Proposition~3.35]{LiYang}.
\end{definition}

On complexes of Dieudonn\'e modules, the derived box product
$\boxtimes_W^{L}$ is computed by free $R^0$-resolutions, and the
analogue of completion is derived $V$-adic completion,
\[
 C^\wedge_V:=R\varprojlim_r\cofib\bigl(V^r:\sigma^r_*C\to C\bigr),
\]
with transition maps given by $V$ on the source and the identity on
the target.  The Frobenius twist makes $V^r$ $W$-linear.

\begin{proposition}[Derived and completed products in internal degree zero]
\label{prop:derived-star-box}
For complexes $M,N$ of graded $R$-modules concentrated in nonnegative
internal degrees, there are natural equivalences
\[
 (M\star^{L}N)^0\simeq M^0\boxtimes_W^{L}N^0,\qquad
 (M\widehat\star^{L}N)^0\simeq\bigl(M^0\boxtimes_W^{L}N^0\bigr)^\wedge_V,
\]
where the superscript $0$ takes internal degree zero and keeps the
complex degree.
\end{proposition}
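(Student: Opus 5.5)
We prove both equivalences by reducing to free modules and then computing with the degree-zero presentation of Proposition~\ref{prop:star-box-degree-zero}. Both functors $(-)^0\circ\star^{L}$ and $\boxtimes^{L}_W\circ((-)^0,(-)^0)$ are exact in each variable and preserve colimits: the first by construction of $\star^{L}$ as a monoidal cocompletion \cite[Constructions~3.27--3.32]{LiYang}, the second because $\boxtimes_W^{L}$ is computed by free $R^0$-resolutions. Taking the internal degree zero part of a complex concentrated in nonnegative internal degrees is also exact and colimit-preserving. So it suffices to construct a natural comparison map and check it on generators. A complex in nonnegative internal degrees can be resolved, within nonnegative internal degrees, by sums of free modules $R(-n)$ with $n\geq0$. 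The relevant generators are therefore the pairs $R(-n),R(-m)$ with $n,m\geq0$.

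For the comparison map, we use the universal $(F,V,d)$-bilinear map. Restricted to internal degree zero it is a $(V,F)$-bilinear map $M^0\times N^0\to(M\star N)^0$. By the universal property of $\boxtimes_W$ it gives a natural map $M^0\boxtimes_WN^0\to(M\star N)^0$ on modules, and Proposition~\ref{prop:star-box-degree-zero} shows this map is an isomorphism. We then derive it by left Kan extension from free modules, which gives the natural map $M^0\boxtimes^{L}_WN^0\to(M\star^{L}N)^0$.

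To check the map on generators, we use $R(-n)\star R(-m)\simeq R(-n-m)$. If $n+m>0$, both sides vanish: the right side because $R(-n-m)^0=0$, the left side because at least one factor has $R(-n)^0=0$. If $n=m=0$, the right side is $R^0$ and the left side is $R^0\boxtimes_WR^0\simeq R^0$, the box product of free Dieudonn\'e modules \cite[\S4]{AntieauNikolaus}. Here we must be careful on one point. In the resolution of $M$ by free $R$-modules, the degree-zero part $(R(-n))^0$ is $R^0$ or $0$, so the induced resolution of $M^0$ is by free $R^0$-modules. This is exactly what is needed for the derived box product to be computed on the same resolution. The first equivalence follows.

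For the completed version, we apply $(-)^0$ to $\widehat{M\star^{L}N}=R\varprojlim_r R_r\otimes_R^{L}(M\star^{L}N)$. The functor $(-)^0$ commutes with limits. By the resolution \eqref{eq:Raynaud-finite-resolution}, $(R_r\otimes^{L}_RP)^0$ for $P$ in nonnegative internal degrees is $\cofib(V^r:\sigma^r_*P^0\to P^0)$. Indeed, the terms $dV^r$ and $R(-1)$ shift into internal degree $1$ and do not meet degree zero, because $R$ raises internal degree and $P$ has no negative part. The transition maps are $V$ on the source and the identity on the target. Combined with the first equivalence, this gives $(M^0\boxtimes^{L}_WN^0)^\wedge_V$.

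We expect the main obstacle to be this last degree-zero computation of $R_r\otimes^{L}_R P$. One has to verify that the summand $R(-1)$ in degree $-2$ and the $dV^r$ component contribute nothing in internal degree zero. In degree zero, the right module structure makes the map $R(-1)^0\oplus R^0\to R^0$ reduce to $z\mapsto V^rz$, and one must check this with the correct Frobenius twists. We would do this by tensoring the resolution with $P$ termwise and reading off internal degree zero using Remark~\ref{lem:raynaud-normal-form}.
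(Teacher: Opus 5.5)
Your route is the paper's. You resolve $M$ and $N$ by free graded $R$-modules generated in nonnegative internal degrees, note that their degree-zero parts are free $R^0$-resolutions, and apply Proposition~\ref{prop:star-box-degree-zero} termwise. For the completed product you use $(R_r\otimes_R^{L}K)^0\simeq\cofib(V^r:\sigma^r_*K^0\to K^0)$ and commute $(-)^0$ with the limit. The paper reads off this last formula from $(R_r)^0=R^0/V^rR^0$ and its two-term resolution by left multiplication by $V^r$, instead of from \eqref{eq:Raynaud-finite-resolution}. The two readings agree, so the step you flag as the main obstacle is routine.

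There is, however, a false claim in your check on generators: $R(-n)\star R(-m)\not\simeq R(-n-m)$, and $R^0\boxtimes_WR^0\not\simeq R^0$.
\begin{itemize}
\item The unit of $\star$ is $W$, not $R$. The product $R\star R$ is free on generators in internal degrees $0$ and $1$ (Remark~\ref{rem:star-free-level-one}), and by \eqref{eq:level-one-star} the element $d\star d$ is nonzero in internal degree $2$, where $R$ vanishes.
\item In degree zero, give $k$ the structure $F=V=0$, and let $b_\ell(x)$ be the coefficient of $F^\ell$ in the normal form of $x\in R^0$. Then $\beta(x,y)=\overline{b_1(x)}\,\overline{b_0(y)}$ is $(V,F)$-bilinear, since $b_0$ is divisible by $p$ on $FR^0+VR^0$ and $b_1$ is divisible by $p$ on $VR^0$. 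It satisfies $\beta(1,1)=0\neq\beta(F,1)$, so $R^0\boxtimes_WR^0$ is not generated by $1\boxtimes1$.
\end{itemize}

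The conclusion you need on generators is still true. For $n+m>0$, $R(-n)\star R(-m)=(R\star R)(-n-m)$ lies in internal degrees $\geq n+m>0$, so both sides vanish in degree zero. For $n=m=0$, the isomorphism $(R\star R)^0\cong R^0\boxtimes_WR^0$ is Proposition~\ref{prop:star-box-degree-zero} for $M=N=R$, which your previous paragraph already invokes for all modules. Replace the generator computation by this and the proof is complete.
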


\begin{proof}
Resolve $M$ and $N$ by free graded $R$-modules with generators in
nonnegative internal degrees.  In internal degree zero these become
free $R^0$-resolutions of $M^0$ and $N^0$, and
Proposition~\ref{prop:star-box-degree-zero} applies termwise.  This
proves the first equivalence.

For the second, let $K$ be such a complex.  Since the relation $dV^r$
has positive internal degree, $(R_r)^0=R^0/V^rR^0$.  Left
multiplication by $V^r$ is injective on $R^0$
(Remark~\ref{lem:raynaud-normal-form}), so $R^0/V^rR^0$ has a two-term
free resolution, and
\[
 (R_r\otimes_R^{L}K)^0\simeq
 \cofib\bigl(V^r:\sigma^r_*K^0\to K^0\bigr),
\]
compatibly with the transition maps.  Taking internal degree zero
commutes with inverse limits, so $(\widehat K)^0\simeq(K^0)^\wedge_V$.
Apply this to $K=M\star^{L}N$.
\end{proof}

For classical Dieudonn\'e modules $M,N$ in internal degree zero, this
recovers the completed box product in the heart:
$H^0\bigl((M\widehat\star^{L}N)^0\bigr)\simeq H^0\bigl((M\boxtimes_WN)^\wedge_V\bigr)$,
which is the completed box product of
\cite[Proposition~4.14]{AntieauNikolaus}.  Indeed $(-)^\wedge_V$ preserves
complexes in degrees $\leq0$, and on them $H^0(C^\wedge_V)$ depends
only on $H^0C$, since the quotients by $V^r$ have surjective
transition maps.

\begin{example}[The star product $k\star k$]
\label{ex:star-box-dV}
Regard $k$ as a graded $R$-module in internal degree zero, with
$F=V=d=0$.  Put $e=1\star1$.  The only relations on $e$ are
$pe=Fe=de=0$, so $k\star k=R/(Rp+RF+Rd)$, and the normal forms of
Remark~\ref{lem:raynaud-normal-form} give
\[
 (k\star k)^0=\bigoplus_{j\geq0}k\,V^je,\qquad
 (k\star k)^1=\bigoplus_{j\geq1}k\,dV^je,
\]
with $F(dV^je)=dV^{j-1}e$ and $V(dV^je)=0$ in degree one.  The part of
internal degree zero is the box product $k\boxtimes_Wk$, and its
$V$-adic completion $\prod_{j\geq0}k\,V^je$ is the completed box
product.
Completing the whole star product gives, in cohomological degree zero,
\[
 \prod_{j\geq0}k\,V^je\xrightarrow{\ d\ }\prod_{j\geq1}k\,dV^je,
\]
the elementary domino $U_1=\widehat R/\widehat R(F,d)$ of Illusie and
Raynaud \cite[I, \S2]{IllusieRaynaud83}.  This is the degree zero part
of Ekedahl's computation of $k\,\widehat\star^{L}k$, whose cohomology is
$U_1$, $U_0^{\oplus2}$ and $U_{-1}$ in degrees $0$, $-1$ and $-2$
\cite[\S III.6]{Ekedahl85}.  So the star product also records the
differentials $d(V^je)=dV^je$ of the new $V$-terms, which the box
product forgets.
\end{example}

\begin{remark}[{Free modules and level one; \cite[Chapter~I]{Ekedahl85}}]
\label{rem:star-free-level-one}
The star product of free graded $R$-modules is again free, since
$R\star R$ is free with generators in internal degrees $0,1$.  At level
one the star product becomes the tensor product over $k$: for graded
$R$-modules $M,N$, the map $x\otimes y\mapsto x\star y$ induces an
isomorphism
\begin{equation}\label{eq:level-one-star}
 (R_1\otimes_RM)\otimes_k(R_1\otimes_RN)\xrightarrow{\sim}
 R_1\otimes_R(M\star N).
\end{equation}
\end{remark}

On the side of $F$-gauges, the finite realizations carry cup pairings.

\begin{construction}[Cup pairings]
\label{con:cup-pairings}
In the description
$T_{\dRW_r}(E)\simeq(\sigma^{1-r})^*\bigl((g'_r)_*e_r^*E\bigr)[\bullet]$ of
Lemma~\ref{lem:T-r-Hodge}, the functors $e_r^*$, $(\sigma^{1-r})^*$ and
regrading are symmetric monoidal, and $(g'_r)_*$ is lax symmetric
monoidal as the right adjoint of $(g'_r)^*$.  So $T_{\dRW_r}$ is lax
symmetric monoidal, and we call its structure maps
\[
 T_{\dRW_r}(E)\otimes_{W_r}^{L}T_{\dRW_r}(E')
 \longrightarrow T_{\dRW_r}(E\otimes E')
\]
the \emph{cup pairings}.  For $r=1$ the map $g'_1$ is the identity, so
$T_{\dRW_1}(E)\simeq E_{\mathrm{Hodge}}[\bullet]$
(Lemma~\ref{lem:T-r-Hodge}) is symmetric monoidal.
\end{construction}

\begin{lemma}[Operators and cup pairings]
\label{lem:V-cup-projection}
For $E,E'\in\Dqc(k^{\syn})$ and $x$ of internal degree $i$, the cup
pairings satisfy
\[
\begin{aligned}
 \pi_r(x\cup y)&=\pi_rx\cup\pi_ry,&
 F_r(x\cup y)&=F_rx\cup F_ry,\\
 V_r(x)\cup y&=V_r(x\cup F_ry),&
 x\cup V_r(y)&=V_r(F_rx\cup y),\\
 d(x\cup y)&=dx\cup y+(-1)^ix\cup dy,
\end{aligned}
\]
where the argument of $V_r$ lies in $\sigma_*T_{\dRW_r}$ and the other
factor in $T_{\dRW_{r+1}}$.  In the limit over $r$, the cup products
$T_{\dRW}(E)\otimes_W^{L}T_{\dRW}(E')\to T_{\dRW}(E\otimes E')$ are
$(F,V,d)$-bilinear (Definition~\ref{def:star-product}).
\end{lemma}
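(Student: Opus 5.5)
We prove each identity from a general property of the geometric operation that realizes the corresponding operator.

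\emph{Restriction and Frobenius.} By Construction~\ref{def:intrinsic-geometric-V}, $\pi_r$ and $F_r$ are pullbacks along $\mathsf R_r$ and $\mathsf F_r$. These lie in the commutative squares of Proposition~\ref{prop:level-maps-compatible} over $\Phi_{\syn}$, and they restrict to the $r$-Hodge stacks as in the diagram after Remark~\ref{rem:twist-one-minus-r}. On $T_{\dRW_r}\simeq(\sigma^{1-r})^*(g'_r)_*e_r^*[\bullet]$, each operator is then a composite of three maps: the symmetric monoidal pullback $\mathsf F_r^*$ (or $\mathsf R_r^*$), the identification $\mathsf F_r^*e_{r+1}^*\simeq e_r^*\Phi_{\syn}^*$, and the Beck--Chevalley map $\mathsf F_r^*(g'_{r+1})_*\to(g'_r)_*\mathsf F_r^*$. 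The Beck--Chevalley map of a commutative square is a map of lax symmetric monoidal functors, because it is the mate of a symmetric monoidal equivalence. Regrading and Frobenius twists are symmetric monoidal (Convention~\ref{conv:shearing}). Together these give multiplicativity of $\pi_r$ and $F_r$.

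\emph{Verschiebung.} We use the geometric description of $V_r$ as the trace along the finite map $\mathsf F_r:k^{\mathrm{Hodge}_r}\to k^{\mathrm{Hodge}_{r+1}}$, with $\mathsf F_r^!\cO\simeq\cO$. Write $T'(E)=(g'_{r+1})_*e_{r+1}^*E$, so that $(g'_{r})_*e_r^*\Phi^*E\simeq(\mathsf F_r|_{0})^*$-twisted $(g'_{r+1})_*\mathsf F_{r,*}\mathsf F_r^*e_{r+1}^*E$. Then $V_r$ is induced by the counit $\mathsf F_{r,*}\mathsf F_r^*\simeq\mathsf F_{r,*}\mathsf F_r^!\to\mathrm{id}$, that is, by multiplication with $\mathsf F_{r,*}\cO\xrightarrow{\operatorname{tr}}\cO$, followed by $(g'_{r+1})_*$. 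The identity $V_r(x)\cup y=V_r(x\cup F_ry)$ is then the projection formula $\mathsf F_{r,*}(a)\otimes b\simeq\mathsf F_{r,*}(a\otimes\mathsf F_r^*b)$ together with the $\cO_{k^{\mathrm{Hodge}_{r+1}}}$-linearity of the trace, both noted after Remark~\ref{rem:twist-one-minus-r}. The second $V$-identity follows by symmetry of the cup pairing, with the Koszul signs of Convention~\ref{conv:shearing}. The step that needs the most care is compatibility with the Yoneda description of $V_r$ as a transpose. The identity $\operatorname{tr}_{e_{r+1}}(V_r(x)y)=\operatorname{tr}_{e_r}(xF_r(y))$ shows that the trace description agrees with Construction~\ref{def:intrinsic-geometric-V}. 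One must still check that the lax structure of $(g'_{r+1})_*$ commutes with the lax structure of $(g'_{r+1}\mathsf F_r)_*=(\mathsf F_r|_0)_*(g'_r)_*$. This is the standard compatibility of lax structures of pushforwards under composition, and since $\mathsf F_r|_0$ is an isomorphism onto a closed substack ($B\mathbb G_{m,W_r}\to B\mathbb G_{m,W_{r+1}}$) it reduces to the projection formula there.

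\emph{Differential.} The operator $d$ is the boundary map $\eta$ of the $D_-$-module structure on the associated graded, which is pullback along the zero section of $\mathbb A^1_{W_r,t}/\Gm$ (Construction~\ref{con:enhanced-gr}). The cup pairing is already defined at the filtered level on $i^*_{\dRW_r,+}g_{r,*}f_r^*$, since pushforward is lax monoidal before restriction to the origin. Therefore $\mathrm{gr}^\bullet$ is symmetric monoidal as a functor to $D_-$-modules, where $D_-=\mathrm{Bar}(W_r[t])^\vee$ is a bialgebra with $\eta$ primitive. Primitivity of $\eta$ is the Leibniz rule. After regrading, the Koszul sign becomes $(-1)^i$, as in Proposition~\ref{prop:Koszul-Rees-recovery}, where it reproduces the sign rule of the de Rham--Witt differential. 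One can also check this concretely on the chart Koszul algebras: $\eta(\varepsilon_t)=1$ is a derivation of $\Ksz(t)$.

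\emph{Passing to the limit.} By Lemma~\ref{lem:restriction-operator-compatibility} and the multiplicativity of $\pi_r$, the cup products form a compatible system. Taking $R\varprojlim_r$ gives $T_{\dRW}(E)\otimes^L_WT_{\dRW}(E')\to T_{\dRW}(E\otimes E')$, and the five identities pass to the limit, where they become exactly the $(F,V,d)$-bilinearity relations of Definition~\ref{def:star-product}.
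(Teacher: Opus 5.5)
Your proposal is correct and follows the paper's proof. $\pi_r$ and $F_r$ are multiplicative because they are pullbacks. The two $V$-identities come from the projection formula, that is, the $\cO_{k^{\mathrm{Hodge}_{r+1}}}$-linearity of the trace along the finite map $\mathsf F_r$, together with graded symmetry. The Leibniz rule comes from $\eta$ being a derivation on $\Ksz(t)$; your primitivity of $\eta$ in $D_-$ is the same fact. Everything passes to the limit because the cup pairings commute with $\pi_r$.

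Your extra bookkeeping (Beck--Chevalley maps as lax monoidal transformations, composing the lax structures of pushforwards) only makes explicit what the paper leaves implicit. The one imprecision is writing $\mathsf F_r^*\simeq\mathsf F_r^!$ as an identification of functors. It is needed only on objects of the form $e_{r+1}^*E$, where it follows from the two orientations. Your reformulation as multiplication by $\operatorname{tr}_{\mathsf F_r}$ is what is actually used.
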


\begin{proof}
Restriction and Frobenius are pullbacks along the restriction map
$\mathsf R_r$ and the Frobenius map $\mathsf F_r$, so they are
multiplicative.  The operator $d$ is the boundary map $\eta$ on
$\Ksz(t)$, a derivation, and regrading turns it into the graded Leibniz
rule.  Verschiebung is the trace along the finite map $\mathsf F_r$
(\S\ref{sec:geometric-verschiebung}), and the trace is linear over
$\cO_{k^{\mathrm{Hodge}_{r+1}}}$:
\[
 \operatorname{tr}_{\mathsf F_r}\bigl(x\cdot\mathsf F_r^*y\bigr)
 =\operatorname{tr}_{\mathsf F_r}(x)\cdot y .
\]
Applied to $e_{r+1}^*E$ and $e_{r+1}^*E'$ and pushed forward as in
Construction~\ref{con:cup-pairings}, this projection formula gives the
first identity for $V_r$, and graded symmetry gives the second, since
$V$ has degree zero.  The cup pairings commute with $\pi_r$, so they
pass to the limit over $r$.
\end{proof}

\begin{theorem}[Symmetric monoidal equivalence]
\label{thm:symmetric-monoidal-equivalence}
There is a symmetric monoidal refinement of
Theorem~\ref{thm:presentable-equivalence},
\[
 \bigl(\Dqc(k^{\syn}),\otimes\bigr)
 \xrightarrow[T_{\dRW}]{\sim}
 \bigl(\widehat D(R),\widehat\star^{L}\bigr).
\]
It restricts to a symmetric monoidal equivalence
$\Perf(k^{\syn})\simeq D_c^b(R)$
(Corollary~\ref{thm:gauge-raynaud-equivalence}).
\end{theorem}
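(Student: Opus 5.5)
The plan is to make $T_{\dRW}$ lax symmetric monoidal for $\widehat\star^{L}$ using the cup pairings, and then to prove that the structure maps are equivalences by reducing to level one with derived Nakayama.

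\emph{Lax structure.} By Lemma~\ref{lem:V-cup-projection}, the limit cup products
\[
 T_{\dRW}(E)\otimes_W^{L}T_{\dRW}(E')\longrightarrow T_{\dRW}(E\otimes E')
\]
are $(F,V,d)$-bilinear.  The universal property of the star product then gives maps
\[
 \mu_{E,E'}:T_{\dRW}(E)\star^{L}T_{\dRW}(E')\longrightarrow T_{\dRW}(E\otimes E').
\]
The target is complete by Theorem~\ref{thm:presentable-equivalence}, so $\mu_{E,E'}$ factors through $\widehat\star^{L}$.  To obtain the coherences, which a universal property of bilinear maps alone does not supply, I would work with the right $R$-module object $G$ of Definition~\ref{def:geometric-adjunction}.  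It suffices to equip the left adjoint $S_{\mathrm{FG}}$ with an oplax structure on free modules.  Since $R(-m)\star R(-n)$ is free (Remark~\ref{rem:star-free-level-one}), this amounts to maps
\[
 G\otimes G\longrightarrow S_{\mathrm{FG}}(R\star R)
\]
compatible with the right $R\otimes R$-actions.  These are determined by the images of the canonical classes $c$ (Remark~\ref{rem:canonical-class}).  Because the relevant mapping spaces are discrete, as in the proof of Lemma~\ref{lem:limit-action-comparison}, the coherences reduce to identities of maps, and those identities are exactly the $(F,V,d)$-bilinearity relations.  Monoidal cocompletion, following the construction of $\star^{L}$ by Li and Yang (Definition~\ref{def:derived-star}), extends this to all of $D(R)$.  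The unit comparison $W\to T_{\dRW}(\cO)$ comes from the unit of the cup pairings.  I expect this step, getting a coherent $\infty$-categorical monoidal structure rather than a homotopy-level one, to be the main obstacle.  The first thing I would try is to check the relations on the Koszul pairs of Lemma~\ref{lem:free-ladder} and to rely on discreteness of the mapping spaces for everything else.

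\emph{Equivalence.} It is enough to treat $E=S_{\mathrm{FG}}(R(-m))=G_m$ and $E'=G_n$.  Indeed, both sides of $\mu$ preserve colimits in each variable after completion, because $\widehat\star^{L}$ is a localization of the colimit-preserving $\star^{L}$ and $T_{\dRW}$ is an equivalence onto $\widehat D(R)$.  Moreover, the $G_n$ generate (Lemma~\ref{lem:N-conservative}).  Both sides are complete, so by \eqref{eq:raynaud-reduction} and derived Nakayama on $\widehat D(R)$ it suffices to check that $R_1\otimes_R^{L}\mu$ is an equivalence.  Apply $R_1\otimes_R^{L}$ to both sides.  On the source, \eqref{eq:level-one-star} gives $T_{\dRW_1}(E)\otimes_kT_{\dRW_1}(E')$.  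On the target, \eqref{eq:raynaud-reduction} gives $T_{\dRW_1}(E\otimes E')$.  Under these identifications, $R_1\otimes\mu$ becomes the level-one cup pairing.  That pairing is an equivalence because $g'_1$ is the identity and $T_{\dRW_1}=(-)_{\mathrm{Hodge}}[\bullet]$ is symmetric monoidal (Construction~\ref{con:cup-pairings}).  The one compatibility to verify is that the reduction of $\mu$ agrees with the cup pairing, which holds by construction, since the cup pairings commute with $\pi_r$.

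\emph{Perfect objects.} The restriction to $\Perf(k^{\syn})\simeq D_c^b(R)$ is immediate from Corollary~\ref{thm:gauge-raynaud-equivalence}, because perfect objects are closed under $\otimes$.
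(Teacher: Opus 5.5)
Your architecture matches the paper's: a cup pairing on free modules, a level-one check through \eqref{eq:level-one-star}, and extension along the Li--Yang cocompletion. However, two steps do not work as written.

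\textbf{The Nakayama step rests on a false principle.} Completeness alone does not let $R_1\otimes^{L}_R(-)$ detect equivalences in $\widehat D(R)$. Take $E=j_{\Delta,*}W$. Its underlying gauge is the sum of the pushforwards from the open loci $u\neq0$ and $t\neq0$ of $k^{\Nyg}$, on which $u$, respectively $t$, acts invertibly, so $E_{\mathrm{Hodge}}=E/^{L}(u,t)=0$. Then $M=T_{\dRW}(E)$ is complete and nonzero. Yet $R_1\otimes^{L}_RM\simeq T_{\dRW_1}(E)=0$ by \eqref{eq:raynaud-reduction} and Lemma~\ref{lem:T-r-Hodge}. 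By the computation of $j_\Delta^*G_n$ in Lemma~\ref{lem:N-conservative}, $M^n\simeq\prod_{\ell\geq1}W/p^\ell[n]$ for every $n$, so $M$ is unbounded.

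Level-one detection therefore needs a boundedness hypothesis, as in Ekedahl's Nakayama lemma \cite[Corollary~I.1.1.3]{Ekedahl85} for bounded-above complexes, which is the input the paper uses. Your reduction to $E=G_m$, $E'=G_n$ puts you in position to apply it, but you must supply the bounds. The source is the completion of the free module $R(-m)\star R(-n)$, hence a module in finitely many internal degrees. For the target you need Lemma~\ref{lem:finite-weight-bounds}. It applies because, by \eqref{eq:universal-G-chart}, the underlying gauge of $G_m\otimes G_n$ is a bounded complex of completed sums of $A(-j)$ with $j$ in a finite interval.

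\textbf{The coherence step is out of order.} An oplax structure on $S_{\mathrm{FG}}$ consists of maps $S_{\mathrm{FG}}(P\star Q)\to S_{\mathrm{FG}}(P)\otimes S_{\mathrm{FG}}(Q)$; you wrote the opposite direction. Only this direction is determined by the classes $c$, since $S_{\mathrm{FG}}(P\star Q)$ is a sum of the $G_k$, which corepresent $T_{\dRW}$.

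The relevant mapping spaces are pieces of $T_{\dRW}(S_{\mathrm{FG}}(P)\otimes S_{\mathrm{FG}}(Q))$, not spaces of maps among the $G_n$. So Lemma~\ref{lem:limit-action-comparison} says nothing about them. They are known to be discrete only once you know $S_{\mathrm{FG}}(P)\otimes S_{\mathrm{FG}}(Q)\simeq S_{\mathrm{FG}}(P\star Q)$ with $P\star Q$ free.

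The argument must therefore run as in the paper:
\begin{enumerate}
\item Construct the comparison map on free modules up to homotopy; it is unique, as $P\star Q$ is free.
\item Prove it is an equivalence by the bounded Nakayama argument above.
\item Only then use discreteness to obtain the operadic coherences.
\item Extend along the Li--Yang cocompletion, treating the unit through the quasi-unit $W$ and $S_{\mathrm{FG}}(W)\simeq\cO_{k^{\syn}}$.
\end{enumerate}
In that order your separate equivalence check for general $E,E'$ is unnecessary, because strong monoidality extends from free modules.
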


\begin{proof}
We make $S_{\mathrm{FG}}$ symmetric monoidal on graded free modules and
then extend it.

\emph{Free modules.}  Let $P,Q$ be graded free $R$-modules generated in
a bounded range of internal degrees.  Then so is $P\star Q$
(Remark~\ref{rem:star-free-level-one}), and the pairing
$(x,y)\mapsto\eta(x)\cup\eta(y)$ is $(F,V,d)$-bilinear
(Lemma~\ref{lem:V-cup-projection}).  It induces a map
\[
 \beta_{P,Q}:P\star Q\longrightarrow
 T_{\dRW}\bigl(S_{\mathrm{FG}}(P)\otimes S_{\mathrm{FG}}(Q)\bigr),
\]
unique up to homotopy since $P\star Q$ is free, with adjoint
$\gamma_{P,Q}:S_{\mathrm{FG}}(P\star Q)\to S_{\mathrm{FG}}(P)\otimes
S_{\mathrm{FG}}(Q)$.  By \eqref{eq:raynaud-reduction},
Proposition~\ref{prop:finite-kernel-calculation} and
\eqref{eq:level-one-star}, $R_1\otimes_R\beta_{P,Q}$ is the level-one
cup pairing, an equivalence (Construction~\ref{con:cup-pairings}).
Both sides of $\beta_{P,Q}$ are bounded above.  For the target, by
Lemmas~\ref{thm:ladder-is-quasi-inverse} and~\ref{lem:free-ladder} and
\eqref{eq:universal-G-chart}, the underlying gauges of
$S_{\mathrm{FG}}(P)$ and $S_{\mathrm{FG}}(Q)$ are two-term complexes of
completed direct sums of free graded $A$-modules with generator weights
in a finite interval, so that of
$S_{\mathrm{FG}}(P)\otimes S_{\mathrm{FG}}(Q)$ is a bounded complex of
the same kind, and
Lemma~\ref{lem:finite-weight-bounds} applies.
So $\beta_{P,Q}$ becomes an equivalence after completion, by Ekedahl's
Nakayama lemma \cite[Corollary~I.1.1.3]{Ekedahl85}.  As $\eta$ is the
completion unit \eqref{eq:NL-completion}, $T_{\dRW}(\gamma_{P,Q})$ is an
equivalence, hence so is $\gamma_{P,Q}$ (Lemma~\ref{lem:N-conservative}).

\emph{Coherence.}  A map of graded free modules sends the summand
generated in internal degree $n$ into those generated in degrees $n-1$
and $n$, and $S_{\mathrm{FG}}$, $\star$ and $\otimes$ preserve direct
sums.  So the $\gamma_{P,Q}$ extend to all graded free $P,Q$.  The
completion of such a $P$ is a module, as the maps
$R_{r+1}\otimes_RP\to R_r\otimes_RP$ are surjective, so by
Theorem~\ref{thm:presentable-equivalence} the mapping spaces between the
$S_{\mathrm{FG}}(P)$ are discrete.  Since $\gamma_{P,Q}$ identifies
$S_{\mathrm{FG}}(P)\otimes S_{\mathrm{FG}}(Q)$ with
$S_{\mathrm{FG}}(P\star Q)$, and $P\star Q$ is again graded free
\cite[Proposition~I.3.2]{Ekedahl85}, the mapping spaces from tensor
products of such objects are discrete as well.  So the $\infty$-operads involved are
nerves of ordinary categories
\cite[Propositions~2.3.4.5 and~2.3.4.18]{LurieHTT}, and $S_{\mathrm{FG}}$
is non-unital symmetric monoidal on graded free modules once
associativity and symmetry hold, and these may be checked on elements $x\star y\star z$,
where they are the associativity and graded commutativity of cup
products.

\emph{Extension and unit.}  Li and Yang obtain $\star^{L}$ from $\star$
on graded free modules by monoidal cocompletion and stabilization, with
$W$ as a quasi-unit \cite[Proposition~3.30]{LiYang}.  As $S_{\mathrm{FG}}$
preserves colimits, it extends to a non-unital symmetric monoidal
functor on $D(R)$ \cite[\S4.8.1]{LurieHA}.  The Nygaard filtration of
$W$ is $p$-adic, so $S_{\mathrm{FG}}(W)\simeq\cO_{k^{\syn}}$ by
Lemmas~\ref{thm:ladder-is-quasi-inverse} and~\ref{lem:saturated-ladder}.
So $S_{\mathrm{FG}}$ carries the quasi-unit $W$ to $\cO_{k^{\syn}}$,
and tensoring with $\cO_{k^{\syn}}$ is the identity, so $S_{\mathrm{FG}}$
is a quasi-unital map.  It is therefore symmetric monoidal, since
quasi-unital commutative algebras in $\infty$-categories and their
quasi-unital maps are unital \cite[Corollary~5.4.4.7]{LurieHA}.  By \eqref{eq:NL-completion} the
localization it induces is Raynaud completion, which is symmetric
monoidal with product $\widehat\star^{L}$.  The restriction to perfect
objects follows from Corollary~\ref{thm:gauge-raynaud-equivalence}.
\end{proof}

We now deduce Ekedahl's K\"unneth formula for de Rham--Witt cohomology
from the K\"unneth formula for $F$-gauge cohomology, which can be
checked at the Hodge point.

\begin{corollary}[{K\"unneth formula for de Rham--Witt cohomology; compare \cite[Theorem~II.1.1]{Ekedahl85} and \cite[Theorem~3.36]{LiYang}}]
\label{cor:geometric-kunneth}
Let $X$ and $Y$ be smooth proper over $k$.  The K\"unneth map
$\cH_{\syn}(X)\otimes\cH_{\syn}(Y)\to\cH_{\syn}(X\times_kY)$ is an
equivalence, and $T_{\dRW}$ carries it to an equivalence
\[
 R\Gamma(X,W\Omega^\bullet_{X/k})
 \widehat\star^{L}
 R\Gamma(Y,W\Omega^\bullet_{Y/k})
 \simeq
 R\Gamma(X\times_kY,W\Omega^\bullet_{X\times_kY/k}).
\]
\end{corollary}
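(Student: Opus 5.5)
The plan has two parts: first prove the K\"unneth equivalence on the side of $F$-gauges, then transport it to de Rham--Witt cohomology with Theorem~\ref{thm:symmetric-monoidal-equivalence} and Corollary~\ref{cor:geometric-raynaud-realization}.

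For the first part, the K\"unneth map $\kappa:\cH_{\syn}(X)\otimes\cH_{\syn}(Y)\to\cH_{\syn}(X\times_kY)$ is the map of commutative algebras induced by the two projections. Both sides are perfect $F$-gauges: $\cH_{\syn}$ of a smooth proper scheme is perfect by Bhatt's theory, and perfect objects are closed under $\otimes$. By Lemma~\ref{lem:graded-perfect-lifting}(1) their underlying gauges are derived $(u,t)$-complete. By Lemma~\ref{lem:graded-perfect-lifting}(3) it therefore suffices to check that $\kappa_{\mathrm{Hodge}}$ is an equivalence. Pullback to the Hodge point is symmetric monoidal. By Theorem~\ref{thm:ordinary-input}(6) and base change for $X^{\mathrm{Hodge}}=(X^{\Nyg})_{u=t=0}$, the map $\kappa_{\mathrm{Hodge}}$ is the graded K\"unneth map for Hodge cohomology,
\[
\bigoplus_{a+b=i}R\Gamma(X,\Omega^a)\otimes_kR\Gamma(Y,\Omega^b)\to R\Gamma(X\times Y,\Omega^i).
\]
This map is an equivalence because $\Omega^i_{X\times Y}=\bigoplus_{a+b=i}\Omega^a_X\boxtimes\Omega^b_Y$ and coherent K\"unneth holds for proper schemes over a field. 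The main care here is that $\kappa$ restricted to the Hodge point really is the cup-product K\"unneth map. This follows because $\cH_{\syn}$ is formed by pushforward along $\pi_X$, and base change along $B\mathbb G_{m,k}\to k^{\syn}$ is compatible with the exterior product.

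For the second part, apply $T_{\dRW}$. By Theorem~\ref{thm:symmetric-monoidal-equivalence}, $T_{\dRW}$ is symmetric monoidal, so $T_{\dRW}(\kappa)$ is identified with a map
\[
T_{\dRW}\cH_{\syn}(X)\,\widehat\star^{L}\,T_{\dRW}\cH_{\syn}(Y)\to T_{\dRW}\cH_{\syn}(X\times Y),
\]
and this map is an equivalence because $\kappa$ is. Corollary~\ref{cor:geometric-raynaud-realization} identifies each term with the corresponding $R\Gamma(-,W\Omega^\bullet)$, which gives the displayed equivalence.

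The main obstacle is to check that this equivalence is the expected K\"unneth map, namely the one induced by the $(F,V,d)$-bilinear cup product of de Rham--Witt complexes, and not merely an abstract equivalence. To address this, I would use Lemma~\ref{lem:V-cup-projection}: the symmetric monoidal structure on $T_{\dRW}$ is built from the cup pairings, and the identification of Corollary~\ref{cor:geometric-raynaud-realization} is multiplicative. Multiplicativity holds because Lemma~\ref{lem:saturated-ladder} and Corollary~\ref{thm:filtered-comparison} are multiplicative and the comparison is done sheafwise. Under these identifications the cup pairings at each finite level are the products $W_r\Omega_X\otimes W_r\Omega_Y\to W_r\Omega_{X\times Y}$ of pullbacks along the projections. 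The statement asserts only an equivalence, so this compatibility is a bonus, and the core argument is the Hodge-point reduction above.
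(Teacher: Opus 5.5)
Your proposal is correct and follows essentially the same route as the paper. The paper likewise uses perfectness of both sides and Lemma~\ref{lem:graded-perfect-lifting}(1) and~(3) to reduce to the Hodge point, where the map becomes coherent K\"unneth for Hodge cohomology by the multiplicative comparison of Theorem~\ref{thm:ordinary-input}(6); it then transports the equivalence with Theorem~\ref{thm:symmetric-monoidal-equivalence} and Corollary~\ref{cor:geometric-raynaud-realization}. The only cosmetic difference is that the paper applies Nakayama to the cone rather than to the map, and your final paragraph on identifying the equivalence with the classical cup product is an extra that the statement does not require.
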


\begin{proof}
The two projections and multiplication give a natural map
\[
 \cH_{\syn}(X)\otimes\cH_{\syn}(Y)
 \longrightarrow\cH_{\syn}(X\times_kY).
\]
Both sides are perfect \cite[Remark~4.2.3]{BhattFGauges}.  At the
origin $B\mathbb G_{m,k}$ of $k^{\Nyg}$, the Hodge comparison of
Theorem~\ref{thm:ordinary-input}(6), which is natural and
multiplicative, identifies this map in weight $n$ with coherent
K\"unneth:
\[
 \bigoplus_{i+j=n}
 R\Gamma(X,\Omega^i_{X/k})\otimes_k R\Gamma(Y,\Omega^j_{Y/k})
 \xrightarrow{\sim}
 R\Gamma(X\times_kY,\Omega^n_{X\times_kY/k}).
\]
This is an equivalence by the decomposition of forms on a
product and the affine \v{C}ech formula over the field $k$.
The cone is perfect with zero fibre at the Hodge point, so it vanishes
by Lemma~\ref{lem:graded-perfect-lifting}(1) and~(3).
Apply Corollary~\ref{cor:geometric-raynaud-realization} and
Theorem~\ref{thm:symmetric-monoidal-equivalence}.
\end{proof}

\begin{remark}[Failure of the K\"unneth formula at finite level]
\label{rem:finite-level-not-monoidal}
To recover the Hodge-filtered $r$-de Rham--Witt complex, the finite
realization pulls back along $e_r:k^{\mathrm{Hodge}_r}\to k^{\syn}$ and
then pushes forward along $g'_r:k^{\mathrm{Hodge}_r}\to B\mathbb G_{m,W_r}$
(Lemma~\ref{lem:T-r-Hodge}).  The pullback is symmetric monoidal, but
the pushforward is only lax symmetric monoidal.  For $r=1$ the map
$g'_1$ is the identity, so $T_{\dRW_1}$ is symmetric monoidal.  For
$r\geq2$ the cup pairings are not equivalences in general.  For
instance, for the $R$-module $M=k$ of
Example~\ref{ex:star-box-dV} and $E=S_{\mathrm{FG}}(M)$, the cup pairing
of $T_{\dRW_r}(E)$ in complex and internal degree zero is the map
$k\to\bigoplus_{0\leq j<r}k\,V^je$ with image $ke$, by
Theorems~\ref{thm:presentable-equivalence}
and~\ref{thm:symmetric-monoidal-equivalence}.  At $r=\infty$, the
de Rham--Witt realization $T_{\dRW}$ is again symmetric monoidal, for
the completed star product
(Theorem~\ref{thm:symmetric-monoidal-equivalence}).

This explains a classical phenomenon.  For rings $A$ and $B$,
$W(A)\widehat\boxtimes W(B)\cong W(A\otimes B)$
\cite[Theorem~4.16]{AntieauNikolaus}, but for $k$-algebras and $r\geq2$
the map $W_r(A)\otimes_{W_r}W_r(B)\to W_r(A\otimes_kB)$ is not
surjective in general.  For instance, the image of $W_2(k[x])\otimes_{W_2}W_2(k[y])$
misses $V[xy]$, since it is spanned by products of $[x^n]$, $V[x^n]$,
$[y^m]$ and $V[y^m]$, and the nonzero ones have a weight with an
integral coordinate for the $\mathbb Z[1/p]^2$-grading of
\cite[I, \S2]{Illusie79}.
\end{remark}

\subsection{Duality}
\label{sec:geometric-ekedahl-duality}

In this subsection, we show that Ekedahl's derived equivalence carries
the duality of perfect $F$-gauges to Ekedahl's dualizing functor
(Theorem~\ref{cor:duality-comparison}), and deduce Ekedahl's
Poincar\'e duality for de Rham--Witt cohomology
\cite[Theorem~III.2.9]{Ekedahl84} from Poincar\'e duality for $F$-gauge cohomology
\cite[Theorem~4.5.3]{BhattFGauges}
(Corollary~\ref{thm:geometric-Ekedahl-duality}).

Duality exchanges restriction with its transpose.  On de Rham--Witt
complexes, the transpose is the injective map
$\rho:W_r\Omega^\bullet_{X/k}\to W_{r+1}\Omega^\bullet_{X/k}$ induced by
multiplication by $p$, which makes $\{W_r\Omega^\bullet_{X/k}\}_r$ a
direct system \cite[\S III.2]{Ekedahl84}.

\begin{lemma}[The transpose of restriction]
\label{lem:scalar-transfer-projection}
The oriented transpose of restriction is a natural map
$\rho_r:T_{\dRW_r}\to T_{\dRW_{r+1}}$.  Under
\eqref{eq:raynaud-reduction} it is induced by the map
$\rho_r:R_r\to R_{r+1}$, $[x]\mapsto[px]$
\cite[(0.4.2)]{Ekedahl84}.  For $E,E'\in\Dqc(k^{\syn})$ it
satisfies the projection formula
\[
 \rho_r(x)\cup y=\rho_r(x\cup\pi_r y)
\]
as maps out of $T_{\dRW_r}(E)\otimes_{W_{r+1}}^{L}T_{\dRW_{r+1}}(E')$, with
$T_{\dRW_r}(E)$ viewed as a $W_{r+1}$-complex by reduction.
\end{lemma}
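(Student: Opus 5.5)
We define $\rho_r$ in the same way that Verschiebung was obtained from Frobenius in Construction~\ref{def:intrinsic-geometric-V}, with restriction in place of Frobenius. By Proposition~\ref{thm:oriented-finite-extraction} and the Yoneda lemma, restriction $\pi_r$ is, in weight $n$, precomposition with a map $G_{r,n}\to G_{r+1,n}$. We dualize the map in weight $-n$ through $G_{r,n}^\vee\simeq G_{r,-n}[2]$ and $G_{r+1,n}^\vee\simeq G_{r+1,-n}[2]$ and shift by $[-2]$. This gives a map $G_{r+1,n}\to G_{r,n}$, and $\rho_r$ is precomposition with it. Naturality in $E$ is then automatic.

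For the geometric description, recall that restriction preserves the origins. It therefore induces a map of $r$-Hodge stacks $\mathsf R_r:k^{\mathrm{Hodge}_r}\to k^{\mathrm{Hodge}_{r+1}}$ over $\Phi_{\syn}$ and over the closed immersion $B\mathbb G_{m,W_r}\to B\mathbb G_{m,W_{r+1}}$. This map is finite, since $e_r$ is finite and $e_{r+1}$ is affine. The two orientations of Lemma~\ref{lem:e-finite-oriented} give $\mathsf R_r^!\cO\simeq\cO$. As in \S\ref{sec:geometric-verschiebung}, counits compose, so $\operatorname{tr}_{e_{r+1}}\circ e_{r+1,*}\operatorname{tr}_{\mathsf R_r}=\operatorname{tr}_{e_r}$. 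Hence $\operatorname{tr}_{\mathsf R_r}$ is the transpose of $\mathsf R_r^*$ for the trace pairings, and $\rho_r$ is this trace.

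The projection formula then follows as in Lemma~\ref{lem:V-cup-projection}. The trace is linear over $\cO_{k^{\mathrm{Hodge}_{r+1}}}$, which gives $\operatorname{tr}_{\mathsf R_r}(x\cdot\mathsf R_r^*y)=\operatorname{tr}_{\mathsf R_r}(x)\cdot y$. Apply this to $e_{r+1}^*E$ and $e_{r+1}^*E'$ and push forward along $g'_{r+1}$, using the lax monoidal structure of Construction~\ref{con:cup-pairings}. The $W_{r+1}$-module structure on $T_{\dRW_r}(E)$ comes from the closed immersion $B\mathbb G_{m,W_r}\to B\mathbb G_{m,W_{r+1}}$, and this explains the tensor product over $W_{r+1}$.

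To identify $\rho_r$ under \eqref{eq:raynaud-reduction}, we use Proposition~\ref{prop:finite-kernel-calculation}. It reduces the question to the corepresenting objects, that is, to computing the right $R$-linear map $R_r(-n)\simeq T_{\dRW_r}(G_n)\to T_{\dRW_{r+1}}(G_n)\simeq R_{r+1}(-n)$. Being right $R$-linear, this map is determined by the image of $1=c$. First we compute locally, as in Proposition~\ref{prop:geometric-V-local}. On a common chart, $\mathsf R_r$ multiplies the $u$-equation by $p$: it sends $\varepsilon_u\mapsto p\varepsilon_u$ and fixes $1$ on $\Ksz(u)$, and it is the identity on $\Ksz(t)$. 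The transpose exchanges the coefficients, so $\rho_r$ multiplies $1$ by $p$ and fixes $\varepsilon_u$. The chart $k^{\Nyg}_0$ of level $r+1$ is omitted, so $\rho_r$ vanishes there. The gluing along the overlap is unique, because $B_0/^{L}u_0$ dies after inverting $u_0$, exactly as for $V_r$. Second, we express $\rho_r(1)$ in $R_{r+1}$. The simplest route is to avoid explicit identification and characterize it instead. Using $\pi_r\rho_r$ and $\rho_r\pi_r$, the local formulas show that both composites are multiplication by $p$. Since $\pi_r(1)=1$, we get $\pi_r(\rho_r(1))=p$, and moreover $\rho_r(1)$ is killed by $p^r$ in the appropriate sense. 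The remaining step is to check that the only element $z\in R_{r+1}^0$ with image $p$ in $R_r$ and compatible with the relation $\rho_r\pi_r=p$ is $z=p$. This uses the normal forms of Remark~\ref{lem:raynaud-normal-form}.

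I expect this last identification to be the main obstacle. Note that $\pi_r\circ\rho_r=p$ alone does not determine $\rho_r(1)$ in $R_{r+1}$, because of terms $aV^r$ and $adV^r$ in the kernel of $R_{r+1}\to R_r$. To pin these down I would use weight and internal-degree considerations: $\rho_r(1)$ lies in internal degree $0$, so no $dV^r$ term appears. I would also use $\rho_r\pi_r=p$, which gives $\rho_r(1)=\rho_r(\pi_r(1))=p$ directly, provided that relation is established at the level of corepresenting objects via the local Koszul formulas.
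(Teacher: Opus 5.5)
Your proposal is correct and follows essentially the paper's route. You define $\rho_r$ as the oriented transpose, that is, the trace along the finite map $\mathsf R_r$ of $r$-Hodge stacks, with local entries $1$ and $p$ on $\Ksz(u)$ and zero on the omitted chart $k^{\Nyg}_0$. The projection formula then comes from linearity of the trace, and the identification with $[x]\mapsto[px]$ from right $R$-linearity and the value on $c$.

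Your final step, $\rho_r(1)=\rho_r(\pi_r(1))=p$ via $\rho_r\pi_r=p$, is the same local computation the paper makes when it reads off $\rho_r(c)=p\,c$ from $c$ being dual to the unit of $\Ksz(u)$, with $p\simeq0$ on the first chart via $t_0\varepsilon_u$. So your detour through the uniqueness of $z\in R^0_{r+1}$ is unnecessary.
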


\begin{proof}
On charts, $\rho_r$ is described as Verschiebung is in
Construction~\ref{def:intrinsic-geometric-V}, with restriction in place
of Frobenius.  The restriction map $\mathsf R_r$ embeds the level-$r$
chain as the right sub-chain of the level-$(r+1)$ chain.  There
$\rho_r$ has entries $1$ and $p$ on the Koszul resolution of the
$u$-equation, on the first chart of the longer chain it is zero, and on
the first overlap it is the homotopy \eqref{eq:V-overlap-homotopy} with
$u$ in place of $t$.

As $\rho_r$ is natural, it commutes with the right action of $R$ on the
$G_n$, so as in the proof of
Proposition~\ref{prop:finite-kernel-calculation} it is determined by
its value on the class $c$ of Remark~\ref{rem:canonical-class}, which
corresponds to $1$.  As $c$ is dual to the unit of $\Ksz(u)$, $\rho_r$ sends it to $p$ times the class
$c$ at level $r+1$, that is, to $p$.  The zero on the first chart is consistent with
this, since there $p$ is null-homotopic through $t_0\varepsilon_u$, as
$\partial(t_0\varepsilon_u)=p$.  So $\rho_r$ is the map
$R_r\to R_{r+1}$, $[x]\mapsto[px]$.  The projection formula follows as
in the proof of Lemma~\ref{lem:V-cup-projection}, with restriction in
place of Frobenius.
\end{proof}

To dualize the pushforward along $g_r$ we need its relative
dualizing complex.

\begin{proposition}[The contraction is proper and crepant]
\label{prop:contraction-crepant}
The contraction $g_r$ is proper, and it is crepant: writing $g_r^!$
for the right adjoint of $g_{r,*}$, there is a canonical
identification $g_r^!\cO_{k^{\Nyg_r}}\simeq\cO_{k^{\NN_r}}$.  It is the
identity over $(k^{\Nyg_r})_{u\neq0}$, where $g_r$ is
an isomorphism (Lemma~\ref{lem:contraction-open-loci}).
\end{proposition}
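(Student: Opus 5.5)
Properness is local on the target, so I will check it chartwise over the affine formal stack $\Spf\bigl((W[u,t]/(ut-p^r))^\wedge_p\bigr)/\Gm$.  The plan is to identify $k^{\NN_r}$ with the $p$-adic completion of the base change along $z\mapsto p$ of the toric minimal resolution of $\Spec W[u,t,z]/(ut-z^r)$.  That resolution is a projective morphism, the blow-up sequence of the $A_{r-1}$-singularity.  Concretely, the charts $\Spec W[u_i,t_i,z]/(u_it_i-z)$ map by $u\mapsto z^iu_i$ and $t\mapsto z^{r-1-i}t_i$, and they are glued by $u_{i+1}=t_i^{-1}$ and $t_{i+1}=zt_i$.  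After setting $z=p$ these are exactly the formulas of Proposition~\ref{prop:contraction-map} and Definition~\ref{def:adjacent-transition}, with the Frobenius twists of coefficients absorbed by the $W$-linearity of $\alpha_i$.  Projectivity is preserved by base change and by $p$-completion, so $g_r$ is proper.  Alternatively, one can check the valuative criterion directly: the chain is separated because adjacent opens meet only along $t_i\neq0$, and a $\Gm$-equivariant point of the source is determined by choosing the chart according to the valuations of $u$ and $t$.

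For crepancy, I first want a canonical map $g_r^!\cO\to\cO$ or its inverse, and I plan to use the \v{C}ech formula \eqref{eq:g-pushforward}.  For a genuine $r$-gauge $\mathcal E$, the object $\RHom(g_{r,*}\mathcal E,\cO)$ is computed as a cofibre of the duals of the chart terms and the overlap terms.  The strategy is to compute $g_r^!\cO$ chart by chart.  The map $g_{r,i}\colon\Spf B_i/\Gm\to k^{\Nyg_r}$ is an open part of a proper map, and its dualizing object is $\omega_{B_i}\otimes\omega_{\mathrm{base}}^{-1}$.  Both rings are Gorenstein hypersurfaces $W[u,t]/(ut-c)$, with $\omega$ trivialized by the residue form $du/u=-dt/t$, which has weight zero.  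The key point is that under $u\mapsto p^iu_i$ we get $du/u\mapsto du_i/u_i$, since $p^i$ is a constant.  So the generator pulls back to the generator on each chart.  On overlaps, $u_{i+1}=t_i^{-1}$ gives $du_{i+1}/u_{i+1}=-dt_i/t_i=du_i/u_i$, so these local trivializations glue to a global isomorphism $g_r^!\cO\simeq\cO$.  On $(k^{\Nyg_r})_{u\neq0}$ the map $g_r$ is the isomorphism $u\mapsto u_0$, and the residue form matches tautologically, which gives the normalization claim.

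To make this rigorous in the $p$-adic formal, graded setting, I would rather avoid differential forms and argue via the trace.  The first step is to construct $\operatorname{tr}\colon g_{r,*}\cO\to\cO$ as the identity under Remark~\ref{rem:Rg-structure-sheaf}, which states $g_{r,*}\cO\simeq\cO_{k^{\Nyg_r}}$.  Its adjoint is a map $\cO\to g_r^!\cO$.  I then check that this map is an equivalence after base change to the Hodge point, or more simply after reduction modulo $p$.  Modulo $p$, the target of $g_r$ is $\Spec k[u,t]/(ut)$ modulo $\Gm$, and the source is a chain of charts $k[u_i,t_i]/(u_it_i)$, with the exceptional $\mathbb P^1$'s as described in Remark~\ref{rem:hodge-geometry}.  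For such a semistable chain of rational curves, the relative dualizing sheaf is the log canonical sheaf relative to a nodal base, which has degree $0$ on each exceptional $\mathbb P^1$ because each such line meets two nodes.  Hence this sheaf is trivial.  Derived Nakayama in the style of Lemma~\ref{lem:graded-perfect-lifting}(3) then lifts the statement $p$-adically.

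I expect the main obstacle to be making $g_r^!$ computable in the formal-stack setting, that is, establishing base change for $g_r^!$ along the closed immersion $p=0$, or along the Hodge point.  The natural route is proper base change together with the identification of $g_r^!$ as $g_r^*$ tensored with a dualizing line, which uses that $g_r$ is a proper map of finite Tor-dimension between Gorenstein stacks.  If that proves awkward, the fallback is the explicit \v{C}ech argument: compute $\RHom_{k^{\Nyg_r}}(g_{r,*}\mathcal E,\cO)$ using \eqref{eq:g-pushforward} together with the local Grothendieck duality for each affine chart map, which is a finite-type map of hypersurfaces, and then check the gluing on overlaps through the residue forms as above.
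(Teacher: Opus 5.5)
Your properness argument is the paper's. Your first crepancy argument, the weight-zero residue generator pulling back to a generator on each chart, is also correct, and it is a mild variant of the paper's.

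\textbf{Comparison with the paper.} The paper computes on the $A_{r-1}$-surface $\Spec W[z,u,t]/(ut-z^r)$. There the generator $\frac{du}{u}\wedge dz$ becomes $du_i\wedge dt_i$ because the exponent matrix of $z=u_it_i$, $u=u_i^{i+1}t_i^i$ has determinant one, and only afterwards does the paper base change along $z\mapsto p$. You base change first and work with relative $1$-forms over $W$ on the flat Gorenstein curves $W[u,t]/(ut-p^r)$ and $W^{(i)}[u_i,t_i]/(u_it_i-p)$. There $g_r^!\cO\simeq\omega_{X/W}\otimes g_r^*\omega_{Y/W}^{-1}$ with no shift, and $g_r^*(du/u)=du_i/u_i$ is immediate from $u=p^iu_i$. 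This saves the determinant computation and one base change.

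\textbf{Passage to formal stacks.} In both versions you still have to pass from finite-type $W$-schemes to the $p$-adic formal $\Gm$-stacks, which is the step you flag as the main obstacle. The paper uses base change for the right adjoint of pushforward along $W\to W/p^n$ \cite[Tag~0AA8]{StacksProject}. This applies because $p^n$ is a non-zero-divisor on the target and on every chart, so the squares are Tor-independent and the base change has finite Tor-dimension. The paper then takes the limit over $n$ and descends along $\Gm$, using that the generator has weight zero. Base change to the Hodge point would not be Tor-independent, since $k^{\mathrm{Hodge}_r}$ is derived.

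Your justification of this step is wrong as stated: $g_r$ is \emph{not} of finite Tor-dimension for $r\geq2$. If a local homomorphism $A\to B$ of Noetherian local rings has $B$ regular and of finite flat dimension over $A$, then $A$ is regular; here the chart rings are regular and the target has an $A_{r-1}$-singularity. What must have finite Tor-dimension is the base change, not $g_r$.

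\textbf{The trace route has a genuine gap.} This is the route you say you prefer for rigor. Degree zero on every exceptional $\mathbb P^1$ shows that $g_r^!\cO$ modulo $p$ is a trivial line bundle. It does not show that the specific map $\cO\to g_r^!\cO$ adjoint to $g_{r,*}\cO\simeq\cO$ is an equivalence. A function on the chain can be a unit on both end opens and still vanish on every interior component: take $u_0$ on chart $0$, $t_{r-1}$ on chart $r-1$, and $0$ on the other charts.

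Moreover, degrees cannot see the $\Gm$-linearization. The pullback along $g_r$ of a weight twist $\cO(m)$ on $k^{\Nyg_r}$ has degree zero on every exceptional line. So your computation cannot distinguish $g_r^!\cO\simeq\cO$ from $g_r^!\cO\simeq\cO(m)$.

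\textbf{How to repair it.} You could argue that the adjoint section has weight zero and is the identity over $u_0\neq0$ and over $t_{r-1}\neq0$. On the two end lines this forces $m=0$ and nonvanishing at the end nodes, and constancy on each $\mathbb P^1$ then propagates nonvanishing along the chain. That is exactly what the weight-zero generator of your first route gives at once. The trace route also does not avoid the base-change problem, since Nakayama needs $g_r^!$ to commute with reduction modulo $p$. So keep the residue argument and complete it with the Tor-independent base change described above.
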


\begin{proof}
Identify each $W^{(i)}$ with $W$ by $a^{(i)}\mapsto\sigma^i(a)$, so that
the coefficient maps in the gluings and in $g_r$ become the identity.
By Proposition~\ref{prop:contraction-map}, $g_r$ is then obtained from
the toric resolution of the $A_{r-1}$-singularity
$\Spec W[z,u,t]/(ut-z^r)$, with charts $\Spec W[u_i,t_i]$ where
\[
 z=u_it_i,\qquad u=z^iu_i,\qquad t=z^{r-1-i}t_i,
\]
by base change along $z\mapsto p$, $p$-adic completion and passage to
the $\Gm$-quotient.  The resolution is proper, as its fan subdivides
the original one without changing the support, and so is $g_r$.

It is also crepant.  By adjunction for the hypersurface $ut=z^r$, the
relative dualizing module of $W[z,u,t]/(ut-z^r)$ over $W$ is generated
by the residue
\[
 \frac{du}{u}\wedge dz=z\,d\log u\wedge d\log z,
\]
which has weight zero since $|u|=1$ and $|z|=0$.  On the $i$-th chart $z=u_it_i$ and
$u=u_i^{i+1}t_i^i$, with exponent matrix of determinant one, so
$d\log u\wedge d\log z=d\log u_i\wedge d\log t_i$ and the generator
becomes $du_i\wedge dt_i$, which has neither zeros nor poles.  So the
right adjoint of pushforward along the resolution takes $\cO$ to $\cO$,
without a shift since source and target have the same relative
dimension over $W$.

We pass to $g_r$ in three steps.  First, $z-p$ is a non-zero-divisor
on the domain $W[z,u,t]/(ut-z^r)$ and on each chart, where it equals
$u_it_i-p$.  So the base change along $z=p$ is Tor-independent and has
Tor dimension one.  The base is Noetherian and $\cO$ is bounded below,
so the right adjoint of pushforward commutes with this base change
\cite[Tag~0AA8]{StacksProject}.  Second, $p^n$ is a non-zero-divisor on
$W[u,t]/(ut-p^r)$ and on each chart $W[u_i,t_i]/(u_it_i-p)$, so the
same lemma applies to the reduction modulo $p^n$.  Third, base change
maps compose, so the identifications modulo $p^n$ and modulo $p^{n+1}$
are compatible, and their limit is an identification on the $p$-adic
completions.  It is $\Gm$-equivariant, as the generator has weight zero,
so it descends to the $\Gm$-quotient and gives
$g_r^!\cO_{k^{\Nyg_r}}\simeq\cO_{k^{\NN_r}}$.  On the $0$-th chart
$u=u_0$, so this is the identity over $(k^{\Nyg_r})_{u\neq0}$.
\end{proof}

We can now dualize one level at a time.

\begin{proposition}[Finite duality]
\label{prop:finite-extraction-preserves-duals}
\label{def:finite-duality-pairing}
For $E\in\Perf(k^{\syn})$, let $b_r$ be the cup pairing followed by
evaluation,
\[
 b_r:T_{\dRW_r}(E^\vee)\otimes_{W_r}^{L}T_{\dRW_r}(E)
 \longrightarrow T_{\dRW_r}(E^\vee\otimes E)\longrightarrow T_{\dRW_r}(\cO)=W_r,
\]
where $T_{\dRW_r}(\cO)=W_r$ by Corollary~\ref{cor:geometric-raynaud-realization}
for $X=\Spec k$.  The adjoint of $b_r$ is a natural equivalence
$T_{\dRW_r}(E^\vee)\simeq\RHomgr_{W_r}(T_{\dRW_r}(E),W_r)$.
\end{proposition}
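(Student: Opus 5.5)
We would prove finite duality geometrically, by rewriting $T_{\dRW_r}$ as a pullback followed by a pushforward and dualizing each step. By Lemma~\ref{lem:T-r-Hodge}, $T_{\dRW_r}(E)\simeq(\sigma^{1-r})^*\bigl((g'_r)_*e_r^*E\bigr)[\bullet]$, and both the Frobenius twist and regrading commute with duals (Convention~\ref{conv:shearing}). It therefore suffices to show that the cup pairing followed by evaluation,
\[
 (g'_r)_*e_r^*E^\vee\otimes(g'_r)_*e_r^*E\to(g'_r)_*e_r^*\cO=(g'_r)_*\cO\to\cO_{B\mathbb G_{m,W_r}},
\]
is perfect. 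The last arrow is a trace for $g'_r$; it should be identified with the map through which $T_{\dRW_r}(\cO)=W_r$ is computed.

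\emph{Step 1: relative duality for $g'_r$.} The map $g'_r$ is the base change of $g_r$ along $i_{\mathrm{Hodge}_r}$. By Proposition~\ref{prop:contraction-crepant}, $g_r$ is proper and crepant, $g_r^!\cO\simeq\cO$. Since $i_{\mathrm{Hodge}_r}$ is the zero locus of the regular sequence $(u,t)$, it has finite Tor dimension, so base change for $(-)^!$ (Stacks Tag~0AA8, as in the proof of the crepancy proposition) gives $(g'_r)^!\cO\simeq\cO_{k^{\mathrm{Hodge}_r}}$. Grothendieck duality for the proper map $g'_r$ then gives, for perfect $P$ on $k^{\mathrm{Hodge}_r}$,
\[
 (g'_r)_*(P^\vee)\simeq\bigl((g'_r)_*P\bigr)^\vee,
\]
via the pairing $(g'_r)_*P^\vee\otimes(g'_r)_*P\to(g'_r)_*\cO\xrightarrow{\mathrm{tr}}\cO$. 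Apply this with $P=e_r^*E$, noting that $e_r^*(E^\vee)\simeq(e_r^*E)^\vee$ because $E$ is perfect. A cleaner alternative is to argue on the $\Gm$-equivariant derived level: use the \v{C}ech formula~\eqref{eq:g-pushforward} restricted to the Koszul charts \eqref{eq:Hodge-extraction-chart-Koszul}, where duality reduces to the self-duality of the chart Koszul algebras combined with the gluing along the overlaps.

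\emph{Step 2: the trace is the evaluation to $W_r$.} We must check that $\mathrm{tr}:(g'_r)_*\cO\to\cO$ agrees with the map $T_{\dRW_r}(\cO)\to W_r$ used in $b_r$. By Remark~\ref{rem:Rg-structure-sheaf}, $g_{r,*}\cO\simeq\cO$, so $(g'_r)_*\cO\simeq\cO_{B\mathbb G_{m,W_r}}$, concentrated in weight zero. The crepant trace $g_{r,*}g_r^!\cO\to\cO$ is the identity over $u\neq0$ by Proposition~\ref{prop:contraction-crepant}, which pins it down as the identity of $\cO$; base change then gives the identity on $(g'_r)_*\cO$. This matches the identification $T_{\dRW_r}(\cO)=W_r$ of Corollary~\ref{cor:geometric-raynaud-realization}, up to a unit, and units do not affect perfectness.

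\emph{Main obstacle.} The delicate point is Step 1 for the derived, non-classical stack $k^{\mathrm{Hodge}_r}$, where the $!$-base change must be justified in the $p$-adic formal and $\Gm$-equivariant setting. We would handle it exactly as in the crepancy proof: reduce modulo $p^n$, where the sequence $(u,t)$ stays Tor-independent, then pass to limits. To double-check the result, we would verify perfectness of $b_r$ locally on the charts. There $b_r$ reduces to the pairing of Koszul complexes $\Ksz(p^iu_i)\otimes\Ksz(p^{r-1-i}t_i)$, glued along the overlaps as in Proposition~\ref{prop:geometric-V-local}. Finally, naturality in $E$ follows because every step is built from the natural adjunctions.
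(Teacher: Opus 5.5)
Your strategy agrees with the paper's: Grothendieck duality for the proper crepant contraction, the self-duality of the Koszul complex of $(u,t)$, and normalization of the trace on the locus $u\neq0$. The difference is where you dualize. The paper never forms $(g'_r)^!$. It stays on $k^{\Nyg_r}$, with $\mathcal E=f_r^*(\sigma^{1-r})^*E$, and rewrites $i_{\mathrm{Hodge}_r,*}\RHomgr_{W_r}(i^*_{\mathrm{Hodge}_r}g_{r,*}\mathcal E,W_r)$ as $R\mathcal Hom\bigl(g_{r,*}(\mathcal E\otimes g_r^*\Ksz(u,t)),\cO\bigr)[2]$, using $\Ksz(u,t)^\vee\simeq\Ksz(u,t)[-2]$ and the projection formula. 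It then applies duality for $g_r$ together with $(g_r^*\Ksz(u,t))^\vee\simeq g_r^*\Ksz(u,t)[-2]$, which holds whether or not the pulled-back equations are regular.

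Moving the duality to $g'_r$ is where your argument breaks as written. You derive $(g'_r)^!\cO\simeq\cO$ from the base-change lemma used in the crepancy proof. That lemma needs the square to be Tor-independent, not merely $i_{\mathrm{Hodge}_r}$ of finite Tor dimension. The square defining $k^{\mathrm{Hodge}_r}$ is not Tor-independent for $r\geq2$: on $k^{\Nyg}_0$ the pulled-back equations are $(u_0,p^{r-1}t_0)$, and $p^{r-1}t_0$ vanishes on $B_0/u_0=k[t_0]$ (Remark~\ref{rem:hodge-geometry}). Your proposed remedy, reducing modulo $p^n$, does not help, because the failure is already present modulo $p$. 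The regularity of $(u,t)$ on $k^{\Nyg_r}$ is not the relevant condition.

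The conclusion $(g'_r)^!\cO\simeq\cO$ is nonetheless true, and the repair uses exactly the paper's ingredient. Pass to right adjoints in the derived base change $i_{\mathrm{Hodge}_r}^*g_{r,*}\simeq g'_{r,*}i'^*_{\mathrm{Hodge}_r}$ (used in Lemma~\ref{lem:T-r-Hodge}); this gives $g_r^!\,i_{\mathrm{Hodge}_r,*}\simeq i'_{\mathrm{Hodge}_r,*}(g'_r)^!$. Since $i_{\mathrm{Hodge}_r,*}\cO=\Ksz(u,t)$ is perfect, $g_r^!\Ksz(u,t)\simeq g_r^*\Ksz(u,t)\otimes g_r^!\cO\simeq i'_{\mathrm{Hodge}_r,*}\cO$ as modules over $i'_{\mathrm{Hodge}_r,*}\cO$. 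Hence $(g'_r)^!\cO\simeq\cO$, and the rest of your Step~1 is formal.

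Your Step~2 is fine. The trace and the identification $T_{\dRW_r}(\cO)\simeq W_r$ are both isomorphisms, so they differ by a unit of $W_r$, which is all the statement needs.

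Finally, the chart-by-chart check you suggest cannot replace the global argument. For $r\geq2$ a single chart of $k^{\mathrm{Hodge}_r}$ is not proper over $B\mathbb G_{m,W_r}$: on $k^{\Nyg}_0$ its Koszul algebra contains $k[t_0]$, which lives in infinitely many weights. The bounded, self-dual answer only appears after the \v{C}ech gluing, that is, from the properness of $g_r$.
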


\begin{proof}
Put $\mathcal E=f_r^*((\sigma^{1-r})^*E)$, a perfect genuine $r$-gauge,
and $\cO=\cO_{B\mathbb G_{m,W_r}}$.  The regular sequence $(u,t)$ gives
$(i_{\mathrm{Hodge}_r,*}\cO)^\vee\simeq i_{\mathrm{Hodge}_r,*}\cO[-2]$.
With the projection formula and duality for the proper crepant map
$g_r$ (Proposition~\ref{prop:contraction-crepant} and
\cite[Tag~0GEW]{StacksProject}, applied modulo $p^n$), this gives
\begin{align*}
 &i_{\mathrm{Hodge}_r,*}\RHomgr_{W_r}(i_{\mathrm{Hodge}_r}^*g_{r,*}\mathcal E,W_r)\\
 &\qquad\simeq
 R\mathcal Hom_{\cO_{k^{\Nyg_r}}}\bigl(g_{r,*}(\mathcal E\otimes g_r^*i_{\mathrm{Hodge}_r,*}\cO),\cO_{k^{\Nyg_r}}\bigr)[2]\\
 &\qquad\simeq
 g_{r,*}\bigl(\mathcal E^\vee\otimes(g_r^*i_{\mathrm{Hodge}_r,*}\cO)^\vee\bigr)[2]\\
 &\qquad\simeq g_{r,*}(\mathcal E^\vee\otimes g_r^*i_{\mathrm{Hodge}_r,*}\cO)\\
 &\qquad\simeq i_{\mathrm{Hodge}_r,*}i_{\mathrm{Hodge}_r}^*g_{r,*}\mathcal E^\vee,
\end{align*}
using in the second step that $\mathcal E\otimes
g_r^*i_{\mathrm{Hodge}_r,*}\cO$ is perfect.  These are equivalences of
$i_{\mathrm{Hodge}_r,*}\cO$-modules, that is, of complexes on
$B\mathbb G_{m,W_r}$, and regrading commutes with duals
(Convention~\ref{conv:shearing}).

Tracing the adjunctions identifies this equivalence with cup product,
followed by evaluation and by the trace
$g_{r,*}\cO_{k^{\NN_r}}\simeq g_{r,*}g_r^!\cO_{k^{\Nyg_r}}\to\cO_{k^{\Nyg_r}}$.
This trace is inverse to the isomorphism
$\cO_{k^{\Nyg_r}}\simeq g_{r,*}\cO_{k^{\NN_r}}$ of
Remark~\ref{rem:Rg-structure-sheaf}, since both are the identity over
$(k^{\Nyg_r})_{u\neq0}$ (Proposition~\ref{prop:contraction-crepant}),
which detects the scalar.  So it has the normalization of $b_r$.
\end{proof}

\begin{definition}[{The dual finite system; \cite[\S III.2]{Ekedahl84}}]
\label{def:dual-system}
Write $\rho_r:W_r\hookrightarrow W_{r+1}$, $\bar a\mapsto p\widetilde a$,
for any lift $\widetilde a$.  For a complex $M$ of graded $R$-modules,
the \emph{dual system} of $\{R_r\otimes_R^{L}M\}_r$ has terms
$\RHomgr_{W_r}(R_r\otimes_R^{L}M,W_r)$, with operators
$\pi^D,F^D,V^D,d^D$ characterized by
\[
 \begin{aligned}
 \rho_r\bigl((\pi^Dy)(x)\bigr)&=y(\rho_r x),\\
 \rho_r\bigl((F^Dy)(x)\bigr)&=\sigma\bigl(y(V_r x)\bigr),\\
 (V^Dy)(x)&=\rho_r\bigl(\sigma^{-1}(y(F_r x))\bigr),\\
 (d^Dy)(x)&=(-1)^{n+1}y(dx),
 \end{aligned}
\]
where in the first two lines $y$ is at level $r+1$ and $x$ at level
$r$, in the third the levels are reversed, and in the last $y$ has
internal degree $n$.  The dual of the internal-degree $-n$ component
lies in internal degree $n$.
\end{definition}

The maps $\bar a\mapsto p^{-r}\widetilde a$ embed each $W_r$ into
$W[1/p]/W$, compatibly with $\rho_r$, and identify
$\RHomgr_{W_r}(C,W_r)$ with $\RHomgr_W(C,W[1/p]/W)$.  So the dual system is precomposition into
the injective $W$-module $W[1/p]/W$, with $\sigma^{\pm1}$ on values for
$F$ and $V$, and it dualizes the whole diagram with its homotopies.

\begin{lemma}[Duality of the finite system]
\label{lem:finite-system-duality}
For $E\in\Perf(k^{\syn})$, the equivalences of
Proposition~\ref{prop:finite-extraction-preserves-duals}
intertwine $\pi,F,V,d$ on $\{T_{\dRW_r}(E^\vee)\}_r$ with the dual system of
$\{T_{\dRW_r}(E)\}_r\simeq\{R_r\otimes_R^{L}T_{\dRW}(E)\}_r$.
\end{lemma}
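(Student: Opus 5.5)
Each of the four operators on $\{T_{\dRW_r}(E^\vee)\}_r$ is characterized, through the pairings $b_r$ of Proposition~\ref{prop:finite-extraction-preserves-duals}, by an adjunction identity of the form $b(\mathrm{op}(y),x)=b(y,\mathrm{op}'(x))$, with $\mathrm{op}'$ the transposed operator on $\{T_{\dRW_r}(E)\}_r$. So I would establish, one operator at a time, a projection formula for cup products together with the compatibility of the evaluation $T_{\dRW_r}(\cO)=W_r$ with the operators on the unit. Since $b_r$ is cup product followed by evaluation, each identity then reduces to the corresponding statement on $T_{\dRW_\bullet}(\cO)=\{W_r\}_r$.

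\emph{Restriction.}  Lemma~\ref{lem:scalar-transfer-projection} gives $\rho_r(x)\cup y=\rho_r(x\cup\pi_ry)$. On $\{W_r\}_r$, $\rho_r$ is $\bar a\mapsto p\widetilde a$, as computed there via $R_r\to R_{r+1}$. Hence $\rho_r(b_r(\pi y,x))=b_{r+1}(y,\rho_r x)$, which is the first line of Definition~\ref{def:dual-system}.

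\emph{Frobenius and Verschiebung.}  Lemma~\ref{lem:V-cup-projection} gives $V(x)\cup y=V(x\cup Fy)$ and $x\cup V(y)=V(Fx\cup y)$. On $T_{\dRW_\bullet}(\cO)=\{W_r\}$, Corollary~\ref{cor:geometric-raynaud-realization} for $X=\Spec k$ identifies $F$ with $\sigma$ and $V$ with $p\sigma^{-1}$, which in terms of $\rho$ is $\rho\circ\sigma^{-1}$. Evaluating the two projection formulas gives the $F^D$ and $V^D$ lines. The twist convention of Remark~\ref{rem:twist-one-minus-r} supplies the $\sigma^{\pm1}$ on values.

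\emph{Differential.}  The Leibniz rule of Lemma~\ref{lem:V-cup-projection} applies to the pairing into $T_{\dRW_r}(\cO)$, on which $d=0$ (it is $W_r$ in internal degree $0$). This gives $0=d^Dy\cdot x+(-1)^n y(dx)$, with the sign as in Definition~\ref{def:dual-system}.

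\emph{Coherence.}  Finally, the equivalence $\{T_{\dRW_r}(E)\}_r\simeq\{R_r\otimes^L_RT_{\dRW}(E)\}_r$ with its operators is \eqref{eq:raynaud-reduction}. The dual system is precomposition into the injective $W[1/p]/W$, so it is functorial on the whole diagram with homotopies.

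\emph{Main obstacle.}  The hard part is coherence, not the four identities in the homotopy category. $V$ and $\rho$ carry overlap homotopies (\eqref{eq:V-overlap-homotopy} and its $u$-analogue), and the dualized diagram must match these. I would handle this by the Yoneda argument already used in Proposition~\ref{prop:finite-kernel-calculation}. Both systems are natural in $E$ and are corepresented by the $G_{r,n}$, whose duals are $G_{r,-n}[2]$ (Proposition~\ref{thm:oriented-finite-extraction}). By construction (Construction~\ref{def:intrinsic-geometric-V}), $V$ is the oriented transpose of $F$ under exactly this duality, and $\rho$ is the oriented transpose of $\pi$. Thus on the corepresenting objects, transposition of $F,\pi$ to $V,\rho$ is tautological. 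It therefore suffices to check that the orientation used for $G_{r,n}^\vee$ induces $b_r$, which is the trace normalization at the end of the proof of Proposition~\ref{prop:finite-extraction-preserves-duals}. Because the relevant mapping spaces between the $T_{\dRW_r}(G_n)\simeq R_r(-n)$ are discrete (Lemma~\ref{lem:path-output}), checking the identities on the class $c$ of Remark~\ref{rem:canonical-class} then determines all coherences.
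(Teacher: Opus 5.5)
Your first five paragraphs are the paper's proof. You compute $\pi$, $\rho$, $F$, $V$, $d$ on the unit $T_{\dRW_r}(\cO)=W_r$, apply the projection formulas of Lemmas~\ref{lem:V-cup-projection} and~\ref{lem:scalar-transfer-projection} to the evaluation $E^\vee\otimes E\to\cO$, use the Leibniz rule with $d=0$ on $W_r$, and dualize into the injective $W$-module $W[1/p]/W$. One small correction: the $\sigma^{\pm1}$ on values come from $F_r=\sigma\circ\pi_r$ and $V_r=\rho_r\circ\sigma^{-1}$ on the unit, not from Remark~\ref{rem:twist-one-minus-r}.

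The Yoneda argument of your last paragraph should be dropped, because as written it does not work.
\begin{itemize}
\item \emph{Wrong objects.} Lemma~\ref{lem:path-output} and the class $c$ of Remark~\ref{rem:canonical-class} concern the colimits $G_n$. These are not perfect and carry no duality: the paper notes after Lemma~\ref{lem:e-finite-oriented} that the pairings do not assemble into a perfect pairing at $r=\infty$. The finite duality lives on the $G_{r,n}$ and on $G_{r,n}^\vee\simeq G_{r,-n}[2]$, for which no discreteness is proved. So checking on $c$ determines nothing here.
\item \emph{A hidden compatibility.} Your reduction to ``the orientation used for $G_{r,n}^\vee$ induces $b_r$'' is a real statement. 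It compares the orientation of $e_r$ with the pairing $b_r$, whose perfectness comes from the Koszul self-duality of $i_{\mathrm{Hodge}_r}$ and the crepancy of $g_r$. This is not the trace normalization at the end of Proposition~\ref{prop:finite-extraction-preserves-duals}, which only matches $g_{r,*}g_r^!\cO\to\cO$ with the inverse of $\cO\simeq g_{r,*}\cO$.
\item \emph{Circularity.} That the transpose $V$ of $F$ for the $G$-duality is also its transpose for $b_r$ is exactly what the projection formulas give. So the detour presupposes what it is meant to establish.
\end{itemize}

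It is also unnecessary. The overlap homotopies, such as \eqref{eq:V-overlap-homotopy}, are already contained in the projection formulas. These formulas express the $\cO_{k^{\mathrm{Hodge}_{r+1}}}$-linearity of the trace along $\mathsf F_r$, and of the analogous trace for $\rho$. That is a global statement about module maps on the $r$-Hodge stack, not a chartwise identity. This is how the paper concludes: the projection formulas hold as module maps with their gluing homotopies, and Hom into $W[1/p]/W$ carries the whole diagram.
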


\begin{proof}
On the unit $\cO_{k^{\syn}}=S_{\mathrm{FG}}(W)$, \eqref{eq:raynaud-reduction}
gives $R_r\otimes_R^{L}W=W_r$, with restriction the reduction, its
transpose $\rho_r$, $F_r=\sigma\circ\pi_r$, $V_r=\rho_r\circ\sigma^{-1}$
and $d=0$.  Applying the projection formulas of
Lemmas~\ref{lem:V-cup-projection} and~\ref{lem:scalar-transfer-projection}
to the evaluation $E^\vee\otimes E\to\cO$ gives, with levels as in
Definition~\ref{def:dual-system},
\[
 \begin{aligned}
 \rho_r b_r(\pi_r y,x)&=b_{r+1}(y,\rho_r x),\\
 \rho_r b_r(F_r y,x)&=\sigma b_{r+1}(y,V_r x),\\
 b_{r+1}(V_r y,x)&=\rho_r\sigma^{-1}b_r(y,F_r x),\\
 b_r(dy,x)&=(-1)^{n+1}b_r(y,dx),
 \end{aligned}
\]
the last by the Leibniz rule and $d=0$ on $W_r$.  These are the
formulas of Definition~\ref{def:dual-system}.  The projection formulas
hold as module maps with their gluing homotopies, and passage to Hom
into $W[1/p]/W$ preserves the whole diagram.
\end{proof}

\begin{definition}[{Ekedahl's dualizing functor; \cite[\S III.3]{Ekedahl84}}]
\label{def:raynaud-dual}
Let $\check R=\varprojlim_r\Hom_{W_r}(R_r,W_r)$, with transition maps
dual to $\rho_r:R_r\to R_{r+1}$.  It has two left $R$-module
structures, the first from the right action of $R$ on each $R_r$ and
the second from the operators $F,V,d$ on the system $\{R_r\}$.
\emph{Ekedahl's dualizing functor} is $\Dual_R(M)=\RHom_R(M,\check R)$,
formed for the first structure, with $R$ acting on the result through
the second.
\end{definition}

\begin{proposition}[{Finite reductions of the dual; \cite[Proposition~III.4.2]{Ekedahl84} and \cite[\S3.1]{LiYang}}]
\label{prop:raynaud-duality}
For $M\in D_c^b(R)$, the finite reductions are
$R_r\otimes_R^{L}\Dual_R(M)\simeq\RHomgr_{W_r}(R_r\otimes_R^{L}M,W_r)$, and under this
identification the reduction system of $\Dual_R(M)$ is the dual
system of $\{R_r\otimes_R^{L}M\}_r$.
\end{proposition}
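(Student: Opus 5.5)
The plan is to compute $R_r\otimes_R^{L}\Dual_R(M)$ by first reducing to free modules and then passing to the limit. Write $\check R_r:=\Hom_{W_r}(R_r,W_r)$, so that $\check R=\varprojlim_r\check R_r$. Since the transition maps $\check R_{r+1}\to\check R_r$ are dual to the injections $\rho_r$, and $W_r\subset W[1/p]/W$ is injective, they are surjective. Hence $\check R=R\varprojlim_r\check R_r$.

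\textbf{Free modules.} First compute $R_r\otimes_R^{L}\check R$ for the second $R$-structure. Use the resolution \eqref{eq:Raynaud-finite-resolution} of $R_r$: the reduction is the complex $\check R(-1)\to\check R(-1)\oplus\check R\to\check R$ with maps $dV^r$, $V^r$ and $F^r$, $F^rd$ acting through the operators on the system. Since \eqref{eq:Raynaud-finite-resolution} commutes with limits, this equals $R\varprojlim_s$ of the same complex for $\check R_s$. Via $\check R_s=\Hom_W(R_s,W[1/p]/W)$ and the description of $V$ on the dual system as $\rho\circ\sigma^{-1}\circ F^*$, the complex at level $s\geq r$ is the $W[1/p]/W$-dual of the complex computing the kernel of $\pi^{s-r}$ composed with the quotient. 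Dualizing the exact sequence \eqref{eq:Raynaud-finite-resolution} read for the left action then gives $R_r\otimes_R^{L}\check R\simeq\Hom_{W_r}(R_r,W_r)=\check R_r$, concentrated in degree zero, compatibly with the system maps. This Mittag-Leffler and duality bookkeeping is the main obstacle. The alternative I would try first is Ekedahl's explicit normal-form computation \cite[Proposition~III.4.2]{Ekedahl84}, which the statement cites, transported through Remark~\ref{lem:raynaud-normal-form}.

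\textbf{General $M$.} For $M=R(-n)$, $\Dual_R(M)=\check R(n)$ and $\RHomgr_{W_r}(R_r(-n),W_r)=\check R_r(n)$, so the free case gives the claim. For general $M\in D_c^b(R)$, use Ekedahl's d\'evissage. Coherent complexes are built in finitely many steps, up to completion, from free modules of finite type and the dominoes; equivalently, by Corollary~\ref{thm:gauge-raynaud-equivalence}, $M$ is the image of a perfect $F$-gauge. Both sides are exact in $M$, and there is a natural map
\[
R_r\otimes_R^{L}\RHom_R(M,\check R)\to\RHomgr_{W_r}(R_r\otimes_R^{L}M,W_r),
\]
obtained by evaluation against $\check R\to\check R_r\to W_r$ (pairing with the class of $1$). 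Since $R_r\otimes^{L}_R$ is a finite complex of free $R$-modules, it commutes with $\RHom_R(M,-)$ for $M$ with a resolution by finite free modules in each degree. Coherent $M$ admits such resolutions up to completion, and $\check R$ is complete, so the map is an equivalence.

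\textbf{Operators.} Finally, the operators $\pi,F,V,d$ on the reduction system of $\Dual_R(M)$ come from the second structure on $\check R$, which is dual to $\rho$, $V$, $F$, $d$ on $\{R_r\}$. The evaluation pairing therefore transforms them as in Definition~\ref{def:dual-system}; the sign $(-1)^{n+1}$ arises from graded Hom with the Koszul convention. This is checked on the free modules and extends by naturality.
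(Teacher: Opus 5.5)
Your route is genuinely different from the paper's, whose proof is purely by citation. The paper takes the equivalence of finite reductions from \cite[Proposition~III.4.2]{Ekedahl84}. It takes the compatibility with $\pi$, $F$, $V$, and an \emph{unsigned} action of $d$ by precomposition, from Li--Yang \cite[\S3.1]{LiYang}. It then multiplies by $(-1)^{n(n+1)/2}$ in internal degree $n$ to get the sign $(-1)^{n+1}$ of Definition~\ref{def:dual-system}.

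You instead isolate the bimodule identity $R_r\otimes^L_R\check R\simeq\check R_r$ (second structure, linear for the first, compatible with the operators) and derive the rest formally. This organization is sound, and done right it gives the formula for every $M\in D(R)$, which is more than the paper states. But the formal step is simpler than you make it. It is $R_r$, not $M$, that has the finite free resolution \eqref{eq:Raynaud-finite-resolution}, and the two structures on $\check R$ commute. So $R_r\otimes^L_R\RHom_R(M,\check R)\simeq\RHom_R(M,R_r\otimes^L_R\check R)$ for arbitrary $M$. Then $\RHom_R(M,\check R_r)\simeq\RHomgr_{W_r}(R_r\otimes^L_RM,W_r)$ by adjunction and self-injectivity of $W_r$. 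You can drop the d\'evissage through dominoes (which your free case would not cover anyway), the resolutions of coherent $M$ up to completion, and the completeness of $\check R$.

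The bimodule identity is the whole content, and your direct sketch of it does not work as written.
\begin{itemize}
\item The left operators do not act on a single $R_r$, because $V^rR+dV^rR$ is not a left ideal ($F\cdot dV^r=dV^{r-1}$). So there is no left-module form of \eqref{eq:Raynaud-finite-resolution} to dualize.
\item Since $\check R_s\to\check R_r$ is dual to $\rho^{s-r}$, the submodule that appears after dualizing into $W[1/p]/W$ is the image of $\rho^{s-r}$, not a kernel of $\pi^{s-r}$.
\item What has to be proved is exactness of a level-mixing sequence. For $r=1$ at the first spot this says $\{x\in R_s: Fx=0=Fdx\ \text{in}\ R_{s-1}\}=p^{s-1}R_s$. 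You also need the analogous statements at the other two spots and a Mittag-Leffler check. This is a normal-form computation.
\item Since $R$ is not coherent, this case is not an instance of the proposition. You must take it from Ekedahl's computation or carry it out yourself.
\end{itemize}

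Finally, the sign. With the unsigned convention that makes the two structures commute, your evaluation pairing produces unsigned precomposition by $d$. The sign of Definition~\ref{def:dual-system} does not come for free from a Koszul convention, which would make the two structures graded-commute instead. It comes from the renormalization by $(-1)^{n(n+1)/2}$ that the paper uses.
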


\begin{proof}
The equivalence of finite reductions is
\cite[Proposition~III.4.2]{Ekedahl84}.  Li and Yang show that it
carries the transition maps and $F,V$ of the reduction system to
precomposition with $\rho_r$ and to the maps of
Definition~\ref{def:dual-system}.  They let $d$ act by unsigned
precomposition.  Multiplication by $(-1)^{n(n+1)/2}$ in internal degree
$n$ turns this into the signed action of
Definition~\ref{def:dual-system} and leaves $\pi,F,V$ unchanged.
\end{proof}

Putting the two sides together gives the comparison.

\begin{theorem}[Duality]\label{cor:duality-comparison}
Ekedahl's derived equivalence is compatible with duality: for
$E\in\Perf(k^{\syn})$ and $M\in D_c^b(R)$ there are natural
equivalences
\[
 T_{\dRW}(E^\vee)\simeq\Dual_R(T_{\dRW}(E)),\qquad
 S(\Dual_RM)\simeq S(M)^\vee.
\]
In particular, on $D_c^b(R)$ Ekedahl's dualizing functor is the
duality for the completed star product.
\end{theorem}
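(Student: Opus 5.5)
Both sides are complete objects of $D(R)$, so the plan is to compare their finite reductions as systems with operators and then pass to the limit. Put $M=T_{\dRW}(E)$, which lies in $D_c^b(R)$ by Corollary~\ref{thm:gauge-raynaud-equivalence}; so does $E^\vee$, hence $T_{\dRW}(E^\vee)\in D_c^b(R)$. For each $r$, Theorem~\ref{thm:presentable-equivalence} together with Proposition~\ref{prop:finite-extraction-preserves-duals} gives
\[
 R_r\otimes_R^{L}T_{\dRW}(E^\vee)\simeq T_{\dRW_r}(E^\vee)\simeq\RHomgr_{W_r}(T_{\dRW_r}(E),W_r)\simeq\RHomgr_{W_r}(R_r\otimes_R^{L}M,W_r).
\]
By Lemma~\ref{lem:finite-system-duality}, these equivalences carry the operators $\pi,F,V,d$ on the left to the dual system of $\{R_r\otimes_R^{L}M\}_r$. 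On the other side, Proposition~\ref{prop:raynaud-duality} identifies the reduction system of $\Dual_R(M)$ with the same dual system. Composing, we obtain an equivalence of towers $\{R_r\otimes_R^{L}T_{\dRW}(E^\vee)\}_r\simeq\{R_r\otimes_R^{L}\Dual_R(M)\}_r$ that is compatible with transition maps and with $F,V,d$.

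The main obstacle is promoting this tower equivalence to an equivalence of $R$-modules. The operators are only specified level by level with homotopies, and an $R$-module structure needs the full coherences. To handle this, rewrite both towers as precomposition into the injective $W$-module $W[1/p]/W$, as in the remark after Definition~\ref{def:dual-system}. Both systems are then functorial in the single system $\{T_{\dRW_r}(E)\}_r\simeq\{R_r\otimes_R^{L}M\}_r$, which Theorem~\ref{thm:presentable-equivalence} already identifies coherently. The comparison is therefore $\RHom_W(-,W[1/p]/W)$ applied to a coherent equivalence of diagrams.

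The cleanest way to see this on the $R$-module side is through $\check R$. Since $\check R=\varprojlim_r\Hom_{W_r}(R_r,W_r)$ and $\Hom_{W_r}(-,W_r)=\Hom_W(-,W[1/p]/W)$ on $p^r$-torsion modules, we get
\[
 \Dual_R(M)\simeq\RHom_R(M,\check R)\simeq R\varprojlim_r\RHomgr_W(R_r\otimes_R^{L}M,W[1/p]/W)
\]
as $R$-modules via the second structure on $\check R$. By Theorem~\ref{thm:presentable-equivalence}, the right-hand side is $R\varprojlim_r\RHomgr_{W_r}(T_{\dRW_r}(E),W_r)$. On the gauge side, completeness of $T_{\dRW}(E^\vee)$ gives $T_{\dRW}(E^\vee)\simeq R\varprojlim_r T_{\dRW_r}(E^\vee)$. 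The coherent equivalence of Lemma~\ref{lem:finite-system-duality} then passes to the limit as an equivalence of $R$-modules.

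If the coherence of the $R$-action is hard to check directly, a fallback is available. Dualize the corepresenting objects using $G_{r,n}^\vee\simeq G_{r,-n}[2]$ from Proposition~\ref{thm:oriented-finite-extraction}. This reduces the check to discrete mapping spaces, as in Lemma~\ref{lem:limit-action-comparison}, where coherences are automatic.

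The second equivalence follows formally. Apply the first equivalence to $E=S(M)$ and use $T_{\dRW}S(M)\simeq M$ from Corollary~\ref{thm:gauge-raynaud-equivalence} to get $T_{\dRW}(S(M)^\vee)\simeq\Dual_R(M)$. Applying $S$, which is the inverse equivalence on $D_c^b(R)$, gives $S(\Dual_RM)\simeq S(M)^\vee$.

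For the final sentence: under a symmetric monoidal equivalence (Theorem~\ref{thm:symmetric-monoidal-equivalence}), duals in $\Perf(k^{\syn})$ correspond to duals for $\widehat\star^{L}$ in $D_c^b(R)$. The first equivalence identifies the latter with $\Dual_R$.
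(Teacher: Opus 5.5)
Your proposal is correct and follows the same route as the paper's proof. You combine \eqref{eq:raynaud-reduction}, Proposition~\ref{prop:finite-extraction-preserves-duals} and Proposition~\ref{prop:raynaud-duality} level by level, and use Lemma~\ref{lem:finite-system-duality}, which rests on Lemma~\ref{lem:scalar-transfer-projection} identifying the geometric transpose of restriction with $[x]\mapsto[px]$, to match the full inverse systems; you then pass to the derived limit using that both sides are complete, and deduce the second equivalence from $E=S(M)$ and the last statement from Theorem~\ref{thm:symmetric-monoidal-equivalence}. Your treatment of coherences, via precomposition into $W[1/p]/W$ and $\Dual_R(M)\simeq R\varprojlim_r\RHomgr_W(R_r\otimes_R^{L}M,W[1/p]/W)$, is the mechanism the paper relies on in the remark after Definition~\ref{def:dual-system}, so the fallback through the $G_{r,n}$ is not needed.
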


\begin{proof}
Combine \eqref{eq:raynaud-reduction},
Proposition~\ref{prop:finite-extraction-preserves-duals}, and
Proposition~\ref{prop:raynaud-duality}.
Lemma~\ref{lem:scalar-transfer-projection} identifies the
geometric $\rho_r$ with the map induced by $\rho_r:R_r\to R_{r+1}$,
and Lemma~\ref{lem:finite-system-duality} makes these comparisons
an equivalence of the full inverse systems.
Taking the derived inverse limit therefore gives an
$R$-linear equivalence, both objects being complete.
The second equivalence is the first for $E=S(M)$, which is perfect
with $T_{\dRW}(E)\simeq M$ by
Corollary~\ref{thm:gauge-raynaud-equivalence}.  The last statement
follows from Theorem~\ref{thm:symmetric-monoidal-equivalence}, which
preserves evaluation and coevaluation.
\end{proof}

We now apply the comparison to $F$-gauge cohomology.  Poincar\'e
duality for $F$-gauge cohomology involves a Breuil--Kisin twist, which
$T_{\dRW}$ turns into a shift
(Corollary~\ref{lem:Koszul-BK-shift}).

\begin{corollary}[{Poincar\'e duality for de Rham--Witt cohomology; compare \cite[Theorem~III.2.9]{Ekedahl84}}]
\label{thm:geometric-Ekedahl-duality}
Let $X/k$ be smooth proper of pure dimension $N$.
Poincar\'e duality for $F$-gauge cohomology induces
\[
 \Dual_R(R\Gamma(X,W\Omega^\bullet_{X/k}))(-N)[-N]\xrightarrow{\sim}R\Gamma(X,W\Omega^\bullet_{X/k}).
\]
At finite length this is
\[
 \RHomgr_{W_r}
   \bigl(R\Gamma(X,W_r\Omega^\bullet_{X/k}),W_r\bigr)
   (-N)[-N]
 \xrightarrow{\sim}
 R\Gamma(X,W_r\Omega^\bullet_{X/k}).
\]
These equivalences carry the dual system of
Definition~\ref{def:dual-system} to $\pi,F,V,d$.  In particular,
they exchange Frobenius and Verschiebung under transpose.
\end{corollary}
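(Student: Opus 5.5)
The plan is to start from Poincaré duality for $F$-gauge cohomology \cite[Theorem~4.5.3]{BhattFGauges}, which for $X$ smooth proper of pure dimension $N$ gives an equivalence
\[
 \cH_{\syn}(X)^\vee\{-N\}[-2N]\simeq\cH_{\syn}(X)
\]
of perfect $F$-gauges, induced by cup product followed by the trace $\cH_{\syn}(X)\to\cO\{-N\}[-2N]$. The twist and shift conventions will be fixed by comparing with the Hodge point, where this must become Serre duality $R\Gamma(X,\Omega^i)^\vee\simeq R\Gamma(X,\Omega^{N-i})[N]$. Apply $T_{\dRW}$. By Theorem~\ref{cor:duality-comparison}, $T_{\dRW}(\cH_{\syn}(X)^\vee)\simeq\Dual_R(T_{\dRW}\cH_{\syn}(X))$. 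By Corollary~\ref{lem:Koszul-BK-shift}, $T_{\dRW}$ carries $\{-N\}$ to $\{-N\}=(-N)[N]$. So the combined shift is $(-N)[N-2N]=(-N)[-N]$. Corollary~\ref{cor:geometric-raynaud-realization} identifies $T_{\dRW}\cH_{\syn}(X)$ with $R\Gamma(X,W\Omega^\bullet_{X/k})$, which gives the first displayed equivalence. I will check the shift bookkeeping at level one: there $R_1\otimes\Dual_R$ is the $k$-linear dual (Proposition~\ref{prop:raynaud-duality}), and the result should reproduce Serre duality on Hodge cohomology placed in internal degree $i$.

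For the finite-length statement, apply $R_r\otimes_R^{L}(-)$ and use \eqref{eq:raynaud-reduction}, or directly apply $T_{\dRW_r}$ to the $F$-gauge equivalence. On the left, Proposition~\ref{prop:finite-extraction-preserves-duals} gives $T_{\dRW_r}(E^\vee)\simeq\RHomgr_{W_r}(T_{\dRW_r}(E),W_r)$, and Lemma~\ref{lem:BK-finite} turns the twist into $(-N)[-N]$ after the shift. Corollary~\ref{cor:geometric-raynaud-realization} (or Corollary~\ref{thm:filtered-comparison}) identifies $T_{\dRW_r}\cH_{\syn}(X)$ with $R\Gamma(X,W_r\Omega^\bullet_{X/k})$.

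Compatibility with operators is where the content lies. By Lemma~\ref{lem:finite-system-duality}, the equivalences of Proposition~\ref{prop:finite-extraction-preserves-duals} intertwine $\pi,F,V,d$ on $T_{\dRW_r}(E^\vee)$ with the dual system. On the other side, the $F$-gauge equivalence is a map of $F$-gauges, so by naturality it commutes with $\pi,F,V,d$ of Construction~\ref{def:intrinsic-geometric-V}. The twist $\{-N\}$ is compatible with these operators because it is an internal shift (proof of Corollary~\ref{lem:Koszul-BK-shift}). Composing the two gives the claim. Since $F^D$ is defined through $V$ and $V^D$ through $F$ in Definition~\ref{def:dual-system}, the equivalence exchanges Frobenius and Verschiebung under transpose, and restriction with $\rho_r$ (Lemma~\ref{lem:scalar-transfer-projection}).

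The main obstacle I expect is sign and normalization control. The Breuil--Kisin twist carries a nontrivial Frobenius structure $p^{-m}\sigma$ on $k^\Delta$. I must verify that under Lemma~\ref{lem:BK-finite} it induces no extra scalar factor on $F$ or $V$, as opposed to a factor of $p^{\pm N}$. I would test this on $X=\mathbb P^1$ with $N=1$, where the trace class lives in $H^1(\Omega^1)$ and $F$ acts on it by $1$ in the de Rham--Witt normalization. I also need the signs $(-1)^{n+1}$ in $d^D$ to match the regrading signs of Convention~\ref{conv:shearing}.
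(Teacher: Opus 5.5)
Your proposal is correct and follows essentially the paper's proof. You transport Bhatt's Poincar\'e duality $\cH_{\syn}(X)^\vee\simeq\cH_{\syn}(X)\{N\}[2N]$ by Theorem~\ref{cor:duality-comparison}, Corollary~\ref{lem:Koszul-BK-shift} and Corollary~\ref{cor:geometric-raynaud-realization}, get the finite version from Proposition~\ref{prop:finite-extraction-preserves-duals} and Lemma~\ref{lem:BK-finite}, and get the operator compatibility from Lemma~\ref{lem:finite-system-duality}; the one step the paper adds is a reduction to $X$ connected and then geometrically connected, replacing $k$ by $k'=H^0(X,\cO_X)$ and using the trace of $W_r(k')/W_r(k)$ to compare the duals, before it cites Bhatt's theorem.
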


\begin{proof}
Both sides are additive in $X$, so we may assume $X$ connected.
Then $k'=H^0(X,\cO_X)$ is a finite separable extension of $k$ over
which $X$ is geometrically connected.  The de Rham--Witt complex of
$X$ is the same over $k$ and over $k'$, and the trace of the finite
\'etale extension $W_r(k')/W_r(k)$ identifies
$\RHomgr_{W_r(k')}(-,W_r(k'))$ with $\RHomgr_{W_r(k)}(-,W_r(k))$,
compatibly with $\sigma$, reduction and $\rho_r$.  So we may replace
$k$ by $k'$ and assume $X$ geometrically connected.

Poincar\'e duality for $F$-gauge cohomology, given by the trace pairing, then gives
\[
 \cH_{\syn}(X)^\vee\simeq\cH_{\syn}(X)\{N\}[2N]
\]
in $\Perf(k^{\syn})$ \cite[Theorem~4.5.3]{BhattFGauges}, a result of
Berthelot \cite{Berthelot97} and Tang \cite{TangPrismaticPD}.
Theorem~\ref{cor:duality-comparison} and
Corollary~\ref{lem:Koszul-BK-shift} give
\[
 \Dual_R\bigl(T_{\dRW}\cH_{\syn}(X)\bigr)
 \simeq T_{\dRW}\cH_{\syn}(X)\{N\}[2N]
 =T_{\dRW}\cH_{\syn}(X)(N)[N],
\]
and Corollary~\ref{cor:geometric-raynaud-realization} identifies
$T_{\dRW}\cH_{\syn}(X)$ with $R\Gamma(X,W\Omega^\bullet_{X/k})$.
The finite formula follows in the same way from
Proposition~\ref{prop:finite-extraction-preserves-duals},
Lemma~\ref{lem:BK-finite} and the finite reductions in
Corollary~\ref{cor:geometric-raynaud-realization}, which together with
\eqref{eq:raynaud-reduction} give
$T_{\dRW_r}\cH_{\syn}(X)\simeq R\Gamma(X,W_r\Omega^\bullet_{X/k})$.
Lemma~\ref{lem:finite-system-duality} and
Corollary~\ref{cor:geometric-raynaud-realization} give the operator
compatibility.
\end{proof}

\section{Special values of zeta functions}
\label{sec:zeta}

Let $k=\mathbf F_q$ be a finite field, with $q=p^a$.  For a smooth
proper variety $X$ over $k$, the special values of the zeta function of
$X$ at integers $n$ have two $p$-adic expressions, one through the de Rham--Witt complex of $X$ and
one through its $F$-gauge cohomology.  Milne and Ramachandran proved the
first for every $M\in D_c^b(R)$, assuming that
$q^n$ is at most a simple root of the minimal polynomial of Frobenius
\cite[Theorem~0.1]{MilneRamachandran}, and Illusie proves it
without this assumption \cite[Theorem~5]{IllusieZeta}.  Mondal proves the second for
every perfect $F$-gauge \cite[Theorem~6.5]{MondalZeta}.  In this section, we
show that Ekedahl's derived equivalence matches the invariants entering
the two formulas, so that the two formulas are equivalent
(Theorem~\ref{thm:zeta-comparison}).  We begin by recalling the
invariants, following Illusie \cite[\S\S4--5]{IllusieZeta}.

Throughout this section, $K=W[1/p]$, $\bar k$ is an algebraic closure
of $k$, and $\gamma\in\mathrm{Gal}(\bar k/k)$ is the
Frobenius $x\mapsto x^q$.  As elsewhere in this paper, we write
$M\in D(R)$ and $E\in\Dqc(k^{\syn})$, where Illusie writes $P$ and $M$.

\begin{definition}[Base change]
\label{def:zeta-base-change}
Let $k'$ be a perfect field containing $k$, and let $R_{k'}$ be obtained
from the presentation of $R$ in Definition~\ref{def:raynaud-ring} by
replacing $W$ with $W(k')$ and $\sigma$ with its Witt vector Frobenius.
The inclusion $k\subset k'$ induces $R\to R_{k'}$.  The \emph{base change}
of $M\in D(R)$ is the Raynaud completion $M_{k'}$ of
$R_{k'}\otimes_R^{L}M$.  For a coherent $R$-module $M$, it is given by
\[
 (M_{k'})^i\simeq\varprojlim_r\Bigl(W(k')\otimes_W
 \bigl(M/(V^rM+dV^rM)\bigr)^i\Bigr)
\]
\cite[I, Theorem~3.8 and IV, (3.6.3)]{IllusieRaynaud83}.  The base
change of $E\in\Dqc(k^{\syn})$ is $E_{k'}:=\rho^*E$, where
$\rho:k'^{\syn}\to k^{\syn}$ is the induced map.
\end{definition}

\begin{definition}[{Stable Bockstein characteristic; \cite[Definition~5.2]{MondalZeta} and \cite[\S2]{IllusieZeta}}]
\label{def:stable-bockstein}
Let $C$ be a complex of $\mathbf Z_p$-modules with an endomorphism $u$,
and put $L=\operatorname{fib}(u)$.  The composite of the boundary maps
$L\to C\to L[1]$ of the triangle $L\to C\xrightarrow{u}C\to L[1]$ is a
map $d_u:L\to L[1]$, and the \emph{Bockstein complex}
$\operatorname{Bock}(C,u)$ is $\cdots\to H^i(L)\xrightarrow{d_u}
H^{i+1}(L)\to\cdots$.  If $C$ is bounded and $C\otimes\mathbf Q_p$
and $L$ are perfect, there is an $r_0$ such that for all $r\geq r_0$ the
complex $\operatorname{Bock}(C,u^r)$ has finite-length cohomology and
\[
 \chi_s(C,u):=\frac1r\sum_i(-1)^i
 \operatorname{length}_{\mathbf Z_p}H^i\operatorname{Bock}(C,u^r)
\]
does not depend on $r$ \cite[Proposition~5.1]{MondalZeta}.  We write $\chi_s(C,u)^\times:=p^{\chi_s(C,u)}$.
\end{definition}

\begin{definition}[{Invariants of complexes of $R$-modules; \cite{MilneRamachandran} and \cite[\S4]{IllusieZeta}}]
\label{def:zeta-raynaud}
Let $M\in D_c^b(R)$ and $n\in\mathbb Z$.
\begin{enumerate}
\item The \emph{zeta function} of $M$ is
 \[
  Z(M,t):=\prod_m\det\bigl(1-\varphi_M^at\mid
  H^m(\mathbf s(M)\otimes_WK)\bigr)^{(-1)^{m+1}},
 \]
 where $\varphi_M$ is the $\sigma$-linear automorphism of
 $\mathbf s(M)\otimes_WK$ given by $p^iF$ on $M^i\otimes_WK$.
\item The \emph{syntomic cohomology} of $M$ is
 $R\Gamma(M,\mathbf Z_p(n)):=\RHom_R(W,M\{n\})$, where
 $M\{n\}=M(n)[-n]$ is Illusie's Tate twist $M^{\bullet+n,\bullet-n}$.
\item The \emph{stable Bockstein characteristic} of $M$ is
 $\chi_s(M,\mathbf Z_p(n))^\times:=\chi_s(C,1-\gamma)^\times$
 (Definition~\ref{def:stable-bockstein}) for
 $C=R\Gamma(M_{\bar k},\mathbf Z_p(n)):=\RHom_{R_{\bar k}}(W(\bar k),M_{\bar k}\{n\})$,
 on which $\gamma$ acts through its action on $\bar k$.  The hypotheses
 of Definition~\ref{def:stable-bockstein} hold by
 \cite[Proposition~4.3]{IllusieZeta}.
\item The \emph{Hodge numbers} of $M$ are
 $h^{i,j}(M):=\dim_kH^j\bigl((R_1\otimes_R^{L}M)^i\bigr)$, and
 \[
  \chi(M,n):=\sum_{j,\,i\leq n}(-1)^{i+j}(n-i)\,h^{i,j}(M).
 \]
\end{enumerate}
\end{definition}

\begin{definition}[{Invariants of $F$-gauges; \cite{MondalZeta} and \cite[\S5]{IllusieZeta}}]
\label{def:zeta-gauge}
Let $E\in\Perf(k^{\syn})$ and $n\in\mathbb Z$.
\begin{enumerate}
\item The \emph{$F$-isocrystal} of $E$ is $E^{\infty}[1/p]$, where
 $E^\infty=j_\Delta^*E$, with the Frobenius
 $\varphi_E:\varphi^*E^{\infty}[1/p]\xrightarrow{\sim}E^{\infty}[1/p]$
 induced by the $F$-gauge structure
 \cite[Proposition~4.3.1]{BhattFGauges}.  The
 \emph{zeta function} of $E$ is
 \[
  Z(E,t):=\prod_m\det\bigl(1-\varphi_E^at\mid
  H^m(E^{\infty}[1/p])\bigr)^{(-1)^{m+1}},
 \]
 where the $a$-fold iterate $\varphi_E^a$ is $K$-linear since
 $\sigma^a=\mathrm{id}$ on $K$ \cite[Definition~3.2]{MondalZeta}.
\item The \emph{syntomic cohomology} of $E$ is
 $R\Gamma(E,\mathbf Z_p(n)):=R\Gamma(k^{\syn},E\{n\})$
 \cite[Definition~2.19]{MondalZeta}.
\item The \emph{stable Bockstein characteristic} of $E$ is
 $\chi_s(E,\mathbf Z_p(n))^\times:=\chi_s(C,1-\gamma)^\times$
 (Definition~\ref{def:stable-bockstein}) for
 $C=R\Gamma(E_{\bar k},\mathbf Z_p(n)):=R\Gamma(\bar k^{\syn},E_{\bar k}\{n\})$,
 on which $\gamma$ acts through its action on $\bar k$
 \cite[Definition~6.4]{MondalZeta}.  The hypotheses of
 Definition~\ref{def:stable-bockstein} hold by
 \cite[Corollary~2.21 and Remark~3.3]{MondalZeta}.
\item The \emph{Hodge numbers} of $E$ are
 $h^{i,j}(E):=\dim_kH^{i+j}\bigl(((j_{\Nyg}\circ i_{\mathrm{Hodge}})^*E)^i\bigr)$,
 and $\chi(E,n)$ is defined from them as in
 Definition~\ref{def:zeta-raynaud} \cite[Definitions~6.2--6.3]{MondalZeta}.
\end{enumerate}
\end{definition}

Write $Z(-,t)=(1-q^nt)^{-\rho_n}Z^*(-,t)$ with
$Z^*(-,q^{-n})\neq0$, as in \cite[(4.2.2)]{IllusieZeta}.  The two
special-value formulas read
\[
 \bigl|Z^*(M,q^{-n})\bigr|_p^{-1}
 =\chi_s(M,\mathbf Z_p(n))^\times q^{\chi(M,n)},\qquad
 \bigl|Z^*(E,q^{-n})\bigr|_p^{-1}
 =\chi_s(E,\mathbf Z_p(n))^\times q^{\chi(E,n)}.
\]
Illusie observes that if Ekedahl's equivalence admits a symmetric
monoidal $\infty$-categorical enhancement compatible with Tate twists,
then it matches the invariants on the two sides, and the two formulas
become equivalent \cite[\S\S6.3--6.4]{IllusieZeta}.
Theorem~\ref{thm:intro-equivalence} provides such an enhancement.  Two
further properties of the de Rham--Witt realization are needed: it
commutes with base change (Lemma~\ref{lem:zeta-base-change}), and it
identifies the $F$-isocrystals
(Lemma~\ref{lem:zeta-rational-frobenius}).

\begin{lemma}[Base change]
\label{lem:zeta-base-change}
Let $k'$ be a perfect field containing $k$.  For every $F$-gauge $E$
there is a natural equivalence
\[
 T_{\dRW}(E_{k'})\simeq T_{\dRW}(E)_{k'},
\]
compatible with the action of automorphisms of $k'$ over $k$.  In
particular, for $M\in D_c^b(R)$ we have $M_{k'}\in D_c^b(R_{k'})$ and
$S_{\mathrm{FG}}(M_{k'})\simeq S_{\mathrm{FG}}(M)_{k'}$.
\end{lemma}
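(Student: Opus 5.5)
The plan is to compare both sides one finite level at a time, using the reductions \eqref{eq:raynaud-reduction} and the fact that both sides are complete. The map $\rho:k'^{\syn}\to k^{\syn}$ is compatible with all the geometry of \S\ref{sec:genuine-naive}. The genuine and naive $r$-Nygaard stacks of $k'$, the $r$-Hodge stack, and the maps $f_r,g_r,e_r,g'_r$ are all obtained from those of $k$ by the base change $W\to W(k')$ on coefficients, with $\sigma$ replaced by the Witt vector Frobenius of $k'$. Concretely, each chart ring $B_i$ over $k'$ is $(W(k')\otimes_WB_i)^\wedge_p$, and the gluings $\alpha_i$ are base changed as well. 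This holds because $W\to W(k')$ is flat and commutes with Frobenius.

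The first step is the finite-level statement
\[
 T_{\dRW_r}(E_{k'})\simeq W_r(k')\otimes_{W_r}T_{\dRW_r}(E).
\]
By Lemma~\ref{lem:T-r-Hodge}, $T_{\dRW_r}(E)=(\sigma^{1-r})^*((g'_r)_*e_r^*E)[\bullet]$. The pullback $e_r^*$ commutes with $\rho^*$ because the square formed by the two $r$-Hodge stacks over $k'^{\syn}\to k^{\syn}$ commutes. The pushforward $(g'_r)_*$ is computed by the finite \v{C}ech formula \eqref{eq:g-pushforward} tensored with Koszul complexes, and each term is a restriction of scalars. Base change to $W(k')$ is flat, and modulo $p^r$ it commutes with finite limits and with these restrictions of scalars, so the base change map is an equivalence. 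The equivalence is natural, hence compatible with automorphisms of $k'/k$.

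The second step checks that this equivalence is compatible with $\pi,F,V,d$. For $\pi$, $F$ and $d$ this is immediate, since they come from pullbacks along $\mathsf R_r$ and $\mathsf F_r$ and from boundary maps, all of which base change. For $V$, the trace along $\mathsf F_r$ is built from the orientation of Lemma~\ref{lem:e-finite-oriented}, and that orientation is given by the Koszul generators $\varepsilon_u\varepsilon_t$, which are preserved by base change.

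Combining this with \eqref{eq:raynaud-reduction} on both fields gives
\[
 R_{k',r}\otimes_{R_{k'}}^{L}T_{\dRW}(E_{k'})\simeq R_{k',r}\otimes^{L}_{R}T_{\dRW}(E),
\]
using $R_{k',r}\simeq W_r(k')\otimes_{W_r}R_r$, which follows from the normal forms in Remark~\ref{lem:raynaud-normal-form}. Taking $R\varprojlim_r$ identifies $T_{\dRW}(E_{k'})$, which is complete by Theorem~\ref{thm:presentable-equivalence} for $k'$, with the Raynaud completion $T_{\dRW}(E)_{k'}$.

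I expect the main obstacle to be the coherent compatibility of the level-wise equivalences with the full $R_{k'}$-action. Matching the operators up to homotopy does not by itself give an $R_{k'}$-linear equivalence in the limit. To handle this, I would follow the argument of Lemma~\ref{lem:limit-action-comparison}: realize the comparison map as a map of corepresenting objects $\rho^*G_n\to G'_n$, where $G'_n$ denotes the corepresenting object over $k'$, using the base change of $G_{r,n}$. Since both $T_{\dRW}$ functors are given by mapping out of such objects, this is adjoint to $\rho_*$ and linear in $R_{k'}$. Checking that this map is an equivalence on underlying gauges via Proposition~\ref{prop:universal-G} then yields an $R_{k'}$-linear map, and the level-wise check above shows it is an equivalence.

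For the remaining claims, suppose $M\in D_c^b(R)$ and set $E=S_{\mathrm{FG}}(M)$. Then $E$ is perfect, so $E_{k'}$ is perfect, and Corollary~\ref{thm:gauge-raynaud-equivalence} over $k'$ gives $M_{k'}\simeq T_{\dRW}(E_{k'})\in D_c^b(R_{k'})$. Applying $S_{\mathrm{FG}}$ over $k'$ then gives $S_{\mathrm{FG}}(M_{k'})\simeq E_{k'}$, which is the last assertion.
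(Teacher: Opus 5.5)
Your proposal is correct and follows the paper's proof. Like the paper, you base change the charts along $W\to W(k')$, obtain $T_{\dRW_r}(E_{k'})\simeq W_r(k')\otimes_{W_r}T_{\dRW_r}(E)$ compatibly with $\pi,F,V,d$, use the normal forms to identify $(R_{k'})_r$, conclude from completeness of both sides, and finish with Corollary~\ref{thm:gauge-raynaud-equivalence}. Your extra step, which builds the comparison $R_{k'}$-linearly from an identification $\rho^*G_n\simeq G'_n$ of corepresenting objects, makes explicit a coherence point that the paper passes over when it concludes from matching finite reductions.
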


\begin{proof}
Let $a:k'^{\NN_r}\to k^{\NN_r}$, $b:k'^{\Nyg_r}\to k^{\Nyg_r}$ and
$\rho:k'^{\syn}\to k^{\syn}$ be the maps induced by $k\subset k'$.  Since
$\sigma$ on $W(k')$ extends $\sigma$ on $W$, the charts and gluings for
$k'$ are obtained from those for $k$ by extending scalars along the
$p$-completely flat map $W\to W(k')$.  So the square formed by $a$, $b$
and the two contractions $g_r$ is cartesian, and so is its restriction
to the $r$-Hodge stacks.  The syntomic point is not a $W$-stack
(Remark~\ref{rem:coefficient-Frobenius-syn}), but the two maps $f_r$
satisfy $f_r\circ a=\rho\circ f_r$, so the pullbacks of
$E_{k'}=\rho^*E$ and of $E$ to the $r$-Hodge stacks correspond under the
map induced by $a$.  Flat base change along the proper map $g_r$
therefore gives
\[
 T_{\dRW_r}(E_{k'})\simeq W_r(k')\otimes_{W_r}T_{\dRW_r}(E),
\]
compatibly with $\pi$, $F$, $V$ and $d$.  By the normal forms
(Remark~\ref{lem:raynaud-normal-form}),
$(R_{k'})_r\otimes_R^{L}M\simeq W_r(k')\otimes_{W_r}(R_r\otimes_R^{L}M)$,
so by \eqref{eq:raynaud-reduction} the two sides of the claim have the
same finite reductions, compatibly with the transition maps.  Both are
complete (Theorem~\ref{thm:presentable-equivalence} and
Definition~\ref{def:zeta-base-change}), which gives the claim,
naturally in $k'$ and hence compatibly with automorphisms.  Finally, for
$M\in D_c^b(R)$ the $F$-gauge $S_{\mathrm{FG}}(M)_{k'}$ is perfect, so
$M_{k'}\simeq T_{\dRW}(S_{\mathrm{FG}}(M)_{k'})$ is coherent by
Corollary~\ref{thm:gauge-raynaud-equivalence}, and
$S_{\mathrm{FG}}(M_{k'})\simeq S_{\mathrm{FG}}(M)_{k'}$.
\end{proof}

\begin{lemma}[Rational Frobenius]
\label{lem:zeta-rational-frobenius}
Let $M\in D_c^b(R)$ and $E=S_{\mathrm{FG}}(M)$.  There is a natural
equivalence of $F$-isocrystals
\[
 \bigl(E^{\infty}[1/p],\varphi_E\bigr)
 \simeq(\sigma^{-1})^*\bigl(\mathbf s(M)\otimes_WK,\varphi_M\bigr).
\]
In particular, $Z(E,t)=Z(M,t)$.
\end{lemma}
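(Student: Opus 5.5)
The natural route is to compute $E^\infty$ directly from Ekedahl's functor, since $E=S_{\mathrm{FG}}(M)\simeq S(M)$ by Lemma~\ref{thm:ladder-is-quasi-inverse}. This applies because $M\in D_c^b(R)$ is derived $p$-complete with finite internal range, by the discussion after Definition~\ref{def:coherent-complexes}. Construction~\ref{con:ladder} then gives $S(M)^{\infty}\simeq\sigma_*\mathbf s(M)^\wedge_p$ and $S(M)^{-\infty}\simeq\mathbf s(M)^\wedge_p$, with descent datum the identity $\varphi^*S(M)^\infty\simeq S(M)^{-\infty}$. A coherent complex has $\mathbf s(M)$ already $p$-complete, since each $M^i$ is complete and the internal range is finite. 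So $E^\infty=j_\Delta^*E\simeq\sigma_*\mathbf s(M)=(\sigma^{-1})^*\mathbf s(M)$, and after inverting $p$ the underlying $K$-complexes agree as claimed.

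The main step is identifying the Frobenius. By \cite[Proposition~4.3.1]{BhattFGauges}, the Frobenius of the $F$-isocrystal is the composite
\[
 \varphi^*E^\infty[1/p]\xrightarrow{\ \sim\ }E^{-\infty}[1/p]\xrightarrow{\ \sim\ }E^\infty[1/p].
\]
The first map is the descent datum. The second is the comparison of the two ends through any fixed weight $m$, namely $(u^\infty)\circ(t^\infty)^{-1}$, which becomes invertible rationally because $ut=p$. I would compute this second map entrywise from the table in Construction~\ref{con:ladder}, as in the proof of Lemma~\ref{lem:path-output}. On the entry of internal degree $i$ with $i<m$, the map $t^\infty$ is $p^{m-i-1}V$ and $u^\infty$ is $1$. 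With $i\geq m$, the map $t^\infty$ is $1$ and $u^\infty$ is $p^{i-m}F$.

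Take $m$ below the internal range of $M$, so every entry has $i\geq m$. Then $t^\infty=1$ and $u^\infty=p^{i-m}F$, so the composite is $p^{-m}\cdot p^iF$ on $M^i$. This is $\varphi_M$ up to the global scalar $p^{-m}$.

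The obstacle is this normalization: the weight used to compare the ends must be chosen canonically. I would check that the descent datum convention, where $\varphi^*E^\infty\simeq E^{-\infty}$ is the identity at the ends, already absorbs the scalar. Concretely, the ends are the $p$-completed colimits, identified with $\mathbf s(M)$ through the weights where $u$ (respectively $t$) acts by $1$. This forces the comparison through weight $0$ in the normalization of Construction~\ref{con:ladder}. As a sanity check I would test this on $M=W$, where $S(W)=\cO$ has Frobenius $\sigma$ with $\varphi_W=\sigma$, and on Breuil--Kisin twists via Lemma~\ref{lem:S-BK}, matching Definition~\ref{def:BK-twist}. Naturality in $M$ follows because all maps come from the dg functor $S$.

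Given the identification, the zeta functions agree. $Z$ depends only on the $K$-linear $a$-fold iterate $\varphi^a$ on cohomology, and the Frobenius twist $(\sigma^{-1})^*$ does not change the characteristic polynomials of a $K$-linear map, since $\sigma^a=\mathrm{id}$ on $K$.
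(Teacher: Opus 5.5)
Your route is the paper's. You use $E\simeq S(M)$, so the ends are $\sigma_*\mathbf s(M)$ and $\mathbf s(M)$ with the identity descent datum. Then $\varphi_E$ is the descent datum followed by $u^\infty\circ(t^\infty)^{-1}$, computed entry by entry from the table of Construction~\ref{con:ladder}. Two steps need repair, both small.

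\textbf{The normalization.} The comparison of the ends is not ``through any fixed weight''. Since $ut=p$, moving the comparison weight from $m$ to $m+1$ divides the composite by $p$, and this is exactly your stray factor $p^{-m}$. Nor does identifying the ends with $\mathbf s(M)$ in the extreme weights single out weight $0$; that step of your argument does not follow.

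Weight $0$ is part of the definition of $\varphi_E$. Bhatt identifies both $E^{\pm\infty}[1/p]$ with $E^0[1/p]$ through $t$ and $u$ \cite[Remark~3.4.15]{BhattFGauges}, and this is what the paper uses. Your test on $S(W)=\cO$, where weight $m$ gives $p^{-m}\sigma$, is consistent with this, but it only confirms the convention.

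At weight $0$ you must also handle the entries of negative internal degree, which your choice of $m$ sidestepped. There $t^\infty=p^{|i|-1}V$ and $u^\infty=1$, and $(p^{|i|-1}V)^{-1}=p^iF$ rationally because $FV=p$. Equivalently, dividing your weight-$m$ result by $p^{-m}$ gives $p^iF$ on every entry.

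\textbf{The zeta function.} The twist $(\sigma^{-1})^*$ applies $\sigma^{-1}$ to the coefficients of the characteristic polynomial of a $K$-linear map, so it does not preserve characteristic polynomials in general. The fact that $\sigma^a=\mathrm{id}$ on $K$ only makes $\varphi^a$ linear.

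What you need is that the characteristic polynomials of $\varphi_M^a$ on each $P=H^m(\mathbf s(M)\otimes_WK)$ have coefficients in $\mathbf Q_p=K^{\sigma=1}$. The paper takes this from \cite[\S4.2]{IllusieZeta}. It holds because the $\sigma$-semilinear automorphism $\varphi_M$ of $P$ commutes with $\varphi_M^a$. Equivalently, the linearization of $\varphi_M$ is an isomorphism of $F$-isocrystals $\sigma^*(P,\varphi_M)\simeq(P,\varphi_M)$.
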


\begin{proof}
By Corollary~\ref{thm:gauge-raynaud-equivalence}, $E\simeq S(M)$, so
$E^{-\infty}=\mathbf s(M)$, $E^{\infty}=\sigma_*\mathbf s(M)$, and the
descent datum is the identity of $\mathbf s(M)$.  The Frobenius
$\varphi_E$ is the descent datum composed with the identifications of
$E^{\pm\infty}[1/p]$ with $E^0[1/p]$ by $t$ and $u$
\cite[Remark~3.4.15]{BhattFGauges}.  On the entry of internal degree
$i$, these are composites of the maps $1$, $p$, $F$ and $V$ of
Construction~\ref{con:ladder}: for $i\geq0$ the composite is $p^iF$,
and for $i<0$ it is the inverse of $p^{|i|-1}V$, which is again $p^iF$
since $FV=p$.  These maps commute with the total differential, as $u$
and $t$ are maps of complexes, so $\varphi_E$ is $\varphi_M$
twisted by $(\sigma^{-1})^*$.  The characteristic polynomials of
$\varphi_M^a$ on cohomology have coefficients in $\mathbf Q_p$
\cite[\S4.2]{IllusieZeta}, so $Z(E,t)=Z(M,t)$.
\end{proof}

\begin{theorem}[Comparison of zeta-value invariants]
\label{thm:zeta-comparison}
Let $M\in D_c^b(R)$, $E=S_{\mathrm{FG}}(M)$ and $n\in\mathbb Z$.
\begin{enumerate}
\item $Z(M,t)=Z(E,t)$.  In particular, the orders and the leading
 coefficients at $t=q^{-n}$ agree.
\item There are natural equivalences
 $R\Gamma(M,\mathbf Z_p(n))\simeq R\Gamma(E,\mathbf Z_p(n))$ and
 $R\Gamma(M_{\bar k},\mathbf Z_p(n))\simeq R\Gamma(E_{\bar k},\mathbf Z_p(n))$,
 the second compatible with $\gamma$.
\item $\chi_s(M,\mathbf Z_p(n))^\times=\chi_s(E,\mathbf Z_p(n))^\times$.
\item $h^{i,j}(M)=h^{i,j}(E)$ for all $i,j$, and $\chi(M,n)=\chi(E,n)$.
\end{enumerate}
Hence the special-value formula of \cite[Theorem~5]{IllusieZeta} for $M$
is equivalent to that of \cite[Theorem~6.5]{MondalZeta} for $E$.
\end{theorem}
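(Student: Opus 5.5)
The four items will be proved in order, and the final assertion then follows formally: both special-value formulas are stated in terms of $Z^*(-,q^{-n})$, $\chi_s(-,\mathbf Z_p(n))^\times$ and $\chi(-,n)$, and items (1), (3) and (4) identify each of these for $M$ and for $E$.

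\emph{Item (1).} This is Lemma~\ref{lem:zeta-rational-frobenius}. The factorizations $Z=(1-q^nt)^{-\rho_n}Z^*$ are determined by $Z$, so equality of zeta functions gives equality of $\rho_n$ and of $Z^*(-,q^{-n})$.

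\emph{Item (2).} I use the symmetric monoidal equivalence of Theorem~\ref{thm:symmetric-monoidal-equivalence}. Its unit is $T_{\dRW}(\cO_{k^{\syn}})\simeq W$, which follows from $S_{\mathrm{FG}}(W)\simeq\cO$. By Corollary~\ref{lem:Koszul-BK-shift}, $T_{\dRW}(E\{n\})\simeq M\{n\}$. Since $T_{\dRW}$ is an equivalence onto $\widehat D(R)$ and $W$ and $M\{n\}$ are complete, this gives
\[
 R\Gamma(k^{\syn},E\{n\})=\RHom(\cO,E\{n\})\simeq\RHom_R(W,M\{n\}).
\]
For $\bar k$, Lemma~\ref{lem:zeta-base-change} gives $T_{\dRW}(E_{\bar k})\simeq M_{\bar k}$, compatibly with $\gamma$. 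Running the same argument over $\bar k$, with the equivalence of Theorem~\ref{thm:presentable-equivalence} for $\bar k$ in place of $k$, yields the second equivalence. I expect the main subtlety to be that the whole construction, including Breuil--Kisin twists and the identification of the unit, must be natural in the perfect field, so that $\gamma$, which acts semilinearly, is matched on both sides. I would obtain this from the naturality of the base-change equivalence in $k'$, which Lemma~\ref{lem:zeta-base-change} asserts.

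\emph{Item (3).} The stable Bockstein characteristic $\chi_s(C,1-\gamma)$ depends only on the pair $(C,\gamma)$ up to $\gamma$-equivariant equivalence. Hence (3) follows from the second equivalence of (2). The hypotheses of Definition~\ref{def:stable-bockstein} hold on both sides by the cited results, so nothing further is needed.

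\emph{Item (4).} By \eqref{eq:raynaud-reduction} and Lemma~\ref{lem:T-r-Hodge}, $R_1\otimes_R^LM\simeq T_{\dRW_1}(E)\simeq E_{\mathrm{Hodge}}[\bullet]$. Regrading gives $(R_1\otimes^L_RM)^i\simeq E_{\mathrm{Hodge}}^i[i]$, so $H^j\bigl((R_1\otimes^L_RM)^i\bigr)=H^{i+j}(E^i_{\mathrm{Hodge}})$. This is the identity $h^{i,j}(M)=h^{i,j}(E)$, once I check that $E_{\mathrm{Hodge}}$ is the pullback along $j_{\Nyg}\circ i_{\mathrm{Hodge}}$, which holds by definition. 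The formula for $\chi(-,n)$ then agrees term by term.
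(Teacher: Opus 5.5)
Your proposal is correct and follows the paper's proof item by item. Item (1) comes from the rational Frobenius lemma. Item (2) comes from full faithfulness together with monoidality, the unit, Breuil--Kisin compatibility and base change to $\bar k$. Item (3) is read off from (2), and item (4) comes from \eqref{eq:raynaud-reduction} at $r=1$, Lemma~\ref{lem:T-r-Hodge} and regrading.

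The only step the paper adds is a check that its weight convention agrees with Illusie's: on both sides the twist $\cO\{-1\}$ has Hodge pullback $k$ in weight $1$. This is what lets the Hodge numbers be matched with the cited formulas.
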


\begin{proof}
Assertion~(1) is Lemma~\ref{lem:zeta-rational-frobenius}.

For~(2), we have $T_{\dRW}(E)\simeq M$ since $M$ is complete.  The
equivalence $T_{\dRW}$ is symmetric monoidal
(Theorem~\ref{thm:symmetric-monoidal-equivalence}), so it sends
$\cO_{k^{\syn}}$ to the unit $W$, and it commutes with Breuil--Kisin
twists (Corollary~\ref{lem:Koszul-BK-shift}).  Full faithfulness
(Theorem~\ref{thm:presentable-equivalence}) gives
\[
 R\Gamma(k^{\syn},E\{n\})
 =\RHom(\cO_{k^{\syn}},E\{n\})
 \simeq\RHom_R(W,M\{n\}).
\]
The same argument over $\bar k$, with
$T_{\dRW}(E_{\bar k})\simeq M_{\bar k}$ from
Lemma~\ref{lem:zeta-base-change}, gives the second equivalence, and the
naturality in that lemma gives the compatibility with $\gamma$.
Assertion~(3) follows, since the stable Bockstein characteristic
depends only on the complex and its endomorphism.

For~(4), \eqref{eq:raynaud-reduction} for $r=1$ and
Lemma~\ref{lem:T-r-Hodge} give
\[
 R_1\otimes_R^{L}M\simeq T_{\dRW_1}(E)
 \simeq\bigl((j_{\Nyg}\circ i_{\mathrm{Hodge}})^*E\bigr)[\bullet].
\]
By Convention~\ref{conv:shearing}, the weight-$i$ part of the right side
is $((j_{\Nyg}\circ i_{\mathrm{Hodge}})^*E)^i[i]$, so
\[
 H^j\bigl((R_1\otimes_R^{L}M)^i\bigr)
 \simeq H^{i+j}\bigl(((j_{\Nyg}\circ i_{\mathrm{Hodge}})^*E)^i\bigr).
\]
Our weights agree with Illusie's degrees, since on both sides the
Breuil--Kisin twist $\cO\{-1\}$ has Hodge pullback $k$ in weight $1$
\cite[\S5.2]{IllusieZeta}.  So the Hodge numbers agree, and hence so do
the Euler characteristics $\chi(-,n)$.  The last statement follows
from~(1)--(4) \cite[\S6.4]{IllusieZeta}.
\end{proof}

Let $X$ be a smooth proper variety over $k$.  By
Corollary~\ref{cor:geometric-raynaud-realization}, the de Rham--Witt
realization of $\cH_{\syn}(X)$ is $R\Gamma(X,W\Omega^\bullet_{X/k})$,
and both carry the zeta function, the syntomic cohomology and the Hodge
numbers of $X$ \cite[\S\S4.2--5.2]{IllusieZeta}.  So
Theorem~\ref{thm:zeta-comparison} identifies the two $p$-adic formulas
for the special values of the zeta function of $X$.

The two general formulas were proved by different d\'evissages.
Illusie reduces to elementary objects of $D_c^b(R)$ by Ekedahl's
d\'evissage, using properties of stable Bockstein characteristics but
not Mondal's theorem \cite[\S4.5]{IllusieZeta}, and notes that this
d\'evissage had no counterpart for $F$-gauges
\cite[\S5.5]{IllusieZeta}.  Through Ekedahl's derived equivalence it now
applies to perfect $F$-gauges, so Mondal's formula follows from
Illusie's.



\begin{thebibliography}{BZSV24}

\bibitem[And24]{AndriopoulosTR}
Faidon Andriopoulos.
\newblock {$TR$} and the {$r$}-{N}ygaard filtered prismatic cohomology, 2024.
\newblock Preprint, \href{https://arxiv.org/abs/2412.00914}{arXiv:\allowbreak
  2412.00914}.

\bibitem[AN21]{AntieauNikolaus}
Benjamin Antieau and Thomas Nikolaus.
\newblock Cartier modules and cyclotomic spectra.
\newblock {\em Journal of the American Mathematical Society}, 34(1):1--78, 2021.

\bibitem[Ant24]{AntieauDecalage}
Benjamin Antieau.
\newblock Spectral sequences, d\'ecalage, and the {B}eilinson $t$-structure.
\newblock Preprint, arXiv:2411.09115v1, 2024.

\bibitem[Ari21]{Ariotta}
Stefano Ariotta.
\newblock Coherent cochain complexes and {B}eilinson $t$-structures, with an
  appendix by {A}chim {K}rause.
\newblock arXiv:2109.01017, 2021.

\bibitem[Ber97]{Berthelot97}
Pierre Berthelot.
\newblock Dualit\'e de {P}oincar\'e et formule de {K}\"unneth en
  cohomologie rigide.
\newblock {\em C. R. Acad. Sci. Paris S\'er. I Math.}, 325(5):493--498,
  1997.

\bibitem[Bha23]{BhattFGauges}
Bhargav Bhatt.
\newblock Prismatic {$F$}-gauges, 2023.
\newblock Course notes, January 2023. Available at
  \url{https://www.math.ias.edu/~bhatt/teaching/mat549f22/lectures.pdf}.

\bibitem[BL22]{BhattLurie}
Bhargav Bhatt and Jacob Lurie.
\newblock Absolute prismatic cohomology, 2022.
\newblock Preprint, \href{https://arxiv.org/abs/2201.06120}{arXiv:\allowbreak
  2201.06120}.

\bibitem[BLM21]{BLM21}
Bhargav Bhatt, Jacob Lurie, and Akhil Mathew.
\newblock {\em Revisiting the de {R}ham--{W}itt complex}, volume 424 of {\em
  Ast\'erisque}.
\newblock Soci\'et\'e Math\'ematique de France, 2021.

\bibitem[BMS18]{BMS18}
Bhargav Bhatt, Matthew Morrow, and Peter Scholze.
\newblock Integral {$p$}-adic {H}odge theory.
\newblock {\em Publications Math\'ematiques de l'IH\'ES}, 128:219--397, 2018.

\bibitem[Dev25]{DevalapurkarThesis}
Sanath Devalapurkar.
\newblock {\em Spherochromatism in representation theory and arithmetic
  geometry}.
\newblock PhD thesis, Harvard University, 2025.

\bibitem[Dri24]{Drinfeld}
Vladimir Drinfeld.
\newblock Prismatization.
\newblock {\em Selecta Mathematica. New Series}, 30:Paper No.\ 49, 2024.

\bibitem[Dri25]{DrinfeldBT}
Vladimir Drinfeld.
\newblock Ring stacks conjecturally related to the stacks
  $\mathrm{BT}_n^{G,\mu}$.
\newblock Preprint, arXiv:2510.04958v3, 2025.

\bibitem[Eke84]{Ekedahl84}
Torsten Ekedahl.
\newblock On the multiplicative properties of the de {R}ham--{W}itt
  complex. {I}.
\newblock {\em Arkiv f\"or Matematik}, 22(2):185--239, 1984.

\bibitem[Eke85]{Ekedahl85}
Torsten Ekedahl.
\newblock On the multiplicative properties of the de {R}ham--{W}itt
  complex. {II}.
\newblock {\em Arkiv f\"or Matematik}, 23(1):53--102, 1985.

\bibitem[Eke86]{Ekedahl86Diagonal}
Torsten Ekedahl.
\newblock {\em Diagonal complexes and {$F$}-gauge structures}, volume~18 of
  {\em Travaux en Cours}.
\newblock Hermann, Paris, 1986.

\bibitem[FJ21]{FontaineJannsen}
Jean-Marc Fontaine and Uwe Jannsen.
\newblock Frobenius gauges and a new theory of $p$-torsion sheaves in
  characteristic $p$.
\newblock {\em Documenta Mathematica}, 26:65--101, 2021.

\bibitem[Goe99]{Goerss}
Paul~G. Goerss.
\newblock Hopf rings, {D}ieudonn\'e modules, and {$E_*\Omega^2S^3$}.
\newblock In {\em Homotopy invariant algebraic structures}, volume 239 of {\em
  Contemporary Mathematics}, pages 115--174. American Mathematical Society,
  1999.

\bibitem[Hes96]{Hesselholt96}
Lars Hesselholt.
\newblock On the $p$-typical curves in {Q}uillen's {$K$}-theory.
\newblock {\em Acta Mathematica}, 177(1):1--53, 1996.

\bibitem[HL13]{HopkinsLurie}
Michael Hopkins and Jacob Lurie.
\newblock Ambidexterity in {$K(n)$}-local stable homotopy theory, 2013.
\newblock Preprint. Available at
  \url{https://www.math.ias.edu/~lurie/papers/Ambidexterity.pdf}.

\bibitem[HRW25]{HRW}
Jeremy Hahn, Arpon Raksit, and Dylan Wilson.
\newblock A motivic filtration on the topological cyclic homology of
  commutative ring spectra.
\newblock To appear in {\em Annals of Mathematics}.
  \href{https://arxiv.org/abs/2206.11208}{arXiv:\allowbreak 2206.11208}.

\bibitem[Ill79]{Illusie79}
Luc Illusie.
\newblock Complexe de de {R}ham--{W}itt et cohomologie cristalline.
\newblock {\em Annales scientifiques de l'\'Ecole Normale Sup\'erieure, S\'er.
  4}, 12:501--661, 1979.

\bibitem[Ill26]{IllusieZeta}
Luc Illusie.
\newblock Recent developments on zeta values over finite fields, after
  {S}.~{M}ondal, 2026.
\newblock Notes. Available at
  \url{https://www.imo.universite-paris-saclay.fr/~luc.illusie/Illusie-zeta-values-Mondal.pdf}.

\bibitem[IR83]{IllusieRaynaud83}
Luc Illusie and Michel Raynaud.
\newblock Les suites spectrales associ\'ees au complexe de de {R}ham--{W}itt.
\newblock {\em Publications Math\'ematiques de l'IH\'ES}, 57:73--212, 1983.

\bibitem[LM25]{LahotiManam}
Dhilan Lahoti and Deven Manam.
\newblock Cohomology theories in the moduli of ring stacks, 2025.
\newblock Preprint, \href{https://arxiv.org/abs/2510.09582}{arXiv:\allowbreak
  2510.09582}.

\bibitem[Lur09]{LurieHTT}
Jacob Lurie.
\newblock {\em Higher topos theory}, volume 170 of {\em Annals of
  Mathematics Studies}.
\newblock Princeton University Press, Princeton, NJ, 2009.

\bibitem[Lur17]{LurieHA}
Jacob Lurie.
\newblock Higher algebra, 2017.
\newblock September 2017. Available at
  \url{https://www.math.ias.edu/~lurie/papers/HA.pdf}.

\bibitem[LY26]{LiYang}
S.~Li and Y.~Yang.
\newblock On de {R}ham--{W}itt cohomology of classifying stacks, 2026.
\newblock Preprint, \href{https://arxiv.org/abs/2604.03062v1}{arXiv:\allowbreak
  2604.03062v1}.

\bibitem[Mon25]{MondalZeta}
Shubhodip Mondal.
\newblock Zeta function of {$F$}-gauges and special values, 2025.
\newblock Preprint, \href{https://arxiv.org/abs/2501.05027}{arXiv:\allowbreak
  2501.05027}.

\bibitem[MR15]{MilneRamachandran}
James~S. Milne and Niranjan Ramachandran.
\newblock The $p$-cohomology of algebraic varieties and special values of zeta
  functions.
\newblock {\em J. Inst. Math. Jussieu}, 14:801--835, 2015.

\bibitem[NS18]{NikolausScholze}
Thomas Nikolaus and Peter Scholze.
\newblock On topological cyclic homology.
\newblock {\em Acta Mathematica}, 221(2):203--409, 2018.

\bibitem[Nyg81]{Nygaard81}
Niels~O. Nygaard.
\newblock Slopes of powers of {F}robenius on crystalline cohomology.
\newblock {\em Annales scientifiques de l'\'Ecole Normale Sup\'erieure},
  14(4):369--401, 1981.

\bibitem[Rak26]{RaksitHH}
Arpon Raksit.
\newblock Hochschild homology and the derived de {R}ham complex revisited.
\newblock Preprint, 2020; version~3, January 2026.
\newblock \href{https://arxiv.org/abs/2007.02576v3}{arXiv:\allowbreak 2007.02576v3}.

\bibitem[Rig25]{RiggenbachTrunc}
Noah Riggenbach.
\newblock {$K$}-theory of truncated polynomials.
\newblock {\em Mathematische Zeitschrift}, 310: Paper No. 59, 2025.

\bibitem[Sta]{StacksProject}
The {S}tacks {P}roject {A}uthors.
\newblock The {S}tacks project.
\newblock \url{https://stacks.math.columbia.edu}.

\bibitem[Tan24]{TangPrismaticPD}
Longke Tang.
\newblock Syntomic cycle classes and prismatic {P}oincar\'e duality.
\newblock {\em Compositio Mathematica}, 160:2322--2365, 2024.

\bibitem[Yan26]{YangCanonical}
Yuan Yang.
\newblock {E}kedahl's equivalence explained.
\newblock arXiv:2609.39293v2, 2026.

\bibitem[Zha26a]{ZhangDominoes}
Yuanning Zhang.
\newblock Higher dimensional dominoes in de {R}ham--{W}itt cohomology.
\newblock arXiv:2607.26323, 2026.

\bibitem[Zha26b]{ZhangHN}
Yuanning Zhang.
\newblock Harder--{N}arasimhan filtration of torsion {$F$}-gauges.
\newblock arXiv:2609.27089, 2026.

\end{thebibliography}
\end{document}